\newenvironment{reptheorem}[1]
{\rthm}
{\endrthm}
\tikzstyle{every node}=[circle, draw, fill=white,inner sep=0pt, minimum width=4pt]
\tikzstyle{nodelabel}=[rounded corners,fill=none,inner sep=5pt,draw=none]
\tikzstyle{matching} = [ultra thick]
\tikzset{snake/.style={decorate, decoration=snake}}
\newcommand{\mc}{matching covered}
\newcommand{\cc}[1]{\mbox{$#1$-connected} \mbox{$3$-regular} graph}
\newcommand{\ecc}[1]{\mbox{$#1$-edge-connected}}
\newcommand{\dis}{{\sf dist}}
\newcommand{\distg}[3]{$\dis_{#1} (#2,#3)$}
\newcommand{\distgl}[3]{$\dis_{#1} (#2,#3)\le 1$}
\newcommand{\ses}{solitary edges}
\newcommand{\sd}{solitary doubleton}
\newcommand{\pmg}{perfect matching}
\newcommand{\pms}{perfect matchings}
\newcommand{\mcg}{matching covered graph}
\newcommand{\sepcut}{separating cut}
\newcommand{\mdc}{matching double covered}
\newcommand{\dumbbell}{dumbbell}
\newcommand{\mbb}[3]{${{#1}_{#2}^{#3}}$}
\begin{document}
\title[]{CLM's dependence relation, solitary patterns and  $r$-graphs\\\footnotesize{Dedicated to CLM (Marcelo H. de Carvalho, Cl\'audio L. Lucchesi and U. S. R. Murty) for their invaluable contributions to the theory of perfect matchings}}
\author[]{
D. V. V. Narayana\affiliationmark{1}\thanks{Supported by  IITM Pravartak Technologies Foundation.}\and
D. Mattiolo\affiliationmark{2}\thanks{Supported by a Postdoctoral Fellowship of the Research Foundation Flanders (FWO), grant number 1268323N.}
  \and Kalyani Gohokar\affiliationmark{3}\thanks{Supported by Google.}
  \and Nishad Kothari \affiliationmark{1} \thanks{Supported by IC\&SR IIT Madras.}}
  \affiliation{
  IIT Madras\\
  Department of Computer Science, KU Leuven Kulak, 8500 Kortrijk, Belgium.\\
  Chennai Mathematical Institute
 }
\keywords{matching covered graphs, perfect matchings, $r$-graphs, dependence relation, equivalence classes.}



%

\UKvardate
  \maketitle 
  \thispagestyle{empty}
  
\begin{abstract}

A connected $r$-regular graph, where $r\ge 3$, is an {\em $r$-graph} if each odd cut has at least~$r$ edges. It is easy to observe that every connected \mbox{$r$-edge-colorable} \mbox{$r$-regular} graph is an $r$-graph. However, the converse does not hold in general; for instance, the Petersen graph is the smallest \mbox{$3$-graph} that is not $3$-edge-colorable. Ergo, connected $r$-edge-colorable \mbox{$r$-regular} graphs comprise a proper subset of $r$-graphs. It is worth noting that \mbox{$3$-graphs} are precisely the \mbox{$2$-connected} \mbox{$3$-regular} graphs.
    Extensive research has been conducted on $r$-graphs (Seymour, {\em Proc. London Math. Soc.}, 1979).

    It is easily deduced from Tutte's $1$-factor Theorem that every $r$-graph is {\em  matching covered} --- that is, a connected graph whose each edge participates in at least one perfect matching.
    In light of this, we set out to: (i) characterize {\em solitary edges} (in $r$-graphs) --- that is, those edges that participate in precisely one perfect matching, and (ii) upper bound the number of solitary edges. We investigated solitary edges through the lens of the dependence relationship (between edges of a matching covered graph) that was studied by CLM (Carvalho, Lucchesi and Murty, {\em Combinatorica}, 1999), and further investigated by the last two authors in their recently published monograph ({\em Perfect Matchings: A Theory of Matching Covered Graphs}, 2024).

    Two edges (of a \mcg~$G$) are {\em mutually dependent} if every \pmg\ containing either of them also contains the other. Clearly, this is an equivalence relation and thus induces a partition of~$E(G)$; each part is called an {\em equivalence class}. It is worth noting that if any member of an equivalence class is \mbox{solitary} then so is every member; we refer to such an equivalence class as a {\em solitary class}. This immediately brings us to the notion of {\em solitary pattern} of a \mcg\ --- the sequence of cardinalities of its solitary classes in nonincreasing order. For instance, $K_4$ has solitary pattern~$(2,2,2)$ whereas the Petersen graph has solitary pattern~$()$.

    Clearly,~$\frac{n}{2}$ is an upper bound on the cardinality of any equivalence class, and if equality holds then each largest equivalence class is a solitary class. We provide a characterization of all \mcg s that attain this upper bound; this class includes certain \mbox{$r$-graphs} for each $r\ge 3$. However, all such graphs, of order six or more, contain $2$-cuts. We establish that every $r$-graph that is not \ecc{3} has at most one solitary class, and thus at most $\frac{n}{2}$ solitary edges; furthermore, such a graph uniquely decomposes into smaller \ecc{3} $r$-graphs, and its solitary edges may be computed recursively from the solitary edges of these smaller graphs.

    On the other hand, using a result of Lucchesi and Murty, we deduce that in a \mbox{$3$-edge-connected} \mbox{$r$-graph}, every solitary class has cardinality one or two. Using structural arguments pertaining to cuts, edge-colorings and the dependence relation, we prove that the distance between any two solitary classes in any \mbox{$3$-edge-connected} $r$-graph is at most three; furthermore, if the order is four or more, we establish that the number of solitary classes is at most three and equality holds if and only if $r=3$. Consequently, every \mbox{$3$-edge-connected} $r$-graph, of order four or more, has one of the following ten solitary patterns: $(2,2,2)$, $(2,2,1)$, $(2,1,1)$, $(1,1,1)$, $(2,2)$, $(2,1)$, $(2)$, $(1,1)$, $(1)$~or~$()$. We provide complete characterizations of \mbox{$3$-edge-connected} $r$-graphs that have one of the first six solitary patterns (as listed). We also provide a recursive  characterization of \cc{3}s that have solitary pattern~$(2)$.

    We mention a few consequences. Firstly, every \ecc{3} $r$-graph, where $r\ge4$, has at most four solitary edges, whereas every $3$-connected \mbox{$3$-regular} graph has at most six solitary edges. We remark that the latter bound was obtained independently by Goedgebeur, Mazzuoccolo, Renders, Wolf and the second author (\url{https://doi.org/10.48550/arXiv.2402.08538}, 2024) using different techniques; they also obtained a characterization of all \cc{3}s that have three or more solitary edges. Secondly, in a simple \mbox{$3$-edge-connected} $r$-graph, where $r\ge 4$, each edge participates in at least two perfect matchings. Thirdly, every (not necessarily simple) $r$-graph that has a solitary edge is \mbox{$r$-edge-colorable}; we remark that Seymour conjectured that every $r$-graph is \mbox{$(r+1)$-edge-colorable}.
\end{abstract}




\section{Introduction and summary}\label{sec:introduction}

For general graph-theoretic notation and terminology, we follow Bondy and Murty~\cite{bomu08}, whereas for terminology specific to matching theory, we follow Lucchesi and Murty~\cite{lumu24}.
We use $n$ to denote the {\em order} of a graph --- that is, its number of vertices. All graphs in this paper are loopless; however, we allow multiple/parallel edges. 

\subsection{Cuts, \texorpdfstring{$r$}{}-graphs and solitary edges}

For any graph $G$ and $X\subseteq V(G)$, we use $\partial(X)$ to denote the corresponding {\em cut} --- that is, the set comprising those edges that have one end in $X$ and the other end in $\overline{X}:=V(G)-X$. We refer to $X$ and $\overline{X}$ as the {\em shores} of the cut $C:=\partial(X)$. 
A cut is {\em even} if both of its shores are of even cardinality; otherwise it is {\em odd}; it is {\em trivial} if either of its shores is a singleton, say $\{v\}$, in which case we simplify the notation $\partial(\{v\})$ to $\partial(v)$.
A {\em bond} is any minimal nonempty cut. It is well-known that, for a connected graph, a nonempty cut is a bond if and only if both shores induce connected subgraphs.
Finally, $C$ is a {\em $k$-cut} if $|C|=k$. We mention that cuts play a crucial role in our work. A \textit{matching} is any subset of $E(G)$, say~$M$, such that $|M\cap \partial(v)|\le 1$ for each $v\in V(G)$; it is \textit{perfect} if equality holds for each vertex.
Using these definitions and simple counting arguments, the reader may easily verify the following facts that we shall invoke implicitly throughout this paper.

\begin{lem}\label{lem:basic-facts-about-parity}
For any cut $\partial(X)$ of a graph~$G$, the following statements hold:
\begin{enumerate}[(i)]
\item for each \pmg~$M$: $|M\cap \partial(X)|\equiv |X|~({\rm mod~}2)$, and
\item if each vertex in $X$ has odd degree in~$G$ then $|\partial(X)|\equiv |X|~({\rm mod~}2)$. \qed
\end{enumerate}
\end{lem}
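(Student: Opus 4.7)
The plan is to prove both statements by straightforward double counting, working out the parities of standard handshake-type sums.

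For part (i), I would fix a perfect matching $M$ and count, in two different ways, the quantity $\sum_{v\in X}|M\cap\partial(v)|$. On the one hand, since $M$ is perfect, each summand equals $1$, so the sum equals $|X|$. On the other hand, organizing the sum by edges of $M$: any edge of $M$ with both endpoints in $X$ contributes $2$, any edge of $M$ with exactly one endpoint in $X$ contributes $1$ (this is the edge lying in $\partial(X)$), and any edge with both endpoints outside $X$ contributes $0$. Letting $M_X$ denote the edges of $M$ with both ends in $X$, this gives $|X|=2|M_X|+|M\cap\partial(X)|$, and reducing modulo $2$ yields $|M\cap\partial(X)|\equiv|X|\pmod 2$, as required.

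For part (ii), the same double counting strategy applies to the degree sum. By the handshake identity restricted to $X$, $\sum_{v\in X}d_G(v)=2|E(G[X])|+|\partial(X)|$, where $E(G[X])$ denotes the edges of $G$ with both endpoints in $X$. Since every vertex of $X$ has odd degree in $G$ by hypothesis, the left-hand side is congruent to $|X|$ modulo $2$. The term $2|E(G[X])|$ is even, so reducing modulo $2$ gives $|\partial(X)|\equiv|X|\pmod 2$.

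Neither part presents any real obstacle; both are instances of a single template (double-count incidences between $X$ and an edge set, then reduce mod~$2$). The only mild care needed is in part~(i), where one must be careful to account for edges of $M$ with both endpoints in $X$ contributing twice to the sum. Since the lemma is stated precisely to be invoked silently throughout the paper, I would keep the write-up to a couple of lines per part and not belabor it.
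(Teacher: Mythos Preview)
Your proof is correct and is exactly the ``simple counting argument'' the paper alludes to; the paper does not actually write out a proof but marks the lemma with \qed\ after remarking that the reader may easily verify it using counting. Your double-counting template is the intended argument.
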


Regular graphs have been studied extensively in graph theory, especially in the context of edge-colorings and matchings. In particular, \mbox{$3$-regular} graphs have received immense attention; we note the following well-known fact that is easily proved and that shall be used implicitly throughout this paper.

\begin{prp}
    For a $3$-regular graph, its vertex-connectivity is the same as its edge-connectivity. \qed
\end{prp}

A connected $r$-regular graph, where $r\ge 3$, is an {\em $r$-graph} if each odd cut has at least~$r$ edges. It is worth noting that \mbox{$3$-graphs} are precisely the \mbox{$2$-connected} \mbox{$3$-regular} graphs.
The following facts are easily verified using the definition, and the subtle point that $\partial(V(G))=\emptyset$ is also a cut of any graph~$G$.

\begin{prp}\label{prp:r-graphs-basic-prp}
Every $r$-graph~$G$ satisfies the following:
\begin{enumerate}[(i)]
    \item it is of even order,
    \item it is \ecc{2}; consequently, each (even) $2$-cut is a bond,
    \item each odd $r$-cut is a bond, and
    \item it is \ecc{3} if and only if it is devoid of even $2$-cuts. \qed
\end{enumerate} 
\end{prp}

It is worth noting that $3$-edge-connected $3$-graphs are precisely the $3$-connected \mbox{$3$-regular} graphs.
The following is proved by noting that, given any proper \mbox{$r$-edge-coloring} of an $r$-regular graph, each color class is a \pmg, and thus meets each odd cut.

\begin{prp}
    Every connected \mbox{\mbox{$r$-edge-colorable}} \mbox{$r$-regular} graph~$G$, where $r\ge3$, is an $r$-graph. \qed
\end{prp}

However, the converse does not hold in general; for instance, the Petersen graph is the smallest \mbox{$3$-graph} that is not $3$-edge-colorable. In summary, connected \mbox{$r$-edge-colorable} \mbox{$r$-regular} graphs, where $r\ge 3$, comprise a proper subset of $r$-graphs. Extensive research has been conducted on $r$-graphs. We mention a particular conjecture of Seymour~\cite{s77}; it holds for simple $r$-graphs due to the famous Vizing-Gupta Theorem.

\begin{cnj}\label{conj:Seymour}
    Every $r$-graph is $(r+1)$-edge-colorable.
\end{cnj}

A graph is {\em matchable} if it has a \pmg. In the same spirit, an edge is {\em matchable} if it participates in some \pmg; otherwise it is {\em unmatchable}. A connected graph, of order two or more, is {\em \mc}~if each of its edges is matchable. For instance, a strengthening of Petersen's result (1891) due to Sch\"onberger (1934) states that every $3$-graph is matching covered. In fact, using Tutte's $1$-factor Theorem (1947), one may easily prove the following generalization.

\begin{prp}\label{prp:r-graph-mc}
Every $r$-graph is matching covered. \qed
\end{prp}

A connected graph is {\em matching double covered} if each of its edges belongs to two or more (distinct)  \pms. In light of the above proposition, it is natural to ask whether one can characterize those $r$-graphs that are matching double covered. It is easy to observe that not all $r$-graphs are \mdc. For instance, in Figure~\ref{fig:2-connected cubic graphs with too many solitary edges}, each of the edges, except for those that are colored black, belong to exactly one \pmg\ in the corresponding $r$-graph.

An edge of a graph is {\em solitary} --- aka {\em lonely} in~\cite{gmmrw24}, or {\em solo} in~\cite{kss09} --- if it belongs to precisely one perfect matching. For a \pmg~$M$ of a graph~$G$, a path or a cycle~$Q$ is \textit{$M$-alternating} if its edges, on traversal, alternate between being in~$M$ and not being in~$M$. The following is easily verified.

\begin{prp}\label{prp:mcgs-hcycle}
Let $G$ be a (matchable) graph that has a solitary edge~$e$, and let $M_e$ denote the unique \pmg\ containing~$e$. Then, every $M_e$-alternating cycle contains~$e$. Consequently,
if $M$ is any \pmg\ that is disjoint with~$M_e$, then $M+ M_e$ is a hamiltonian cycle. \qed
\end{prp}

The notion of solitary edges was used in \cite{clm05}, \cite{kss09} and \cite{ekss10} to either establish lower bounds on the number of perfect matchings in \mbox{$3$-graphs}, or to characterize those $3$-graphs that attain a certain lower bound. 
Similarly, solitary edges were employed in~\cite{clm13} to prove lower bounds on the number of perfect matchings in bipartite matching covered graphs.

Ergo, characterizing $r$-graphs that are matching double covered is equivalent to characterizing those that are free of solitary edges. As a first step, one may ask whether there is an upper bound on the number of solitary edges in $r$-graphs.
It is not too difficult to construct examples of $r$-graphs that have $\frac{n}{2}$ \ses. 
In fact, this is best possible as we shall prove in this paper; see Theorem~\ref{thm:at-most-n/2-solitary-edges}. Before stating our result, we need some notation and terminology to describe the small exceptions.


Let $G$ be a matchable $r$-regular graph, and let $M$ denote a fixed \pmg. The \mbox{$(r+k-1)$-regular} (matchable) graph obtained from~$G$ by introducing $k-1$ new copies of each edge in~$M$, denoted by $G\oplus (k-1)M$, is said to be obtained from~$G$ {\em by multiplying the \pmg~$M$ by the positive integer~$k$}. Note that if $k=1$ then $G=G\oplus (k-1)M$. For instance, the graphs shown in Figure~\ref{fig:triangular-prism} are obtained from the triangular prism~$\overline{C_6}$ by multiplying the unique \pmg\ containing the blue edges by $1, 2$~and~$3$, respectively.

\begin{figure}[!htb]
    \centering
    \begin{subfigure}[b]{.46\textwidth}
        \centering
        \begin{tikzpicture}[scale=0.7]
            \node[circle,fill=white] (1) at (0,0){};
            \node[circle,fill=white] (2) at (2,0){};
			\draw[bend left, thick, red] (1) to (2);
			\draw[bend right, thick, blue] (1) to (2);
			\draw[ thick, green] (1) -- (2);

            \node[circle,fill=white] (1) at (3,0){};
            \node[circle,fill=white] (2) at (5,0){};
			\draw[bend left, thick, red] (1) to (2);
			\draw[bend right, thick, blue] (1) to (2);
            \draw[bend left=105, looseness=1.25, thick, pink] (1) to (2);
			\draw[ thick, green] (1) -- (2);

            \node[circle,fill=white] (1) at (6,0){};
            \node[circle,fill=white] (2) at (8,0){};
			\draw[bend left, thick, red] (1) to (2);
			\draw[bend right, thick, blue] (1) to (2);
            \draw[bend left=105, looseness=1.25, thick, pink] (1) to (2);
            \draw[bend right=105, looseness=1.25, thick, yellow] (1) to (2);
			\draw[ thick, green] (1) -- (2);
        \end{tikzpicture}
  \caption{Smallest members of \mbb{\theta}{}{1}}
\label{fig:theta-graph}
    \end{subfigure}
    \begin{subfigure}[b]{.5\textwidth}
        \centering
        \begin{tikzpicture}[scale=0.9]
            \node[circle,fill=white] (1) at (1, 1){};
            \node[circle,fill=white] (2) at (0, -0.5){};
            \node[circle,fill=white] (3) at (2, -0.5){};
            \node[circle,fill=white] (0) at (1, 0){};
                \draw[thick,red] (0) to (1);
                \draw[thick,red] (2) to (3);
                \draw[thick,blue] (0) to (2);
                \draw[thick,blue] (1) to (3);
                \draw[thick,green] (0) to (3);
                \draw[thick,green] (1) to (2);

                \node[circle,fill=white] (1) at (4, 1){};
            \node[circle,fill=white] (2) at (3, -0.5){};
            \node[circle,fill=white] (3) at (5, -0.5){};
            \node[circle,fill=white] (0) at (4, 0){};
                \draw[thick] (0) to (1);
                \draw[thick, bend left=15] (2) to (3);
                \draw[thick, bend left=15] (0) to (1);
                \draw[thick] (2) to (3);
                \draw[thick,blue] (0) to (2);
                \draw[thick,blue] (1) to (3);
                \draw[thick,green] (0) to (3);
                \draw[thick,green] (1) to (2);

            \node[circle,fill=white] (1) at (7, 1){};
            \node[circle,fill=white] (2) at (6, -0.5){};
            \node[circle,fill=white] (3) at (8, -0.5){};
            \node[circle,fill=white] (0) at (7, 0){};
                \draw[thick] (0) to (1);
                \draw[thick, bend left=15] (2) to (3);
                \draw[thick, bend left=15] (0) to (1);
                \draw[thick, bend right=15] (2) to (3);
                \draw[thick, bend right=15] (0) to (1);
                \draw[thick] (2) to (3);
                \draw[thick,blue] (0) to (2);
                \draw[thick,blue] (1) to (3);
                \draw[thick,green] (0) to (3);
                \draw[thick,green] (1) to (2);
        \end{tikzpicture}
        \caption{Smallest members of \mbb{K}{4}{1}}
        \label{fig:K4}
    \end{subfigure}
    \begin{subfigure}[b]{.5\textwidth}
        \centering
        \begin{tikzpicture}[scale=0.7]
            \node[circle,fill=white] (1) at (-0.5,0){};
            \node[circle,fill=white] (2) at (0.5,0){};
            \node[circle,fill=white] (3) at (1.5,-1){};
            \node[circle,fill=white] (4) at (1.5,1){};
            \node[circle,fill=white] (5) at (-1.5,1){};
            \node[circle,fill=white] (6) at (-1.5,-1){};
            \draw (1) -- (2);
            \draw (5) -- (4);
            \draw (6) -- (3);
                    \draw[thick,red] (3) -- (4);
                    \draw[thick,red] (6) -- (5);
                    \draw[thick,green] (2) -- (3);
                    \draw[thick,blue] (1) -- (5);
                    \draw[thick,blue] (2) -- (4);
                    \draw[thick,green] (1) -- (6);

                    \node[circle,fill=white] (1) at (2.5+1,0){};
            \node[circle,fill=white] (2) at (3.5+1,0){};
            \node[circle,fill=white] (3) at (4.5+1,-1){};
            \node[circle,fill=white] (4) at (4.5+1,1){};
            \node[circle,fill=white] (5) at (1.5+1,1){};
            \node[circle,fill=white] (6) at (1.5+1,-1){};
            \draw (1) -- (2);
            \draw (5) -- (4);
            \draw[thick] (6) -- (3);
                    \draw[thick, bend left=15] (6) to (3);
                    \draw (6) -- (3);
                    \draw[thick] (1) -- (5);
                    \draw[thick] (2) -- (4);
                    \draw[thick,red] (3) to (4);
                    \draw[thick,red] (6) to (5);
                    \draw[thick,green] (2) to (3);
                    \draw[thick,bend left=15] (1) to (5);
                    \draw[thick,bend left=15] (2) to (4);
                    \draw[thick,green] (1) to (6);

                    \node[circle,fill=white] (1) at (5.5+2,0){};
            \node[circle,fill=white] (2) at (6.5+2,0){};
            \node[circle,fill=white] (3) at (7.5+2,-1){};
            \node[circle,fill=white] (4) at (7.5+2,1){};
            \node[circle,fill=white] (5) at (4.5+2,1){};
            \node[circle,fill=white] (6) at (4.5+2,-1){};
            \draw (1) -- (2);
            \draw (5) -- (4);
            \draw[thick] (6) -- (3);
                    \draw[thick, bend left=15] (6) to (3);
                    \draw[thick, bend right=15] (6) to (3);
                    \draw (6) -- (3);
                    \draw[thick] (1) -- (5);
                    \draw[thick] (2) -- (4);
                    \draw[thick,red] (3) to (4);
                    \draw[thick,red] (6) to (5);
                    \draw[thick,green] (2) to (3);
                    \draw[thick,bend left=15] (1) to (5);
                    \draw[thick,bend left=15] (2) to (4);
                    \draw[thick,bend right=15] (1) to (5);
                    \draw[thick,bend right=15] (2) to (4);
                    \draw[thick,green] (1) to (6);
        \end{tikzpicture}
        \caption{Smallest members of ${\overline{C_6}}^1$}
        \label{fig:triangular-prism}
    \end{subfigure}
    \begin{subfigure}[b]{.45\textwidth}
        \centering
        \begin{tikzpicture}[scale=0.7]
            \node[circle,fill=white] (1) at (-0.5-0.25,0){};
		\node [draw=none] at (-0.5-0.25, 0.37) {$1$};
		\node [draw=none] at (0.5+0.25, 0.37) {$4$};
		\node [draw=none] at (1.5+0.25,-1.37) {$5$};
		\node [draw=none] at (1.5+0.25,1.37) {$3$};
		\node [draw=none] at (-1.5-0.25,1.37) {$0$};
		\node [draw=none] at (-1.5-0.25,-1.37) {$2$};
		\node [draw=none] at (0,0.37) {$6$};
		\node [draw=none] at (0,-1.37) {$7$};
            \node[circle,fill=white] (2) at (0.5+0.25,0){};
            \node[circle,fill=white] (3) at (1.5+0.25,-1){};
            \node[circle,fill=white] (4) at (1.5+0.25,1){};
            \node[circle,fill=white] (5) at (-1.5-0.25,1){};
            \node[circle,fill=white] (6) at (-1.5-0.25,-1){};
            \node[circle,fill=white] (7) at (0,0){};
            \node[circle,fill=white] (8) at (0,-1){};
            \draw (1) -- (2);
            \draw (2) -- (3);
            \draw (1) -- (6);
            \draw (5) -- (4);
            \draw (6) -- (3);
                \draw[thick,red] (3) -- (4);
                \draw[thick,green] (6) -- (5);
                \draw[thick,green] (2) -- (4);
                \draw[thick,red] (1) -- (5);
                \draw[thick,blue] (7) -- (8);
                \node[circle,fill=white] (1) at (-0.5-0.25,0){};
            \node[circle,fill=white] (2) at (0.5+0.25,0){};
            \node[circle,fill=white] (3) at (1.5+0.25,-1){};
            \node[circle,fill=white] (4) at (1.5+0.25,1){};
            \node[circle,fill=white] (5) at (-1.5-0.25,1){};
            \node[circle,fill=white] (6) at (-1.5-0.25,-1){};
            \node[circle,fill=white] (7) at (0,0){};
            \node[circle,fill=white] (8) at (0,-1){};   
        \end{tikzpicture}
		 \caption{$R_8$}
  \label{fig:R8}
    \end{subfigure}
    \caption{$r$-graphs with more than $\frac{n}{2}$ solitary edges}
    \label{fig:2-connected cubic graphs with too many solitary edges}
\end{figure}

The first part of the following is easily verified using the definition of $r$-graphs and the fact that each perfect matching meets each odd cut, whereas the second part is immediate.

\begin{prp}
    Every graph~$H$ obtained from an $r$-graph~$G$, by multiplying any \pmg\ by a positive integer~$k$, is an {$(r+k-1)$-graph}. Furthermore, if $G$ is \mbox{$r$-edge-colorable} then $H$ is $(r+k-1)$-edge-colorable.
    \qed
\end{prp}

For each $3$-regular graph~$G\in \{\theta, K_4, \overline{C_6}\}$, as shown in Figures \ref{fig:theta-graph}, \ref{fig:K4} and \ref{fig:triangular-prism}, we let $(M_1,M_2,M_3)$ denote its unique proper \mbox{$3$-edge-coloring}, and we define the family $G^i$, where $i\in \{1,2,3\}$, as 
$\{G\oplus (k_1-1)M_1 \oplus$ {$(k_2-1)M_2$} $\oplus (k_3-1)M_3~|~k_1,k_2,k_3\in \mathbb{Z}^+\ {\rm and}\ {\rm at\ most}\ i$ ${\rm of\ them}$~${\rm are\ greater\ than\ one}\}$. We define the families \mbb{C}{4}{1} and \mbb{C}{4}{2} analogously by considering its unique proper \mbox{$2$-edge-coloring}. Note that each graph in \mbb{C}{4}{2}$- C_4$ is an $r$-graph.
The following two propositions are easily verified.




\begin{prp}\label{r-graphs-of-order-two}
    The set \mbb{\theta}{}{1} comprises all $r$-graphs of order two; furthermore, each of its members is \ecc{3} and has the property that every edge is solitary. \qed
\end{prp}

\begin{prp}\label{r-graphs-of-order-four}
    The set $($\mbb{C}{4}{2}$-C_4)$ $\cup$ \mbb{K}{4}{3} comprises all $r$-graphs of order four; furthermore: (i) each member, except for those in $($\mbb{C}{4}{1}$-C_4)$, is \ecc{3}, and (ii) a member has a solitary edge if and only if it belongs to $($\mbb{C}{4}{1}$-C_4)$ $\cup$ \mbb{K}{4}{2}. \qed
\end{prp}







By $R_8$, we mean the bicorn graph shown in Figure \ref{fig:R8}. We are now ready to state the aforementioned result upper bounding the number of solitary edges in any $r$-graph.

\begin{thm}\label{thm:at-most-n/2-solitary-edges}
    For any $r$-graph~$G$, precisely one of the following holds:
    \begin{enumerate}[(i)]
        \item either $G$ has at most $\frac{n}{2}$ \ses, 
        \item or otherwise $G\in $ \mbb{\theta}{}{1}   $\cup$~\mbb{K}{4}{1}~  $\cup~{\overline{C_6}}^1 \cup \{R_8\}$.
    \end{enumerate}
\end{thm}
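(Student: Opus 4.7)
The plan is to invoke the main structural results of the paper (announced in the abstract) to reduce to small orders, and to dispatch the small cases using the propositions above together with the classifications of extremal solitary patterns.

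For $n=2$, Proposition~\ref{r-graphs-of-order-two} puts every $r$-graph in $\theta^1$ with all $r \ge 3$ edges solitary, already exceeding $n/2 = 1$. For $n=4$, Proposition~\ref{r-graphs-of-order-four} identifies all $r$-graphs of that order; using the unique proper $3$-edge-coloring of $K_4$, a routine count shows that an edge lies in exactly $k$ perfect matchings, where $k$ is the multiplicity of its color class, so it is solitary if and only if that color class has multiplicity one. One then reads off that the violators of $n/2=2$ are precisely the members of $K_4^1$, while the members of $C_4^1 - C_4$ meet but do not exceed $n/2$.

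For $n \ge 6$, I split on edge-connectivity. If $G$ is not $3$-edge-connected, the paper's decomposition theorem yields at most one solitary class, and since every equivalence class has cardinality at most $n/2$, case~(i) holds with no new exceptions. If $G$ is $3$-edge-connected, the paper's solitary-class classification gives at most three classes of cardinality at most two, hence at most six solitary edges (at most four when $r \ge 4$); this already yields~(i) whenever $n \ge 12$.

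The remaining work is the finite case analysis $n \in \{6, 8, 10\}$, where I would invoke the explicit characterizations of $3$-edge-connected $r$-graphs realising the extremal solitary patterns $(2,2,2)$, $(2,2,1)$, $(2,1,1)$, and $(2,2)$. I expect the pattern-$(2,2,2)$ classification to place every order-six violator inside $\overline{C_6}^1$ (with $\overline{C_6}$ itself realising $(2,2,2)$ and its spoke-multiplications realising $(2,2)$), the pattern-$(2,2,1)$ characterization to single out $R_8$ at order eight, and no characterization to admit an order-ten violator. The main obstacle lies precisely here: the structural characterizations must be sharp enough to pin down the orders of their extremal graphs, so that no unforeseen graph of order eight or ten slips through and forces a further exception.
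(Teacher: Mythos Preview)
Your proposal is correct and follows essentially the same route as the paper: handle small orders and the non-$3$-edge-connected case directly, then invoke Corollary~\ref{cor:main-thm} together with the characterizations in Theorems~\ref{thm:more-than-one-SD} and~\ref{thm:SDSS}. The only difference is cosmetic: the paper splits the $3$-edge-connected case on the number of solitary edges (at most three versus at least four) rather than on $n$, which sidesteps your finite case analysis for $n\in\{6,8,10\}$ but uses exactly the same structural inputs.
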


In Section~\ref{sec:largest-EC-n/2}, we provide a characterization of those $r$-graphs that have precisely $\frac{n}{2}$ solitary edges; see Corollary~\ref{cor:r-graphs-n/2-solitary-edges}. All such graphs, of order six or more, contain a $2$-cut. In fact, in the case of \ecc{3} $r$-graphs of order four or more, we shall establish constant upper bounds on the number of solitary edges; see Theorem~\ref{thm:main-thm-in-simple-words}. 
In the following section, we discuss a folklore result which states that every $r$-graph ``decomposes'' uniquely into \ecc{3} $r$-graphs, and we shall observe that in order to study solitary edges in $r$-graphs, one may restrict themself to \ecc{3} $r$-graphs.

\subsection{The \texorpdfstring{\ecc{3}}{} pieces of an \texorpdfstring{$r$}{}-graph}
Let $G_1$ and $G_2$ denote disjoint graphs of even orders $n_1$~and~$n_2$, respectively, with specified edges $e_1:=u_1v_1$~and $e_2:=u_2v_2$, respectively. 
We refer to the graph~$G$ obtained from the union of $G_1-e_1$ and $G_2-e_2$, by adding two edges $f:=u_1u_2$ and $f':=v_1v_2$, as a graph obtained by {{\em gluing $G_1$~and~$G_2$ at edges $e_1$~and~$e_2$}}, or simply as a graph obtained by {\em gluing $G_1$~and~$G_2$}, and we refer to $\{f,f'\}$ as the {\em corresponding even $2$-cut}. Note that $n=n_1+n_2$.
The following is easily observed.

\begin{prp}\label{prp:gluing}
Let $G$ be a graph obtained by gluing two matchable graphs $G_1$~and~$G_2$ at edges $e_1$~and~$e_2$, let~$C$ denote the corresponding even $2$-cut, and let $M_1$~and~$M_2$ denote \pmg s of $G_1$~and~$G_2$, respectively. If $e_i\notin M_i$ for each $i\in \{1,2\}$, then $M_1\cup M_2$ is a \pmg\ of~$G$. Furthermore, if $e_i\in M_i$ for each $i\in \{1,2\}$, then $(M_1-e_1)+(M_2-e_2)+C$ is a \pmg\ of~$G$. \qed
\end{prp}

Next, we proceed to define an operation that may be seen as the inverse of the gluing operation. 
Let $G$ be a \mbox{$2$-connected} graph of even order that has an even $2$-cut~$C:=\{f,f'\}$, and let $H_1$~and~$H_2$ denote the components of~$G-C$. For each $i\in\{1,2\}$, we let $G_i$ denote the ($2$-connected) graph obtained from~$H_i$ by adding a new edge~$e_i$ joining the end of~$f$ (that lies in~$G_i$) with the end of~$f'$ (that lies in~$G_i$). Inspired by the terminology used in Bondy and Murty \cite[Chapter~$9$]{bomu08}, we refer to $G_1$~and~$G_2$ as the {\em marked $C$-components of~$G$}, and their edges $e_1$~and~$e_2$ as the corresponding {\em marker edges}. The following may be viewed as a converse of Proposition~\ref{prp:gluing}, and is easily verified using Lemma~\ref{lem:basic-facts-about-parity}.

\begin{prp}\label{prp:2-bond}
Let $G$ be a $2$-connected matchable graph that has an even $2$-cut~$C$, let $G_1$~and~$G_2$ denote its marked $C$-components with marker edges $e_1$~and~$e_2$, respectively, and let $M$ denote a \pmg\ of~$G$. Then, either $M\cap C$ is empty and $M\cap E(G_i)$ is a \pmg\ of~$G_i$ for each $i\in \{1,2\}$, or otherwise, $M\cap C=C$ and $(M\cap E(G_i))+e_i$ is a \pmg\ of~$G_i$ for each $i\in \{1,2\}$. \qed
\end{prp}

The following is a consequence of the above two propositions.
\begin{cor}\label{prp:2-cut-connection-preserves-mc}
 Let $G$ be a $2$-connected graph of even order that has an even $2$-cut~$C$, and let $G_1$~and~$G_2$ denote its marked $C$-components. Then, $G$ is matching covered if and only if both $G_1$~and~$G_2$ are matching covered.
\qed
\end{cor}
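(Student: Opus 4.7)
The plan is to deduce both directions directly from Propositions \ref{prp:gluing} and \ref{prp:2-bond}. Observe first that since $G$ is $2$-connected, the cut $C$ is a bond, so both components $H_1, H_2$ of $G - C$ are connected, whence each $G_i = H_i + e_i$ is connected; moreover Lemma \ref{lem:basic-facts-about-parity}(i) forces each shore of $C$ to be of even cardinality, so each $G_i$ has even order.

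For the forward direction, suppose $G$ is \mc\ and fix an edge $e$ of $G_1$. If $e \neq e_1$, then $e \in E(H_1) \subseteq E(G)$; pick any \pmg\ $M$ of $G$ containing $e$ and apply Proposition \ref{prp:2-bond} to obtain a \pmg\ of $G_1$ containing $e$ (namely $M \cap E(G_1)$ or $(M \cap E(G_1)) + e_1$, depending on whether $M \cap C$ is empty or equals $C$). If $e = e_1$, then any \pmg~$M$ of $G$ containing $f$ must satisfy $C \subseteq M$ by Proposition \ref{prp:2-bond}, and $(M \cap E(G_1)) + e_1$ is a \pmg\ of $G_1$ containing $e_1$. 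The same argument applies to $G_2$.

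For the converse, assume both $G_1$ and $G_2$ are \mc\ and fix any edge $e$ of $G$. If $e \in C$, choose \pmg s $M_i$ of $G_i$ containing $e_i$ for $i \in \{1,2\}$ and apply the second part of Proposition \ref{prp:gluing}: the resulting \pmg\ of $G$ contains both edges of $C$, hence $e$. Otherwise, without loss of generality $e \in E(H_1)$, and we fix a \pmg~$M_1$ of $G_1$ containing $e$. If $e_1 \in M_1$, take any \pmg\ of $G_2$ containing $e_2$ and glue as before. The delicate case is $e_1 \notin M_1$: here we must supply a \pmg~$M_2$ of $G_2$ avoiding $e_2$, after which the first part of Proposition \ref{prp:gluing} produces $M_1 \cup M_2$ as a \pmg\ of $G$ containing $e$.

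The main obstacle is producing such an $M_2$. Since $G$ is $2$-connected, the endpoint $u_2$ of $e_2$ has degree at least two in $G$, and it is incident to exactly one edge of $C$, namely $f$; hence there exists an edge $e'_2 \in E(H_2)$ incident to $u_2$. Because $G_2$ is \mc, there is a \pmg~$M_2$ of $G_2$ containing $e'_2$, and $M_2$ cannot also contain $e_2$ since both share the endpoint $u_2$. This $M_2$ is the desired matching and completes the argument.
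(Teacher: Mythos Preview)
Your proof is correct and follows exactly the approach the paper intends: the corollary is stated there as an immediate consequence of Propositions~\ref{prp:gluing} and~\ref{prp:2-bond} with no further argument, and you have filled in those details faithfully, including the one nontrivial point (using $2$-connectivity of~$G$ to guarantee an edge of~$H_2$ at~$u_2$, hence a perfect matching of~$G_2$ avoiding~$e_2$). One minor quibble: the evenness of the shores is already part of the hypothesis that~$C$ is an \emph{even} $2$-cut, not a consequence of Lemma~\ref{lem:basic-facts-about-parity}(i).
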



We now prove a strengthening of the above in the context of $r$-graphs.

\begin{prp}\label{prp:marked-C-even-2-cut-components-in-r-graphs-are-also-r-graphs}
    For an $r$-graph~$G$ that has a $2$-cut~$C$, both of its marked $C$-components are also $r$-graphs for the same value of~$r$.
\end{prp}
\begin{proof}
    Let $C:=\{f,f'\}$ denote an (even) $2$-cut of~$G$, and let $f:=u_1u_2$ and $f':=v_1v_2$ 
    so that $u_1$~and~$v_1$ belong to the same shore of~$C$.
    Let $G_1$~and~$G_2$ denote its marked $C$-components so that $u_1,v_1\in V(G_1)$.
    Clearly, $G_1$~and~$G_2$ are $r$-regular graphs of even order;
    since $C$ is a bond of~$G$, both of them are connected.

    By symmetry, it suffices to argue that $G_1$ is an $r$-graph. Suppose not, and let $F$ be an odd cut of~$G_1$ that has fewer than $r$ edges. If $u_1$~and~$v_1$ lie in the same shore of~$F$, then $F$ is an odd cut in~$G$; a contradiction. Now, let $F:=\partial_{G_1}(X)$ and adjust notation so that $u_1\in X$~and~$v_1\notin X$. Observe that $\partial_{G}(X)=F-u_1v_1+f$ is an odd cut in~$G$ with fewer than $r$ edges; a contradiction once again.
\end{proof}

The above proposition inspires the following recursive process that may be used to obtain a list of \ecc{3} $r$-graphs from any given $r$-graph~$G$. If $G$ itself is \ecc{3}, then there is nothing to be done; otherwise, let $C$ denote any $2$-cut, and we consider its marked $C$-components, say $G_1$~and~$G_2$. If either of them has a $2$-cut then we repeat this recursively. Clearly, we obtain a list of \ecc{3} $r$-graphs at this end of this process --- that we refer to as an application of the \textit{$2$-cut decomposition procedure to~$G$}. Since one may choose any $2$-cut at each step,
it is not immediately clear whether any two applications of the $2$-cut decomposition procedure to an $r$-graph~$G$ result in the same list of \ecc{3} $r$-graphs. However, this is indeed the case, and it seems to be a folklore result; see Theorem~\ref{thm:unique-2-cut-decomposition-of-r-graphs}. We shall provide a proof for the sake of completeness. 

For two cuts $C:=\partial(X)$ and $D:=\partial(Y)$ of a graph $G$, we refer to the four sets $X\cap Y, X\cap \overline{Y}, \overline{X}\cap Y$ and $\overline{X}\cap \overline{Y}$ \textit{as the quadrants corresponding to $C$~and~$D$}. The two  cuts
are said to be {\em crossing} if all quadrants are nonempty; otherwise, they are said to be {\em laminar}.
To put it differently, $C$ and $D$ are laminar if and only if one of the two shores of $C$ is a subset of a shore of $D$. We are now ready to prove the uniqueness of the result of the $2$-cut decomposition procedure; throughout, we shall invoke Proposition~\ref{prp:marked-C-even-2-cut-components-in-r-graphs-are-also-r-graphs} implicitly.


\begin{thm}\label{thm:unique-2-cut-decomposition-of-r-graphs}
    Any two applications of the $2$-cut decomposition procedure to an $r$-graph~$G$ result in the same list of \ecc{3} $r$-graphs.
\end{thm}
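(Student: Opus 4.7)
The plan is induction on $|E(G)|$: if $G$ is \ecc{3} then any application produces the singleton list $(G)$, so assume $G$ has a $2$-cut. Take two applications $\pi_1, \pi_2$ of the procedure with first cuts $C, D$ respectively; if $C = D$ then the two marked components are the same for both applications, and by Proposition~\ref{prp:marked-C-even-2-cut-components-in-r-graphs-are-also-r-graphs} each is an $r$-graph of strictly fewer edges, so induction finishes. It therefore suffices to establish the following confluence lemma: for any two distinct $2$-cuts~$C, D$ of~$G$, there exist applications starting with $C$ and with $D$ respectively that yield the same final list. Indeed, given this, arbitrary $\pi_1, \pi_2$ can be compared to such confluent applications, and since the confluent applications share their final list and (by induction) agree pairwise with $\pi_1$ and with $\pi_2$, we conclude $\pi_1$ and $\pi_2$ also agree.

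To prove the confluence lemma, I split into cases based on the laminar/crossing dichotomy. In the \emph{laminar} case, a shore~$Y$ of~$D$ is a proper subset of a shore~$X$ of~$C$. Note $|C \cap D| \in \{0,1\}$, else $C=D$. Cutting by $C$ yields marked components $G_1$ (on~$X$) and $G_2$ (on~$\overline{X}$); a brief check shows that the set $D' := D$ (if $C\cap D = \emptyset$) or $D' := (D \setminus C) \cup \{e_1\}$ (if $|C\cap D|=1$) is a $2$-cut of~$G_1$, and further cutting by $D'$ yields intermediate pieces on $Y$ and on $X \setminus Y$. Symmetrically, starting with $D$ and then cutting the $\overline{Y}$-component by the analogously-adjusted~$C$ yields intermediate pieces on $X \setminus Y$ and on $\overline{X}$. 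A direct verification shows that both sequences produce the same three intermediate marked $r$-graphs on the vertex sets $Y$, $X \setminus Y$, $\overline{X}$; induction applied to each then produces a common final list.

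In the \emph{crossing} case, I invoke Lemma~\ref{lem:basic-facts-about-parity} and submodularity of the cut function. Since $G$ is an $r$-graph with $r \ge 3$, every $2$-cut is even, so $|X|$ and $|Y|$ are even; the $r$-graph property further forces the four quadrants $X \cap Y$, $X \cap \overline{Y}$, $\overline{X} \cap Y$, $\overline{X} \cap \overline{Y}$ to have even cardinality (else $\partial(X \cap Y)$ and $\partial(X \cup Y)$ would be odd cuts, contradicting submodularity together with $|\partial(X)| + |\partial(Y)| = 4$). Applying submodularity to both $\{X, Y\}$ and $\{X, \overline{Y}\}$, together with $2$-edge-connectivity, then forces each of the four quadrant-cuts $\partial(X \cap Y)$, $\partial(X \cup Y)$, $\partial(X \setminus Y)$, $\partial(Y \setminus X)$ to have exactly $2$ edges, forces the edges between diagonally opposite quadrants to vanish, and forces each pair of adjacent quadrants to be joined by exactly one edge. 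This rigid $4$-cycle structure makes the analysis parallel to the laminar case: cutting $G$ by $C$ and then by the $D$-induced $2$-cut inside~$G_1$ (respectively inside~$G_2$) produces the same four intermediate marked pieces (one per quadrant) as cutting by $D$ first and then by the corresponding $C$-induced cuts; induction concludes.

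The main technical obstacle I anticipate is bookkeeping the marker edges faithfully across iterated cuts---especially in the laminar subcase $|C\cap D| = 1$, where the marker~$e_1$ of~$G_1$ simultaneously plays the role of a $C$-marker and of one of the two edges comprising~$D'$. I expect that labelling each marker by the (unordered) pair of vertices it joins, together with the original cut from which it arose, will make the verification that the ``middle piece'' on~$X \setminus Y$ is the same graph (with the same two markers) in both decomposition orders entirely mechanical.
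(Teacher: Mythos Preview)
Your proposal is correct and takes essentially the same approach as the paper's proof: both induct (you on $|E(G)|$, the paper on the order), reduce to comparing two applications with distinct initial $2$-cuts, and split into laminar and crossing cases to exhibit a common list of three (respectively four) intermediate pieces. The only cosmetic difference is that in the crossing case you argue via submodularity of the cut function where the paper instead uses that $2$-cuts in $r$-graphs are bonds (Proposition~\ref{prp:r-graphs-basic-prp}(ii)); your anticipated marker-bookkeeping obstacle is real but routine, and the paper handles it implicitly by observing that each intermediate piece is $G[Q]$ plus a single marker edge joining the two boundary vertices of the relevant shore or quadrant~$Q$.
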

\begin{proof}
    We proceed by induction on the order of~$G$. If $G$ is \ecc{3}, the desired conclusion holds trivially.
    Now, let $C_1:=\partial(X)$ and $C_2:=\partial(Y)$ denote the initial $2$-cuts (of~$G$) that are chosen by the first and second applications of the $2$-cut decomposition procedure, respectively. 
    If $C_1=C_2$, the desired conclusion follows immediately by the induction hypothesis.
Now, suppose that $C_1$ and $C_2$ are distinct; they are either laminar or otherwise crossing, and we shall consider these cases separately. In each case, we let $G_{11}$~and~$G_{12}$ denote the marked $C_1$-components of~$G$, and we 
let $G_{21}$~and~$G_{22}$ denote the marked $C_2$-components of~$G$. By the induction hypothesis, for any of these four graphs, any two applications of the $2$-cut decomposition procedure result in the same list of \ecc{3} $r$-graphs; we shall use this implicitly in both cases.

    \begin{figure}[!htb]
    \centering
\begin{subfigure}[b]{0.98\textwidth}
\centering
\begin{tikzpicture}[scale=1]
\node [draw=none] (0) at (0, 2) {};
		\node [draw=none] (1) at (0, 0) {};
		\node [draw=none] (2) at (5, 2) {};
		\node [draw=none] (3) at (5, 0) {};
		\node [draw=none] (4) at (0.5, 1.5) {};
		\node [draw=none] (5) at (1.5, 1.5) {};
		\node [draw=none] (6) at (0.5, 0.5) {};
		\node [draw=none] (7) at (1.5, 0.5) {};
		\node [draw=none] (8) at (3.5, 1.5) {};
		\node [draw=none] (9) at (4.5, 1.5) {};
		\node [draw=none] (10) at (3.5, 0.5) {};
		\node [draw=none] (11) at (4.5, 0.5) {};
		\node [draw=none] (12) at (1, 2.5) {};
		\node [draw=none] (13) at (1, -0.5) {};
		\node [draw=none] (14) at (4, 2.5) {};
		\node [draw=none] (15) at (4, -0.5) {};
		\node [draw=none] (16) at (1, 2.75) {$C_1$};
		\node [draw=none] (17) at (4, 2.75) {$C_2$};
		\node [draw=none] (18) at (0.75, 2.25) {$X$};
		\node [draw=none] (19) at (2.5, 2.25) {$\overline{X} \cap \overline{Y}$};
		\node [draw=none] (20) at (4.25, 2.25) {$Y$};
        \draw (0.center) to (1.center);
		\draw (1.center) to (3.center);
		\draw (3.center) to (2.center);
		\draw (2.center) to (0.center);
		\draw (4.center) to (5.center);
		\draw (6.center) to (7.center);
		\draw (8.center) to (9.center);
		\draw (10.center) to (11.center);
		\draw (12.center) to (13.center);
		\draw (14.center) to (15.center);
\end{tikzpicture}
\caption{$C_1$ and $C_2$ are laminar $2$-cuts of the $r$-graph~$G$}
  \label{fig:G-laminar-case}
\end{subfigure}
\begin{subfigure}[b]{.48\textwidth}
\centering
\begin{tikzpicture}[scale=1]
\node [draw=none] (0) at (0, 2) {};
		\node [draw=none] (1) at (0, 0) {};
		\node [draw=none] (2) at (2, 2) {};
		\node [draw=none] (3) at (2, 0) {};
		\node [draw=none] (4) at (4, 2) {};
		\node [draw=none] (5) at (4, 0) {};
		\node [draw=none] (6) at (8, 2) {};
		\node [draw=none] (7) at (8, 0) {};
		\node [draw=none] (8) at (1.5, 1.5) {};
		\node [draw=none] (9) at (1.5, 0.5) {};
		\node [draw=none] (10) at (4.5, 1.5) {};
		\node [draw=none] (11) at (4.5, 0.5) {};
		\node [draw=none] (12) at (6.5, 1.5) {};
		\node [draw=none] (13) at (7.5, 1.5) {};
		\node [draw=none] (14) at (6.5, 0.5) {};
		\node [draw=none] (15) at (7.5, 0.5) {};
		\node [draw=none] (16) at (7, 2.5) {};
		\node [draw=none] (17) at (7, -0.5) {};
		\node [draw=none] (18) at (7, 2.75) {$C_2$};
		\node [draw=none] (19) at (7.5, 2.25) {$Y$};
		\node [draw=none] (20) at (6, 2.25) {$\overline{X}\cap \overline{Y}$};
		\node [draw=none] (21) at (1, -0.5) {$G_{11}$};
		\node [draw=none] (22) at (6, -0.5) {$G_{12}$};
        \node [draw=none] (23) at (1, 2.25) {$X$};
        \draw (0.center) to (2.center);
		\draw (2.center) to (3.center);
		\draw (0.center) to (1.center);
		\draw (1.center) to (3.center);
		\draw (4.center) to (5.center);
		\draw (5.center) to (7.center);
		\draw (7.center) to (6.center);
		\draw (6.center) to (4.center);
		\draw [bend right=105, looseness=3.75] (10.center) to (11.center);
		\draw (12.center) to (13.center);
		\draw (14.center) to (15.center);
		\draw (16.center) to (17.center);
		\draw [bend left=105, looseness=3.75] (8.center) to (9.center);
\end{tikzpicture}
\caption{Marked $C_1$-components of~$G$}
  \label{fig:G11-G12-laminar-case}
            \end{subfigure}
\begin{subfigure}[b]{.48\textwidth}
\centering
\begin{tikzpicture}[scale=1]
\node [draw=none] (0) at (0, 2) {};
		\node [draw=none] (1) at (0, 0) {};
		\node [draw=none] (2) at (4, 2) {};
		\node [draw=none] (3) at (4, 0) {};
		\node [draw=none] (4) at (0.5, 1.5) {};
		\node [draw=none] (5) at (0.5, 0.5) {};
		\node [draw=none] (6) at (1.5, 1.5) {};
		\node [draw=none] (7) at (1.5, 0.5) {};
		\node [draw=none] (8) at (3.5, 1.5) {};
		\node [draw=none] (9) at (3.5, 0.5) {};
		\node [draw=none] (10) at (1, 2.5) {};
		\node [draw=none] (11) at (1, -0.5) {};
		\node [draw=none] (12) at (1, 2.75) {$C_1$};
		\node [draw=none] (13) at (6, 2) {};
		\node [draw=none] (14) at (6, 0) {};
		\node [draw=none] (15) at (8, 2) {};
		\node [draw=none] (16) at (8, 0) {};
		\node [draw=none] (17) at (6.5, 1.5) {};
		\node [draw=none] (18) at (6.5, 0.5) {};
		\node [draw=none] (19) at (0.5, 2.25) {$X$};
		\node [draw=none] (20) at (1.75, 2.25) {$\overline{X} \cap \overline{Y}$};
		\node [draw=none] (21) at (2, -0.5) {$G_{21}$};
		\node [draw=none] (22) at (7, -0.5) {$G_{22}$};
		\node [draw=none] (23) at (7, 2.25) {$Y$};
        \draw (0.center) to (1.center);
		\draw (1.center) to (3.center);
		\draw (3.center) to (2.center);
		\draw (2.center) to (0.center);
		\draw (4.center) to (6.center);
		\draw (5.center) to (7.center);
		\draw (10.center) to (11.center);
		\draw [bend left=105, looseness=3.75] (8.center) to (9.center);
		\draw (13.center) to (15.center);
		\draw (15.center) to (16.center);
		\draw (16.center) to (14.center);
		\draw (13.center) to (14.center);
		\draw [bend right=105, looseness=3.75] (17.center) to (18.center);
\end{tikzpicture}
\caption{Marked $C_2$-components of~$G$}
  \label{fig:G21-G22-laminar-case}
            \end{subfigure}
 \begin{subfigure}[b]{.48\textwidth}
\centering
\begin{tikzpicture}[scale=1]
\node [draw=none] (0) at (0, 2) {};
		\node [draw=none] (1) at (0, 0) {};
		\node [draw=none] (2) at (3, 2) {};
		\node [draw=none] (3) at (3, 0) {};
		\node [draw=none] (4) at (0.5, 1.5) {};
		\node [draw=none] (5) at (0.5, 0.5) {};
		\node [draw=none] (6) at (2.5, 1.5) {};
		\node [draw=none] (7) at (2.5, 0.5) {};
		\node [draw=none] (8) at (5, 2) {};
		\node [draw=none] (9) at (5, 0) {};
		\node [draw=none] (10) at (7, 2) {};
		\node [draw=none] (11) at (7, 0) {};
		\node [draw=none] (12) at (5.5, 1.5) {};
		\node [draw=none] (13) at (5.5, 0.5) {};
		\node [draw=none] (14) at (1.5, 2.25) {$\overline{X}\cap \overline{Y}$};
		\node [draw=none] (15) at (6, 2.25) {$Y$};
		\node [draw=none] (16) at (1.5, -0.5) {$G_{12}'$};
		\node [draw=none] (17) at (6, -0.5) {$G_{12}''$};
        \draw (0.center) to (2.center);
		\draw (2.center) to (3.center);
		\draw (3.center) to (1.center);
		\draw (1.center) to (0.center);
		\draw [bend right=105, looseness=3.75] (4.center) to (5.center);
		\draw [bend left=105, looseness=3.75] (6.center) to (7.center);
		\draw (8.center) to (10.center);
		\draw (10.center) to (11.center);
		\draw (11.center) to (9.center);
		\draw (9.center) to (8.center);
		\draw [bend right=105, looseness=3.75] (12.center) to (13.center);
\end{tikzpicture}
\caption{Marked $C_2$-components of~$G_{12}$}
  \label{fig:G12'-G12''-laminar-case}
            \end{subfigure}
            \begin{subfigure}[b]{.48\textwidth}
            \centering
            \begin{tikzpicture}[scale=1]
\node [draw=none] (0) at (5, 2) {};
		\node [draw=none] (1) at (5, 0) {};
		\node [draw=none] (2) at (8, 2) {};
		\node [draw=none] (3) at (8, 0) {};
		\node [draw=none] (4) at (5.5, 1.5) {};
		\node [draw=none] (5) at (5.5, 0.5) {};
		\node [draw=none] (6) at (7.5, 1.5) {};
		\node [draw=none] (7) at (7.5, 0.5) {};
		\node [draw=none] (8) at (0, 2) {};
		\node [draw=none] (9) at (0, 0) {};
		\node [draw=none] (10) at (2, 2) {};
		\node [draw=none] (11) at (2, 0) {};
		\node [draw=none] (12) at (1.5, 1.5) {};
		\node [draw=none] (13) at (1.5, 0.5) {};
		\node [draw=none] (14) at (6.5, 2.25) {$\overline{X}\cap \overline{Y}$};
		\node [draw=none] (15) at (1, 2.25) {$X$};
		\node [draw=none] (16) at (6.5, -0.5) {$G_{21}'$};
		\node [draw=none] (17) at (1, -0.5) {$G_{21}''$};
        \draw (0.center) to (2.center);
		\draw (2.center) to (3.center);
		\draw (3.center) to (1.center);
		\draw (1.center) to (0.center);
		\draw [bend right=105, looseness=3.75] (4.center) to (5.center);
		\draw [bend left=105, looseness=3.75] (6.center) to (7.center);
		\draw (8.center) to (10.center);
		\draw (10.center) to (11.center);
		\draw (11.center) to (9.center);
		\draw (9.center) to (8.center);
		\draw [bend left=105, looseness=3.75] (12.center) to (13.center);
\end{tikzpicture}
\caption{Marked $C_1$-components of~$G_{21}$}
  \label{fig:G21''-G21'-laminar-case}
            \end{subfigure}
            \caption{Illustration for the proof of Theorem~\ref{thm:unique-2-cut-decomposition-of-r-graphs} in the laminar case}
            \label{fig:the-laminar-case}
        \end{figure}

    First suppose that $C_1=\partial(X)$~and~$C_2=\partial(Y)$ are laminar in~$G$; adjust notation so that the quadrant $X\cap{Y}$ is empty, $C_2$ is a $2$-cut of~$G_{12}$ and $C_1$ is a $2$-cut of~$G_{21}$ as shown in Figures \ref{fig:G-laminar-case}, \ref{fig:G11-G12-laminar-case} and \ref{fig:G21-G22-laminar-case}. We conveniently choose an application of the $2$-cut decomposition procedure to~$G_{12}$ that starts with~$C_{2}$,
    and we let $G_{12}'$~and~$G_{12}''$ denote the marked $C_2$-components of~$G_{12}$ adjusting notation as shown in Figure~\ref{fig:G12'-G12''-laminar-case}. Likewise, we conveniently choose an application of the $2$-cut decomposition to~$G_{21}$ starting with~$C_{1}$, and we let $G_{21}'$~and~$G_{21}''$ denote the marked $C_1$-components of~$G_{21}$ adjusting notation as shown in Figure~\ref{fig:G21''-G21'-laminar-case}. Observe that the graphs $G_{11},G_{12}'$ and $G_{12}''$ are isomorphic to $G_{21}'',G_{21}'$ and $G_{22}$, respectively. By the induction hypothesis, these smaller $r$-graphs satisfy the desired conclusion; hence, so does $G$.

        \begin{figure}[!htb]
        \centering
\begin{tikzpicture}[scale=1.2]

		\node [draw=none] (0) at (0, 4) {};
		\node [draw=none] (1) at (0, 0) {};
		\node [draw=none] (2) at (4, 4) {};
		\node [draw=none] (3) at (4, 0) {};
		\node [draw=none] (4) at (2, 4.5) {};
		\node [draw=none] (5) at (2, -0.5) {};
		\node [draw=none] (6) at (-0.5, 2) {};
		\node [draw=none] (7) at (4.5, 2) {};
		\node [draw=none] (8) at (1.5, 3) {};
		\node [draw=none] (9) at (2.5, 3) {};
		\node [draw=none] (10) at (1, 2.5) {};
		\node [draw=none] (11) at (1, 1.5) {};
		\node [draw=none] (12) at (1.5, 1) {};
		\node [draw=none] (13) at (2.5, 1) {};
		\node [draw=none] (14) at (3, 1.5) {};
		\node [draw=none] (15) at (3, 2.5) {};
		\node [draw=none] (16) at (-0.8, 2) {$C_1$};
		\node [draw=none] (17) at (2, -0.8) {$C_2$};
		\node [draw=none] (18) at (-0.5, 2.5) {$X$};
		\node [draw=none] (19) at (-0.5, 1.5) {$\overline{X}$};
		\node [draw=none] (20) at (1.5, -0.5) {$Y$};
		\node [draw=none] (21) at (2.5, -0.5) {$\overline{Y}$};
        \node [draw=none] (26) at (0.5, 3.5) {$X\cap Y$};
		\node [draw=none] (27) at (3.5, 3.5) {$X\cap \overline{Y}$};
		\node [draw=none] (28) at (0.5, 0.5) {$\overline{X}\cap Y$};
		\node [draw=none] (29) at (3.5, 0.5) {$\overline{X}\cap\overline{Y}$};

        \draw (0.center) to (2.center);
		\draw (2.center) to (3.center);
		\draw (3.center) to (1.center);
		\draw (1.center) to (0.center);
		\draw (4.center) to (5.center);
		\draw (7.center) to (6.center);
		\draw (8.center) to (9.center);
		\draw (10.center) to (11.center);
		\draw (12.center) to (13.center);
		\draw (14.center) to (15.center);
\end{tikzpicture}
\caption{Illustration for the proof of Theorem~\ref{thm:unique-2-cut-decomposition-of-r-graphs} in the crossing cuts case}
  \label{fig:G-crossing-case}
        \end{figure}


    Now suppose that $C_1$~and~$C_2$ are crossing cuts in~$G$. Since $C_1$~and~$C_2$ are bonds, their shores induce connected subgraphs. Consequently, for any two quadrants whose union is a shore of~$C_1$ or of $C_2$, there is at least one edge of~$G$ that has an end in each of these quadrants. This observation implies the following: (i) $|C_1\cup C_2|=4$, (ii) each quadrant $Q$ satisfies $|\partial_G(Q)|=2$, and (iii) by definition of $r$-graph, cardinality of each quadrant is even; see Figure~\ref{fig:G-crossing-case}.

    Now, let $i,j\in\{1,2\}$. The subgraph of~$G$ induced by $V(G_{ij})$ has a unique even $1$-cut $e_{ij}$ that lies in~$C_1\cup C_2$. The edge~$e_{ij}$, along with the marker edge of~$G_{ij}$, comprise a $2$-cut $C_{ij}$ of~$G_{ij}$. Ergo, we conveniently choose an application of the $2$-cut decomposition procedure to~$G_{ij}$ that starts with $C_{ij}$, and we let $G_{ij}'$ and $G_{ij}''$ denote the marked $C_{ij}$-components of~$G_{ij}$. Observe that each of $G_{ij}'$ and $G_{ij}''$ is isomorphic to $G[Q]+uv$, where $Q$ is some quadrant, and $u,v\in Q$ denote ends of the edges in~$\partial_{G}(Q)$; see Figure~\ref{fig:G-crossing-case}.
    Consequently, the lists $\{G_{11}',G_{11}'',G_{12}',G_{12}''\}$ and $\{G_{21}',G_{21}'',G_{22}',G_{22}''\}$ comprise the same graphs up to isomorphism. By the induction hypothesis, their members satisfy the desired conclusion; hence, so does~$G$.
\end{proof}

    \begin{figure}[!htb]
    \begin{subfigure}[b]{.4\textwidth}
    \centering
    \begin{tikzpicture}[scale=1]

        \node [circle,fill=white] (0) at (4.5, 5.5) {};
		\node [circle,fill=white] (1) at (5.5, 5.5) {};
		\node [circle,fill=white] (2) at (3, 4) {};
		\node [circle,fill=white] (3) at (2.5, 3.5) {};
		\node [circle,fill=white] (4) at (3.5, 3.5) {};
		\node [circle,fill=white] (5) at (3, 3) {};
		\node [circle,fill=white] (6) at (4, 2) {};
		\node [circle,fill=white] (7) at (4, 1) {};
		\node [circle,fill=white] (8) at (4.5, 1.5) {};
		\node [circle,fill=white] (9) at (5.5, 1.5) {};
		\node [circle,fill=white] (10) at (6, 2) {};
		\node [circle,fill=white] (11) at (6, 1) {};
		\node [circle,fill=white] (12) at (7, 3) {};
		\node [circle,fill=white] (13) at (6.5, 3.5) {};
		\node [circle,fill=white] (14) at (7.5, 3.5) {};
		\node [circle,fill=white] (15) at (6.5, 4) {};
		\node [circle,fill=white] (16) at (7.5, 4) {};
		\node [circle,fill=white] (17) at (7, 4.5) {};
		\node [draw=none] (18) at (2.25, 5.25) {};
		\node [draw=none] (19) at (7.5, 2) {};
		\node [draw=none] (20) at (2.5, 1.5) {};
		\node [draw=none] (21) at (3.5, 6) {};
        \node [draw=none] (22) at (2, 5.5) {$C$};
		\node [draw=none] (23) at (3.25, 6.25) {$D$};

        \draw (2) to (3);
		\draw (3) to (5);
		\draw (5) to (4);
		\draw (4) to (2);
		\draw (3) to (4);
		\draw (2) to (0);
		\draw (6) to (7);
		\draw (7) to (8);
		\draw (8) to (6);
		\draw (8) to (9);
		\draw (10) to (9);
		\draw (9) to (11);
		\draw (11) to (10);
		\draw (7) to (11);
		\draw (17) to (15);
		\draw (15) to (16);
		\draw (16) to (17);
		\draw (15) to (13);
		\draw (13) to (14);
		\draw (14) to (16);
		\draw (13) to (12);
		\draw (12) to (14);
		\draw (10) to (12);
		\draw (5) to (6);
		\draw [bend left] (0) to (1);
		\draw [bend right] (0) to (1);
		\draw (1) to (17);
		\draw [bend left=75, looseness=1.25, color=red] (18) to (20);
		\draw [bend right, color=blue] (21) to (19);
    \end{tikzpicture}
    \caption{A $3$-graph~$G$}
    \end{subfigure}
\begin{subfigure}[b]{.46\textwidth}
    \centering
    \begin{tikzpicture}
        \node [circle,fill=white] (0) at (3.5, 5.75) {};
		\node [circle,fill=white] (1) at (4.5, 5.75) {};
		\node [circle,fill=white] (2) at (1.5, 4.5) {};
		\node [circle,fill=white] (3) at (1, 4) {};
		\node [circle,fill=white] (4) at (2, 4) {};
		\node [circle,fill=white] (5) at (1.5, 3.5) {};
		\node [circle,fill=white] (6) at (3, 2.5) {};
		\node [circle,fill=white] (7) at (3, 1.5) {};
		\node [circle,fill=white] (8) at (3.5, 2) {};
		\node [circle,fill=white] (9) at (4.5, 2) {};
		\node [circle,fill=white] (10) at (5, 2.5) {};
		\node [circle,fill=white] (11) at (5, 1.5) {};
		\node [circle,fill=white] (12) at (7, 3) {};
		\node [circle,fill=white] (13) at (6.5, 3.5) {};
		\node [circle,fill=white] (14) at (7.5, 3.5) {};
		\node [circle,fill=white] (15) at (6.5, 4) {};
		\node [circle,fill=white] (16) at (7.5, 4) {};
		\node [circle,fill=white] (17) at (7, 4.5) {};

\draw (2) to (3);
		\draw (3) to (5);
		\draw (5) to (4);
		\draw (4) to (2);
		\draw (3) to (4);
		\draw (6) to (7);
		\draw (7) to (8);
		\draw (8) to (6);
		\draw (8) to (9);
		\draw (10) to (9);
		\draw (9) to (11);
		\draw (11) to (10);
		\draw (7) to (11);
		\draw (17) to (15);
		\draw (15) to (16);
		\draw (16) to (17);
		\draw (15) to (13);
		\draw (13) to (14);
		\draw (14) to (16);
		\draw (13) to (12);
		\draw (12) to (14);
		\draw [bend left] (0) to (1);
		\draw [bend right] (0) to (1);
		\draw [bend left=105, looseness=3.00] (2) to (5);
		\draw  (0) to (1);
		\draw [bend left] (6) to (10);
		\draw [bend right=270, looseness=2.25] (12) to (17);
    \end{tikzpicture}
    \caption{$3$-edge-connected pieces of~$G$}
\end{subfigure}
\begin{subfigure}[b]{.5\textwidth}
    \centering
    \begin{tikzpicture}
        \node [circle,fill=white] (0) at (4.5, 5.5) {};
		\node [circle,fill=white] (1) at (5.5, 5.5) {};
		\node [circle,fill=white] (2) at (1.5, 4) {};
		\node [circle,fill=white] (3) at (1, 3.5) {};
		\node [circle,fill=white] (4) at (2, 3.5) {};
		\node [circle,fill=white] (5) at (1.5, 3) {};
		\node [circle,fill=white] (6) at (4, 2) {};
		\node [circle,fill=white] (7) at (4, 1) {};
		\node [circle,fill=white] (8) at (4.5, 1.5) {};
		\node [circle,fill=white] (9) at (5.5, 1.5) {};
		\node [circle,fill=white] (10) at (6, 2) {};
		\node [circle,fill=white] (11) at (6, 1) {};
		\node [circle,fill=white] (12) at (7, 3) {};
		\node [circle,fill=white] (13) at (6.5, 3.5) {};
		\node [circle,fill=white] (14) at (7.5, 3.5) {};
		\node [circle,fill=white] (15) at (6.5, 4) {};
		\node [circle,fill=white] (16) at (7.5, 4) {};
		\node [circle,fill=white] (17) at (7, 4.5) {};

\draw (2) to (3);
		\draw (3) to (5);
		\draw (5) to (4);
		\draw (4) to (2);
		\draw (3) to (4);
		\draw (6) to (7);
		\draw (7) to (8);
		\draw (8) to (6);
		\draw (8) to (9);
		\draw (10) to (9);
		\draw (9) to (11);
		\draw (11) to (10);
		\draw (7) to (11);
		\draw (17) to (15);
		\draw (15) to (16);
		\draw (16) to (17);
		\draw (15) to (13);
		\draw (13) to (14);
		\draw (14) to (16);
		\draw (13) to (12);
		\draw (12) to (14);
		\draw (10) to (12);
		\draw [bend left] (0) to (1);
		\draw [bend right] (0) to (1);
		\draw (1) to (17);
		\draw [bend left=105, looseness=3.00, color=red] (2) to (5);
		\draw [bend right=45, looseness=1.25, color=red] (0) to (6);
    \end{tikzpicture}
    \caption{Marked $C$-components of~$G$}
\end{subfigure}
\begin{subfigure}[b]{.5\textwidth}
    \centering
    \begin{tikzpicture}
        \node [circle,fill=white] (0) at (4.5, 5.5) {};
		\node [circle,fill=white] (1) at (5.5, 5.5) {};
		\node [circle,fill=white] (2) at (1.5, 4.5) {};
		\node [circle,fill=white] (3) at (1, 4) {};
		\node [circle,fill=white] (4) at (2, 4) {};
		\node [circle,fill=white] (5) at (1.5, 3.5) {};
		\node [circle,fill=white] (6) at (2.5, 2.5) {};
		\node [circle,fill=white] (7) at (2.5, 1.5) {};
		\node [circle,fill=white] (8) at (3, 2) {};
		\node [circle,fill=white] (9) at (4, 2) {};
		\node [circle,fill=white] (10) at (4.5, 2.5) {};
		\node [circle,fill=white] (11) at (4.5, 1.5) {};
		\node [circle,fill=white] (12) at (7, 3) {};
		\node [circle,fill=white] (13) at (6.5, 3.5) {};
		\node [circle,fill=white] (14) at (7.5, 3.5) {};
		\node [circle,fill=white] (15) at (6.5, 4) {};
		\node [circle,fill=white] (16) at (7.5, 4) {};
		\node [circle,fill=white] (17) at (7, 4.5) {};

\draw (2) to (3);
		\draw (3) to (5);
		\draw (5) to (4);
		\draw (4) to (2);
		\draw (3) to (4);
		\draw (6) to (7);
		\draw (7) to (8);
		\draw (8) to (6);
		\draw (8) to (9);
		\draw (10) to (9);
		\draw (9) to (11);
		\draw (11) to (10);
		\draw (7) to (11);
		\draw (17) to (15);
		\draw (15) to (16);
		\draw (16) to (17);
		\draw (15) to (13);
		\draw (13) to (14);
		\draw (14) to (16);
		\draw (13) to (12);
		\draw (12) to (14);
		\draw (5) to (6);
		\draw [bend left] (0) to (1);
		\draw [bend right] (0) to (1);
		\draw (1) to (17);
		\draw [bend left=45, color=blue] (2) to (10);
		\draw [bend right=45,color=blue] (0) to (12);
    \end{tikzpicture}
    \caption{Marked $D$-components of~$G$}
\end{subfigure}
\caption{An illustration for the $2$-cut decomposition procedure}
\label{An-illustration-for-the-$2$-cut-decomposition-procedure}
\end{figure}

Consequently, for any $r$-graph~$G$, we refer to the list of \ecc{3} $r$-graphs obtained from it by an application of the $2$-cut decomposition procedure as the \textit{\ecc{3} pieces of~$G$}.
The following proposition demonstrates that one may recursively compute or determine the solitary edges of an $r$-graph using the solitary edges of its \ecc{3} pieces. We leave its proof as an exercise to the reader. In fact, we shall prove a stronger version in Section~\ref{sec:r-edge-colorability}; see Corollaries~\ref{lem:2-cut-decomposition-r-graphs-new}~and~\ref{cor:2-cut-solitary-stronger-version}.

\begin{prp}\label{prp:marked-c-components-solitary-edges}
    Let $G$ be an $r$-graph that has an even $2$-cut~$C$, let $G_1$~and~$G_2$ denote its marked $C$-components with marker edges $e_1$~and~$e_2$, respectively. Then the following statements hold:
    \begin{enumerate}[(i)]
        \item for any edge $f\in C$, the edge $f$ is solitary in~$G$ if and only if $e_i$ is solitary in~$G_i$, for each $i\in \{1,2\}$, and
        \item for an edge $e\in E(G_1)-e_1$, the edge $e$ is solitary in~$G$ if and only if $e$ is solitary in~$G_1$, the unique perfect matching containing $e$ in~$G_1$ also contains~$e_1$, and $e_2$ is solitary in~$G_2$. \qed
    \end{enumerate} 
\end{prp}


Thus, we now switch our attention to solitude in \ecc{3} $r$-graphs.

\subsection{Solitary edges in \texorpdfstring{\ecc{3}}{} \texorpdfstring{$r$}{}-graphs}

In the case of \ecc{3} $r$-graphs, we establish the following constant upper bounds.

\begin{thm}
    \label{thm:main-thm-in-simple-words}
    Every \ecc{3} $r$-graph~$G$, of order four or more, satisfies the following:
    \begin{enumerate}[(i)]
        \item if $r=3$ then $G$ has at most six solitary edges, and
        \item if $r\ge 4$ then $G$ has at most four solitary edges.
    \end{enumerate}
\end{thm}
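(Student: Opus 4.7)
The plan is to derive this theorem as a consequence of the paper's finer structural results on solitary patterns, rather than attack the edge-count bounds directly. Every solitary edge lies in a solitary class (an equivalence class under the CLM dependence relation consisting entirely of solitary edges), so the number of solitary edges equals the sum of the cardinalities of all solitary classes. The strategy reduces to two independent bounds: a per-class size bound, and a bound on the total number of solitary classes (refined when $r \ge 4$).

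The first ingredient is the Lucchesi--Murty result advertised in the abstract, which states that in any $3$-edge-connected $r$-graph every equivalence class under the dependence relation has cardinality at most $2$; in particular every solitary class has size $1$ or $2$. With this in hand it suffices to prove that (a) there are at most three solitary classes when $G$ has order at least four, and (b) three solitary classes force $r = 3$. From (a) and (b) the theorem follows immediately: for $r = 3$ the worst case is three classes of size $2$, giving the bound $6$, while for $r \ge 4$ we have at most two classes of size $2$, giving the bound $4$.

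For (a), the intended route is a two-stage distance-and-packing argument. First one proves (via structural analysis of $3$-cuts, dependence-relation chains, and proper edge-colorings on the pieces carved out by such cuts) that any two distinct solitary classes in a $3$-edge-connected $r$-graph must sit within distance at most three of one another; this is exactly the "distance between any two solitary classes is at most three" statement flagged in the abstract. Second, one shows that four or more such classes cannot be simultaneously packed within distance three in a $3$-regular-or-higher graph without producing either a forbidden even $2$-cut, a contradiction with $3$-edge-connectedness, or a vertex at which the dependence relation forces an inconsistency. For (b), the idea is to leverage the extra ``color budget'' available when $r \ge 4$: using Proposition~\ref{prp:mcgs-hcycle} (each solitary edge together with a disjoint perfect matching yields a hamiltonian cycle), one examines the \mbox{$M_e$-alternating} structure attached to three hypothetical solitary classes and shows that the constraints they jointly impose on cuts through their endpoints are satisfiable only when $r = 3$.

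The main obstacle will be step (a), and specifically the packing portion: the distance bound itself should follow from routine dependence-relation manipulation once the $3$-edge-connectedness is exploited, but ruling out a fourth solitary class requires a careful case analysis of how the small neighborhoods of three solitary classes can interleave subject to $r$-regularity and the $r$-graph odd-cut condition. The case split in (b) between $r = 3$ and $r \ge 4$ is also delicate, since it is precisely here that edge-colorability (and its interaction with the Petersen-style obstructions) enters the picture; this is what ultimately separates the bound $6$ from the bound $4$.
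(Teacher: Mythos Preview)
Your reduction via Lucchesi--Murty to bounding the number of solitary classes is correct and matches the paper. The gap is in how you propose to bound that number.

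The paper does \emph{not} argue via distance-and-packing. In fact, the distance results (Section~\ref{sec:further-developing-toolkit}) are proved \emph{after} Theorem~\ref{thm:main-thm-in-simple-words} and are not inputs to it; your proposed direction reverses the logical flow. More importantly, your ``packing'' step --- arguing that four solitary classes within distance three force a $2$-cut or a dependence inconsistency --- is speculative. You give no mechanism for it, and it is not clear what local obstruction would actually fire. Likewise, your plan for (b) via ``color budget'' and hamiltonian cycles is too vague to constitute a strategy.

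What you are missing is the \emph{rainbow triangle} (\emph{$r$-triangle}) argument. The paper first proves, by induction along odd $r$-cuts (Lemma~\ref{lem:combined-stat-r-graphs}), that every $3$-edge-connected $r$-graph of order at least four with a solitary edge is $r$-edge-colorable and contains a triangle $T$ with $|E(G[V(T)])|=r$. In any proper $r$-edge-coloring, each color class meets $E(G[V(T)])$ in exactly one edge. Since each solitary class lies in a unique color class (Corollary~\ref{lem:solitary-class-unique-pm}), and since every edge in the perfect matching through a solitary edge has multiplicity one (Lemma~\ref{lem:associated-r-cuts}), each solitary class is forced to use a distinct multiplicity-one edge of the triangle (Lemma~\ref{lem:r-cut-lemma}). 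When $r=3$ all three triangle edges are simple, giving at most three classes; when $r\ge 4$, the $r$ edges of $G[V(T)]$ are distributed among only three vertex pairs, so at most two pairs have multiplicity one, giving at most two classes. This is the entire argument --- no distance bounds, no packing.
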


A proof of the above appears in Section~\ref{sec:LM}.
Goedgebeur, Mazzuoccolo, Renders, Wolf and the second author~\cite{gmmrw24} proved Theorem~\ref{thm:main-thm-in-simple-words}~(i), which applies only to $3$-connected $3$-regular graphs, in their recent work. However, we prove stronger results that apply to all \mbox{\ecc{3}} $r$-graphs (see Corollaries~\ref{cor:main-thm}~and~\ref{lem:solitary-patterns-in-r-graphs}); in order to describe these, we need additional concepts from the theory of \mcg s, and these are discussed in the next section. Before that, we briefly outline the approach 
used in ~\cite{gmmrw24} which is quite different from ours.


The class of {\em Klee graphs}, denoted by $\mathcal{K}$, is defined recursively as follows: $\theta \in \mathcal{K}$, and if $H\in \mathcal{K}$ then every $3$-regular graph~$G$ obtained from~$H$ by replacing any vertex by a \textit{triangle} (that is, $C_3$) also belongs to~$\mathcal{K}$; this operation is equivalent to {\em splicing} with~$K_4$ that is defined in Section~\ref{sec:Splicing and separating cuts}.
Fowler~\cite{f98} proved that the class~$\mathcal{K}$ comprises those planar $3$-regular graphs that are uniquely \mbox{$3$-edge-colorable}. 
Esperet, Kr\'al, \v{S}koda and \v{S}krekovski~\cite{ekss10} proved the following.

\begin{thm}\label{thm:Klee}
Every $3$-connected $3$-regular graph, that is not in $\mathcal{K}$, is matching double covered.
\end{thm}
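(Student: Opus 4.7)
The plan is to prove the contrapositive: if a \mbox{$3$-connected} \mbox{$3$-regular} graph $G$ contains a solitary edge, then $G \in \mathcal{K}$. I would proceed by induction on $|V(G)|$. The base case $|V(G)|=4$ is immediate since the unique \mbox{$3$-connected} cubic graph of order four is~$K_4$, which lies in~$\mathcal{K}$ (as $K_4$ is obtained from $\theta$ by replacing a vertex with a triangle).

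For the inductive step, let $e$ be a solitary edge of~$G$ and let $M_1$ denote the unique \pmg\ containing~$e$. Invoking the assertion (made elsewhere in this paper) that every \mbox{$r$-graph} with a solitary edge is \mbox{$r$-edge-colorable}, I fix a proper \mbox{$3$-edge-coloring} $(M_1,M_2,M_3)$. By Proposition~\ref{prp:mcgs-hcycle}, both $M_1\!+\!M_2$ and $M_1\!+\!M_3$ are Hamiltonian cycles through~$e$, and indeed every $M_1$-alternating cycle of~$G$ contains~$e$; this strong rigidity is what drives the rest of the argument.

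The crux is the following structural lemma to be proved: every \mbox{$3$-connected} cubic graph~$G$ with a solitary edge and $|V(G)|\geq 6$ contains a triangle~$T$ whose contraction to a single vertex produces a graph $G'=G/T$ that is again \mbox{$3$-connected} cubic and inherits a solitary edge. The existence of such a triangle is what one forces from solitude: in a triangle-free $G$ one locates a short $M_1$-alternating detour away from~$e$ (built from the interaction of the two Hamiltonian alternating cycles $M_1\!+\!M_2$ and $M_1\!+\!M_3$ near~$e$, combined with the absence of triangles among the neighbors of $u$ and~$v$), producing an $M_1$-alternating cycle that avoids~$e$ --- a contradiction. Once such a $T$ is extracted, the perfect matchings of $G$ are in bijective correspondence with those of~$G'$ (each \pmg\ of~$G$ uses exactly one triangle edge plus the opposite external edge), which transports the solitude hypothesis down to~$G'$. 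Induction then yields $G'\in\mathcal{K}$, and since $G$ is obtained from~$G'$ by replacing a vertex by a triangle, $G\in\mathcal{K}$.

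The main obstacle is the triangle-finding lemma itself: carefully ruling out triangle-free configurations given only the solitude of a single edge, and then verifying that the resulting contraction $G'$ is genuinely simple and \mbox{$3$-connected}. The delicate case is when the three external edges of the purported triangle do not land on three distinct vertices (yielding parallel edges or a $2$-cut in $G'$), which requires a case analysis exploiting \mbox{$3$-connectivity} of~$G$ and the trivial $3$-cuts $\partial(u),\partial(v)$ around the solitary edge. A secondary technicality is to ensure that among all triangles the one selected hosts an edge that remains solitary after contraction; here the correspondence between \pmg s of~$G$ and \pmg s of~$G'$ via uncontraction makes the inheritance transparent once a suitable triangle is identified.
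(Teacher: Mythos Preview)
The paper does not prove Theorem~\ref{thm:Klee}; it is quoted from Esperet, Kr\'al, \v{S}koda and \v{S}krekovski~\cite{ekss10} and used only as context. So there is no ``paper's own proof'' to compare against. That said, your overall strategy --- induct by contracting a triangle --- is the natural one given the recursive definition of~$\mathcal{K}$, and it can in fact be carried out cleanly using tools this very paper develops for other purposes.

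Your identified ``main obstacle,'' the triangle-finding lemma, is where your sketch is weakest: the talk of an ``$M_1$-alternating detour'' built from the two Hamiltonian cycles is too vague to constitute an argument, and it is not clear how to make it work. Fortunately you do not need it. Lemma~\ref{lem:combined-stat-r-graphs}(ii) of this paper already proves (via the associated $r$-cuts of Lemma~\ref{lem:associated-r-cuts}, not via alternating paths) that every \ecc{3} cubic graph of order at least four with a solitary edge contains, for each vertex~$w$, a triangle avoiding~$w$. Take $w:=u$, an end of your solitary edge $e=uv$. If the resulting triangle~$T$ contained~$v$, then $e\in\partial(T)$ and Corollary~\ref{cor:solitary-leaving-triangle-implies-K4} would force $G=K_4$; hence for $|V(G)|\ge 6$ you get a triangle~$T$ disjoint from~$e$. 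Then $e$ remains solitary in $G':=G/T$ by Lemma~\ref{lem:solitary-ME-sepcuts}(i), and $G'$ is again $3$-connected cubic by Proposition~\ref{prp:r-cuts}. Induction finishes.

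Your secondary worries are non-issues. By Proposition~\ref{prp:3-cut-matching-laminar}, every nontrivial $3$-cut in a \cc{3} is a matching, so the three edges of $\partial(T)$ land on three distinct vertices and $G/T$ is automatically simple and $3$-connected; there is no delicate case analysis needed. Likewise, once~$T$ is chosen disjoint from~$e$, the inheritance of solitude is a one-liner via Lemma~\ref{lem:solitary-ME-sepcuts}(i); you do not need the full perfect-matching bijection you describe.
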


Equivalently, if $G$ is a $3$-connected $3$-regular graph that has a solitary edge then $G\in \mathcal{K}$. However, the converse does not hold; for instance, the Klee graph of order twelve, that is obtained from~$K_4$ by replacing each of its vertices by a triangle, is devoid of solitary edges. The above theorem suggests that one may be able to provide a recursive  description of all $3$-connected $3$-regular graphs that have a solitary edge; this is precisely what 
 Goedgebeur, Mazzuoccolo, Renders, Wolf and the second author~\cite{gmmrw24} demonstrate in their recent work. Additionally, they provide a complete characterization of those $3$-connected $3$-regular graphs that have at least three solitary edges.

In this paper, we generalize their result to obtain a complete characterization of those \mbox{\ecc{3}} \mbox{$r$-graphs} that have at least three solitary edges. Our approach for proving Theorem~\ref{thm:main-thm-in-simple-words} (and its stronger version Corollary~\ref{lem:solitary-patterns-in-r-graphs}) relies heavily on the dependence relation and its associated theory introduced and developed by Carvalho, Lucchesi and Murty, abbreviated to CLM henceforth; see~\cite{clm99} and \cite{lumu24}. 





\subsection{Matching covered graphs}

Proposition~\ref{prp:r-graph-mc} suggests that in order to prove results pertaining to $r$-graphs, one may find the theory of \mcg s useful, and this indeed turns out to be the case in our work.

\subsubsection{CLM's dependence relation}\label{sec:CLM's-DR}

Let $G$ be a \mcg.
An edge $e$ {\em depends on} an edge $f$, denoted as $e \xrightarrow{G} f$, if every perfect matching that includes $e$ also includes $f$. We note that this relation is not symmetric. In Figure~\ref{fig:R10}, the edge $45$ depends on the edge $23$; however, $23$ does not depend on $45$.

 Carvalho, Lucchesi and Murty \cite{clm99} used the dependence relation to define an equivalence relation called {\em mutual dependence} as follows.
Edges $e$ and $f$ are {\em mutually dependent}, denoted as $e \xleftrightarrow{G} f$, if both $e \xrightarrow{G} f$ and $f \xrightarrow{G} e$. When $G$ is clear from the context, we simplify $e \xrightarrow{G} f$ and $e \xleftrightarrow{G} f$ to $e \rightarrow f$ and $e \leftrightarrow f$, respectively.
Thus, mutual dependence induces a partition of the edge set $E(G)$ of a \mcg\ $G$. Throughout this paper, we denote this partition of $E(G)$ by $\mathcal{E}_G$. Each part of $\mathcal{E}_G$ is referred to as an {\em equivalence class} of~$G$. The graph $R_{10}$, shown in Figure~\ref{fig:R10}, has thirteen equivalence classes; two of them are $\{12,09\}$ and $\{13,08\}$, and each of the remaining equivalence classes is a singleton.
We invite the reader to make the following observation.
\begin{prp}\label{prp:pm-union-of-eqs}
In a \mcg, every \pmg\ is the union of some equivalence classes. \qed
\end{prp}


We now combine the above discussed equivalence classes and dependence relation to define the {\em dependence poset} $(\mathcal{E}_G,\rightarrow)$ as follows.
For a matching covered graph $G$, an equivalence class $D_1$ {\em depends on} an equivalence class $D_2$, denoted as $D_1 \rightarrow D_2$, if there exist edges $e_1\in D_1$ and $e_2\in D_2$ such that $e_1\rightarrow e_2$. It follows from the definitions that $D_1 \rightarrow D_2$ if and only if  $e_1 \rightarrow e_2$ for each pair $e_1\in D_1$ and $e_2\in D_2$. Figure~\ref{fig:R10-poset} shows the dependence poset $(\mathcal{E}_{R_{10}},\rightarrow)$.

We refer to each minimal element of this poset $(\mathcal{E}_G,\rightarrow)$ as a {\em minimal class} of $G$. Observe that an equivalence class $D$ is minimal if and only if $e$ does not depend on $f$ for each pair $e\in E(G)-D$ and $f\in D$. For the graph $R_{10}$, the minimal classes are $\{12,09\},\{13,08\},\{45\}$ and $\{67\}$; see Figure~\ref{fig:R10-poset}.


\begin{figure}[!htb]
    \centering
    \begin{subfigure}[b]{.3\textwidth}
        \centering
        \begin{tikzpicture}[scale=0.7]
           \node [circle,fill=white] (0) at (0, -0.5) {};
		\node [circle,fill=white] (1) at (1, 0) {};
		\node [circle,fill=white] (2) at (1, -1) {};
		\node [circle,fill=white] (3) at (2, 0) {};
		\node [circle,fill=white] (4) at (2, -1) {};
		\node [circle,fill=white] (5) at (3, 0) {};
		\node [circle,fill=white] (6) at (3, -1) {};
		\node [circle,fill=white] (7) at (4, 0) {};
		\node [circle,fill=white] (8) at (4, -1) {};
		\node [circle,fill=white] (9) at (5, -0.5) {};
		\node [draw=none] (10) at (-0.475, -0.5) {$1$};
		\node [draw=none] (11) at (1.025, 0.4) {$2$};
		\node [draw=none] (12) at (1, -1.475) {$3$};
		\node [circle,fill=white] (13) at (2, -1.5) {};
		\node [draw=none] (14) at (2, -1.475) {$4$};
		\node [draw=none] (16) at (2, 0.45) {$5$};
		\node [draw=none] (17) at (3, 0.45) {$6$};
		\node [draw=none] (18) at (3, -1.475) {$7$};
		\node [draw=none] (19) at (4, -1.475) {$8$};
		\node [draw=none] (20) at (3.875, 0.425) {$9$};
		\node [circle,fill=white] (21) at (4, 0) {};
		\node [draw=none] (22) at (5.4, -0.5) {$0$};
        \draw (0) -- (1);
		\draw (1) -- (3);
		\draw (3) -- (5);
		\draw (5) -- (7);
		\draw (7) -- (9);
		\draw (9) -- (8);
		\draw (8) -- (6);
		\draw (6) -- (4);
		\draw (4) -- (2);
		\draw (2) -- (0);
		\draw (2) -- (1);
		\draw (3) -- (4);
		\draw (5) -- (6);
		\draw (7) -- (8);
		\draw [bend left=70, looseness=1.25] (0) to (9);
        \end{tikzpicture}
\caption{$R_{10}$}
  \label{fig:R10}
    \end{subfigure}
    \begin{subfigure}[b]{.6\textwidth}
        \centering
        \begin{tikzpicture}[scale=0.8]
            \node [draw=none] (0) at (-2, -0.5) {$\{12,09\}$};
		\node [draw=none] (1) at (-3, 1) {$34$};
		\node [draw=none] (2) at (-2, 1) {$56$};
		\node [draw=none] (3) at (-1, 1) {$78$};
		\node [draw=none] (4) at (2, -0.5) {$\{13,08\}$};
		\node [draw=none] (5) at (1, 1) {$25$};
		\node [draw=none] (6) at (2, 1) {$47$};
		\node [draw=none] (7) at (3, 1) {$69$};
		\node [draw=none] (8) at (5, -0.5) {$45$};
		\node [draw=none] (9) at (5, 1) {$23$};
		\node [draw=none] (10) at (6, -0.5) {$67$};
		\node [draw=none] (11) at (6, 1) {$89$};
		\node [draw=none] (12) at (5.5, 2.5) {$01$};
        \draw[->] (0) -- (1);
        \draw[->] (0) -- (2);
        \draw[->] (0) -- (3);
        \draw[->] (4) -- (5);
        \draw[->] (4) -- (6);
        \draw[->] (4) -- (7);
        \draw[->] (8) -- (9);
        \draw[->] (9) -- (12);
        \draw[->] (10) -- (11);
        \draw[->] (11) -- (12);
        \end{tikzpicture}
\caption{Hasse diagram of $(\mathcal{E}_{R_{10}},\rightarrow)$}
  \label{fig:R10-poset}
    \end{subfigure}
    \caption{The graph $R_{10}$ and its dependence poset}
    \label{fig:posetexample}
\end{figure}

Observe that if $D$ is a minimal class of a \mcg~$G$ distinct from~$K_2$, then each component of $G-D$ is \mc; however, $G-D$ may have more than one component. We say that a minimal class~$D$ is a {\em removable class} if $G-D$ is connected. In other words, an equivalence class $D$ is removable if and only if $G-D$ is \mc. This notion plays a key role in matching theory, and is intrinsically related to the ear decomposition theory of \mcg s that we do not discuss in this paper; see Lucchesi and Murty~\cite{lumu24}.


We now proceed to make simple observations pertaining to solitary edges with respect to the dependence poset. Let $e$ denote a solitary edge of a \mcg~$G$, and let $M_e$ denote the unique \pmg\ containing $e$. Recall that if some edge~$f$ depends on~$e$ then every perfect matching containing $f$ also contains $e$. Since $G$ is \mc, $f\in M_e$, the edge $f$ is also solitary and $e\rightarrow f$.
This proves the following.

\begin{lem}
\label{lem:solitary-source-clasee}
Let $e$~and~$f$ denote edges of a \mcg~$G$. If $f \rightarrow e$ and $e$ is solitary, then $f$ is also solitary and $e \leftrightarrow f$. \hfill \qed
\end{lem}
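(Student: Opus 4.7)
The plan is to unpack the definitions: let $M_e$ denote the unique perfect matching containing the solitary edge~$e$, and exploit the hypothesis $f \to e$ together with the fact that $G$ is matching covered to pin down where $f$ can live.

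First, since $G$ is matching covered, there exists at least one perfect matching, say~$M_f$, that contains~$f$. Invoking the dependence $f \to e$, we obtain $e \in M_f$. But $e$ being solitary forces $M_f = M_e$, and therefore $f \in M_e$. Next, I would argue that $M_e$ is in fact the \emph{only} perfect matching containing~$f$: any perfect matching~$N$ with $f \in N$ satisfies $e \in N$ (again by $f \to e$), and then the solitude of $e$ gives $N = M_e$. This shows that $f$ is solitary, with $M_e$ as its unique perfect matching.

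Finally, to conclude $e \leftrightarrow f$, it remains to verify $e \to f$. Any perfect matching containing~$e$ must equal~$M_e$ (since $e$ is solitary), and we have just observed that $f \in M_e$; hence every perfect matching containing~$e$ also contains~$f$, i.e.\ $e \to f$. Combined with the hypothesis $f \to e$, this yields $e \leftrightarrow f$.

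I do not anticipate any real obstacle here: the statement follows directly from the definitions of \emph{solitary edge}, \emph{dependence}, and \emph{mutual dependence}, together with the elementary fact (from matching coveredness) that every edge lies in some perfect matching. The only subtlety worth flagging in the write-up is the initial step that uses matching coveredness to guarantee the existence of $M_f$; without it one could not immediately locate $f$ inside~$M_e$.
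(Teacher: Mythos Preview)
Your proof is correct and follows essentially the same approach as the paper: both arguments let $M_e$ be the unique perfect matching containing~$e$, use matching coveredness together with $f \to e$ to place $f$ in $M_e$, deduce that $f$ is solitary (any perfect matching containing $f$ contains $e$ and hence equals $M_e$), and then observe $e \to f$ since $M_e$ is the only perfect matching through~$e$. Your write-up is simply a more detailed version of the paper's terse paragraph preceding the lemma.
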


Using the above lemma and our preceding discussion on minimal classes, we infer the following.

\begin{cor}
\label{prp:solitary-dependence-clasee}
    Let $D$ be the equivalence class of any solitary edge in a \mcg~$G$. Then, every edge in $D$ is solitary. Furthermore, $D$ is a minimal class. \qed
\end{cor}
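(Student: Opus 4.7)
The plan is to derive both assertions directly from Lemma~\ref{lem:solitary-source-clasee}; no further machinery is needed. Fix a solitary edge~$e$ of~$G$ and let~$D$ denote its equivalence class, so that every member of~$D$ is, by definition, mutually dependent with~$e$.

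For the first assertion, pick any $f \in D$. Since $f \leftrightarrow e$, in particular $f \rightarrow e$. As $e$ is solitary, Lemma~\ref{lem:solitary-source-clasee} applied to the ordered pair $(f, e)$ yields that $f$ is also solitary. Since $f \in D$ was arbitrary, every edge of~$D$ is solitary. For the second assertion, I would argue by contradiction using the characterization of minimality recorded just above the statement: $D$ is minimal precisely when no edge outside~$D$ depends on any edge of~$D$. Suppose $D$ were not minimal; then there would exist $g \in D$ and $h \in E(G) - D$ with $h \rightarrow g$. By the first assertion, $g$ is solitary, so Lemma~\ref{lem:solitary-source-clasee} applied to the pair $(h, g)$ forces $h$ to be solitary and $h \leftrightarrow g$. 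But mutual dependence with $g \in D$ places $h$ in~$D$, contradicting $h \notin D$. Hence $D$ is minimal.

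Since the whole argument is just two invocations of Lemma~\ref{lem:solitary-source-clasee}, there is no real obstacle; the only care required is bookkeeping the direction of the (non-symmetric) dependence relation so that the hypotheses of the lemma are met in each application, and translating ``minimal in $(\mathcal{E}_G, \rightarrow)$'' into the edge-level condition ``no $h \notin D$ satisfies $h \rightarrow g$ for some $g \in D$'' before proceeding.
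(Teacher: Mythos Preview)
Your proposal is correct and matches the paper's intended argument exactly: the paper derives the corollary directly from Lemma~\ref{lem:solitary-source-clasee} together with the edge-level characterization of minimal classes, and marks the statement with a \qed\ rather than spelling out the two applications of the lemma as you have done.
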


In light of the above fact, we refer to the equivalence class of any solitary edge as a {\em solitary class}, and we refer to the remaining equivalence classes as {\em non-solitary classes}. We thus have the following consequence.

\begin{cor}
\label{cor:solitary-source-clasee}
    The members of $\mathcal{E}_G$ of any matching covered graph $G$ may be classified into two different types of equivalence classes: {solitary classes} and {non-solitary classes}. Furthermore, every solitary class is a {minimal class}. \hfill \qed
\end{cor}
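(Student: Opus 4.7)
The plan is to observe that this corollary is an immediate restatement of Corollary~\ref{prp:solitary-dependence-clasee} combined with the definition just introduced. The substantive content was already established: Lemma~\ref{lem:solitary-source-clasee} shows that solitude ``propagates'' along the dependence arrows, and Corollary~\ref{prp:solitary-dependence-clasee} packages this into the two facts that (a)~if one edge of an equivalence class~$D$ is solitary then every edge of~$D$ is solitary, and (b)~such a~$D$ is necessarily a minimal class.

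For the first assertion of Corollary~\ref{cor:solitary-source-clasee}, I would argue as follows. Fix an equivalence class $D \in \mathcal{E}_G$. Either $D$ contains at least one solitary edge, or it contains none. In the former case, Corollary~\ref{prp:solitary-dependence-clasee} implies that every edge of~$D$ is solitary, so by the definition introduced just before the corollary, $D$ is a solitary class. In the latter case, no edge of~$D$ is solitary, and thus, again by the same definition, $D$ is a non-solitary class. Since these two cases are exhaustive and mutually exclusive, the partition $\mathcal{E}_G$ splits cleanly into these two types.

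For the second assertion, let $D$ be any solitary class. By definition it contains a solitary edge, and so Corollary~\ref{prp:solitary-dependence-clasee} immediately yields that $D$ is a minimal class in the dependence poset $(\mathcal{E}_G, \rightarrow)$.

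Since both assertions follow by direct appeal to results already proved, there is no real obstacle to overcome here; the only ``work'' is organising the vocabulary (solitary class versus non-solitary class, minimal class) so that the two previously established facts are recognised as the desired classification. I would therefore keep the proof to a single short paragraph that invokes Corollary~\ref{prp:solitary-dependence-clasee} twice — once for the dichotomy and once for the minimality — with no further argument needed.
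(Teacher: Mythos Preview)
Your proposal is correct and matches the paper's approach exactly: the paper marks this corollary with a bare \qed, treating it as an immediate consequence of Corollary~\ref{prp:solitary-dependence-clasee} together with the definitions of solitary and non-solitary classes just introduced.
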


Note that the converse of the second part need not hold in general. For instance, for the graph $R_{10}$, shown in Figure~\ref{fig:R10}, the minimal class $\{45\}$ is not solitary. 

In view of Corollary~\ref{cor:solitary-source-clasee}, for a \mcg, we use the term {\em solitary pattern} to refer to the (possibly empty) sequence of cardinalities of its solitary classes in nonincreasing order. For instance, $\theta$ has solitary pattern $(1,1,1)$, each of $K_4$ and $\overline{C_6}$ has solitary pattern $(2,2,2)$, whereas the bicorn $R_8$ has solitary pattern $(2,2,1)$; see Figure~\ref{fig:2-connected cubic graphs with too many solitary edges}. On the other hand, the Petersen graph has the trivial solitary pattern~$()$. 
The reader may find it instructive to verify the following by invoking Propositions~\ref{r-graphs-of-order-two}~and~\ref{r-graphs-of-order-four}. 
\begin{prp}\label{prp:solitary-patterns-of-small-r-graphs}
    In an $r$-graph of order two, each edge comprises a solitary class, whereas every $r$-graph of order four has one of the following four solitary patterns: $(2,2,2)$, $(2,2)$, $(2)$ and $()$. \qed
\end{prp}


Our next goal is to establish an upper bound on the cardinality of any solitary class in a \mcg\ that is free of even $2$-cuts; this will bring us one step closer towards proving Theorem~\ref{thm:main-thm-in-simple-words}. To this end, we shall find the following result of 
Lucchesi and Murty~\cite{lumu24} useful; it establishes that the existence of a ``large'' minimal class implies ``low'' edge-connectivity.
\begin{thm}{\sc[Lucchesi-Murty Theorem]}
\label{thm:source-class-2-cut}
\newline Any minimal class $D$ of a matching covered graph $G$, such that $|D|\ge 3$, includes an even $2$-cut of~$G$. Consequently, if $G$ is free of even $2$-cuts, then each minimal class has cardinality one or two.
\end{thm}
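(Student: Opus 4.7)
The plan is to argue by contradiction. Suppose $D = \{e_1, \ldots, e_k\}$ is a minimal class with $k \ge 3$ and that no pair $\{e_i, e_j\} \subseteq D$ forms a $2$-cut of~$G$. First observe that $D$ must be a matching: if $e_i$ and $e_j$ shared a vertex, mutual dependence would force them into the same \pms, yet no matching can contain both; hence neither would lie in any \pmg, contradicting $G$ being \mc. Writing $e_i = u_i v_i$, we have $|V(D)| = 2k$.

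Next, pick a \pmg~$M$ containing $D$ (exists since $G$ is \mc) and a \pmg~$N$ with $N \cap D = \emptyset$ (exists since each component of $G - D$ is \mc, as $D$ is a minimal class). Decompose $M \triangle N$ into its $M$-alternating cycles; every edge of $D$ lies on one of them, and crucially they must all lie on a \emph{single} alternating cycle $C$. For if some alternating cycle $C^*$ of $M \triangle N$ contained a proper, nonempty subset of $D$, then $M \triangle C^*$ would be a \pmg\ containing some but not all edges of $D$, contradicting mutual dependence. Traverse $C$ in one direction so that the $D$-edges appear in the cyclic order $e_1, \ldots, e_k$, with $e_i$ traversed from $u_i$ to $v_i$. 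For $1 \le i < j \le k$, let $X_{ij}$ denote the set of vertices on the arc of $C$ from $v_i$ to $u_j$. By construction, the only edges of $C$ with exactly one endpoint in $X_{ij}$ are $e_i$ and $e_j$, so $\partial_C(X_{ij}) = \{e_i, e_j\}$ and therefore $\partial_G(X_{ij}) \supseteq \{e_i, e_j\}$; equality would make $\{e_i, e_j\}$ a $2$-cut of~$G$. Hence under our hypothesis, for every such pair there exists a \emph{crossing edge} $g_{ij} \in \partial_G(X_{ij}) \setminus \{e_i, e_j\}$.

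The main obstacle, and the hardest step of the proof, is to combine the crossing edges $g_{ij}$ with $M$-alternating path extensions in~$G$ to construct an $M$-alternating cycle $C^\dagger$ containing a proper, nonempty subset of $D$. Then $M \triangle C^\dagger$ would be a \pmg\ containing some but not all edges of $D$, contradicting mutual dependence and completing the argument. The condition $k \ge 3$ is essential here: it provides enough structural slack to pick indices $i, j$ for which some $g_{ij}$ extends to an $M$-alternating detour joining $e_i$ and $e_j$ while avoiding the remaining $D$-edges (which would otherwise be forced back onto $C^\dagger$ by the cycle-property). The delicacy lies in managing the $M$-alternation across $g_{ij}$, which is generally neither an $M$- nor an $N$-edge, and in routing the detour through $V(G) \setminus X_{ij}$ so that the resulting cycle closes with the correct parity.
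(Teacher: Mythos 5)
First, note that the paper does not actually prove this theorem: it is imported verbatim from the Lucchesi--Murty monograph \cite{lumu24} and used as a black box, so there is no in-paper proof to compare against. Your proposal therefore has to stand on its own, and as written it does not: it is a correct and sensible setup followed by an explicit admission that the decisive step is missing. Everything up to the introduction of the crossing edges $g_{ij}$ is fine --- $D$ is a matching (else some $e_i$ would lie in no perfect matching), a perfect matching $M\supseteq D$ and a perfect matching $N$ with $N\cap D=\emptyset$ both exist (the latter because $D$ is minimal, so each component of $G-D$ is matching covered), all of $D$ lies on a single $M$-alternating cycle $C$ of $M\,\triangle\, N$ (else switching on one cycle yields a perfect matching containing a proper nonempty part of $D$), and $\partial_C(X_{ij})=\{e_i,e_j\}$ with $|X_{ij}|$ even, so that if $\partial_G(X_{ij})=\{e_i,e_j\}$ you are done. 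But the entire content of the theorem is concentrated in the step you label ``the main obstacle'': deriving a contradiction from the assumption that every segment admits a crossing edge. You do not carry this out, and the sketch you give does not indicate how it could be carried out.

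Concretely, the gap is this: a crossing edge $g_{ij}\in\partial_G(X_{ij})\setminus\{e_i,e_j\}$ is a single edge carrying no alternating structure whatsoever --- it need not belong to $M$, to $N$, or to any perfect matching conveniently related to them. To build the cycle $C^\dagger$ you would need two $M$-alternating paths joining the ends of $g_{ij}$ back to $C$ with the correct parities, entering $C$ at $M$-covered vertices in a way that closes up an alternating cycle meeting some but not all of $D$; nothing in your setup produces such paths, and the phrase ``the condition $k\ge 3$ provides enough structural slack'' is not an argument. Indeed, one natural attempt --- take a perfect matching $M_{g}$ through $g_{ij}$ avoiding $D$ (which exists by minimality of $D$) and examine $M\,\triangle\, M_{g}$ --- runs into exactly the difficulty that $g_{ij}$ need not land on the alternating cycle containing $D$, so no contradiction falls out. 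This is why the theorem is a genuinely nontrivial result in \cite{lumu24} rather than a parity exercise. To repair the proposal you would either have to supply the detour construction in full (including the parity bookkeeping you defer), or switch to the global argument used by Lucchesi and Murty.
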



It follows from the above that in a \mcg\ free of even $2$-cuts, every removable class has cardinality one or two. In light of this, a \textit{removable edge} of a \mcg\ is any removable class of cardinality one, whereas a \textit{removable doubleton} is any removable class of cardinality two.

Using the above and Corollary~\ref{prp:solitary-dependence-clasee}, we arrive at the following upper bound.

\begin{cor}\label{cor:lm-cor}
    In a \mcg\ that is free of even $2$-cuts, each solitary class has cardinality one or two. \qed
\end{cor}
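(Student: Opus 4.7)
The plan is to chain together the two results stated immediately before the corollary. By Corollary~\ref{prp:solitary-dependence-clasee}, we already know that the equivalence class of any solitary edge is a minimal class of the dependence poset $(\mathcal{E}_G, \rightarrow)$. So the only thing to rule out is the existence of a solitary class of cardinality three or more; once that is done, the conclusion that every solitary class has cardinality one or two is immediate.

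To rule out large solitary classes, I would invoke the Lucchesi--Murty Theorem (Theorem~\ref{thm:source-class-2-cut}): any minimal class $D$ with $|D| \ge 3$ must contain an even $2$-cut of $G$. Thus, if $G$ is assumed to be free of even $2$-cuts, no minimal class of $G$ can have cardinality three or more. Since solitary classes are minimal (by Corollary~\ref{prp:solitary-dependence-clasee}), each solitary class must therefore have cardinality at most two, which is precisely the desired bound.

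Putting these two observations together gives a one-line deduction: let $D$ be a solitary class of $G$; by Corollary~\ref{prp:solitary-dependence-clasee}, $D$ is a minimal class; by the hypothesis that $G$ has no even $2$-cuts and the Lucchesi--Murty Theorem, $|D| \le 2$. There is essentially no obstacle here, because the genuinely hard step---showing that $|D| \ge 3$ forces an even $2$-cut inside $D$---is the Lucchesi--Murty Theorem itself, which we are allowed to take as given. So the corollary is really just a repackaging of Theorem~\ref{thm:source-class-2-cut} in the special case where the minimal class in question happens to be solitary, specialized via Corollary~\ref{prp:solitary-dependence-clasee}.
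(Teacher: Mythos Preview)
Your proposal is correct and matches the paper's approach exactly: the paper simply states that the corollary follows from Theorem~\ref{thm:source-class-2-cut} (Lucchesi--Murty) together with Corollary~\ref{prp:solitary-dependence-clasee}, which is precisely the two-step chain you describe.
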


In view of this, we use the term {\em solitary singleton} to refer to a solitary class of cardinality one; likewise, we use {\em \sd} to refer to a solitary class of cardinality two. For instance, the bicorn $R_8$ has two \sd s and precisely one solitary singleton.

 It is worth noting that minimality is really crucial in the Lucchesi-Murty Theorem~(\ref{thm:source-class-2-cut}). In particular, 
 Lu, Kothari, Feng and Zhang~\cite{lkfz20} constructed matching covered graphs that have arbitrarily large equivalence classes as well as arbitrarily high vertex-connectivity.

We use $\varepsilon(G)$ to denote the cardinality of a largest equivalence class of a \mcg~$G$. The next observation follows from Proposition~\ref{prp:pm-union-of-eqs} and definitions.

\begin{cor}\label{cor:at-least-n/2-solitary-edges}
For a \mcg~$G$, the inequality  $\varepsilon(G)\le \frac{n}{2}$ holds. Furthermore, any equivalence class of cardinality~$\frac{n}{2}$ is a solitary class and comprises a \pmg\ of~$G$. \qed
\end{cor}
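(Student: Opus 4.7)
The plan is to exploit a single structural consequence of mutual dependence: if $D$ is any equivalence class of a \mcg~$G$ and $M$ is a \pmg~containing some edge $e\in D$, then in fact $D\subseteq M$. This follows immediately from the definition of $\leftrightarrow$, since for every $f\in D$ we have $e\rightarrow f$, so any \pmg~through $e$ contains $f$. Combined with the hypothesis that $G$ is \mc~(so at least one such $M$ exists for every $e$), this observation will drive both assertions. Note that it is essentially Proposition~\ref{prp:pm-union-of-eqs} restricted to a single class.

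For the inequality, fix any equivalence class $D$ and any $e\in D$. Since $G$ is \mc, $e$ belongs to some \pmg~$M$, and the preceding observation gives $D\subseteq M$. Since $|M|=\frac{n}{2}$, we conclude $|D|\le \frac{n}{2}$, and taking the maximum over $D$ yields $\varepsilon(G)\le\frac{n}{2}$.

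For the equality case, suppose $|D|=\frac{n}{2}$. The same argument produces a \pmg~$M$ with $D\subseteq M$, but equality of cardinalities forces $D=M$, proving that $D$ itself is a \pmg~of $G$. To see that $D$ is a solitary class, pick any $e\in D$ and suppose it lies in some \pmg~$M'$; applying the observation again gives $D\subseteq M'$, so $D=M'$ and hence $M'=M$. Thus $e$ is the unique \pmg~containing $e$, so $e$ is solitary; since this holds for every edge of $D$, the class $D$ is solitary by Corollary~\ref{prp:solitary-dependence-clasee}.

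I do not anticipate a genuine obstacle here: both statements are short formal consequences of the definition of mutual dependence, the fact that $G$ is \mc, and the cardinality identity $|M|=\frac{n}{2}$ for a \pmg. The only point worth being careful about is the containment $D\subseteq M$, which must be invoked from a single edge of $D$ rather than derived from Proposition~\ref{prp:pm-union-of-eqs} as a whole; but this is immediate from $e\leftrightarrow f$.
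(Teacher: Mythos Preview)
Your proof is correct and follows essentially the same approach the paper indicates (Proposition~\ref{prp:pm-union-of-eqs} plus definitions): an equivalence class sits inside any perfect matching meeting it, which immediately gives both the bound and the equality characterization. One small typo: in the solitary-class argument you write ``Thus $e$ is the unique \pmg~containing $e$'' where you mean ``Thus $M$ is the unique \pmg~containing $e$''.
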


This leads to a natural question: what are those \mcg s that attain the above upper bound tightly? We answer this question in 
Section~\ref{sec:largest-EC-n/2} thus fixing an error in~\cite[Proposition~1.8]{lkfz20} that was spotted by the first author. 
As a consequence, we obtain a complete characterization of those $r$-graphs that attain the upper bound stated in Theorem~\ref{thm:at-most-n/2-solitary-edges}.

We now describe another relation on the edges of a \mcg\ that was also introduced by Carvalho, Lucchesi and Murty~\cite{clm99}, and relate it to solitude --- that is, the state of being solitary.
Two edges $e$ and $f$ of a \mcg~$G$ are {\em mutually exclusive} if each \pmg\ contains at most one of them. In light of Proposition~\ref{prp:pm-union-of-eqs}, we may extend this definition to equivalence classes. Two equivalence classes $D_1$~and~$D_2$ are {\em mutually exclusive} if each \pmg\ includes at most one of them.
We now proceed to prove the following lemma that we shall find useful in establishing that every $r$-graph with two or more solitary classes is $3$-connected; see Corollary~\ref{cor:r-graphs-3-connected}.

\begin{lem}\label{lem:r-graphs-new-lemma}
Let $e_1$ and $e_2$ denote mutually exclusive solitary edges in an~$r$-graph~$G$ that admits a proper \mbox{$r$-edge-coloring}, say $(M_1, M_2, M_3,$ $\dots, M_r)$, and let $M_1$~and~$M_2$ denote the perfect matchings containing $e_1$~and~$e_2$, respectively. Then, for each $i\in \{3,\dots,r\}$, the (spanning) subgraph $H:=M_1\cup M_2\cup M_i$ is a \mbox{$3$-connected (\mbox{$3$-regular})} graph (and $e_1$~and~$e_2$ are mutually exclusive and solitary in~$H$).
\end{lem}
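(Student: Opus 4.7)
The plan is to combine the $3$-regularity of $H$ with the solitude-forced Hamilton cycle structure (Proposition~\ref{prp:mcgs-hcycle}) to rule out small edge-cuts by a pigeonhole counting cut edges against color classes.

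First I would verify that $H$ is $3$-regular and that $e_1, e_2$ remain mutually exclusive solitary edges in $H$. Since $M_1, M_2, M_i$ are distinct color classes of a proper $r$-edge-coloring of~$G$, they are pairwise disjoint perfect matchings, and their union covers each vertex of $H$ exactly thrice; hence $H$ is $3$-regular. Any perfect matching of $H$ is a perfect matching of $G$ because $E(H) \subseteq E(G)$; thus $M_1$ (respectively $M_2$), being the unique perfect matching of $G$ containing $e_1$ (respectively~$e_2$), remains the unique perfect matching of $H$ containing that edge. Mutual exclusion of $e_1, e_2$ is inherited from~$G$.

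Next, I would apply Proposition~\ref{prp:mcgs-hcycle} inside $H$ three times. Since $e_1$ is solitary in~$H$ with unique matching~$M_1$, and $M_2, M_i$ are perfect matchings of $H$ disjoint from~$M_1$, both $M_1 \cup M_2$ and $M_1 \cup M_i$ are Hamilton cycles of~$H$. Symmetrically, using that $e_2$ is solitary in $H$ with unique matching $M_2$ and that $M_i$ is disjoint from $M_2$, the subgraph $M_2 \cup M_i$ is also a Hamilton cycle of~$H$. In particular, $H$ is connected. Since $H$ is $3$-regular, vertex-connectivity coincides with edge-connectivity, so it suffices to show that $H$ is \ecc{3}.

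Suppose for contradiction that $H$ admits an edge-cut $C = \partial_H(X)$ with $0 < |C| \le 2$; connectedness rules out $|C|=0$. For a $3$-regular graph, $|C| \equiv |X| \pmod 2$. If $|C|=1$, then $|X|$ is odd, so each perfect matching of $H$ meets $C$ in an odd positive number of edges, forcing the unique edge of $C$ to lie in each of $M_1, M_2, M_i$; this contradicts their pairwise disjointness. If $|C|=2$, then each of the Hamilton cycles $M_1 \cup M_2$, $M_1 \cup M_i$, $M_2 \cup M_i$ visits vertices of both $X$ and $\overline{X}$ and therefore crosses the $2$-cut $C$ at least twice; since $|C|=2$, each Hamilton cycle uses both edges of~$C$. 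Consequently each edge of $C$ lies in at least two of the three color classes $M_1, M_2, M_i$, once again contradicting pairwise disjointness. I expect the only nontrivial step to be this observation that a Hamilton cycle must use both edges of a $2$-cut; after that, the argument reduces to a clean pigeonhole between the two cut edges and the three pairwise disjoint matchings.
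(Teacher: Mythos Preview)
Your proof is correct and follows essentially the same route as the paper: establish via Proposition~\ref{prp:mcgs-hcycle} that each pairwise union $M_j\cup M_k$ is a Hamiltonian cycle, then derive a contradiction from a putative small cut. The only cosmetic difference is in the $2$-cut step: the paper uses the parity Lemma~\ref{lem:basic-facts-about-parity}~(i) to place both cut edges in a single color class (so the union of the other two is disconnected), whereas you argue that each cut edge must lie in all three Hamiltonian unions and hence in at least two color classes; both yield the same contradiction with pairwise disjointness.
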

\begin{proof}
We let $H:=M_1\cup M_2\cup M_i$ for any $i\in \{3,\dots,r\}$. 
Observe that it suffices to argue that $H$ is \mbox{$3$-connected}. By Proposition~\ref{prp:mcgs-hcycle}, the union of any two of $M_1, M_2$~and~$M_i$ is a hamiltonian cycle. In particular, $H$ is $2$-connected. By Proposition~\ref{prp:r-graphs-basic-prp}~(iii), it remains to show that there is no even $2$-cut in~$H$. Suppose that $H$ has an even $2$-cut, say $C$. By Lemma~\ref{lem:basic-facts-about-parity}~(i), $C$ is a subset of one of $M_1, M_2$~and~$M_i$; consequently, the union of the other two perfect matchings is a disconnected subgraph, and this contradicts what we have already established.
\end{proof}

We now invite the reader to use the definition of solitary class and Proposition~\ref{prp:pm-union-of-eqs} to observe the first part, and use Lemma~\ref{lem:solitary-source-clasee} to observe the second part, of the following.
\begin{cor}
\label{lem:solitary-class-unique-pm}
Each solitary class $S$ of a \mcg~$G$ meets precisely one perfect matching, say~$M$; furthermore $S\subseteq M$. 
Also, any perfect matching of $G$ contains at most one solitary class, or equivalently, any two solitary classes are mutually exclusive.\hfill \qed
\end{cor}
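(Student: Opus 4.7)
The plan is to handle the two assertions separately, leveraging the hint that the first follows from Proposition~\ref{prp:pm-union-of-eqs} together with the definition of a solitary class, and the second follows from Lemma~\ref{lem:solitary-source-clasee}.

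For the first assertion, I would fix an arbitrary edge $e$ of the solitary class $S$ and let $M$ denote the unique perfect matching containing~$e$. The aim is to show $S \subseteq M$ and that $M$ is the only perfect matching meeting $S$. Pick any $f \in S$. By the definition of a solitary class, every $f \in S$ is itself solitary, and by the definition of an equivalence class, $f \leftrightarrow e$. In particular, $f \to e$, so the unique perfect matching containing $f$ must also contain~$e$; as $e$ is solitary, this matching is necessarily~$M$. Hence $f \in M$, giving $S \subseteq M$. For the second half of this assertion, if $M'$ is any perfect matching with $M' \cap S \ne \emptyset$, say $f \in M' \cap S$, then the solitude of $f$ forces $M' = M$. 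Incidentally, this is consistent with Proposition~\ref{prp:pm-union-of-eqs}, which guarantees that $M$ is a union of equivalence classes of $\mathcal{E}_G$ and thus contains $S$ whole.

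For the second assertion, I would argue by contradiction: suppose distinct solitary classes $S_1$ and $S_2$ are both contained in a perfect matching~$M$, and pick $e_i \in S_i$ for $i \in \{1,2\}$. Since $e_2$ is solitary and $M$ is the unique perfect matching containing it, and since $e_1 \in M$, every perfect matching containing $e_2$ (namely just $M$) contains~$e_1$. Therefore $e_2 \to e_1$. Invoking Lemma~\ref{lem:solitary-source-clasee} with the solitary edge $e_1$, we conclude $e_1 \leftrightarrow e_2$, which places $e_1$ and $e_2$ into a common equivalence class, contradicting $S_1 \ne S_2$. Hence no perfect matching contains two distinct solitary classes, which is precisely the statement that any two solitary classes are mutually exclusive.

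I do not anticipate a serious obstacle here: both parts reduce to careful unwinding of the mutual-dependence definition alongside the uniqueness of the perfect matching containing a solitary edge. The only subtle point is to extract the dependence in the correct direction when applying Lemma~\ref{lem:solitary-source-clasee} in the second part --- namely, to establish $e_2 \to e_1$ from the solitude of $e_2$ and then apply the lemma with the solitude of $e_1$ to upgrade this to mutual dependence. Both directions can be swapped symmetrically, and either one suffices to collapse $S_1$ and $S_2$ into the same class.
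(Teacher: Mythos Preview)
Your proposal is correct and follows essentially the same approach as the paper, which simply instructs the reader to use the definition of solitary class together with Proposition~\ref{prp:pm-union-of-eqs} for the first assertion and Lemma~\ref{lem:solitary-source-clasee} for the second. Your unwinding of both parts is accurate, and your observation about the direction of dependence needed to invoke Lemma~\ref{lem:solitary-source-clasee} is exactly the point.
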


This yields the following consequence; to see this, consider any proper \mbox{$r$-edge-coloring}.
\begin{cor}
\label{lem:r-reg-rec}
Every connected \mbox{$r$-edge-colorable} $r$-regular graph has at most $r$ solitary classes.\hfill \qed
\end{cor}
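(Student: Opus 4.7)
The plan is to combine the existence of a proper $r$-edge-coloring with the two parts of Corollary~\ref{lem:solitary-class-unique-pm}. Since $G$ is $r$-regular, any proper $r$-edge-coloring $(M_1, M_2, \ldots, M_r)$ partitions $E(G)$ into $r$ perfect matchings, one for each color class.

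Now I would argue as follows. Let $\mathcal{S}$ denote the set of solitary classes of $G$. By the first part of Corollary~\ref{lem:solitary-class-unique-pm}, every solitary class $S \in \mathcal{S}$ is entirely contained in the unique perfect matching meeting~$S$; in particular, $S$ is contained in some $M_{i(S)}$ of our coloring. This gives a well-defined map $\varphi \colon \mathcal{S} \to \{M_1, \ldots, M_r\}$ sending $S$ to $M_{i(S)}$. By the second part of Corollary~\ref{lem:solitary-class-unique-pm}, any two distinct solitary classes are mutually exclusive, and hence cannot both be contained in the same perfect matching. Therefore $\varphi$ is injective, which yields $|\mathcal{S}| \le r$.

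The main step where one has to be careful is verifying that the containment $S \subseteq M_{i(S)}$ follows for a color class of the fixed $r$-edge-coloring (as opposed to some arbitrary perfect matching), but this is immediate since each $M_i$ is itself a perfect matching and Corollary~\ref{lem:solitary-class-unique-pm} applies to \emph{every} perfect matching of $G$. There is no genuine obstacle here; the whole argument is essentially a counting consequence of facts already established, which is why the hint in the excerpt reduces the proof to the phrase ``consider any proper $r$-edge-coloring''.
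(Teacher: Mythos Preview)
Your proof is correct and follows exactly the approach indicated in the paper: fix a proper $r$-edge-coloring, observe that each solitary class lies in a single color class by the first part of Corollary~\ref{lem:solitary-class-unique-pm}, and use the second part to conclude that distinct solitary classes go to distinct color classes. This is precisely what the paper's one-line hint ``consider any proper $r$-edge-coloring'' unpacks to.
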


Now suppose that $G$ is a connected \mbox{$r$-edge-colorable} $r$-regular graph that has at least $r-1$ solitary classes. It follows from Corollary~\ref{lem:solitary-class-unique-pm} that, in any proper \mbox{$r$-edge-coloring}, these solitary classes determine $r-1$ color classes, and thus the entire coloring (up to permutation of colors). This proves the following.

\begin{cor}\label{cor:urec}
Every connected \mbox{$r$-edge-colorable} $r$-regular graph that has at least $r-1$ solitary classes is uniquely \mbox{$r$-edge-colorable}. \qed
\end{cor}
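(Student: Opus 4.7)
The plan is to directly exploit Corollary~\ref{lem:solitary-class-unique-pm}, which pins each solitary class to a unique perfect matching and forces any two solitary classes to be mutually exclusive. Let $G$ be a connected $r$-edge-colorable $r$-regular graph, and let $S_1,\dots,S_{r-1}$ be (any) $r-1$ solitary classes of $G$. For each $j$, let $M_j$ denote the unique perfect matching that contains $S_j$; note that $M_1,\dots,M_{r-1}$ are pairwise distinct by the mutual exclusivity part of Corollary~\ref{lem:solitary-class-unique-pm}.

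Next, I fix an arbitrary proper $r$-edge-coloring $(N_1,N_2,\dots,N_r)$ of $G$; each $N_i$ is a perfect matching (since $G$ is $r$-regular). For each $j\in\{1,\dots,r-1\}$, the solitary class $S_j$ must be contained in some $N_i$: indeed, pick any edge $e\in S_j$ and let $N_i$ be the color class containing $e$; since $N_i$ is a perfect matching meeting the solitary class $S_j$, we have $N_i=M_j$ by Corollary~\ref{lem:solitary-class-unique-pm}, and in particular $S_j\subseteq N_i$. Moreover, the $N_i$ so obtained for distinct $j$'s must be distinct, because two distinct solitary classes cannot lie in a common perfect matching (again by Corollary~\ref{lem:solitary-class-unique-pm}).

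Thus, up to permuting colors, we may assume $N_j=M_j$ for $j=1,\dots,r-1$; these $r-1$ color classes are then determined entirely by $G$ (independent of the chosen coloring). The remaining color class is forced as
\[
N_r \;=\; E(G)\setminus\bigl(N_1\cup N_2\cup\cdots\cup N_{r-1}\bigr)\;=\; E(G)\setminus\bigl(M_1\cup\cdots\cup M_{r-1}\bigr),
\]
so it, too, is determined by $G$. Hence the proper $r$-edge-coloring is unique up to permutation of colors, which is precisely the statement that $G$ is uniquely $r$-edge-colorable.

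There is no real obstacle here: the argument is essentially a bookkeeping consequence of Corollary~\ref{lem:solitary-class-unique-pm}, and it reproduces the one-sentence justification given just before the corollary. The only subtle point to make explicit is the double use of that corollary — first to identify $N_j$ with $M_j$ (the uniqueness of the perfect matching containing a solitary class), and then to ensure that different $S_j$'s land in different color classes (mutual exclusivity of solitary classes).
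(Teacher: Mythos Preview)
Your proof is correct and follows essentially the same approach as the paper's one-sentence justification: use Corollary~\ref{lem:solitary-class-unique-pm} to show that in any proper $r$-edge-coloring the $r-1$ solitary classes determine $r-1$ of the color classes, whence the remaining class is forced. You have simply made explicit the two invocations of that corollary (uniqueness of the containing perfect matching, and mutual exclusivity of distinct solitary classes) that the paper leaves implicit.
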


Combining Theorem~\ref{thm:source-class-2-cut} with Corollaries~\ref{cor:solitary-source-clasee}~and~\ref{lem:r-reg-rec} yields the following.
\begin{cor}\label{cor:at-most-2r-SE}
Every \ecc{3} \mbox{$r$-edge-colorable} $r$-regular graph has at most $2r$ solitary edges. \qed
\end{cor}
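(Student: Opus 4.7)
The plan is to assemble this corollary directly from the three results it cites, with no additional structural work required. Let $G$ be an \ecc{3} \mbox{$r$-edge-colorable} $r$-regular graph; since any proper \mbox{$r$-edge-coloring} witnesses that $G$ is an $r$-graph, we may invoke Proposition~\ref{prp:r-graph-mc} to conclude that $G$ is matching covered, so that the entire machinery of equivalence classes, solitary classes and the dependence poset applies to it.

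First I would invoke Proposition~\ref{prp:r-graphs-basic-prp}~(iv) (or just the definition of \ecc{3}): since $G$ is \ecc{3}, it has no $2$-cut whatsoever, and in particular no even $2$-cut. The Lucchesi--Murty Theorem (Theorem~\ref{thm:source-class-2-cut}) then forces every minimal class of $G$ to have cardinality at most two. By Corollary~\ref{cor:solitary-source-clasee} every solitary class of $G$ is a minimal class, so I obtain that each solitary class of $G$ has cardinality at most two. (This is of course just Corollary~\ref{cor:lm-cor} specialized to the \ecc{3} setting.)

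Next I would bound the number of solitary classes by $r$ using Corollary~\ref{lem:r-reg-rec}: in any proper \mbox{$r$-edge-coloring} $(M_1,\dots,M_r)$ of $G$, distinct solitary classes lie in distinct $M_i$'s (by Corollary~\ref{lem:solitary-class-unique-pm}, any two solitary classes are mutually exclusive), so there are at most $r$ of them. Multiplying the two estimates yields at most $2r$ solitary edges in total, which is the claim.

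The main (non-)obstacle here is simply that the statement is essentially a one-line bookkeeping corollary of previously established results; there is no delicate case analysis to carry out, nor any dependence on the finer structural distinctions between $r=3$ and $r\ge 4$ that will later sharpen this bound in Theorem~\ref{thm:main-thm-in-simple-words}. The only thing to be mildly careful about is to explicitly note that \ecc{3} implies the absence of even $2$-cuts (so that Theorem~\ref{thm:source-class-2-cut} genuinely applies), and that $r$-edge-colorable $r$-regular graphs are indeed matching covered (so that solitary classes and the dependence poset are well-defined on $G$).
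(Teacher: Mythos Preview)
Your proposal is correct and follows exactly the paper's approach: combine Theorem~\ref{thm:source-class-2-cut} with Corollaries~\ref{cor:solitary-source-clasee} and~\ref{lem:r-reg-rec} to get at most $r$ solitary classes each of cardinality at most two. The extra care you take in noting that $G$ is matching covered and that \ecc{3} eliminates even $2$-cuts is entirely appropriate bookkeeping.
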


We now turn our attention to special types of odd cuts that play an important role in the theory of \mcg s.






\subsubsection{Splicing and separating cuts}\label{sec:Splicing and separating cuts}

For a cut $C:=\partial(X)$ of a graph $G$, we denote the graph obtained from~$G$ by shrinking $X$ to a single vertex $x$ as $G/(X \to x)$, or simply as $G/X$, and we refer to $x$ as the {\em contraction vertex}. The graphs $G/X$ and $G/\overline{X}$ are called the {\em $C$-contractions of $G$}. Observe that each cut of $G/X$ is also a cut of~$G$. This implies that the edge-connectivity of $G/X$ is at least as large as the edge-connectivity of~$G$. 

A cut $C$ of a \mcg~$G$ is a {\em separating cut} if both of its $C$-contractions are also matching covered. The following is an immediate consequence of Proposition~\ref{prp:r-graph-mc} and the preceding discussion.

\begin{prp}\label{prp:r-cuts}
For any odd $r$-cut $C$ of an $r$-graph~$G$, both of its $C$-contractions, say $G_1$~and~$G_2$, are also $r$-graphs; consequently, $C$ is a separating cut of~$G$. Furthermore, if $G$ is $3$-edge-connected then both $G_1$~and~$G_2$ are also $3$-edge-connected. \qed
\end{prp}


It is worth noting that an $r$-graph may have separating cuts that are not $r$-cuts; for instance, the Petersen graph has separating $5$-cuts. We shall find the following easy observation useful.

\begin{prp}\label{prp:pmg-extension-across-sep-cuts}
For a separating cut $C:=\partial(X)$ of a \mcg~$G$, each \pmg~of~$G/X$ extends to a \pmg~of~$G$. \qed
\end{prp}

The following characterization of separating cuts is easily proved; see~\cite[Proposition~2.2]{clm05}.
\begin{prp} {\sc[Characterization of Separating Cuts]}
\label{prp:sep-cuts-characterization}
\newline A cut $C$ of a \mcg~$G$ is a \sepcut\ if and only if, for each edge $e$, there exists a \pmg~$M$ containing $e$ such that $|M\cap C|=1$. \qed
\end{prp}

In many of our proofs, we shall encounter odd $r$-cuts (that are indeed separating cuts by Proposition~\ref{prp:r-cuts}) in our $r$-graph~$G$, and we shall consider the corresponding cut contractions $G_1$~and~$G_2$. In these situations, we shall find it useful to arrive at conclusions pertaining to solitude and mutual exclusiveness (recall~Corollary~\ref{lem:solitary-class-unique-pm}) in $G_1$~and~$G_2$ using our hypotheses regarding the same notions in~$G$. The next lemma deals with this.
\begin{lem}{\sc[Solitude and Mutual Exclusiveness across Separating Cuts]}
\label{lem:solitary-ME-sepcuts} 
\newline
Let $C:=\partial(X)$ be a \sepcut\ in a \mcg~$G$ and let $G':=G/X$. For each pair $e,f\in E(G')$, the following two statements hold:
 \begin{enumerate}[(i)]
            \item if $e$ is solitary in $G$ then $e$ is also solitary in $G'$, and
            \item if $e$ and $f$ are mutually exclusive in~$G$ then they are also mutually exclusive in~$G'$.
        \end{enumerate}
Furthermore, for each pair of edges $d$ and $d'$ in $G[X]$ and $G[\overline{X}]$, respectively, and for each $e\in C$ such that $d\xrightarrow{G} e$, the following statements hold:
        \begin{enumerate}
            \item[(iii)] if $d$ is solitary in $G$ then $e$ is solitary in~$G'$, and
            \item[(iv)] if $d$ and $d'$ are mutually exclusive in $G$ then $e$~and~$d'$ are mutually exclusive in~$G'$.
        \end{enumerate}
    \end{lem}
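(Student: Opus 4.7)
My plan is to treat parts (i) and (ii) first, as direct consequences of the extension property of separating cuts, and then (iii) and (iv), which require one additional structural observation.

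For (i) and (ii), I would invoke Proposition~\ref{prp:pmg-extension-across-sep-cuts}: every perfect matching $M'$ of $G'$ extends to a perfect matching $M$ of $G$ by adjoining a perfect matching of $G[X - u]$, where $u$ is the end in $X$ of the unique cut edge of $M'$ covering the contraction vertex. Since the adjoined part lies entirely in $E(G[X])$, which is disjoint from $E(G')$, we have $M \cap E(G') = M'$. For (i), given a solitary edge $e \in E(G')$, any two perfect matchings $M'_1, M'_2$ of $G'$ containing $e$ extend to $M_1, M_2$ of $G$ both containing $e$; solitude of $e$ in $G$ forces $M_1 = M_2$, whence $M'_1 = M_1 \cap E(G') = M_2 \cap E(G') = M'_2$. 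For (ii), any $M'$ containing both $e$ and $f$ would extend to an $M$ containing both, contradicting mutual exclusiveness in $G$.

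For (iii) and (iv), the crux is the following observation: applying Proposition~\ref{prp:sep-cuts-characterization} to $d$ yields a perfect matching containing $d$ that meets $C$ in precisely one edge, and by solitude of $d$ this matching must be $M_d$; combined with $d \rightarrow e$, this forces $M_d \cap C = \{e\}$. Letting $u, v$ denote the ends of $e$ in $X, \overline{X}$ respectively, I note that $u$ cannot be an end of $d$, lest $M_d$ cover $u$ with both $d$ and $e$; consequently $N_X := M_d \cap E(G[X])$ is a perfect matching of $G[X - u]$ containing $d$, and symmetrically $M_d \cap E(G[\overline{X}])$ is a perfect matching of $G[\overline{X} - v]$ that witnesses the existence of a perfect matching of $G'$ containing $e$.

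For (iii), I argue that $G[\overline{X} - v]$ has a unique perfect matching: two distinct perfect matchings $N_1 \neq N_2$ of $G[\overline{X} - v]$ would combine with $N_X \cup \{e\}$ to produce two distinct perfect matchings of $G$, both containing $d$, contradicting its solitude. Since perfect matchings of $G'$ containing $e$ correspond bijectively to perfect matchings of $G[\overline{X} - v]$ (via adjoining $e$), it follows that $e$ is solitary in $G'$. For (iv), if some perfect matching $M'$ of $G'$ contained both $e$ and $d'$, then $M' \cup N_X$ would be a perfect matching of $G$ containing $e$, $d'$ and $d$, contradicting the hypothesized mutual exclusiveness of $d$ and $d'$ in $G$. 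The main obstacle is recognizing and pinning down the structural step $M_d \cap C = \{e\}$; once this is established via Proposition~\ref{prp:sep-cuts-characterization}, the remaining arguments are routine combinations of perfect matchings across the two shores of~$C$.
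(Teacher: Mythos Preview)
Your approach matches the paper's almost exactly: parts (i) and (ii) via the extension property (Proposition~\ref{prp:pmg-extension-across-sep-cuts}), and parts (iii) and (iv) via Proposition~\ref{prp:sep-cuts-characterization} to obtain a perfect matching $M$ of $G$ with $d\in M$ and $|M\cap C|=1$, whence $M\cap C=\{e\}$ by $d\xrightarrow{G} e$, followed by recombining $M\cap E(G[X])$ with perfect matchings of $G'$ containing~$e$.

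There is, however, one genuine slip in your common setup for (iii) and (iv). You write that ``by solitude of $d$ this matching must be $M_d$'' and then define $N_X:=M_d\cap E(G[X])$. Solitude of $d$ is a hypothesis only in (iii); in (iv) the hypothesis is merely that $d$ and $d'$ are mutually exclusive, so $M_d$ need not be well-defined. Your argument for (iv) as written therefore relies on an object that may not exist. The repair is immediate: drop the appeal to solitude entirely in the setup, let $M$ be any perfect matching from Proposition~\ref{prp:sep-cuts-characterization} containing $d$ with $|M\cap C|=1$ (so $M\cap C=\{e\}$ by $d\xrightarrow{G} e$), and set $N_X:=M\cap E(G[X])$. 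Both (iii) and (iv) then go through exactly as you wrote them, and this is precisely what the paper does.
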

\begin{proof}
We invite the reader to prove the contrapositives of statements (i)~and~(ii) by using Proposition~\ref{prp:pmg-extension-across-sep-cuts}.

We now proceed to prove the contrapositives of statements~(iii)~and~(iv). Since $C$ is a separating cut, by Proposition~\ref{prp:sep-cuts-characterization}, we let $M$ denote a \pmg~of~$G$ that contains~$d$ such that $|M\cap C|=1$; since $d\xrightarrow{G} e$, we infer that $M\cap C=\{e\}$.

If $M'_1$ and $M'_2$ are distinct \pmg s of $G'$ containing~$e$, then $(M-E(G'))\cup M'_1$ and $(M-E(G'))\cup M'_2$ are distinct \pmg s of~$G$ containing $d$; this proves statement~(iii).
If $M'$ is a \pmg\ of $G'$ containing $e$ and $d'$, then \mbox{$(M-E(G'))\cup M'$} is a \pmg\ of~$G$ containing $d$ and $d'$; this proves statement~(iv).
\end{proof}

We now discuss the inverse of the operation of taking $C$-contractions with respect to a cut~$C$, and relate this to the notion of separating cuts in the context of \mcg s.

For each $i\in \{1,2\}$, we let $G_i$ denote a graph with a specified vertex $v_i$ such that the degree of $v_1$ in $G_1$ is the same as the degree of $v_2$ in $G_2$. Let $\pi$ denote a bijection between $\partial_{G_1}(v_1)$~and~$\partial_{G_2}(v_2)$. We let $(G_1\odot G_2)_{v_1,v_2,\pi}$ denote the graph obtained from the disjoint union of $G_1-v_1$ and $G_2-v_2$ by joining, for each edge $e$ in $\partial_{G_1}(v_1)$, the end of $e$ in $G_1-v_1$ to the end of $\pi(e)$ in $G_2-v_2$. We refer to the graph $(G_1\odot G_2)_{v_1,v_2,\pi}$ as the graph obtained by {\em splicing $G_1$ at $v_1$ and $G_2$ at $v_2$ with respect to the bijection $\pi$}, or simply as a graph obtained by {\em splicing $G_1$~and~$G_2$}. If $G_1$ is $3$-regular and $G_2$ is $K_4$, we simplify the notation to $(G_1\odot K_4)_{v_1}$ since the result does not depend on $v_2$~and~$\pi$. The following fact is easily verified and we shall use it implicitly throughout this paper.

\begin{prp}\label{prp:splicing-with-k4}
Splicing any \cc{3} with~$K_4$ yields a \cc{3}. \qed
\end{prp}

Observe that, if $C$ is a separating cut of a \mcg~$G$, and if $G_1$~and~$G_2$ are the $C$-contractions, then $G$ may be obtained by splicing the \mcg s $G_1$~and~$G_2$. The converse, as stated below, is also verified easily; see~\cite[Proposition~2.1]{clm05}.
\begin{prp}\label{lem:splicing}
Splicing two \mcg s $G_1$ and $G_2$ yields a \mcg~$G$. \qed
\end{prp}

Following Bondy and Murty~\cite{bomu08}, we use the notation $H[A,B]$ to denote a bipartite graph with color classes $A$~and~$B$. 
It is easily proved using Hall's Theorem that a bipartite matchable graph~$H[A,B]$ is \mcg\ if and only if $H-a-b$ is matchable for each pair $a\in A$ and $b\in B$. 
A graph $G$, of order four or more, is {\em bicritical} if $G-u-v$ is matchable for each pair of distinct vertices $u$~and~$v$; clearly, every such graph is nonbipartite. The following facts are easily verified; see \cite{lumu24}. 


\begin{prp}\label{prp:bicritical-proposition}
    Splicing two bipartite graphs yields a bipartite graph, whereas splicing two bicritical graphs yields a bicritical graph. \qed
\end{prp}
We now proceed to define a special class of separating cuts, and related graph classes.

\subsubsection{Bricks}

 A cut~$C$ of a \mcg~$G$ is {\em tight} if $|C\cap M|=1$ for each \pmg~$M$. A \mcg, free of nontrivial tight cuts, is a {\em brace} if it is bipartite, and otherwise a {\em brick}. The following deep result was proved by Edmonds, Lov\'asz and Pulleyblank~\cite{elp82}.

\begin{thm}\label{thm:elp-theorem}
    A graph is a brick if and only if it is $3$-connected and bicritical.
\end{thm}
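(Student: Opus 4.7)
The statement is a biconditional, so I would argue the two directions separately.

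\textbf{Direction $(\Rightarrow)$.} Assume $G$ is a brick. For bicriticality, I argue contrapositively: if $G - u - v$ were unmatchable for some pair $u, v$, then Tutte's theorem together with a parity argument on $|V(G)| - 2 - |S|$ would furnish a set $S$ with $c_o((G - u - v) - S) \ge |S| + 2$, making $B := S \cup \{u, v\}$ a barrier of $G$ with $|B| \ge 2$. I would then invoke the standard matching-theoretic fact (see Lucchesi–Murty~\cite{lumu24}) that for any barrier $B$ of size $\ge 2$ in a \mcg\ and any odd component $K$ of $G - B$, the cut $\partial(V(K))$ is tight. Whenever some such $K$ has $|V(K)| \ge 3$, this is a nontrivial tight cut, contradicting the brick property. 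In the residual case where every odd component of $G - B$ is a singleton, the absence of even components forces $G$ to be bipartite (since $B$ must then match the singletons in every PM, leaving no room for edges within $B$), while the existence of even components can be reduced — by iteratively pushing a vertex $z$ of an even component $E$ into $B$ — to show that each even component is a $K_2 = \{z, z'\}$, after which non-bipartiteness provides some $b \in B$ adjacent to both $z, z'$, yielding the nontrivial tight cut $\partial(\{z, z', b\})$ from the triangle. For $3$-connectedness, a cut vertex $v$ would produce an odd component $H$ of $G - v$ (since $|V(G)| - 1$ is odd) with $|V(H)| \ge 3$, giving the nontrivial tight cut $\partial(V(H))$. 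For a candidate $2$-vertex-cut $\{u, v\}$, parity and matching-coveredness force $c_o(G - u - v) \in \{0, 2\}$; the case $2$ reduces to the above barrier argument, while the case $0$ forces every PM of $G$ to pair $u$ and $v$ either to each other or into a common even component (otherwise a residual odd parity emerges in another component), whence for any component $H_1$ the cut $\partial(V(H_1) \cup \{u\})$ is a nontrivial tight cut.

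\textbf{Direction $(\Leftarrow)$.} Assume $G$ is $3$-connected and bicritical. Matching-coveredness is immediate: for any edge $uv$, a PM of $G - u - v$ exists by bicriticality, and appending $uv$ produces a PM of $G$ containing $uv$. Non-bipartiteness is also immediate, since a balanced bipartite graph becomes unmatchable after removing two vertices from the same color class. To prove the absence of nontrivial tight cuts, suppose $C := \partial(X)$ were one; then $|X|, |\overline{X}|$ are odd and at least $3$, and $|C| \ge 3$ by $3$-edge-connectedness (a consequence of $3$-connectedness). Using $3$-connectedness, the set of $X$-endpoints of $C$-edges must have size at least $3$ (else it would be a vertex cut of size at most $2$), and likewise for the $\overline{X}$-endpoints, which lets me select $e_1 = x_1 y_1, e_2 = x_2 y_2 \in C$ with $x_1 \ne x_2$ and $y_1 \ne y_2$. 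Applying bicriticality to $G - x_1 - x_2$ yields a PM $M$ whose $C$-intersection has the parity of $|X| - 2$ and $|\overline{X}|$ (both odd), hence $|M \cap C| \ge 1$. Since $M$ is not maximum in $G$, an $M$-augmenting path $P$ from $x_1$ to $x_2$ exists, and by Menger's theorem there are three internally disjoint such paths. The identity $|(M \triangle E(P)) \cap C| = |M \cap C| + |E(P) \cap C| - 2\,|M \cap E(P) \cap C|$, together with the parity constraint that $|E(P) \cap C|$ is even (since $P$ starts and ends in $X$), allows me to pick $P$ among the three alternatives so that the resulting PM $M' := M \triangle E(P)$ of $G$ satisfies $|M' \cap C| \ge 2$, contradicting the tightness of $C$.

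\textbf{Main obstacle.} The crux of the argument lies in the final step of the reverse direction: producing a PM of $G$ with at least two $C$-edges, thereby contradicting tightness. A naive $M$-augmenting path could in principle cancel out the $C$-edges already in $M$ upon symmetric-differencing, so $3$-connectedness must be invoked genuinely — either via the three internally disjoint $x_1$-$x_2$ paths from Menger's theorem (at least one of which preserves or increases the $C$-count after flipping) or via a structural analysis of the cut contractions $G/X$ and $G/\overline{X}$, which are \mcg s by tightness of $C$ and inherit enough bicriticality and connectedness from $G$ to eventually force a contradiction.
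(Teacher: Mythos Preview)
The paper does not supply its own proof of this theorem; it quotes it as the Edmonds--Lov\'asz--Pulleyblank characterization and explicitly remarks that the original proof ``relied on linear programming duality'', with later graph-theoretical proofs due to Szigeti and to Carvalho--Lucchesi--Murty. So there is no in-paper argument to compare against, but your proposal should be assessed on its own merits.

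Your forward direction $(\Rightarrow)$ is essentially the standard barrier argument and is correct in spirit. The treatment of even components is more cleanly handled by passing at once to a \emph{maximal} barrier $B \supseteq \{u,v\}$: then $G-B$ has no even components at all, and either some odd component has at least three vertices (yielding a nontrivial tight cut) or all are singletons (forcing $G$ bipartite). Your iterated ``push a vertex into $B$'' description is heading toward this but is vaguer than necessary.

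The reverse direction $(\Leftarrow)$ has a genuine gap, and it is precisely the step the paper flags as deep. You write that ``by Menger's theorem there are three internally disjoint such paths'', where ``such'' means $M$-augmenting $x_1x_2$-paths. Menger's theorem furnishes three internally disjoint $x_1x_2$-paths in $G$, but it gives no reason whatsoever for any of them to be $M$-alternating; symmetric-differencing $M$ with a non-alternating path does not produce a perfect matching, so the identity you wrote down is useless for those paths. Moreover, even granting several augmenting paths, your parity bookkeeping does not force $|M' \cap C| \ge 2$: if $C$ really is tight then \emph{every} perfect matching of $G$ meets $C$ in exactly one edge, so for \emph{every} augmenting path $P$ one has $|(M \bigtriangleup E(P)) \cap C| = 1$, and no choice of $P$ can beat that. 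Your argument therefore assumes what it is trying to disprove. This direction genuinely requires more machinery --- either the LP-duality route of the original paper, or the combinatorial arguments of Szigeti or CLM --- and cannot be completed by the augmenting-path-plus-Menger manoeuvre you sketch.
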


It is worth noting that their proof relied on linear programming duality; a graph-theoretical proof was given by Szigeti~\cite{szig02}, and more recently, by CLM~\cite{clm18}. Tight cuts, bricks and braces occupy crucial roles in the theory of \mcg s; we refer the interested reader to
\cite{lumu24}. For instance, every \cc{2} free of nontrivial $3$-cuts is either a brick or a brace; see~\cite{kcll20}.
We shall find the following result of Lov\'asz~\cite{lova87} useful; the interested reader may also refer to~\cite[Section 9.1]{lumu24}.

\begin{lem}\label{lem:brick-equivalence-classes}
    For distinct edges $\alpha$~and~$\beta$ of a brick~$G$, if $\alpha \xleftrightarrow{G} \beta$ then $H[A,B]:=G-\alpha-\beta$ is bipartite such that $|A|=|B|$; furthermore, up to relabeling, ends of $\alpha$ lie in~$A$ whereas ends of~$\beta$ lie in~$B$.
\end{lem}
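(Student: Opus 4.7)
My approach is via the perfect matching polytope $P(G)$. By Theorem~\ref{thm:elp-theorem}, $G$ is $3$-connected, bicritical, and in particular nonbipartite and $3$-edge-connected. I would invoke the standard affine-hull description of $P(G)$ for a brick: since a brick has no nontrivial tight cut, the only linear equalities satisfied by every perfect matching indicator of~$G$ are those in the span of the vertex-degree equalities $\sum_{e \ni v} x_e = 1$. Equivalently, a linear functional $\ell$ on $\mathbb{R}^{E(G)}$ is zero on every perfect matching of~$G$ if and only if there exists $\lambda \colon V(G) \to \mathbb{R}$ with $\sum_v \lambda_v = 0$ such that $\ell(e) = \lambda_x + \lambda_y$ for every edge $e = xy$; see, for instance, \cite{lumu24}.

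Applying this to the linear functional $\ell(x) := x_\alpha - x_\beta$, which vanishes on every perfect matching by the hypothesis $\alpha \leftrightarrow \beta$, I obtain $\lambda$ satisfying, with $\alpha = u_1 u_2$ and $\beta = v_1 v_2$:
\begin{equation*}
\lambda_{u_1} + \lambda_{u_2} = 1, \qquad \lambda_{v_1} + \lambda_{v_2} = -1, \qquad \lambda_x + \lambda_y = 0 \text{ for every edge } xy \text{ of } H := G - \alpha - \beta.
\end{equation*}
Since $G$ is $3$-edge-connected, $H$ is connected. The relation $\lambda_y = -\lambda_x$ along each edge of $H$ propagates throughout, forcing $\lambda$ to take at most two values $t$ and $-t$ on $V(G)$. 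The case $\lambda \equiv 0$ is ruled out by $\lambda_{u_1} + \lambda_{u_2} = 1$, so $t \neq 0$. With $\lambda$ taking only values in $\{t, -t\}$, the equation $\lambda_{u_1} + \lambda_{u_2} = 1$ forces $\lambda_{u_1} = \lambda_{u_2} = 1/2$ (the only alternative, $\lambda_{u_1} = -\lambda_{u_2}$, sums to $0$); symmetrically, $\lambda_{v_1} = \lambda_{v_2} = -1/2$.

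Setting $A := \lambda^{-1}(1/2)$ and $B := \lambda^{-1}(-1/2)$ yields the desired bipartition of $H$ with $\{u_1, u_2\} \subseteq A$ and $\{v_1, v_2\} \subseteq B$. Finally, the side condition $\sum_v \lambda_v = 0$ translates directly to $\tfrac{1}{2}|A| - \tfrac{1}{2}|B| = 0$, hence $|A| = |B|$. The main obstacle is invoking the affine-hull description of $P(G)$ for a brick --- a nontrivial but classical ingredient in matching theory; once that is in hand, the remainder is elementary linear algebra together with the observation that removing two edges from a $3$-edge-connected graph leaves it connected.
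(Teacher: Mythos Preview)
Your proof is correct. The paper does not supply its own proof of this lemma; it attributes the result to Lov\'asz~\cite{lova87} and points the reader to \cite[Section~9.1]{lumu24}. Your argument via the affine hull of the perfect matching polytope is precisely the classical route to this statement: the key nontrivial input is that, for a brick, the only affine dependencies among perfect matching indicators come from the vertex-degree equalities, and once that is in hand the bipartition and the equality $|A|=|B|$ fall out exactly as you describe. The only minor remark is that the connectedness of $H=G-\alpha-\beta$ follows already from $3$-connectedness (which you have via Theorem~\ref{thm:elp-theorem}); you need not pass through $3$-edge-connectedness separately.
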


A nonbipartite \mcg~$G$ is \textit{near-bipartite} if it has a removable doubleton~$R$ such that $G-R$ is a bipartite (matching covered) graph. This class of graphs plays a crucial role in the theory of \mcg s; see \cite{lumu24}~and~\cite{koca17, koth17}. We state the following consequence of the above lemma.

\begin{cor}\label{cor:brick-rd-near-bipartite}
    Every brick~$G$, that has a removable doubleton, is near-bipartite. \qed
\end{cor}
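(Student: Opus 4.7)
The plan is very short because this corollary is essentially a packaging of Lemma~\ref{lem:brick-equivalence-classes}. The statement to prove is that a brick $G$ with a removable doubleton $R$ is near-bipartite, and the three ingredients required by the definition of near-bipartite (nonbipartite, has a removable doubleton~$R$, and $G-R$ is bipartite matching covered) are already almost all in hand.

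First I would unpack the definition of a removable doubleton $R=\{\alpha,\beta\}$: by definition of ``removable class'' applied to a class of cardinality two, we have $\alpha \xleftrightarrow{G} \beta$, and $G-R$ is (connected and) matching covered. The nonbipartiteness of $G$ is immediate from the definition of brick. So the only nontrivial thing left to verify is that $G-R=G-\alpha-\beta$ is bipartite.

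Next I would invoke Lemma~\ref{lem:brick-equivalence-classes} directly with this pair $\alpha,\beta$. Since $G$ is a brick, $\alpha$ and $\beta$ are distinct edges of $G$, and $\alpha\xleftrightarrow{G}\beta$, the hypotheses of the lemma are satisfied. The lemma then produces a bipartition $(A,B)$ of $G-\alpha-\beta$ with $|A|=|B|$, exactly asserting that $G-R$ is bipartite.

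Finally I would assemble the three pieces (nonbipartite brick, removable doubleton $R$, and $G-R$ bipartite matching covered) to conclude $G$ is near-bipartite, per the definition given just before the corollary. There is no real obstacle here; the only thing worth emphasizing in the write-up is that ``removable doubleton'' supplies both the mutual dependence of its two edges (needed to trigger Lemma~\ref{lem:brick-equivalence-classes}) and the fact that $G-R$ is matching covered (needed for the near-bipartiteness conclusion).
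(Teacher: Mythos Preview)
Your proposal is correct and matches the paper's approach exactly: the corollary is marked with an immediate \qed and is presented as a direct consequence of Lemma~\ref{lem:brick-equivalence-classes}, using precisely the unpacking you describe (mutual dependence to trigger the lemma, removability to get $G-R$ matching covered, and the brick definition for nonbipartiteness).
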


In the following section, we shall deduce various implications of the existence of a solitary edge in an $r$-graph, and our proofs of these shall rely heavily on the theory of \mcg s discussed thus far.

\subsection{Implications of the existence of a solitary edge}\label{sec:cuts-and-applications}

Recall that we allow multiple/parallel edges. For a graph~$G$, we use $\mu_{G}(u,v)$ to denote the number of edges joining vertices $u$ and $v$; furthermore, if $e:=uv$ is an edge of~$G$, we let $\mu_{G}(e):=\mu_{G}(u,v)$.
We begin this section with the following easy observation.

\begin{prp}\label{prp:basic-prp}
If $e$ is any solitary edge of a graph~$G$ and $M_e$ is the unique perfect matching containing~$e$, then $\mu_{G}(d)=1$ for each edge $d\in M_e-e$. \qed
\end{prp}

\subsubsection{Uniquely matchable graphs and odd \texorpdfstring{$1$}{}-cuts}

A graph $G$ is {\em uniquely matchable} if it has precisely one \pmg. Any investigation of \ses\ naturally calls for the study of uniquely matchable graphs due to the following observation.

\begin{prp}\label{prp:solitary-calls-for-uniquely-matchable}
An edge $e:=uv$ of a graph $G$ is solitary if and only if $G-u-v$ is uniquely matchable. \hfill\qed
\end{prp}

Following Bondy and Murty \cite{bomu08}, by the {\em null graph}, we mean the graph with no vertices, and we consider this to be uniquely matchable and disconnected.
We now state a result of Kotzig~\cite[Section~$8.6$]{lopl86} pertaining to uniquely matchable graphs and odd $1$-cuts that we shall find very useful in the next section.

\begin{lem}{\sc[Kotzig's Lemma]}
\label{lem:unique-pm-in-a-graph}
\newline Every non-null graph $G$ that has a unique {\pmg}, say $M$, has an odd $1$-cut whose constituent edge, say $f:=xy$, belongs to $M$; furthermore, $M-f$ is the only \pmg~ of $G-x-y$. 
\end{lem}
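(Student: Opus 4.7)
I plan to prove Kotzig's Lemma by induction on $n := |V(G)|$. The base case $n=2$ is immediate: uniqueness of the perfect matching forces $G$ to consist of a single edge $f$ between its two vertices $x,y$ (any parallel edge would produce another perfect matching), and this $f$ is trivially an odd $1$-cut with $G-x-y$ the null graph, which has the empty matching as its unique perfect matching.

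For the inductive step with $n\ge 4$, I first reduce to the connected case: if $G$ is disconnected, then each component inherits a unique perfect matching (otherwise one could alter matchings on components independently to produce multiple perfect matchings of $G$), and applying the induction hypothesis within any component yields a $1$-cut of that component, which is automatically a $1$-cut of $G$. Assuming $G$ connected, the core task is to exhibit an $M$-edge $f=xy$ that is a bridge of $G$. Once such $f$ is identified, Lemma~\ref{lem:basic-facts-about-parity}(i) applied to a shore $X$ of the $1$-cut $\partial(X)=\{f\}$ gives $|X|\equiv |M\cap\partial(X)|=1\pmod 2$, so $\partial(X)$ is an odd $1$-cut. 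The ``furthermore'' clause is then automatic: $M-f$ is certainly a perfect matching of $G-x-y$, and for any perfect matching $M'$ of $G-x-y$, the set $M'+f$ is a perfect matching of $G$, which must equal $M$ by uniqueness, forcing $M'=M-f$.

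To find the bridge in $M$, I argue by contradiction: suppose every $M$-edge lies on a cycle of $G$. First, no $M$-edge $uv$ can have a parallel non-$M$-edge in $G$, else swapping it into $M$ would produce a second perfect matching; hence the multigraph $H:=G/M$ obtained by contracting each $M$-edge into a single vertex is loopless, and each of its vertices (corresponding to an $M$-edge $uv$) has degree $\deg_G(u)+\deg_G(v)-2\ge 2$ since every vertex of $G$ is incident to a cycle through its $M$-edge. Thus $H$ contains a cycle; choose a shortest such cycle $C_H=m_1 m_2 \cdots m_\ell m_1$, with corresponding non-$M$-edges $h_1,\ldots,h_\ell$ of $G$ realizing the edges of $C_H$. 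I would attempt to lift $C_H$ to an $M$-alternating cycle of $G$ by traversing alternately each $M$-edge $m_i$ (between its incoming $h_{i-1}$ and outgoing $h_i$) and each $h_i$. The lift succeeds provided that at each $m_i$ the endpoints of $h_{i-1}$ and $h_i$ in $G$ are distinct endpoints of $m_i$; otherwise a local shortcut along two consecutive non-$M$-edges at a shared endpoint ought to yield either a shorter cycle of $H$ (contradicting extremality) or an alternative alternating configuration. A successful lift produces an $M$-alternating cycle $C$, whence $M\mathbin{\triangle}E(C)\neq M$ is a second perfect matching of $G$---the desired contradiction.

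The principal obstacle is the lifting step: making the ``shortcut'' rigorous for the degenerate configurations. In particular, when $\ell=2$ (two parallel edges between two vertices of $H$) or when two cycle-edges of $H$ incident to a common vertex share an endpoint in $G$, the naive shortcut does not always produce a strictly shorter $H$-cycle, and a finer extremal choice is required (for example, among shortest cycles, one that also minimizes the number of such shared-endpoint incidences). An alternative route, which I expect sidesteps some of this, is to analyze a \emph{longest} $M$-alternating path $P=v_0v_1\cdots v_k$ of $G$ and examine its endpoints: each endpoint must be either of degree $1$ in $G$ (yielding a pendant bridge in $M$ directly) or have a non-$M$-neighbor already in $V(P)$ of the correct parity (yielding an $M$-alternating cycle by closing $P$). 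The same parity-mismatch obstacle reappears here in a different guise and is the technical heart of the argument; resolving it gives the desired bridge in $M$ and completes the induction.
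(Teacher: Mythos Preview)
The paper does not prove this lemma; it is quoted as Kotzig's result with a reference to Lov\'asz--Plummer, so there is no in-paper proof to compare against.

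Your reductions (the base case, the passage to a connected component, the parity of the shores via Lemma~\ref{lem:basic-facts-about-parity}(i), and the ``furthermore'' clause once an $M$-bridge $f$ is in hand) are all correct. The gap is exactly where you place it, and it is genuine. In the $H=G/M$ approach, the proposed shortcut simply does not exist: if $h_{i-1}$ and $h_i$ both meet $m_i=u_iv_i$ at the same endpoint $u_i$, these are just two non-$M$ edges of $G$ incident to $u_i$, contributing only the edges $m_{i-1}m_i$ and $m_im_{i+1}$ already on your $H$-cycle and creating no adjacency $m_{i-1}m_{i+1}$ in $H$; no shorter $H$-cycle appears, and a secondary minimization of ``bad incidences'' can trade a bad $m_i$ for a bad $m_{i\pm1}$ without net gain. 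In the longest-$M$-alternating-path approach, examining one endpoint is likewise insufficient; a further device is needed to defeat the parity obstacle. One workable line is to use both endpoints at once: with $P=v_0\cdots v_k$ longest and $v_0v_1,v_{k-1}v_k\in M$, any non-$M$ edges $v_0v_j$ and $v_kv_{j'}$ with $j'<j$ close an $M$-alternating cycle $v_0\cdots v_{j'}\,v_kv_{k-1}\cdots v_j\,v_0$; the residual configuration (all such $j$ below all such $j'$) then succumbs either to a careful P\'osa-type rotation with a genuine monovariant, or to a separate induction on $|E(G)\setminus M|$ via the $2$-edge-cut $\{e,f\}$ created when the inductively found $M$-bridge $f$ of $G-e$ fails to be a bridge of $G$. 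You have correctly isolated the crux, but the argument as written does not yet supply it.
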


For a $1$-cut comprising the edge~$f$, we simplify the notation $\{f\}$ to~$f$. In the next section, we discuss the structure of an $r$-graph with respect to a fixed solitary edge. Again, odd cuts --- in particular, odd $r$-cuts --- will play an important role.

\subsubsection{Solitary edges and odd \texorpdfstring{$r$}{}-cuts in \texorpdfstring{$r$}{}-graphs}
We first state and prove a couple of properties of $3$-cuts in \cc{3}s.

\begin{prp}\label{prp:3-cut-matching-laminar}
 {\sc[$3$-cuts in $3$-connected \mbox{$3$-regular} Graphs]}
\newline In a \cc{3}, every nontrivial $3$-cut is a matching, and any two $3$-cuts are laminar.
\end{prp}
\begin{proof}
We invite the reader to prove the first part of the statement.
Now, let $C:=\partial(X)$ and $D:=\partial(Y)$ denote distinct $3$-cuts in a \cc{3}~$G$. Since $C$ is an odd cut, exactly one of the two quadrants $X\cap Y$ and $X\cap \overline{Y}$ is an odd set; adjust notation so that $|X\cap Y|$ is odd; consequently, since $D$ is also an odd cut, $|\overline{X}\cap Y|$ is even whereas $|\overline{X}\cap \overline{Y}|$ is odd.

Suppose to the contrary that all four quadrants are nonempty. Since $G$ is $3$-connected, each nonempty cut has at least three edges; furthermore, since $G$ is \mbox{$3$-regular} each nonempty even cut has at least four edges. Thus, $|\partial(X\cap Y)|+|\partial(X\cap \overline{Y})|+|\partial(\overline{X}\cap Y)|+$ $|\partial(\overline{X}\cap \overline{Y})|\ge 14$. On the other hand, $|\partial(X\cap Y)|+|\partial(X\cap \overline{Y})|+|\partial(\overline{X}\cap Y)|+|\partial(\overline{X}\cap \overline{Y})|=2|C\cup D|\le 12$; a contradiction.
\end{proof}

Now, let $G$ be any \ecc{3} $r$-graph that has order four or more, and let $e:=uv$ denote a solitary edge. By Kotzig's Lemma (\ref{lem:unique-pm-in-a-graph}), $G-u-v$ has an odd $1$-cut, and let $f$ denote any odd $1$-cut. In the following lemma, we try to gain a deeper understanding of the structure of~$G$ with respect to $e$ and $f$; see Figure~\ref{fig:update-fig-with-two-possibilities-r-graphs}.

\begin{figure}[!htb]
    \begin{subfigure}[b]{.45\textwidth}
        \begin{tikzpicture}[scale=0.6]
\draw (-3.5,0) ellipse (2 and 2.8);
\node[draw=none] at (-4.5,0) {$X$};
\draw (3.5,0) ellipse (2 and 2.8);
\node[draw=none] at (4.5,0) {$Y$};
\node[circle,fill=white] (1) at (0,0){};
\node[draw=none] at (0,-0.6) {$v$};
\node[draw=none] at (0.2,1) {$e$};
\node[circle,fill=white] (2) at (0,2){};
\node[draw=none] at (0,2.4) {$u$};
\node[circle,fill=white] (3) at (-2.5,-1.5){};
\node[circle,fill=white] (4) at (2.5,-1.5){};
\node[draw=none] at (0,-1.9) {$f$};
\draw (3) -- (4);
\draw (1) -- (2);
\draw (1) -- (2.5,0);
\draw (1) -- (-2.5,0);
\draw (2) -- (2.5,2);
\draw (2) -- (-2.5,2);
\node[draw=none] at (-1,4.4) {$C$};
\draw (-1,4) -- (-1,-4);
\node[draw=none] at (1,4.4) {$D$};
\draw (1,4) -- (1,-4);
\draw (1) -- (2.5,1);
\draw (2) -- (-2.5,1);
\end{tikzpicture}
    \caption{}
    \label{fig:associated-r-cuts-1}
    \end{subfigure}
    \hspace{25pt}
    \begin{subfigure}[b]{.45\textwidth}
   \begin{tikzpicture}[scale=0.6]
\draw (-3.5,0) ellipse (2 and 2.8);
\node[draw=none] at (-4.5,0) {$X$};
\draw (3.5,0) ellipse (2 and 2.8);
\node[draw=none] at (4.5,0) {$Y$};
\node[circle,fill=white] (1) at (0,0){};
\node[draw=none] at (0,-0.6) {$v$};
\node[draw=none] at (0.2,1) {$e$};
\node[circle,fill=white] (2) at (0,2){};
\node[draw=none] at (0,2.4) {$u$};
\node[circle,fill=white] (3) at (-2.5,-1.5){};
\node[circle,fill=white] (4) at (2.5,-1.5){};
\node[draw=none] at (0,-1.9) {$f$};
\draw (3) -- (4);
\draw (1) -- (2);
\draw (1) -- (2.5,0);
\draw (1) -- (2.5,1);
\draw (2) -- (-2.5,1);
\draw (2) -- (-2.5,2);
\node[draw=none] at (-1,4.4) {$C$};
\draw (-1,4) -- (-1,-4);
\node[draw=none] at (1,4.4) {$D$};
\draw (1,4) -- (1,-4);
\draw (1) -- (2.5,-1);
\draw (2) -- (-2.5,0);
\end{tikzpicture}
    \caption{} 
    \label{fig:associated-r-cuts-2}
    \end{subfigure}
    \caption{Illustration for Lemma \ref{lem:associated-r-cuts}}
    \label{fig:update-fig-with-two-possibilities-r-graphs}
\end{figure}

\begin{lem}{\sc[Structure of an $r$-graph with respect to a fixed solitary edge]}
\label{lem:associated-r-cuts}
\newline Let $e:=uv$ denote a solitary edge of an $r$-graph~$G$, of order four or more, and let $f$ denote an odd $1$-cut of $G-u-v$ that has shores $X$ and $Y$. Then $C:=\partial_{G}(X)$ and $D:=\partial_{G}(Y)$ are (laminar) odd $r$-cuts of $G$ that satisfy: (i) $C\cap D=\{f\}$, (ii)~$X\cap Y=\emptyset$, (iii) $V(G)=X\cup Y\cup \{u,v\}$, (iv)~$|\partial_{G}(u)\cap C|=|\partial_{G}(v)\cap D|$ and $|\partial_{G}(v)\cap C|=|\partial_{G}(u)\cap D|$, and (v)~$G-u-v$ is a connected graph. Also, $\mu_{G}(e')=1$ for each edge~$e'$ in the unique perfect matching containing~$e$. Furthermore, if $G$ is \ecc{3}, then (i)~each of $u$~and~$v$ has at least one neighbour in each of $X$~and~$Y$, and (ii)~$G-u-v$ has a unique $1$-cut, namely~$f$.
\end{lem}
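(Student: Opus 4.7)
The plan is to dispose of the set-theoretic statements first, then extract the cut-size and multiplicity information from a single degree count, then prove connectivity of $G-u-v$ via a parity argument on cuts, and finally handle the two $3$-edge-connected addenda via small cut arguments. By the definition of shore, $X$ and $Y$ partition $V(G)-\{u,v\}$, giving (ii) and (iii) immediately. Since the edges of~$G$ not in $G-u-v$ are precisely those incident to $u$ or~$v$, the only edge of~$G$ between $X$ and $Y$ is still~$f$; hence $C\cap D=\{f\}$, proving~(i), and the empty quadrant $X\cap Y$ makes $C$ and~$D$ laminar.

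The central quantitative step is a single degree count. Set $a_u:=|\partial_G(u,X)|$, $a_v:=|\partial_G(v,X)|$, $b_u:=|\partial_G(u,Y)|$, and $b_v:=|\partial_G(v,Y)|$. Reading off the degrees of~$u$ and~$v$ gives $a_u+b_u=r-\mu_G(e)=a_v+b_v$, while by construction $|C|=1+a_u+a_v$ and $|D|=1+b_u+b_v$; adding,
\[
|C|+|D|=2+2(r-\mu_G(e)).
\]
Since $C$ and $D$ are odd cuts of an $r$-graph, $|C|,|D|\ge r$, forcing $\mu_G(e)=1$ and $|C|=|D|=r$ simultaneously. This establishes that $C,D$ are odd $r$-cuts and, combined with Proposition~\ref{prp:basic-prp} applied to the remaining edges of $M_e$, yields the multiplicity claim for every edge of the unique perfect matching through~$e$. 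Property~(iv) is a one-line consequence: $a_u+b_u=r-1=b_u+b_v$ gives $a_u=b_v$, and similarly $a_v=b_u$.

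For~(v), it will suffice to show that $G[X]$ is connected; symmetry handles $G[Y]$, and $f$ then joins the two. If $H_1,\dots,H_k$ are the components of $G[X]$, then the cuts $\partial_G(V(H_i))$ partition~$C$, so $\sum_i|\partial_G(V(H_i))|=r$. Any~$H_i$ of odd order contributes at least~$r$ to this sum; since $|X|$ is odd, the number of odd components is odd. Three or more odd components would force a sum of at least~$3r$, which is impossible; a unique odd component~$H_1$ would force $|\partial_G(V(H_1))|=r$, leaving $|\partial_G(V(H_j))|=0$ for each $j\ge 2$ and thus isolating $V(H_j)$ from the rest of~$G$, contradicting connectivity. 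Hence $k=1$.

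The main obstacle will be the $3$-edge-connected ``furthermore'' clause. For~(i), if say $a_v=0$, then by~(iv) we also have $b_u=0$, and a direct count yields $\partial_G(X\cup\{u\})=\{e,f\}$, which is an even $2$-cut (since $|X\cup\{u\}|$ is even), contradicting $3$-edge-connectivity; the other three cases are symmetric. For~(ii), suppose $f'\ne f$ is another $1$-cut of $G-u-v$; by~(v) we may assume $f'\in E(G[X])$, and it splits $X$ into nonempty parts $X_1,X_2$ with $\{f'\}$ the only edge of~$G$ between them. Then
\[
|\partial_G(X_1)|+|\partial_G(X_2)|=|C|+2=r+2,
\]
and since $|X|$ is odd, exactly one of $|X_1|,|X_2|$ is odd. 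The odd shore contributes at least~$r$ and the even shore at least~$3$ by $3$-edge-connectivity, forcing $r+3\le r+2$ --- the desired contradiction.
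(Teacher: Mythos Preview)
Your proof is correct and follows essentially the same route as the paper's: the identical degree count pins down $|C|=|D|=r$ and $\mu_G(e)=1$, Proposition~\ref{prp:basic-prp} handles the remaining multiplicities, and the $2$-cut contradictions for the $3$-edge-connected clause match the paper's. The only noticeable variations are that you argue connectivity of $G[X]$ from scratch rather than citing Proposition~\ref{prp:r-graphs-basic-prp}~(iii) (odd $r$-cuts are bonds), and for uniqueness of~$f$ you count $|\partial_G(X_1)|+|\partial_G(X_2)|=r+2$ directly whereas the paper bounds $|\partial_G(\{u,v\})|$ from below --- both are cosmetic reshufflings of the same idea.
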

\begin{proof}
We let $G,e:=uv,f,X$ and $Y$ be as defined in the lemma statement. Clearly, $X\cap Y=\emptyset$ and $V(G)=X\cup Y\cup \{u,v\}$. In particular, $C:=\partial_{G}(X)$ and $D:=\partial_{G}(Y)$ are laminar odd cuts of~$G$. Since $f$ is a $1$-cut in~$G-u-v$ with shores $X$~and~$Y$, each edge of $(C\cup D)-f$ has one end in $X\cup Y$ and the other end in $\{u,v\}$; consequently, $C\cap D=\{f\}$ and $(C\cup D)-f=\partial_{G}(X\cup Y)$.

Since $G$ is an $r$-graph, $|C|\ge r$ and $|D|\ge r$. Consequently, $|\partial_{G}(X\cup Y)|\ge 2r-2$. On the other hand, $|\partial_{G}(\{u,v\})|\le 2r-2$. However, $\partial_{G}(\{u,v\})=\partial_{G}(X\cup Y)$. These facts imply that $|\partial_{G}(\{u,v\})|=2r-2$, the multiplicity $\mu_{G}(u,v)=1$, $|C|=|D|=r$, and $|\partial_{G}(u)\cap C|=|\partial_{G}(v)\cap D|$ as well as $|\partial_{G}(v)\cap C|=|\partial_{G}(u)\cap D|$. By Proposition~\ref{prp:r-graphs-basic-prp}, both subgraphs $G[X]$~and~$G[Y]$ are connected; consequently, $G-u-v$ is connected. Since $\mu_{G}(u,v)=1$, using Proposition~\ref{prp:basic-prp}, we conclude that $\mu_{G}(e')=1$ for each edge $e'$ in the unique perfect matching containing~$e$.

Furthermore, the reader may verify that the structure of $G$ with respect to $e$ and $f$ is as shown in one of the two Figures~\ref{fig:associated-r-cuts-1}~and~\ref{fig:associated-r-cuts-2} up to relabeling of $X$ and $Y$, since either each of $u$~and~$v$ has at least one neighbour in each of $X$~and~$Y$, or otherwise one of them has no neighbours in~$X$ whereas the other one has no neighbours in~$Y$; in the latter case, $\{e,f\}$ is a $2$-cut of~$G$.

\begin{figure}[!htb]
    \centering
    \begin{tikzpicture}[scale=0.7]
\centering
\draw (-3.5,0) ellipse (2 and 2.8);
\node[draw=none] at (-3.5,-1.5) {$X$};
\draw (3.5,0) ellipse (2 and 2.8);
\node[draw=none] at (3.5,-1.5) {$Y_1$};
\draw (10.5,0) ellipse (2 and 2.8);
\node[draw=none] at (10.5,-1.5) {$Y_2$};
\node[circle,fill=white] (3) at (-2.5,0){};
\node[circle,fill=white] (4) at (2.5,0){};
\node[draw=none] at (0,-0.4) {$f$};
\draw (3) -- (4);
\node[circle,fill=white] (5) at (4.5,0){};
\node[circle,fill=white] (6) at (9.5,0){};
\node[draw=none] at (7,-0.4) {$f'$};
\draw (5) -- (6);
\end{tikzpicture}
    \caption{An illustration for the proof of Lemma~\ref{lem:associated-r-cuts}}
    \label{fig:3cc-proof}
\end{figure}

Finally, suppose to the contrary that $G$ is \ecc{3} and that $G-u-v$ has a $1$-cut $f'$ that is distinct from $f$. We may adjust notation so that $f'$ has both ends in $Y$, and is thus a $1$-cut of $G[Y]$. Let $Y_1$ and $Y_2$ denote the shores of $f'$ in $G[Y]$; adjust notation so that $f$ has one end in $Y_1$ as shown in Figure~\ref{fig:3cc-proof}. Since $G$ is \ecc{3}, a simple counting argument shows that $\partial_{G}(X\cup Y_1 \cup Y_2)=\partial_{G}(\{u,v\})$ has at least~$2r-1$ edges; however, this contradicts what we have already established earlier. Thus, $f$ is the unique $1$-cut of $G-u-v$. This completes the proof.
\end{proof}

In light of Lemma~\ref{lem:associated-r-cuts}, for a solitary edge $e:=uv$ of an $r$-graph~$G$, of order four or more, we refer to any odd $1$-cut $f$ of $G-u-v$ as a {\em companion of $e$}, and we refer to the laminar (odd) $r$-cuts $C$ and $D$, as defined in the statement of Lemma~\ref{lem:associated-r-cuts}, as the {\em $r$-cuts associated with the solitary-companion pair $(e,f)$}, or simply as the {\em $r$-cuts associated with $(e,f)$}; see Figure~\ref{fig:update-fig-with-two-possibilities-r-graphs}. Furthermore, if $G$ is \ecc{3},
then $f$ is its unique companion; consequently, the $r$-cuts $C$ and $D$ associated with $(e,f)$ are also unique, and we refer to them simply as the {\em $r$-cuts associated with $e$}.
These notions will play a crucial role throughout this paper.

For a subgraph $H$ of a graph~$G$, we simplify the notation $\partial(V(H))$ to $\partial(H)$. The following is a consequence of the preceding two results.
\begin{cor}\label{cor:solitary-leaving-triangle-implies-K4}
If a \cc{3} $G$ has a solitary edge $e:=uv$ that belongs to $\partial(T)$, where $T:=uu_1u_2u$ is a triangle, then $G$ is~$K_4$.
\end{cor}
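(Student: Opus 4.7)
The plan is to invoke Lemma~\ref{lem:associated-r-cuts} with $r=3$ to pin down the structure of $G$ near the solitary edge $e$, and then to exploit Proposition~\ref{prp:3-cut-matching-laminar} on the laminarity of $3$-cuts in a \cc{3} to force the shores of the associated $3$-cuts to be singletons.

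First, since $G$ is $3$-regular and $u$ lies on the triangle $T$, the three neighbours of $u$ are precisely $v, u_1, u_2$, all distinct (note that $v \notin T$ because $uv \in \partial(T)$). Applying Lemma~\ref{lem:associated-r-cuts} to the solitary edge $e$ yields its unique companion $f$ together with the associated $3$-cuts $C=\partial(X)$ and $D=\partial(Y)$ satisfying $V(G)=X\cup Y\cup\{u,v\}$, where each of $u$ and $v$ has exactly one neighbour in $X$ and one in $Y$. Adjusting notation so that $u_1\in X$ and $u_2\in Y$, the triangle edge $u_1u_2$ lies in $C\cap D=\{f\}$, so in fact $f=u_1u_2$; writing $v_1\in X$ and $v_2\in Y$ for the two neighbours of $v$ distinct from $u$, we get $C=\{uu_1,u_1u_2,vv_1\}$ and $D=\{uu_2,u_1u_2,vv_2\}$.

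Next I would consider the $3$-cut $\partial(T)$ of $G$, with the aim of showing $X=\{u_1\}$ (and, by symmetry, $Y=\{u_2\}$). Suppose for contradiction that $|X|\ge 2$; then both $C$ and $\partial(T)$ are nontrivial $3$-cuts of $G$, provided $|V(G)|\ge 6$ (the alternative $|V(G)|=4$ already forces $X=\{u_1\}$ and $Y=\{u_2\}$ outright). By Proposition~\ref{prp:3-cut-matching-laminar}, $C$ and $\partial(T)$ must be laminar. Using $u\notin X$, $v\notin X$, $u_1\in X$, $u_2\notin X$, a short case analysis on the four possible shore inclusions rules out $X\subseteq V-T$ (since $u_1\in X\cap T$), $V-X\subseteq T$ (since $v\in V-X$ and $v\notin T$), and $T\subseteq X$ (since $u\notin X$); the only surviving case is $X\subseteq T$, which combined with $u,u_2\notin X$ gives $X\subseteq\{u_1\}$, contradicting $|X|\ge 2$. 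Hence $X=\{u_1\}$, and an identical argument applied to $D$ yields $Y=\{u_2\}$.

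Consequently $|V(G)|=2+|X|+|Y|=4$, and a one-line degree count on $u_1$ and $u_2$ (each already of degree two inside the triangle, and thus needing exactly one further edge, which must join $v$ and cannot be a multi-edge without over-saturating $u_1$ or $u_2$) identifies $G$ as $K_4$. The main obstacle I anticipate is executing the laminarity case analysis cleanly while correctly handling the boundary case $|X|=1$, where $C$ degenerates to the trivial cut $\partial(u_1)$ and Proposition~\ref{prp:3-cut-matching-laminar} no longer applies --- but this is precisely the outcome the argument is trying to isolate, so there is no real loss. The rest is book-keeping given the structural data delivered by Lemma~\ref{lem:associated-r-cuts}.
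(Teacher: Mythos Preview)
Your proof is correct and reaches the same pivotal observation as the paper --- namely that the companion $f$ coincides with the triangle edge $u_1u_2$ --- but then finishes by a different clause of Proposition~\ref{prp:3-cut-matching-laminar}. The paper applies the \emph{matching} clause: since $uu_1$ and $u_1u_2$ are adjacent edges both lying in $C$, the $3$-cut $C$ fails to be a matching and is therefore trivial; likewise $D$. This concludes in one line without introducing $\partial(T)$ or any case analysis. Your route via the \emph{laminar} clause, comparing $C$ with $\partial(T)$, is also valid but slightly longer; the paper's argument has the advantage of extracting the conclusion directly from the structure of $C$ itself rather than bringing in a third cut. As a minor remark, your parenthetical concern about triviality is unnecessary: the laminarity statement in Proposition~\ref{prp:3-cut-matching-laminar} applies to any two $3$-cuts, trivial or not.
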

\begin{proof}
Let $f$ denote the companion of $e$, and $C:=\partial(X)$~and~$D:=\partial(Y)$ denote the $3$-cuts associated with the solitary-companion pair~$(e,f)$ so that $X\cap Y=\emptyset$ and $C\cap D=\{f\}$; see Lemma~\ref{lem:associated-r-cuts}. Since $G$ is $3$-connected, exactly one of $u_1$~and~$u_2$ lies in $X$; we adjust notation so that $u_1\in X$~and~$u_2\in Y$. Since $u_1u_2\in E(G)$, and since $C\cap D=\{f\}$, we infer that $f=u_1u_2$. Consequently, neither $C$ nor $D$ is a matching. By Proposition~\ref{prp:3-cut-matching-laminar}, $C$~and~$D$ are trivial cuts and $G$ is $K_4$.
\end{proof}



In the following section, we shall use Lemma~\ref{lem:associated-r-cuts} and other results to establish an upper bound on the number of solitary classes in any \ecc{3} $r$-graph; see Corollary~\ref{cor:new-cor}. This, combined with Corollary~\ref{cor:lm-cor}, will prove Theorem~\ref{thm:main-thm-in-simple-words}.


\subsubsection{\texorpdfstring{$r$}{}-edge-colorability, rainbow triangles and bricks} \label{sec:r-edge-colorability}


We begin this section with an easy observation.

\begin{prp}
    Let $G$ be an $r$-regular graph that has a triangle~$T$. Then, $|\partial_{G}(T)|=r$ if and only if the induced subgraph~$G[V(T)]$ has precisely $r$ edges. \qed
\end{prp}

Now, let $G$ be an $r$-regular graph that is \mbox{$r$-edge-colorable}. Observe that if $G$ has a triangle~$T$ such that $G[V(T)]$ has precisely $r$ edges then, in any proper \mbox{$r$-edge-coloring}, the induced subgraph~$G[V(T)]$ witnesses each of the $r$ colors (precisely once), and the same holds for $\partial_{G}(T)$.

Inspired by the above observation, for an $r$-graph~$G$, a triangle~$T$ such that~$G[V(T)]$ has precisely $r$ edges is called a {\em rainbow triangle}, or simply an {\em $r$-triangle}. We now prove their existence in \ecc{3} $r$-graphs, of order four or more, that have a solitary edge.



\begin{lem}\label{lem:combined-stat-r-graphs}
Every $r$-graph $G$ of order four or more, that has a solitary edge, is \mbox{$r$-edge-colorable}. Furthermore, if $G$ is \ecc{3} then: (i) $G$ is bicritical, and (ii) for each $w\in V(G)$, there exists an $r$-triangle~$T$ that does not contain~$w$.
\end{lem}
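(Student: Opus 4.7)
The plan is to prove all three assertions jointly by strong induction on the order~$n$, with base case $n=4$ handled by the enumeration in Proposition~\ref{r-graphs-of-order-four}. Every \ecc{3} $r$-graph of order four with a solitary edge lies in $\mbb{K}{4}{2}$, and each such graph is easily verified to be $r$-edge-colorable, bicritical, and to contain an $r$-triangle on every triple of vertices.

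For the inductive step $n\ge 6$, I first dispatch the case in which $G$ has an even $2$-cut: Proposition~\ref{prp:marked-c-components-solitary-edges} exhibits $G$ as a gluing of two smaller $r$-graphs each carrying a solitary edge, so I would paste the $r$-edge-colorings delivered by induction after permuting colors to identify the two marker edges. Since (i) and (ii) are asserted only in the \ecc{3} case, no further work is needed in this sub-case. Assume therefore that $G$ is \ecc{3}; I apply Lemma~\ref{lem:associated-r-cuts} to the solitary edge $e=uv$ to obtain the associated odd $r$-cuts $C=\partial(X)$ and $D=\partial(Y)$, and I assume without loss of generality that $|Y|\ge 3$ (using $|X|+|Y|=n-2\ge 4$). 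Then both $G/Y$ and $G/\overline{Y}$ are \ecc{3} $r$-graphs of order strictly less than~$n$: the edge $e$ remains solitary in $G/Y$ by Lemma~\ref{lem:solitary-ME-sepcuts}(i), while each edge of $M_e\cap D$ (nonempty by the parity of $|Y|$) is solitary in $G/\overline{Y}$ by the symmetric form of Lemma~\ref{lem:solitary-ME-sepcuts}(iii). Hence the induction hypothesis applies to both contractions.

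For $r$-edge-colorability I paste the proper $r$-edge-colorings of $G/Y$ and $G/\overline{Y}$ together after permuting colors so that they agree on the $r$ edges of $D$, which form the edge set at the contraction vertex in each coloring. For bicriticality~(i), I observe that $G$ is precisely the splicing of $G/Y$ at $y_*$ with $G/\overline{Y}$ at $\overline{y}_*$ under the natural bijection on the edges of~$D$; Proposition~\ref{prp:bicritical-proposition} then transfers bicriticality from the two contractions to~$G$.

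For the rainbow triangle in (ii), the strategy is to pick the contraction according to the position of~$w$. When $|X|\ge 3$, the induction hypothesis applied to $G/\overline{X}$ (which also carries a solitary edge via the symmetric form of Lemma~\ref{lem:solitary-ME-sepcuts}(iii)) with avoided vertex $\overline{x}_*$ yields an $r$-triangle on three vertices of~$X$, which lifts unchanged to~$G$; symmetrically, using $G/\overline{Y}$ gives a triangle supported on~$Y$ when $|Y|\ge 3$. Since $n\ge 6$ forces at least one of $|X|,|Y|$ to be $\ge 3$, all cases are covered: for $w\in X$ use the $Y$-triangle (or $\{q,u,v\}$ when $|Y|=1$), for $w\in Y$ use the $X$-triangle (or $\{p,u,v\}$ when $|X|=1$), and for $w\in\{u,v\}$ use whichever of the two pure-shore triangles is available. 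The extremal triangles $\{p,u,v\}$ and $\{q,u,v\}$ are supplied directly by Lemma~\ref{lem:associated-r-cuts}, which forces the singleton vertex to be adjacent to both $u$ and~$v$ with multiplicities summing to $r-1$, so that the three vertices span exactly $r$ edges. The main obstacle throughout is in (ii): the induction hypothesis delivers only a single avoidance, so care is needed to pick a contraction whose contraction vertex automatically lies opposite to~$w$, with the extremal triangles filling the remaining gap in the singleton-shore cases.
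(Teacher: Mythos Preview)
Your proof is correct and follows the same inductive skeleton as the paper: use the associated $r$-cuts from Lemma~\ref{lem:associated-r-cuts}, push solitude into both contractions, splice the resulting $r$-edge-colorings, and invoke Proposition~\ref{prp:bicritical-proposition} for bicriticality. However, your treatment of~(ii) is unnecessarily convoluted. The paper simply picks the cut $C=\partial(X)$ to be nontrivial (so $|X|\ge 3$ and $|\overline{X}|\ge 3$), applies the induction hypothesis to $G/X$ with avoided vertex $x$ to obtain an $r$-triangle $T_1\subseteq\overline{X}$, and to $G/\overline{X}$ with avoided vertex $\overline{x}$ to obtain $T_2\subseteq X$; since every $w$ lies in $X$ or in $\overline{X}$, one of $T_1,T_2$ avoids it. This eliminates your third contraction $G/\overline{X}$ alongside $G/Y,G/\overline{Y}$, and entirely removes the extremal-triangle analysis of $\{p,u,v\}$ in the singleton-shore case. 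Your separate handling of the even $2$-cut case for $r$-edge-colorability is likewise avoidable, since Lemma~\ref{lem:associated-r-cuts} applies to all $r$-graphs of order at least four; as a side effect, you have a small gap there --- a marked $C$-component may have order two, in which case the induction hypothesis does not literally apply (though such graphs are trivially $r$-edge-colorable).
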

\begin{proof}
We proceed by induction on the order of~$G$. 
Let $e:=uv$ denote a solitary edge and let $f$ denote any companion of $e$, and $C:=\partial(X)$ and $D:=\partial(Y)$ denote the odd $r$-cuts associated with~$(e,f)$ as defined in the statement of Lemma~\ref{lem:associated-r-cuts} so that $X\cap Y=\emptyset$. 
If both $C$~and~$D$ are trivial,
the desired conclusions are easily verified using Proposition~\ref{r-graphs-of-order-four}.
Now, suppose that at least one of them is nontrivial; adjust notation so that $C$ is nontrivial. 
By Proposition~\ref{prp:r-cuts}, both $G_1:=G/(X\rightarrow x)$ and $G_2:= G/(\overline{X}\rightarrow \overline{x})$ are $r$-graphs; clearly, each of them is of order four or more.

Observe that each edge in $\partial_{G_1}(x)-f$ is incident with one of $u$ and $v$; consequently, $e\xrightarrow{G_1} f$. Thus, if $M_1$ is any \pmg\ of~$G_1$ containing~$e$, and $M_2$ is any \pmg\ of~$G_2$ containing $f$, then $M_1 \cup M_2$ is a \pmg\ of~$G$. Since $e$ is solitary in~$G$, these facts imply that $e$ is solitary in~$G_1$ and that $f$ is solitary in~$G_2$.
Therefore, by the induction hypothesis, both $G_1$~and~$G_2$ satisfy the desired conclusions. In particular, both $G_1$~and~$G_2$ are \mbox{$r$-edge-colorable}. Since $G$ is obtained from $G_1$~and~$G_2$ by splicing, we conclude that $G$ is also \mbox{$r$-edge-colorable}.
This proves the first part.

Finally, suppose that $G$ is \ecc{3}. By Proposition~\ref{prp:r-cuts}, both $G_1$~and~$G_2$ are also \ecc{3}. By the induction hypothesis, both $G_1$~and~$G_2$ are bicritical, and $G_1$ has a triangle~$T_1$ such that \mbox{$x\notin V(T_1)$} and $|\partial_{G_1}(T_1)|=r$; likewise, $G_2$ has a triangle~$T_2$ such that $\overline{x} \notin V(T_2)$ and $|\partial_{G_2}(T_2)|=r$. By Proposition~\ref{prp:bicritical-proposition}, $G$ is bicritical. 
Observe that both $T_1$~and~$T_2$ are triangles in~$G$; furthermore, $T_1$ is the desired triangle for each $w\in X$, whereas $T_2$ is the desired triangle for each $w\in \overline{X}$. This completes the proof.
\end{proof}



We are now ready to state and prove two consequences that together comprise a strengthening of Proposition~\ref{prp:marked-c-components-solitary-edges} that we mentioned earlier.
Let $D$ denote any solitary class of an $r$-graph~$G$ that has a $2$-cut~$C$, and let $(M_1,M_2,\dots,M_r)$ be a proper $r$-edge-coloring so that $D\subseteq M_1$. By Proposition~\ref{prp:mcgs-hcycle}, $M_1\cup M_i$ is a hamiltonian cycle for each $i\in \{2,\dots,r\}$; consequently, $C\subseteq M_1$. This proves the following.


\begin{cor}\label{lem:2-cut-decomposition-r-graphs-new}
For each solitary class~$D$ of an $r$-graph~$G$, the unique \pmg\ that contains~$D$ also contains every $2$-cut of~$G$; consequently, if $G$ is not \ecc{3} then $G$ has at most one solitary class. \qed
\end{cor}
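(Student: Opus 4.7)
The plan is to first produce a proper $r$-edge-coloring of $G$ in which the unique perfect matching $M$ containing $D$ is one of the color classes, and then track how a fixed $2$-cut $C$ distributes among the color classes using the Hamiltonian-cycle property from Proposition~\ref{prp:mcgs-hcycle}. Since the conclusion about $2$-cuts is vacuous when $G$ has no $2$-cut, and since order-two $r$-graphs are theta graphs (hence $r$-edge-connected) by Proposition~\ref{r-graphs-of-order-two}, I may restrict attention to the case where $G$ has order at least four. The existence of a solitary edge (any member of $D$) then invokes Lemma~\ref{lem:combined-stat-r-graphs} to guarantee that $G$ is $r$-edge-colorable. Because each color class of a proper $r$-edge-coloring is a perfect matching, and because the unique perfect matching through a solitary edge is $M$, any edge of~$D$ forces its color class to be exactly $M$; relabel so that $M_1 = M \supseteq D$.

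Next, I would fix any $2$-cut $C$ of $G$. By Proposition~\ref{prp:r-graphs-basic-prp} the cut $C$ is even, so Lemma~\ref{lem:basic-facts-about-parity}(i) forces $|M_i \cap C|$ to be even for every $i$, hence $|M_i \cap C| \in \{0,2\}$. Since $\sum_{i=1}^{r} |M_i \cap C| = |C| = 2$, there is a unique index $j$ with $C \subseteq M_j$ and $C$ avoids every other color class. Now I invoke Proposition~\ref{prp:mcgs-hcycle}: the matchings $M_1$ and $M_k$ are disjoint for $k \ne 1$, so $M_1 \cup M_k$ is a Hamiltonian cycle, and therefore crosses the nonempty cut $C$ at least twice. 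If $j \ne 1$, then since $r \ge 3$ we may pick $k \in \{2,\dots,r\} \setminus \{j\}$, and the resulting Hamiltonian cycle $M_1 \cup M_k$ is disjoint from $C$, a contradiction. Thus $j = 1$ and $C \subseteq M_1$, which is the first claim.

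For the consequence, suppose $G$ is not \ecc{3}. By Proposition~\ref{prp:r-graphs-basic-prp}(iv) there is an even $2$-cut $C$ of $G$. By the first claim, every solitary class has its unique perfect matching containing $C$, so all solitary classes lie in the common perfect matching that contains $C$. By Corollary~\ref{lem:solitary-class-unique-pm}, any perfect matching contains at most one solitary class, forcing $G$ to have at most one solitary class.

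The only non-routine step is justifying the existence of the proper $r$-edge-coloring, and this is essentially imported wholesale from the earlier Lemma~\ref{lem:combined-stat-r-graphs}; once that is in hand the parity count on $|M_i \cap C|$ together with the Hamiltonian-cycle obstruction does all the work, so I do not anticipate a genuine obstacle beyond carefully handling the degenerate case of $r$-graphs of order two.
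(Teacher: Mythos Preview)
Your proof is correct and follows essentially the same route as the paper: invoke Lemma~\ref{lem:combined-stat-r-graphs} to obtain a proper $r$-edge-coloring with $D\subseteq M_1$, then use Proposition~\ref{prp:mcgs-hcycle} to force $C\subseteq M_1$; the paper's argument is terser (it omits your explicit parity count), but the content is the same. One small remark on the consequence: the phrase ``the common perfect matching that contains $C$'' implicitly uses that, in the \emph{fixed} coloring, exactly one color class contains $C$ (which you established) together with the fact that the unique perfect matching of any solitary class must be one of those color classes---you have all the ingredients, just make that link explicit.
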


Propositions~\ref{prp:r-graph-mc},~\ref{prp:gluing}, \ref{prp:2-bond} and \ref{prp:marked-C-even-2-cut-components-in-r-graphs-are-also-r-graphs} imply the following.

\begin{cor}\label{cor:2-cut-solitary-stronger-version}
    Every $r$-graph~$G$ that has a $2$-cut~$C$ and (precisely) one solitary class~$D$ satisfies the following, where $G_1$~and~$G_2$ denote its marked $C$-components with marker edges $e_1$~and~$e_2$, respectively.
    \begin{enumerate}[(i)]
        \item $C\subseteq D$ if and only if $e_i$ is solitary in~$G_i$ for each $i\in \{1,2\}$, and
        \item for an edge $e\in E(G_1)-e_1$, the edge $e\in D$ if and only if $e$ is solitary in~$G_1$, $e\xrightarrow{G_1}e_1$, and $e_2$ is solitary in~$G_2$.\qed
    \end{enumerate} 
\end{cor}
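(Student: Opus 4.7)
The plan is to work through the natural bijection between perfect matchings of $G$ and compatible pairs of perfect matchings of $G_1$ and $G_2$ codified by Propositions~\ref{prp:gluing} and~\ref{prp:2-bond}. First, I would invoke Proposition~\ref{prp:marked-C-even-2-cut-components-in-r-graphs-are-also-r-graphs} to assert that $G_1$ and $G_2$ are themselves $r$-graphs, and hence matching covered by Proposition~\ref{prp:r-graph-mc}; also, the $2$-cut $C$ is necessarily even since an $r$-graph admits no odd cut of size smaller than $r$. Writing $C = \{f,f'\}$, the cited gluing propositions yield a bijection of the following shape: the perfect matchings of $G$ containing $C$ correspond to pairs $(M_1, M_2)$ with $e_i \in M_i$ for each $i$, while those disjoint from $C$ correspond to pairs $(M_1, M_2)$ with $e_i \notin M_i$ for each $i$. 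This bijection is the workhorse for both parts.

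For part~(i), in the forward direction I would use Corollary~\ref{lem:2-cut-decomposition-r-graphs-new} to observe that $C \subseteq D$ forces a unique perfect matching $M_D$ of $G$ containing $C$; through the bijection, the number of such perfect matchings factors as a product over $i \in \{1,2\}$ of the number of perfect matchings of $G_i$ containing $e_i$, so both factors must equal $1$. For the reverse direction, uniqueness of a perfect matching of each $G_i$ containing $e_i$ produces, via the bijection, a unique perfect matching of $G$ containing $C$; each edge of $C$ is then solitary, and the two edges of $C$ are mutually dependent (both lying in that same unique perfect matching), so they lie in a common solitary class, which must be $D$ by the uniqueness hypothesis.

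For part~(ii), the strategy is symmetric. Assuming $e \in D$, I would argue that every perfect matching of $G_1$ containing $e$ must contain $e_1$: otherwise, pairing such a matching with any perfect matching of $G_2$ avoiding $e_2$ (which exists because $G_2$ is matching covered) would produce, via the bijection, a perfect matching of $G$ containing $e$ but disjoint from $C$, contradicting Corollary~\ref{lem:2-cut-decomposition-r-graphs-new}. Uniqueness of the perfect matching of $G_1$ containing $e$, and uniqueness of the perfect matching of $G_2$ containing $e_2$, would then both follow from the same counting observation: any duplication on either side would produce two distinct perfect matchings of $G$ containing $e$ via the bijection. Conversely, given the three hypotheses, I would pair the unique perfect matching of $G_1$ containing $e$ (which contains $e_1$ by the dependence hypothesis) with the unique perfect matching of $G_2$ containing $e_2$ to produce a perfect matching of $G$ containing both $e$ and $C$; the condition $e \xrightarrow{G_1} e_1$ then rules out any perfect matching of $G$ containing $e$ but disjoint from $C$, so $e$ is solitary in $G$ and therefore lies in the unique solitary class $D$.

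The main point to get right --- really bookkeeping rather than a genuine obstacle --- is keeping the two cases of the bijection cleanly separated (perfect matchings containing all of $C$ versus perfect matchings avoiding $C$) and consistently translating each solitude or dependence claim in $G$ into a uniqueness statement about pairs of perfect matchings in $G_1 \times G_2$. Once this dictionary is set up, both directions of both parts reduce to elementary counting.
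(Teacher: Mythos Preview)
Your proposal is correct and follows essentially the same route as the paper: the paper's entire proof is the single line ``Propositions~\ref{prp:r-graph-mc},~\ref{prp:gluing}, \ref{prp:2-bond} and \ref{prp:marked-C-even-2-cut-components-in-r-graphs-are-also-r-graphs} imply the following,'' and you have accurately unpacked how those results combine via the bijection between perfect matchings of $G$ and compatible pairs in $G_1 \times G_2$. Your additional use of Corollary~\ref{lem:2-cut-decomposition-r-graphs-new} is natural (it is the immediately preceding result and the paper presents the two corollaries as a package strengthening Proposition~\ref{prp:marked-c-components-solitary-edges}); strictly speaking one can also do without it, since the dependence $e\xrightarrow{G_1}e_1$ is derived directly from the bijection anyway.
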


The following is an immediate consequence of Lemma~\ref{lem:combined-stat-r-graphs} and Theorem~\ref{thm:elp-theorem}.

\begin{cor}\label{cor:3cc-solitary-brick}
    Every $3$-connected $r$-graph of order four or more, that has a solitary edge, is a brick. \qed
\end{cor}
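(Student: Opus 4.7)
The plan is to use the two cited results essentially as black boxes, with only a small observation connecting them.

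First, I would note that since $G$ is $3$-connected, it is in particular $3$-edge-connected (this is a standard implication, valid for any graph). Thus $G$ is a \mbox{$3$-edge-connected} $r$-graph of order four or more that has a solitary edge, placing it in the hypothesis of the second part of Lemma~\ref{lem:combined-stat-r-graphs}. From that lemma we immediately extract item~(i): $G$ is bicritical.

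Now I would invoke Theorem~\ref{thm:elp-theorem} (Edmonds–Lov\'asz–Pulleyblank), which asserts that a graph is a brick if and only if it is $3$-connected and bicritical. Both conditions are in hand: $3$-connectedness is a hypothesis, and bicriticality was just established. Therefore $G$ is a brick, as claimed.

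I don't see any genuine obstacle here; the corollary is truly immediate once Lemma~\ref{lem:combined-stat-r-graphs}~(i) has been proved. The only step that merits a brief comment is the passage from $3$-connectedness to $3$-edge-connectedness, which is what allows us to apply the lemma (whose hypothesis is phrased in terms of edge-connectivity). For $3$-regular graphs the two notions coincide, but for $r\ge 4$ they do not, and it is important that Lemma~\ref{lem:combined-stat-r-graphs} requires only the weaker edge-connectivity condition.
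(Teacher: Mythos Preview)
Your proposal is correct and matches the paper's own reasoning exactly: the corollary is stated as an immediate consequence of Lemma~\ref{lem:combined-stat-r-graphs} and Theorem~\ref{thm:elp-theorem}, and your only added detail---that $3$-connectedness implies $3$-edge-connectedness so that the lemma applies---is precisely the small observation needed to connect the two.
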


Our next goal is to strengthen the above by weakening the hypothesis. To this end, we prove the following that is reminiscent of an earlier result, namely Lemma~\ref{lem:r-graphs-new-lemma}.

\begin{lem}\label{3cc-subgraph-in-r-graphs}
Let $G$ be a \ecc{3} $r$-graph that has a solitary edge~$e$, and let $(M_1, M_2, M_3, \dots, M_r)$ denote any proper $r$-edge-coloring wherein $M_1$ is the perfect matching containing~$e$. Then there exist distinct $i,j\in \{2,3,\dots,r\}$ such that the (spanning) subgraph \mbox{$H:= M_1\cup M_i\cup M_j$} is a \cc{3} (and $e$ is a solitary edge of~$H$).
\end{lem}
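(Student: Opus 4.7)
The plan is to first show that any even $2$-edge-cut of $H:=M_1\cup M_i\cup M_j$ must lie entirely inside $M_1$, and then to exhibit a suitable pair $(i,j)$ based on the number of solitary classes of $G$.

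To begin, $H$ is $3$-regular as the union of three disjoint perfect matchings. Since $e$ is solitary and $M_i$ is a perfect matching distinct from $M_1$, Proposition~\ref{prp:mcgs-hcycle} gives that $M_1\cup M_i$ is a Hamiltonian cycle of $G$, so $H$ is $2$-edge-connected; by Proposition~\ref{prp:r-graphs-basic-prp}(iv) it therefore suffices to rule out even $2$-cuts of $H$. Let $C':=\partial_H(W)$ be such a cut, with $|W|$ even. Lemma~\ref{lem:basic-facts-about-parity}(i) applied to each $k\in\{1,i,j\}$ gives $|M_k\cap C'|\in\{0,2\}$, so (summing to $|C'|=2$) exactly one of these intersections equals $C'$. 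If $C'\subseteq M_i$ (symmetrically $M_j$), then the Hamiltonian cycle $M_1\cup M_j$ is a connected spanning subgraph disjoint from $C'$, contradicting that $C'$ is a cut of $H$. Hence $C'\subseteq M_1$, and a bad pair $(i,j)$ is witnessed by an even set $W\subsetneq V(G)$ satisfying $|\partial(W)\cap M_1|=2$ and $\partial(W)\cap(M_i\cup M_j)=\emptyset$.

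Suppose next that $G$ has two or more solitary classes. Pick a solitary edge $e^*$ in a class distinct from that of $e$; by Corollary~\ref{lem:solitary-class-unique-pm}, $e$ and $e^*$ lie in distinct perfect matchings, so they are mutually exclusive solitary edges. After relabelling we may assume $e^*\in M_2$, and Lemma~\ref{lem:r-graphs-new-lemma} applies to give that $M_1\cup M_2\cup M_k$ is a $3$-connected $3$-regular graph for every $k\in\{3,\ldots,r\}$, as required.

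It remains to handle the case where $G$ has exactly one solitary class, which contains $e$ and has size at most $2$ by Corollary~\ref{cor:lm-cor}. I would proceed by induction on $|V(G)|$: the base case $|V(G)|=4$ is verified directly using Proposition~\ref{r-graphs-of-order-four}, and for larger $G$, invoke Lemma~\ref{lem:associated-r-cuts} to obtain the companion $f$ of $e$ and the associated $r$-cuts $C=\partial(X)$ and $D=\partial(Y)$. The identity $M_1\cap C=\{f\}$ (from $e\notin C$, $f\in M_1\cap C$, and the other $M_1$-edges lying entirely within $G[X]$ or $G[Y]$) forces $|X|$ odd by parity, and since $\sum_k|M_k\cap C|=|C|=r$ with each summand odd and at least $1$, we obtain $|M_k\cap C|=1$ for every $k$. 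Consequently the coloring restricts to a proper $r$-edge-coloring of each contraction --- a smaller \ecc{3} $r$-graph by Proposition~\ref{prp:r-cuts} --- and $e$ (respectively $f$) remains solitary in the contraction by Lemma~\ref{lem:solitary-ME-sepcuts}, so the induction hypothesis supplies good pairs for the contractions. The main obstacle I anticipate is lifting such a good pair back to $G$: any witness $W$ of a $2$-cut of $H$ in $G$ that becomes invisible in $G/X$ must properly split $X$, and such split-$X$ witnesses are not directly ruled out by the inductive hypothesis on $G/X$ alone. I expect this step requires applying the induction to both contractions simultaneously and using the tightness-like identity $|M_k\cap C|=|M_k\cap D|=1$ to translate a split-$X$ witness in $G$ into a genuine cut witness in $G/Y$; alternatively, when the solitary class has size two, one can exploit the near-bipartite structure of $G$ afforded by Corollary~\ref{cor:brick-rd-near-bipartite} to exclude such $W$ directly.
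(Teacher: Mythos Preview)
Your opening observation (even $2$-cuts of $H$ must lie in $M_1$) and the two-or-more-solitary-classes case are correct. The gap you flag in the remaining case is genuine, and the paper closes it with an idea you have not hit upon: instead of inducting across the $r$-cuts $C,D$ associated with $(e,f)$, it inducts across an $r$-triangle.

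Concretely, Lemma~\ref{lem:combined-stat-r-graphs}(ii) furnishes a triangle $T$ with $|\partial_G(T)|=r$ avoiding a chosen end of $e$. Set $G':=G/V(T)$; this is a smaller \ecc{3} $r$-graph by Proposition~\ref{prp:r-cuts}, the edge $e$ remains solitary in it by Lemma~\ref{lem:solitary-ME-sepcuts}(i), and the coloring restricts (your tightness argument $|M_k\cap\partial(T)|=1$ applies verbatim with $\partial(T)$ in place of $C$). Induction yields $i,j$ with $H':=M'_1\cup M'_i\cup M'_j$ a \cc{3}. The lift is now free: because $T$ is a triangle and each of $M_1,M_i,M_j$ contributes exactly one edge to $G[V(T)]$ and one to $\partial(T)$, the graph $H=M_1\cup M_i\cup M_j$ is precisely $(H'\odot K_4)$, and Proposition~\ref{prp:splicing-with-k4} gives $3$-connectivity of $H$. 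No case split on the number of solitary classes is needed.

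Your route stalls at exactly the point you identify: applying induction to $G/X$ and to $G/\overline{X}$ separately produces two \emph{a priori} different good pairs, and shuttling witnesses between $C$ and $D$ does not coordinate them. The triangle trick makes one side of the cut so small that \emph{every} pair $(i,j)$ is good there, which is what breaks the deadlock. As a secondary point, the near-bipartite fallback you propose for the doubleton subcase would be circular in the paper's development: the statement that a \ecc{3} $r$-graph with a solitary doubleton is a (near-bipartite) brick is Corollary~\ref{cor:solitary-doubleton-near-bipartite-brick}, which is \emph{derived from} the present lemma.
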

\begin{proof}
We proceed by induction on the order. If $|V(G)|=2$, the desired conclusion holds trivially. Now suppose that $|V(G)|\ge 4$, and let $u$ denote an end of~$e$. By the second part of Lemma~\ref{lem:combined-stat-r-graphs}, there exists a triangle~$T$ such that $u\notin V(T)$ and $|\partial_{G}(T)|=r$.
By Proposition~\ref{prp:r-cuts}, the smaller graph $G':=G/V(T)$ is also a \ecc{3} $r$-graph, and $\partial_{G}(T)$ is a separating cut of~$G$. By Lemma~\ref{lem:solitary-ME-sepcuts}~(i), $e$ is also solitary in~$G'$. Note that $(M_1', M_2', M_3', \dots, M_r')$ is a proper $r$-edge-coloring of~$G'$, where $M_i'$ is the restriction of $M_i$ to~$G'$. By the induction hypothesis, there exist two distinct
$i,j\in \{2,3,\dots,r\}$ such that the subgraph $H':= M_1' \cup M_i' \cup M_j'$ is a \cc{3}. Observe that $H:=M_1\cup M_i \cup M_j$ is obtained from $H'$ by splicing with~$K_4$; consequently, by Proposition~\ref{prp:splicing-with-k4}, $H$ is also a \cc{3}.
\end{proof}

We are now ready to deduce the aforementioned strengthening of Corollary~\ref{cor:3cc-solitary-brick}. Its statement~(i) is an immediate consequence of Corollary~\ref{cor:3cc-solitary-brick} and Lemma~\ref{3cc-subgraph-in-r-graphs}, whereas statement~(ii) follows from  Corollary~\ref{cor:brick-rd-near-bipartite} and the fact that bricks are \mbox{$3$-connected}; recall Theorem~\ref{thm:elp-theorem}.

\begin{cor}\label{cor:solitary-doubleton-near-bipartite-brick}
    Every \ecc{3} $r$-graph, of order four or more, satisfies the following:
    \begin{enumerate}[(i)]
        \item if $G$ has a solitary edge then $G$ is a brick, and
        \item if $G$ has a solitary doubleton then $G$ is a near-bipartite brick; in particular, deletion of any solitary doubleton~$\{\alpha, \beta\}$ results in a bipartite \mcg~$H[A,B]$ such that $\alpha$ has both ends in~$A$ whereas $\beta$ has both ends in~$B$.
        \qed
    \end{enumerate}
\end{cor}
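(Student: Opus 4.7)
The plan is to assemble the conclusion from three already-established pieces: the brick conclusion for $3$-connected $r$-graphs with a solitary edge (Corollary~\ref{cor:3cc-solitary-brick}), the extraction of a spanning \cc{3}\ subgraph from an \ecc{3} $r$-graph with a solitary edge (Lemma~\ref{3cc-subgraph-in-r-graphs}), and the bipartition control for mutually dependent edges in a brick (Lemma~\ref{lem:brick-equivalence-classes}). The only substantive step is upgrading the hypothesis ``\ecc{3}'' to genuine $3$-connectivity when $r\ge 4$, and Lemma~\ref{3cc-subgraph-in-r-graphs} was tailored precisely for this; everything else is bookkeeping via Theorem~\ref{thm:elp-theorem}.

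For statement~(i), let $e$ be a solitary edge of $G$. I would invoke Lemma~\ref{3cc-subgraph-in-r-graphs} to obtain a spanning \cc{3}\ subgraph $H$ of $G$ in which $e$ is still solitary; Corollary~\ref{cor:3cc-solitary-brick} applied to $H$ (as a \ecc{3} $3$-graph of order at least four with a solitary edge) gives that $H$ is a brick, whence by Theorem~\ref{thm:elp-theorem} $H$ is both $3$-connected and bicritical. Since $H$ is a spanning subgraph of $G$, both properties transfer to $G$: adding edges cannot decrease vertex-connectivity, and any \pmg\ of $H-u-v$ remains a \pmg\ of $G-u-v$, so $G-u-v$ is matchable for every pair of distinct vertices $u,v$. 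A second application of Theorem~\ref{thm:elp-theorem} then gives that $G$ is a brick.

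For statement~(ii), suppose $\{\alpha, \beta\}$ is a solitary doubleton of $G$. Since $\alpha$ is solitary, part~(i) yields that $G$ is a brick, hence $3$-connected and consequently \ecc{3}, so $G - \{\alpha, \beta\}$ is connected. By Corollary~\ref{prp:solitary-dependence-clasee} the class $\{\alpha, \beta\}$ is minimal, and the paragraph preceding Theorem~\ref{thm:source-class-2-cut} observes that every component of the removal of a minimal class from a \mcg\ is matching covered; combined with connectedness, $G - \{\alpha, \beta\}$ itself is matching covered, making $\{\alpha, \beta\}$ a removable doubleton. Corollary~\ref{cor:brick-rd-near-bipartite} then gives that $G$ is a near-bipartite brick. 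For the ``in particular'' conclusion, observe that $\alpha \leftrightarrow \beta$ because they share an equivalence class; Lemma~\ref{lem:brick-equivalence-classes} applied to the brick $G$ produces a bipartition $H[A,B] = G - \alpha - \beta$ with $|A| = |B|$ in which the ends of $\alpha$ lie in $A$ and the ends of $\beta$ lie in $B$, and the matching-coveredness of $H[A,B]$ is exactly the removability already established. The only delicate point, and the place where one could get tripped up, is the tacit use of $3$-edge-connectivity of $G$ in showing that $\{\alpha,\beta\}$ is \emph{removable} (not merely minimal); this is why the proof of~(ii) has to route through~(i) before one can invoke Corollary~\ref{cor:brick-rd-near-bipartite}.
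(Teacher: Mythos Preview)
Your proof is correct and follows essentially the same route as the paper's (very terse) justification: for~(i) you combine Lemma~\ref{3cc-subgraph-in-r-graphs} with Corollary~\ref{cor:3cc-solitary-brick} and transfer $3$-connectivity and bicriticality across the spanning inclusion via Theorem~\ref{thm:elp-theorem}, and for~(ii) you use~(i) to secure $3$-connectivity, upgrade the solitary doubleton to a removable one, and then appeal to Corollary~\ref{cor:brick-rd-near-bipartite} together with Lemma~\ref{lem:brick-equivalence-classes} for the bipartition structure. The only thing you add beyond the paper's sketch is spelling out explicitly why the minimal class $\{\alpha,\beta\}$ is removable (connectedness of $G-\{\alpha,\beta\}$ plus the observation preceding Theorem~\ref{thm:source-class-2-cut}) and invoking Lemma~\ref{lem:brick-equivalence-classes} directly for the ``in particular'' clause, both of which are implicit in the paper's one-line proof.
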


We remark that the first part of Lemma~\ref{lem:combined-stat-r-graphs} proves Seymour's Conjecture~(\ref{conj:Seymour}) for all $r$-graphs that have a solitary edge; it is worth noting that such graphs need not be simple. 
We also mention a consequence of the second part of Lemma~\ref{lem:combined-stat-r-graphs} that may be viewed as a refinement of Proposition~\ref{prp:r-graph-mc}.

\begin{cor}
    Every simple \mbox{$3$-edge-connected} $r$-graph, where $r\ge 4$, is matching double covered. \qed
\end{cor}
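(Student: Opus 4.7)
The plan is to argue by contradiction, leveraging the structural statement in Lemma~\ref{lem:combined-stat-r-graphs}(ii). Suppose that some simple \mbox{$3$-edge-connected} $r$-graph $G$, with $r \ge 4$, fails to be matching double covered. Since every edge of an $r$-graph is matchable by Proposition~\ref{prp:r-graph-mc}, this failure means that some edge of $G$ is solitary.

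First, I would observe that any simple $r$-regular graph with $r \ge 3$ has at least $r+1 \ge 4$ vertices, so in particular $G$ is of order four or more, putting us in the regime where Lemma~\ref{lem:combined-stat-r-graphs} applies. Invoking the second assertion of that lemma (applied to any vertex $w$), we obtain an $r$-triangle $T$ of $G$, that is, a triangle whose three vertices induce a subgraph with precisely $r$ edges.

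The crux is that the existence of an $r$-triangle in a simple graph immediately forces $r \le 3$: the induced subgraph on three vertices of a simple graph has at most $\binom{3}{2} = 3$ edges. Together with the hypothesis $r \ge 4$, this is the desired contradiction, ruling out the existence of a solitary edge in $G$ and completing the proof. The only subtlety worth flagging is the small-order check at the beginning, which is needed to legitimately apply Lemma~\ref{lem:combined-stat-r-graphs}(ii); once that is in place, the argument is essentially a one-line pigeonhole on the number of edges a triangle can induce in a simple graph.
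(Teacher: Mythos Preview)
Your argument is correct and is essentially the same as the paper's: both proceed by contradiction, invoke Lemma~\ref{lem:combined-stat-r-graphs}(ii) to produce an $r$-triangle, and then note that in a simple graph three vertices can span at most three edges, contradicting $r\ge 4$. Your explicit check that the order is at least four is exactly the small prerequisite the paper leaves implicit.
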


The first part of Lemma~\ref{lem:combined-stat-r-graphs}, combined with Corollary~\ref{lem:r-reg-rec}, yields the following.
\begin{cor}\label{cor:r-graphs-has-at-most-r-solitary-classes}
Every $r$-graph has at most $r$ solitary classes. \qed
\end{cor}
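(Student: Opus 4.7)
The plan is to deduce the corollary by chaining together the two results cited in the hint, with a small amount of care at the boundary. First, I would observe that if $G$ has no solitary class whatsoever, the bound $0 \le r$ holds trivially. Hence I may assume that $G$ possesses at least one solitary edge, and it suffices to show $G$ is $r$-edge-colorable; the conclusion then follows instantly from Corollary~\ref{lem:r-reg-rec}, which bounds the number of solitary classes in any connected $r$-edge-colorable $r$-regular graph by $r$.

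Next, to establish $r$-edge-colorability, I would invoke the first part of Lemma~\ref{lem:combined-stat-r-graphs}, which yields precisely this conclusion for $r$-graphs of order four or more that have a solitary edge. The only case the lemma does not cover is when $G$ has order two; but by Proposition~\ref{r-graphs-of-order-two}, every such $r$-graph belongs to $\theta_{}^1$ and thus consists of two vertices joined by $r$ parallel edges, which is trivially $r$-edge-colorable by assigning a distinct color to each parallel edge. Consequently, in all cases $G$ admits a proper $r$-edge-coloring.

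I do not foresee any genuine obstacle here: the entire argument is a one-line combination of the two cited results, with the order-two boundary dispatched by Proposition~\ref{r-graphs-of-order-two}. It is worth noting that the bound is tight: in the order-two case, each of the $r$ parallel edges forms its own solitary singleton, so the graph attains exactly $r$ solitary classes, matching the upper bound.
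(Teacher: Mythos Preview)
Your proposal is correct and follows essentially the same approach as the paper, which simply states that the first part of Lemma~\ref{lem:combined-stat-r-graphs} combined with Corollary~\ref{lem:r-reg-rec} yields the result. Your added care with the order-two boundary case (via Proposition~\ref{r-graphs-of-order-two}) is a nice touch that the paper leaves implicit.
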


It is worth noting that the above follows immediately if one assumes the famous Berge-Fulkerson Conjecture to be true; it states that every $r$-graph has $2r$ (not necessarily distinct) \pmg s so that each edge appears in precisely two of them. To see this implication, note that any two solitary classes are mutually exclusive. On a different note, the first part of Lemma~\ref{lem:combined-stat-r-graphs}, combined with Lemma~\ref{lem:r-graphs-new-lemma}, proves the following.

\begin{cor}\label{cor:r-graphs-3-connected}
Every $r$-graph, that has two or more solitary classes, is \mbox{$3$-connected}. \qed
\end{cor}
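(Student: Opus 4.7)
The plan is to produce, inside $G$, a spanning $3$-regular, $3$-connected subgraph via Lemma~\ref{lem:r-graphs-new-lemma}; since any cut of $G$ restricts to the corresponding cut of that subgraph, this will force $G$ itself to be $3$-connected. Concretely, I will pick representative solitary edges from two of the solitary classes, exhibit a proper $r$-edge-coloring of $G$ in which these two edges lie in distinct color classes, and then feed the result into Lemma~\ref{lem:r-graphs-new-lemma}.

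So let $G$ be an $r$-graph with at least two solitary classes, and choose $e_1, e_2$ lying in distinct solitary classes. By Corollary~\ref{lem:solitary-class-unique-pm}, the two classes are mutually exclusive, and therefore so are $e_1$ and $e_2$. The case $|V(G)| = 2$ follows at once from Proposition~\ref{r-graphs-of-order-two}, which asserts that every $r$-graph of order two is already $\ecc{3}$. For the main case $|V(G)| \ge 4$, the first part of Lemma~\ref{lem:combined-stat-r-graphs} yields a proper $r$-edge-coloring $(M_1, \dots, M_r)$ of $G$, since $G$ contains a solitary edge. The same Corollary~\ref{lem:solitary-class-unique-pm} says that each $e_j$ lies in a \emph{unique} perfect matching of $G$, which must then be the color class containing $e_j$; because $e_1$ and $e_2$ belong to distinct solitary classes, these color classes are distinct, and after relabeling I may assume $e_1 \in M_1$ and $e_2 \in M_2$. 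Invoking Lemma~\ref{lem:r-graphs-new-lemma} with $i := 3$ (available since $r \ge 3$) then produces the desired $3$-regular, $3$-connected spanning subgraph $H := M_1 \cup M_2 \cup M_3$ of $G$.

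To finish, I would observe that $\partial_H(X) \subseteq \partial_G(X)$ for every proper nonempty $X \subset V(G)$, so the $3$-edge-connectedness of $H$ transfers verbatim to $G$; likewise, any pair of vertices separating $G$ would induce the same separation in $H$, so $3$-vertex-connectedness transfers as well. I do not foresee any genuine obstacle here: the only mildly delicate point is the forced alignment of the two solitary edges with two distinct color classes of the $r$-edge-coloring, and that alignment is already dictated by the uniqueness clause of Corollary~\ref{lem:solitary-class-unique-pm}. The rest is bookkeeping in the style of the preceding corollaries.
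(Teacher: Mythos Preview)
Your proposal is correct and follows essentially the same route as the paper: combine the $r$-edge-colorability from Lemma~\ref{lem:combined-stat-r-graphs} with Lemma~\ref{lem:r-graphs-new-lemma}, using mutual exclusivity from Corollary~\ref{lem:solitary-class-unique-pm} to place $e_1$ and $e_2$ in distinct color classes. The only addition you make explicit is the (routine) transfer of $3$-connectedness from the spanning subgraph $H$ back to $G$, which the paper leaves implicit.
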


Combining the above with Corollary~\ref{lem:solitary-class-unique-pm} yields the following --- that proves Theorem~\ref{thm:at-most-n/2-solitary-edges} for $r$-graphs that are not $3$-edge-connected.
\begin{cor}\label{cor:2-connected-but-not-3-connected}
Every $r$-graph, that is not $3$-edge-connected, has at most $\frac{n}{2}$ \ses. Furthermore, equality holds if and only if the \ses\ comprise a \pmg. \qed
\end{cor}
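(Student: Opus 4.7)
The plan is to derive Corollary~\ref{cor:2-connected-but-not-3-connected} by a short chaining of the two cited results, exactly as suggested in the text immediately preceding the statement. The key observation is that 3-vertex-connectivity implies 3-edge-connectivity, so the contrapositive of Corollary~\ref{cor:r-graphs-3-connected} applies to any $r$-graph that is not \ecc{3}.

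First I would argue that $G$ has at most one solitary class. Since $G$ is assumed not to be \ecc{3}, and since every \vcc{3} graph is also \ecc{3}, $G$ is not \mbox{$3$-connected}. The contrapositive of Corollary~\ref{cor:r-graphs-3-connected} then yields that $G$ has at most one solitary class. (If $G$ is $3$-regular, one can invoke the earlier Proposition equating vertex- and edge-connectivity; for $r \ge 4$, the implication \vcc{3}~$\Rightarrow$~\ecc{3} is immediate.)

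Next I would bound the number of solitary edges. If $G$ has no solitary class, it has no solitary edges and the bound $0 \le \tfrac{n}{2}$ holds trivially. Otherwise, let $S$ denote the unique solitary class. By Corollary~\ref{lem:solitary-class-unique-pm}, there is a perfect matching $M$ of $G$ such that $S \subseteq M$, and every solitary edge of $G$ lies in $S$. Consequently the number of solitary edges equals $|S| \le |M| = \tfrac{n}{2}$.

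For the equality clause, if the number of solitary edges equals $\tfrac{n}{2}$, then $|S| = |M|$ forces $S = M$, so the set of solitary edges is a perfect matching. Conversely, if the solitary edges constitute a perfect matching, their number is exactly $\tfrac{n}{2}$. Since the entire argument is just a bookkeeping consequence of the two preceding corollaries, there is no substantive obstacle; the only subtlety worth a sentence is the explicit appeal to \vcc{3}~$\Rightarrow$~\ecc{3} when invoking the contrapositive of Corollary~\ref{cor:r-graphs-3-connected}.
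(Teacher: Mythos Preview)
Your proof is correct and follows precisely the approach the paper indicates: take the contrapositive of Corollary~\ref{cor:r-graphs-3-connected} (using that \vcc{3} implies \ecc{3}) to conclude there is at most one solitary class, then apply Corollary~\ref{lem:solitary-class-unique-pm} to bound its size by~$\frac{n}{2}$ and handle the equality case.
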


We also note that the first part of Lemma~\ref{lem:combined-stat-r-graphs} proves Theorem~\ref{thm:main-thm-in-simple-words} for $r$-graphs that are not \mbox{$r$-edge-colorable}; furthermore, along with Corollaries~\ref{cor:lm-cor}~and~\ref{cor:r-graphs-has-at-most-r-solitary-classes}, it proves Theorem~\ref{thm:main-thm-in-simple-words} for \ecc{3} $3$-graphs. To prove Theorem~\ref{thm:main-thm-in-simple-words} for the remaining case --- that is, for \ecc{3} $r$-graphs, where $r\ge 4$, that are \mbox{$r$-edge-colorable} --- we first prove the following lemma.

\begin{lem}\label{lem:r-cut-lemma}
Let $G$ be an $r$-graph that has an $r$-triangle~$T$. Then, $G$ has at most $k$ solitary classes where $k$ denotes the number of edges of~$T$ whose multiplicity (in~$G$) is precisely one.
\end{lem}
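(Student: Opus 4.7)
If $G$ has no solitary class the inequality $0 \le k$ is immediate, so assume $G$ has at least one solitary class and therefore a solitary edge; Lemma~\ref{lem:combined-stat-r-graphs} then guarantees that $G$ is $r$-edge-colorable. I fix a proper $r$-edge-coloring $(M_1,\dots,M_r)$. Because $T$ is an $r$-triangle, the cut $C:=\partial_G(V(T))$ has exactly $r$ edges, and each $|M_i\cap C|$ is a positive odd integer with $\sum_i |M_i\cap C|=|C|=r$, forcing $|M_i\cap C|=1$ for every $i$. Consequently each $M_i$ matches one vertex of $V(T)$ across $C$ and the other two within $V(T)$, so each color class contains exactly one edge of $E(G[V(T)])$; the assignment $M_i\mapsto M_i\cap E(G[V(T)])$ is therefore a bijection between the $r$ colors and the $r$ specific edges of $E(G[V(T)])$.

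For any solitary class $D$ with unique perfect matching $M^D$, I associate the specific edge $e_D:=M^D\cap E(G[V(T)])$. Picking any solitary edge $f\in D$, its color class is a perfect matching containing $f$ and hence equals $M^D$, so every $M^D$ is a color class. By Corollary~\ref{lem:solitary-class-unique-pm} distinct solitary classes yield distinct color classes, and hence distinct edges $e_{D_1},\dots,e_{D_s}$ in $E(G[V(T)])$. The proof reduces to the multiplicity claim $\mu_G(e_D)=1$ for every $D$, since the set of specific edges of $E(G[V(T)])$ with multiplicity one has exactly $k$ elements, yielding the desired $s\le k$.

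The multiplicity claim splits into two cases. If $f\ne e_D$, then $e_D\in M^D-f$ and Proposition~\ref{prp:basic-prp} immediately delivers $\mu_G(e_D)=1$. The harder subcase is $f=e_D$, making $e_D$ itself solitary in $G$: I will contract the separating $r$-cut $C$ to form the 4-vertex $r$-graph $G'':=G/(V(G)\setminus V(T))$ (Proposition~\ref{prp:r-cuts}), in which $e_D$ remains solitary by Lemma~\ref{lem:solitary-ME-sepcuts}(i). Writing $V(T)=\{a,b,c\}$ and letting $t'$ be the contracted vertex, the degree equation at $a$ in $G''$ combined with $\mu_{ab}+\mu_{ac}+\mu_{bc}=r$ forces $\mu_{at'}=\mu_{bc}$, and cyclically $\mu_{bt'}=\mu_{ac}$, $\mu_{ct'}=\mu_{ab}$. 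A direct enumeration of the perfect matchings of $G''$ shows that any specific $bc$-edge lies in exactly $\mu_{at'}=\mu_{bc}$ of them, so solitude in $G''$ forces $\mu_{bc}=\mu_G(e_D)=1$.

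The main technical obstacle is precisely this subcase $f=e_D$: without contracting to the 4-vertex quotient $G''$ and exploiting the resulting ``opposite-edge equality,'' one cannot rule out the possibility that $e_D$ is solitary in $G$ while $\mu_G(e_D)\ge 2$ (in which case parallel-edge swaps would generate additional singleton solitary classes and violate the bound). Once the multiplicity claim is established, the $s$ pairwise distinct edges $e_{D_1},\dots,e_{D_s}$ of multiplicity one within $E(G[V(T)])$ directly witness $s\le k$.
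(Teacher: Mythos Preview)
Your proof is correct and follows essentially the same approach as the paper's: fix an $r$-edge-coloring, note that each color class meets $E(G[V(T)])$ in exactly one edge, and show that the edge associated to any solitary class has multiplicity one. The paper's version is shorter because it invokes Lemma~\ref{lem:associated-r-cuts}, whose conclusion $\mu_G(e')=1$ already applies to \emph{every} edge $e'$ in the unique perfect matching of a solitary edge---including the solitary edge itself---so your ``harder subcase'' $f=e_D$ is dispatched immediately without the contraction to a four-vertex graph.
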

\begin{proof}
If $G$ has no solitary classes then we are done. Now, suppose that $G$ has a solitary class. By the first part of Lemma~\ref{lem:combined-stat-r-graphs}, $G$ is \mbox{$r$-edge-colorable}; let $M_1, M_2, \dots, M_r$ denote a proper \mbox{$r$-edge-coloring}. By Corollary~\ref{lem:solitary-class-unique-pm}, each $M_i$ contains at most one solitary class.
Since $T$ is a triangle, any two edges in the induced subgraph~$T':=G[V(T)]$ are adjacent; consequently, $|M_i\cap E(T')|=1$ for each $M_i$. Finally, by Lemma~\ref{lem:associated-r-cuts}, if $e$ is any solitary edge then $\mu(e')=1$ for each edge~$e'$ in the unique perfect matching containing~$e$. Using these facts, we conclude that $G$ has at most $k$ solitary classes where $k$ denotes the number of edges of~$T$ whose multiplicity (in~$G$) is precisely one.
\end{proof}

We are now ready to state and prove the desired stronger versions of Theorem~\ref{thm:main-thm-in-simple-words}. We do this in the following section.

\subsection{Proof of Theorem~\ref{thm:main-thm-in-simple-words}: solitary patterns of \texorpdfstring{\ecc{3}}{} \texorpdfstring{$r$}{}-graphs}\label{sec:LM}

We first invite the reader to observe that the next result is an immediate consequence of Lemmas \ref{lem:combined-stat-r-graphs}~and~\ref{lem:r-cut-lemma}.

\begin{cor}\label{cor:new-cor}
Every \ecc{3} $r$-graph~$G$, of order four or more, satisfies the following:
\begin{enumerate}[(i)]
\item if $r=3$ then $G$ has at most three solitary classes, and
\item if $r\ge 4$ then $G$ has at most two solitary classes. \qed
\end{enumerate}
\end{cor}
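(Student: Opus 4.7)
The plan is to invoke the two preceding lemmas directly, since the corollary really is immediate once both are in hand. First I would dispose of the trivial direction: if $G$ has no solitary class then it has zero solitary classes and both bounds hold vacuously. So assume $G$ has a solitary edge. Since $G$ is \ecc{3} and of order at least four, Lemma~\ref{lem:combined-stat-r-graphs} produces an $r$-triangle $T$ in $G$ (for instance, by choosing any vertex $w$ and applying part~(ii) of that lemma). We may now apply Lemma~\ref{lem:r-cut-lemma} to this $r$-triangle.

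Next I would translate the bound from Lemma~\ref{lem:r-cut-lemma} into the two numerical statements by examining edge multiplicities. Let $m_1,m_2,m_3$ denote the multiplicities $\mu_G(\cdot)$ of the three pairs of vertices of $T$. By definition of an $r$-triangle, $m_1+m_2+m_3 = |E(G[V(T)])| = r$, and since each pair is joined by at least one edge of $T$, we have $m_i\ge 1$ for each $i$. Let $k:=|\{i:m_i=1\}|$; then Lemma~\ref{lem:r-cut-lemma} asserts that $G$ has at most $k$ solitary classes.

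For part~(i), when $r=3$ we have trivially $k\le 3$, yielding the desired bound of three solitary classes. For part~(ii), suppose $r\ge 4$. If all three multiplicities were equal to one, then $m_1+m_2+m_3=3<4\le r$, contradicting $m_1+m_2+m_3=r$. Hence $k\le 2$, and Lemma~\ref{lem:r-cut-lemma} gives at most two solitary classes, as required.

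The main step, as anticipated, is just the pigeonhole reasoning on the edge multiplicities of the $r$-triangle; there is no genuine obstacle since both lemmas have already been established. The proof is therefore essentially a one-paragraph consequence of Lemma~\ref{lem:combined-stat-r-graphs} (to find the $r$-triangle) and Lemma~\ref{lem:r-cut-lemma} (to bound solitary classes via its simple edges).
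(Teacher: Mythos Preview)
Your proposal is correct and follows exactly the approach the paper intends: the corollary is stated as an immediate consequence of Lemmas~\ref{lem:combined-stat-r-graphs} and~\ref{lem:r-cut-lemma}, and you have simply spelled out the multiplicity-count detail that makes the implication go through.
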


Observe that the above, combined with Corollary~\ref{cor:lm-cor}, proves Theorem~\ref{thm:main-thm-in-simple-words}. The following is simply an amalgamation of  Lemma~\ref{lem:combined-stat-r-graphs} and Corollaries~\ref{cor:lm-cor}~and~\ref{cor:new-cor}. 

\begin{cor}\label{cor:main-thm}
Every \ecc{3} $r$-graph~$G$, of order four or more, satisfies the following:
\begin{enumerate}[(i)]
\item if $G$ has a solitary edge then $G$ is \mbox{$r$-edge-colorable},
\item the cardinality of each solitary class is either one or two,
\item if $r=3$ then $G$ has at most three solitary classes, whence at most six solitary edges,
\item and if $r\ge 4$ then $G$ has at most two solitary classes, whence at most four solitary edges. \qed
\end{enumerate}
\end{cor}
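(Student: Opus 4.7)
The plan is to assemble the corollary directly from the three results the author names, with no new work required beyond a clean citation chain. Since the statement is labeled as ``simply an amalgamation,'' the proposal is to present each part as a one-line deduction.

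For part~(i), I would invoke Lemma~\ref{lem:combined-stat-r-graphs} verbatim: it already asserts that any $r$-graph of order at least four with a solitary edge is \mbox{$r$-edge-colorable}, so the \ecc{3} hypothesis is not even needed here. For part~(ii), I would observe that a \ecc{3} graph has no $2$-cuts at all, hence in particular no even $2$-cuts, and then apply Corollary~\ref{cor:lm-cor}, which bounds the cardinality of each solitary class in such a matching covered graph by two. Parts~(iii) and~(iv) follow from Corollary~\ref{cor:new-cor}, which bounds the number of solitary classes by three when $r=3$ and by two when $r\ge 4$; combining each bound with part~(ii), the corresponding edge counts $3\cdot 2=6$ and $2\cdot 2=4$ are immediate.

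Since every piece is already established, there is no genuine obstacle: the only ``work'' is noting that \ecc{3} supplies the hypothesis needed by Corollary~\ref{cor:lm-cor} (absence of even $2$-cuts) and that every $r$-graph is matching covered by Proposition~\ref{prp:r-graph-mc}, which legitimizes speaking of solitary classes and equivalence classes in $G$ in the first place. Thus the proposed proof is essentially a three-sentence citation: apply Lemma~\ref{lem:combined-stat-r-graphs} for~(i), Corollary~\ref{cor:lm-cor} for~(ii), and Corollary~\ref{cor:new-cor} together with~(ii) for~(iii) and~(iv).
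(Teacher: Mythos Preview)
Your proposal is correct and matches the paper's approach exactly: the paper states that this corollary ``is simply an amalgamation of Lemma~\ref{lem:combined-stat-r-graphs} and Corollaries~\ref{cor:lm-cor}~and~\ref{cor:new-cor},'' which is precisely the citation chain you describe. Your additional remark that \ecc{3} eliminates even $2$-cuts (supplying the hypothesis of Corollary~\ref{cor:lm-cor}) is the only bridging observation needed, and you have it.
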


Recall from Section~\ref{sec:CLM's-DR} that the solitary pattern of a \mcg\ refers to the (possibly empty) sequence of cardinalities of its solitary classes in nonincreasing order. Now, the following is simply a restatement of Corollary~\ref{cor:main-thm}~(ii), (iii) and (iv).

\begin{cor}\label{lem:solitary-patterns-in-r-graphs}
Every $3$-edge-connected $r$-graph~$G$, of order four or more, satisfies the following:
\begin{enumerate}[(i)]
\item if $r=3$, then $G$ has one of the following ten solitary patterns:  \newline $(2,2,2)$, $(2,2,1)$, $(2,1,1)$, $(1,1,1)$,  $(2,2)$, $(2,1)$, $(1,1)$, $(2)$, $(1)$~or~$()$,
\item whereas if $r\ge 4$, then $G$ has one of the following six solitary patterns: \newline $(2,2)$, $(2,1)$, $(1,1)$, $(2)$, $(1)$~or~$()$. \qed
\end{enumerate}
\end{cor}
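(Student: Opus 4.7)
The plan is very short because, as the authors themselves note, this corollary is just a bookkeeping consequence of the already-proved Corollary~\ref{cor:main-thm}. First I would invoke part~(ii) of Corollary~\ref{cor:main-thm} to record that, for any \ecc{3} $r$-graph of order four or more, every solitary class has cardinality one or two. Hence the solitary pattern of such a graph is a nonincreasing sequence whose entries all lie in $\{1,2\}$, and its length equals the number of solitary classes.

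Next I would invoke parts~(iii) and~(iv) of Corollary~\ref{cor:main-thm} to bound this length: at most three when $r=3$, and at most two when $r\ge 4$. So the only remaining task is to enumerate the nonincreasing $\{1,2\}$-sequences of the appropriate length.

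For $r=3$, the admissible lengths are $0,1,2,3$, and the enumeration gives: length $3$ contributes $(2,2,2)$, $(2,2,1)$, $(2,1,1)$, $(1,1,1)$; length $2$ contributes $(2,2)$, $(2,1)$, $(1,1)$; length $1$ contributes $(2)$, $(1)$; and length $0$ contributes $()$. That is exactly the list of ten patterns in statement~(i). For $r\ge 4$, the admissible lengths are only $0,1,2$, producing $(2,2)$, $(2,1)$, $(1,1)$, $(2)$, $(1)$, $()$, which matches statement~(ii).

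There is no real obstacle here since every ingredient is already in hand; the only thing to be careful about is ensuring the enumeration is exhaustive and that we correctly interpret the empty pattern $()$ as the case of zero solitary classes (which, by Corollaries~\ref{cor:main-thm}~(iii)--(iv), is permitted). Thus the proof reduces to a one-line appeal to Corollary~\ref{cor:main-thm} followed by the above case split.
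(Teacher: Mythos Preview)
Your proposal is correct and matches the paper's own treatment: the paper explicitly states that this corollary is ``simply a restatement of Corollary~\ref{cor:main-thm}~(ii), (iii) and (iv)'' and marks it with a \qed, i.e., no further proof is given. Your enumeration of the admissible $\{1,2\}$-sequences of bounded length is exactly the unpacking that the paper leaves implicit.
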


We have thus stated and proved our desired stronger versions of Theorem~\ref{thm:main-thm-in-simple-words}. 
In this paper, we provide complete characterizations of \ecc{3} $r$-graphs for six of the ten solitary patterns that are shown in (the first six rows of) Table~\ref{table1}, and of \ecc{3} $3$-graphs for the solitary pattern~$(2)$; these characterizations and their proofs appear in the corresponding sections as shown in the table. Finally, in Section~\ref{sec:largest-EC-n/2}, we establish a characterization of all \mcg s whose largest equivalence class has cardinality~$\frac{n}{2}$, and as a consequence, we obtain a characterization of \mbox{$r$-graphs} with the same property; by Corollary~\ref{cor:at-least-n/2-solitary-edges}, any such equivalence class is solitary.

\begin{table}[!htb]
    \centering
    \makebox[\textwidth][c]{
    \begin{tabular}{|c|c|c|c|}
    \toprule
     \textbf{Solitary Pattern} & \textbf{Characterization} & \textbf{Results}& \textbf{Sections}\\
    \midrule
     $(2,2,2)$ & $K_4, \overline{C_6}$ & &\\[2.5pt]
     $(2,2,1)$ & ${R}_{8}$ & {Theorem~\ref{thm:more-than-one-SD}} &{\ref{sec:more-than-one-SD}~and~\ref{sec:more than one solitary doubleton}} \\[2.5pt]
     $(2,2)$ & staircases of order ten or more & & \\[2.5pt]
    \midrule
     $(2,1,1)$ & ${N}_{10}$ & \multirow{2}{*}{Theorem~\ref{thm:SDSS}} &\multirow{2}{*}{\ref{sec:SDSS}~and~\ref{sec:oneSD-oneSS}} \\[2.5pt]
     $(2,1)$ & $3$-staircases of order twelve or more & & \\[2.5pt]
    \midrule
    
     $(1,1,1)$ & family $\mathcal{S}$ & Theorem~\ref{thm:3SS} & \ref{sec:5-special-graphs}~and~\ref{sec:3SS} \\[2.5pt]
     \midrule
	$(2)$\footnote{only for $r=3$} & family $\mathcal{D}$ & Theorem~\ref{thm:SP2} & \ref{sec:SP2} \\[2.5pt]
	
    \bottomrule
    \end{tabular}}
     \caption{Solitary patterns and their characterizations}
\label{table1}
\end{table}

It is worth noting that Goedgebeur, Mazzuoccolo, Renders, Wolf and the second author
\cite{gmmrw24} have obtained characterizations of \ecc{3} $3$-graphs for the six solitary patterns --- all of those shown in Table~\ref{table1} except solitary pattern~$(2)$. As mentioned earlier, their descriptions and approach are inspired by Theorem~\ref{thm:Klee}, whereas ours are inspired by the theory of \mcg s that we have discussed thus far and that we develop further in Section~\ref{sec:further-developing-toolkit}.

The results pertaining to $3$-graphs, that appear in this paper, are included in the unpublished manuscript available here: \cite{dmkg24}. 
\section{Characterizations and easier implications: \newline \texorpdfstring{\ecc{3}}{} \texorpdfstring{$r$}{}-graphs with three or more solitary edges}\label{sec:characterizations}

In this section, we state our main results (Theorems~\ref{thm:more-than-one-SD}, \ref{thm:SDSS}~and~\ref{thm:3SS}) pertaining to the characterizations of \ecc{3} $r$-graphs that have at least three solitary edges, and we also verify their easier implications. Furthermore, in Section~\ref{sec:proof-of-Theorem-at-most-n/2-solitary-edges}, we present a proof of Theorem~\ref{thm:at-most-n/2-solitary-edges} assuming Theorems~\ref{thm:more-than-one-SD}~and~\ref{thm:SDSS}.

We begin by defining a couple of graph families that feature as subgraphs of some classes we intend to characterize. By {\em $k$-path} or {\em $k$-cycle}, we mean a path or a cycle with~$k$ edges, respectively.
A {\em ladder} is a graph that is obtained from two disjoint copies of a path, say $P$~and~$P'$, by adding edges called {\em rungs} --- each of which joins a vertex of~$P$ with its copy
in~$P'$.
The smallest ladders are $K_2$ and $C_4$; in the latter case, we may consider any \pmg\ to be its set of rungs. A rung of a ladder is {\em peripheral} if any (thus each) of its ends has degree at most two. 
The following pertinent fact is easily observed.

\begin{prp}\label{prp:ladder-is-bipartite-mcg}
Each ladder is a bipartite \mcg; furthermore, it is matching double covered if and only if its order is ten or more. \qed
\end{prp}


A {\em $k$-\dumbbell}, where $k$ is an odd positive integer, is a graph $J$ obtained from the disjoint union of two ladders, say $L_1$~and~$L_2$, and a $k$-path $P$ with ends $v_1$~and~$v_2$, by adding four edges as follows: for $i\in \{1,2\}$, the vertex~$v_i$ is joined with both ends of a peripheral rung of~$L_i$. We refer to rungs of $L_1$~and~$L_2$ as {\em rungs of~$J$}, the path~$P$ as the {\em bone of~$J$}, and the two ends of~$P$ as the {\em sockets of~$J$}.
A rung of a $k$-\dumbbell\ is {\em peripheral} if any (thus each) of its ends has degree exactly two. 
Note that every $k$-\dumbbell~$J$ has a unique perfect matching~$M$ which includes all of its rungs; we refer to the graph $J':=J\oplus (t-1)M$, where $t\in \mathbb{Z}^+$, as a {\em $k$-\dumbbell\ of thickness~$t$}, and the sockets of~$J$ as the {\em sockets of~$J'$}. Note that the sockets of~$J'$ are its only cut-vertices that belong to triangles.
We remark that a $k$-\dumbbell\ is simply a $k$-\dumbbell\ of thickness one.

In particular, $1$-\dumbbell s and $3$-\dumbbell s, of thickness $r-2$ will play an important role in  our characterizations of \ecc{3} $r$-graphs that have a solitary doubleton and at least three solitary edges. The following section deals with those members that have at least two solitary doubletons.



\subsection{Solitary patterns \texorpdfstring{$(2,2,2),(2,2,1)$}{}~and~\texorpdfstring{$(2,2)$}{} --- \texorpdfstring{$1$}{}-staircases}\label{sec:more-than-one-SD}

We let $J$ denote a $1$-\dumbbell\ of thickness~$t\in \mathbb{Z}^+$. The unique $2$-connected $(t+2)$-regular graph~$G$ obtained from~$J$ by adding two edges is called a {\em $1$-staircase of thickness~$t$}, or simply a {\em staircase of thickness~$t$}; see Figures~\ref{fig:1-staircases}~and~\ref{fig:1-staircases-thickness-3}. It is easy to see that $G$ is $3$-connected. By a {\em staircase}, we mean a staircase of thickness one. The following fact is easily verified, and we shall find it useful in Section~\ref{sec:distance-between-solitary-edges}.

\begin{prp}\label{prp:staircase-uniquely-determined}
    If $Q$ is an even cycle of order $n\ge 6$, and $u$ and $v$ are vertices at distance two, then there is a unique supergraph of~$Q+uv$ that is isomorphic to the staircase of order~$n$.
\end{prp}

Staircases play an important role in the study of \mcg s; see~\cite{noth07,lumu24}. We now observe the symmetries of the two smallest staircases.

\begin{prp}\label{prp:R8-vertex-orbits}
The triangular prism $\overline{C_6}$ is vertex-transitive, whereas the automorphism group of the bicorn~$R_8$ has three vertex-orbits --- $\{0,3\},\{1,2,4,5\}$ and $\{6,7\}$ as per the labeling shown in Figure~\ref{fig:R8}. \qed
\end{prp}

\begin{figure}[!htb]
        \centering
        \begin{subfigure}[b]{.3 \textwidth}
        \centering
        \begin{tikzpicture}[scale=0.68]
		\node [circle,fill=white] (0) at (0, 1) {};
		\node [circle,fill=white] (1) at (0, 0) {};
		\node [circle,fill=white] (2) at (1, 1) {};
		\node [circle,fill=white] (3) at (1, 0) {};
		\node [circle,fill=white] (4) at (2, 1) {};
		\node [circle,fill=white] (5) at (2, 0) {};
		\node [circle,fill=white] (6) at (3, 0.5) {};
		\node [circle,fill=white] (7) at (4, 0.5) {};
		\node [circle,fill=white] (8) at (5, 1) {};
		\node [circle,fill=white] (9) at (5, 0) {};
		\node [circle,fill=white] (10) at (6, 1) {};
		\node [circle,fill=white] (11) at (6, 0) {};
		
		\draw (0) -- (2);
		\draw (2) -- (4);
		\draw (4) -- (6);
		\draw (6) -- (5);
		\draw (5) -- (3);
		\draw (3) -- (1);
		\draw (1) -- (0);
		\draw (2) -- (3);
		\draw (4) -- (5);
		\draw (6) -- (7);
		\draw (7) -- (8);
		\draw (8) -- (9);
		\draw (9) -- (7);
		\draw (8) -- (10);
		\draw (10) -- (11);
		\draw (9) -- (11);
        \end{tikzpicture}
        \vspace{21pt}
        \caption{A $1$-dumbbell $J$}
\label{fig:1-dumbbell}
    \end{subfigure}
    \begin{subfigure}[b]{.33 \textwidth}
    \centering
    \begin{tikzpicture}[scale=0.68]
      \node [circle,fill=white] (0) at (0, 1) {};
		\node [circle,fill=white] (1) at (0, 0) {};
		\node [circle,fill=white] (2) at (1, 1) {};
		\node [circle,fill=white] (3) at (1, 0) {};
		\node [circle,fill=white] (4) at (2, 1) {};
		\node [circle,fill=white] (5) at (2, 0) {};
		\node [circle,fill=white] (6) at (3, 0.5) {};
		\node [circle,fill=white] (7) at (4, 0.5) {};
		\node [circle,fill=white] (8) at (5, 1) {};
		\node [circle,fill=white] (9) at (5, 0) {};
		\node [circle,fill=white] (10) at (6, 1) {};
		\node [circle,fill=white] (11) at (6, 0) {};
		
		\draw (0) -- (2);
		\draw (2) -- (4);
		\draw (4) -- (6);
		\draw (6) -- (5);
		\draw (5) -- (3);
		\draw (3) -- (1);
		\draw (1) -- (0);
		\draw (2) -- (3);
		\draw (4) -- (5);
		\draw (6) -- (7);
		\draw (7) -- (8);
		\draw (8) -- (9);
		\draw (9) -- (7);
		\draw (8) -- (10);
		\draw (10) -- (11);
		\draw (9) -- (11);
		\draw [bend left] (0) to (10);
		\draw [bend right] (1) to (11);
    \end{tikzpicture}
    \caption{Corresponding staircase $G$}
\label{fig:1-staircase}
    \end{subfigure}
    \begin{subfigure}[b]{.33 \textwidth}
    \centering
    \begin{tikzpicture}[scale=0.68]
    \node [circle,fill=white] (0) at (0, 0.5) {};
		\node [circle,fill=white] (1) at (1, 1) {};
		\node [circle,fill=white] (2) at (1, 0) {};
		\node [circle,fill=white] (3) at (2, 1) {};
		\node [circle,fill=white] (4) at (2, 0) {};
		\node [circle,fill=white] (5) at (3, 1) {};
		\node [circle,fill=white] (6) at (3, 0) {};
		\node [circle,fill=white] (7) at (4, 1) {};
		\node [circle,fill=white] (8) at (4, 0) {};
		\node [circle,fill=white] (9) at (5, 1) {};
		\node [circle,fill=white] (10) at (5, 0) {};
		\node [circle,fill=white] (11) at (6, 0.5) {};
		
		\draw (0) -- (1);
		\draw (1) -- (3);
		\draw (3) -- (5);
		\draw (5) -- (7);
		\draw (7) -- (9);
		\draw (9) -- (11);
		\draw (11) -- (10);
		\draw (10) -- (8);
		\draw (8) -- (6);
		\draw (6) -- (4);
		\draw (4) -- (2);
		\draw (0) -- (2);
		\draw (1) -- (2);
		\draw (3) -- (4);
		\draw (5) -- (6);
		\draw (7) -- (8);
		\draw (9) -- (10);
		\draw [bend left=60] (0) to (11);
    \end{tikzpicture}
    \vspace{22pt}
    \caption{Another drawing of $G$}
\label{fig:staircase}
    \end{subfigure}
\caption{$1$-dumbbells and $1$-staircases of thickness one}
\label{fig:1-staircases}
\end{figure}
\begin{figure}[!htb]
        \centering
        \begin{subfigure}[b]{.3 \textwidth}
        \centering
        \begin{tikzpicture}[scale=0.68]
		\node [circle,fill=white] (0) at (0, 1) {};
		\node [circle,fill=white] (1) at (0, 0) {};
		\node [circle,fill=white] (2) at (1, 1) {};
		\node [circle,fill=white] (3) at (1, 0) {};
		\node [circle,fill=white] (4) at (2, 1) {};
		\node [circle,fill=white] (5) at (2, 0) {};
		\node [circle,fill=white] (6) at (3, 0.5) {};
		\node [circle,fill=white] (7) at (4, 0.5) {};
		\node [circle,fill=white] (8) at (5, 1) {};
		\node [circle,fill=white] (9) at (5, 0) {};
		\node [circle,fill=white] (10) at (6, 1) {};
		\node [circle,fill=white] (11) at (6, 0) {};

        \draw[bend left] (1) to (0);
		\draw[bend right] (1) to (0);
        \draw[bend left] (2) to (3);
		\draw[bend right] (2) to (3);
        \draw[bend left] (4) to (5);
		\draw[bend right] (4) to (5);
        \draw[bend left] (6) to (7);
		\draw[bend right] (6) to (7);
        \draw[bend left] (8) to (9);
		\draw[bend right] (8) to (9);
        \draw[bend left] (10) to (11);
		\draw[bend right] (10) to (11);
        
		\draw (0) -- (2);
		\draw (2) -- (4);
		\draw (4) -- (6);
		\draw (6) -- (5);
		\draw (5) -- (3);
		\draw (3) -- (1);
		\draw (1) -- (0);
		\draw (2) -- (3);
		\draw (4) -- (5);
		\draw (6) -- (7);
		\draw (7) -- (8);
		\draw (8) -- (9);
		\draw (9) -- (7);
		\draw (8) -- (10);
		\draw (10) -- (11);
		\draw (9) -- (11);
        \end{tikzpicture}
        \vspace{7pt}
        \caption{A $1$-dumbbell $J$}
\label{fig:1-dumbbell-thickness-3}
    \end{subfigure}
    \begin{subfigure}[b]{.33 \textwidth}
    \centering
    \begin{tikzpicture}[scale=0.68]
      \node [circle,fill=white] (0) at (0, 1) {};
		\node [circle,fill=white] (1) at (0, 0) {};
		\node [circle,fill=white] (2) at (1, 1) {};
		\node [circle,fill=white] (3) at (1, 0) {};
		\node [circle,fill=white] (4) at (2, 1) {};
		\node [circle,fill=white] (5) at (2, 0) {};
		\node [circle,fill=white] (6) at (3, 0.5) {};
		\node [circle,fill=white] (7) at (4, 0.5) {};
		\node [circle,fill=white] (8) at (5, 1) {};
		\node [circle,fill=white] (9) at (5, 0) {};
		\node [circle,fill=white] (10) at (6, 1) {};
		\node [circle,fill=white] (11) at (6, 0) {};

        \draw[bend left] (1) to (0);
		\draw[bend right] (1) to (0);
        \draw[bend left] (2) to (3);
		\draw[bend right] (2) to (3);
        \draw[bend left] (4) to (5);
		\draw[bend right] (4) to (5);
        \draw[bend left] (6) to (7);
		\draw[bend right] (6) to (7);
        \draw[bend left] (8) to (9);
		\draw[bend right] (8) to (9);
        \draw[bend left] (10) to (11);
		\draw[bend right] (10) to (11);
		
		\draw (0) -- (2);
		\draw (2) -- (4);
		\draw (4) -- (6);
		\draw (6) -- (5);
		\draw (5) -- (3);
		\draw (3) -- (1);
		\draw (1) -- (0);
		\draw (2) -- (3);
		\draw (4) -- (5);
		\draw (6) -- (7);
		\draw (7) -- (8);
		\draw (8) -- (9);
		\draw (9) -- (7);
		\draw (8) -- (10);
		\draw (10) -- (11);
		\draw (9) -- (11);
		\draw [bend left] (0) to (10);
		\draw [bend right] (1) to (11);
    \end{tikzpicture}
    \caption{Corresponding staircase $G$}
\label{fig:1-staircase-thickness-3}
    \end{subfigure}
    \begin{subfigure}[b]{.33 \textwidth}
    \centering
    \begin{tikzpicture}[scale=0.68]
    \node [circle,fill=white] (0) at (0, 0.5) {};
		\node [circle,fill=white] (1) at (1, 1) {};
		\node [circle,fill=white] (2) at (1, 0) {};
		\node [circle,fill=white] (3) at (2, 1) {};
		\node [circle,fill=white] (4) at (2, 0) {};
		\node [circle,fill=white] (5) at (3, 1) {};
		\node [circle,fill=white] (6) at (3, 0) {};
		\node [circle,fill=white] (7) at (4, 1) {};
		\node [circle,fill=white] (8) at (4, 0) {};
		\node [circle,fill=white] (9) at (5, 1) {};
		\node [circle,fill=white] (10) at (5, 0) {};
		\node [circle,fill=white] (11) at (6, 0.5) {};

        \draw[bend left] (1) to (2);
		\draw[bend right] (1) to (2);
        \draw[bend left] (4) to (3);
		\draw[bend right] (4) to (3);
        \draw[bend left] (6) to (5);
		\draw[bend right] (6) to (5);
        \draw[bend left] (8) to (7);
		\draw[bend right] (8) to (7);
        \draw[bend left] (10) to (9);
		\draw[bend right] (10) to (9);
        \draw[bend left=70] (0) to (11);
		\draw[bend left=50] (0) to (11);
		
		\draw (0) -- (1);
		\draw (1) -- (3);
		\draw (3) -- (5);
		\draw (5) -- (7);
		\draw (7) -- (9);
		\draw (9) -- (11);
		\draw (11) -- (10);
		\draw (10) -- (8);
		\draw (8) -- (6);
		\draw (6) -- (4);
		\draw (4) -- (2);
		\draw (0) -- (2);
		\draw (1) -- (2);
		\draw (3) -- (4);
		\draw (5) -- (6);
		\draw (7) -- (8);
		\draw (9) -- (10);
		\draw [bend left=60] (0) to (11);
    \end{tikzpicture}
    \vspace{22pt}
    \caption{Another drawing of $G$}
\label{fig:staircase-thickness-3}
    \end{subfigure}
\caption{$1$-dumbbells and $1$-staircases of thickness three}
\label{fig:1-staircases-thickness-3}
\end{figure}

 Using the drawings shown in Figures~\ref{fig:staircase}~and~\ref{fig:staircase-thickness-3}, it is easily verified that there is a unique staircase of thickness~$t\in \mathbb{Z}^+$ for each even order $n\ge 6$. The following is our characterization of \ecc{3} $r$-graphs that have at least two solitary doubletons.

\begin{thm}\label{thm:more-than-one-SD}
A \ecc{3} $r$-graph~$G$ has more than one solitary doubleton if and only if either $G$ is a member of~\mbb{K}{4}{1} or a staircase of thickness~$r-2$. Furthermore, precisely one of the following holds:
\begin{enumerate}[(i)]
\item either $G\in \{K_4,\overline{C_6}\}$ and has solitary pattern $(2,2,2)$, or
\item $G$ is the bicorn~$R_8$ and has solitary pattern $(2,2,1)$, or otherwise
\item $G$ has solitary pattern $(2,2)$.
\end{enumerate}
\end{thm}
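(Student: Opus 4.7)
The \emph{if} direction is by direct verification. Each graph in \mbb{K}{4}{1} is obtained from $K_4$ by multiplying at most one color class by $k \geq 1$; the two unmultiplied color classes remain solitary doubletons because any perfect matching must either be a ``copy'' of the multiplied matching (using a pair of parallel edges) or exactly one of the other two color classes. This gives pattern $(2,2,2)$ when $k = 1$ (so $G = K_4$) and pattern $(2,2)$ otherwise. For a staircase of thickness $r - 2$, the two ``outermost'' pairs of parallel rungs form solitary doubletons; the bicorn $R_8$ additionally contributes a solitary singleton (supported by the edge orbit structure in Proposition~\ref{prp:R8-vertex-orbits}), yielding pattern $(2,2,1)$, while every larger staircase has pattern exactly $(2,2)$.

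For the only-if direction, suppose $G$ is a $3$-edge-connected $r$-graph with distinct solitary doubletons $D_1 = \{\alpha_1, \beta_1\}$ and $D_2 = \{\alpha_2, \beta_2\}$. By Corollary~\ref{lem:solitary-class-unique-pm}, $D_1$ and $D_2$ lie in distinct (hence mutually exclusive and disjoint) perfect matchings $M_1$ and $M_2$. By Proposition~\ref{prp:mcgs-hcycle}, the spanning subgraph $C := M_1 \cup M_2$ is a Hamiltonian cycle of $G$. If $|V(G)| = 4$, Proposition~\ref{r-graphs-of-order-four} forces $G \in$ \mbb{K}{4}{1}, so I may assume $n := |V(G)| \geq 6$.

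By Corollary~\ref{cor:solitary-doubleton-near-bipartite-brick}, $G$ is a near-bipartite brick, and for each $i \in \{1, 2\}$ the graph $G - D_i$ is bipartite with parts $(A_i, B_i)$, where both ends of $\alpha_i$ lie in $A_i$ and both ends of $\beta_i$ lie in $B_i$; Lemma~\ref{lem:brick-equivalence-classes} then gives $|A_i| = |B_i| = n/2$. By Lemma~\ref{lem:combined-stat-r-graphs}, $G$ admits a proper $r$-edge-coloring $(M_1, M_2, M_3, \ldots, M_r)$ extending our two matchings; by Lemma~\ref{lem:associated-r-cuts}, every edge of $M_1 \cup M_2$ has multiplicity one in $G$. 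Each remaining color class $M_k$ (for $k \geq 3$) is disjoint from $C$ and so consists of chords of $C$.

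The crux is to show that the chord pattern is precisely that of a staircase of thickness $r - 2$. Overlay the two bipartitions to partition $V(G)$ into four cells $A_1 \cap A_2$, $A_1 \cap B_2$, $B_1 \cap A_2$, $B_1 \cap B_2$. Each chord of $C$ lies in both $G - D_1$ and $G - D_2$, hence respects both bipartitions and so joins diagonally opposite cells. Traversing $C$, the cell label of a vertex alternates in a controlled way, flipping precisely at the four edges $\alpha_1, \beta_1, \alpha_2, \beta_2$; combined with $r$-regularity (each vertex carries exactly $r - 2$ chord incidences through $M_3, \ldots, M_r$), a case analysis on the relative positions of $D_1$ and $D_2$ along $C$ forces the chord structure to be that of a staircase, after which Proposition~\ref{prp:staircase-uniquely-determined} determines $G$ up to isomorphism. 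The principal obstacle will be this final step: ruling out the chord configurations that respect both bipartitions but do not assemble into a staircase; I expect to exclude these by invoking $3$-edge-connectedness together with the $r$-graph condition that every odd cut has at least $r$ edges, which should eliminate any ``twisted'' or ``misaligned'' rung pattern that the cell constraint alone permits.
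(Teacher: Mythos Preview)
Your forward-implication framework is appealing but leaves the entire combinatorial core unfinished. You correctly obtain the Hamiltonian cycle $C=M_1\cup M_2$ and the four-cell overlay, and it is true that every chord joins diagonally opposite cells. However, those constraints are far from pinning down a staircase: they hold in any near-bipartite brick with two removable doubletons, and you have not yet used that $M_1$ and $M_2$ are the \emph{unique} perfect matchings through $D_1$ and $D_2$. Two things are missing. First, nothing in your argument forces $\alpha_1$ to be adjacent to $\alpha_2$ on~$C$ (and $\beta_1$ to $\beta_2$); this is exactly the content of the paper's distance result (Theorem~\ref{thm:distance-at-most-1}) combined with the two-triangle Corollary~\ref{lem:vertex-disjoint-triangle-lemma}, and your cell analysis does not recover it. Second, even once the four doubleton edges are correctly placed, you still need to show that the $r-2$ chord-classes assemble into parallel rungs; the tools you name ($3$-edge-connectivity, the $r$-graph odd-cut bound) are too coarse for this, since the real obstruction to a non-staircase chord pattern is the existence of an $M_i$-alternating cycle avoiding $D_i$, which is a uniqueness-of-matching statement.

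The paper proceeds quite differently. It first locates the two vertex-disjoint triangles (Corollary~\ref{lem:vertex-disjoint-triangle-lemma}) and uses Theorem~\ref{thm:distance-at-most-1} to join them by a single edge, yielding a small $1$-dumbbell of thickness~$r-2$ (Lemmas~\ref{lem:1-dumbbell-existence} and~\ref{lem:r1-dumbbell-existence-r-graphs}). The heart of the argument (Theorem~\ref{thm:(2,2)-graphs-r-graphs}) then takes a \emph{maximal} such $1$-dumbbell~$J$ and shows it is spanning: if not, the uniquely matchable bipartite graph $G-a_1-a_2-b_1-b_2$ restricted to $V(G)\setminus V(J')$ must contain a degree-one vertex (Corollary~\ref{cor:degree-one}), and chasing that vertex either produces a $2$-cut or a strictly larger $1$-dumbbell. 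This uniquely-matchable/degree-one step is precisely the mechanism that your proposal lacks; without something of that strength, the cell constraints alone will not exclude the ``twisted'' chord patterns you anticipate. A minor point on the \emph{if} direction: the two solitary doubletons of a staircase are not the outermost rungs but the four triangle edges incident with the sockets of the underlying $1$-dumbbell (see Proposition~\ref{prp:solitary-unmatchable-in-1-dumbbell}).
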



A proof of the forward implication of the above theorem appears in Section~\ref{sec:more than one solitary doubleton}. In the rest of this section, we shall convince the reader of the reverse implication along with statements~(i),~(ii)~and~(iii). To this end, we shall find the following easy observation useful.

\begin{prp}\label{prp:matchable-spanning-subgraph}
Let $H$ be a spanning subgraph of a graph~$G$. If an edge $e$ of~$H$ is solitary in~$G$, then $e$ is either unmatchable or otherwise solitary in~$H$. \qed
\end{prp}

In light of the above, in order to locate the solitary edges of a $1$-staircase, one may first consider locating the solitary as well as unmatchable edges of a $1$-dumbbell. We do this below.

Let $J$ denote a $1$-dumbbell with $uv$ as its bone. Note that $uv$ is its unique odd \mbox{$1$-cut}, say $\partial_{J}(X)$, where $u\in X$. Since $uv$ participates in each \pmg\ of~$J$, each of the four edges adjacent to~$uv$ is unmatchable. Furthermore, all of the remaining edges are matchable (as each ladder is matching covered). Note that, if the ladder~$J[\overline{X}-v]$ is distinct from~$K_2$, then each matchable edge in $J[X+ v]$ 
participates in at least two \pms\ of~$J$; an analogous statement holds for $J[X-u]$ and $J[\overline{X}+ u]$. These observations, along with Proposition~\ref{prp:ladder-is-bipartite-mcg}, imply that if $J$ has a solitary edge then one of the ladders $J[X-u]$ and $J[\overline{X}-v]$ is isomorphic to $K_2$, whereas the other one is of order at most eight. This proves the following except for the easy task of locating the solitary edges in the graphs shown in Figure~\ref{fig:1-dumbbells}. 

\begin{prp}\label{prp:solitary-unmatchable-in-1-dumbbell}
Every $1$-dumbbell $J$ of thickness~$t\in \mathbb{Z}^+$ has exactly four unmatchable edges, each of which is incident with a socket and belongs to a triangle. Also, $J$~is free of solitary edges unless $J$ is one of the four graphs shown in Figure~\ref{fig:1-dumbbells} --- wherein the solitary edges are indicated in red. \qed
\end{prp}

\begin{figure}[!htb]
    \centering
    \begin{subfigure}[b]{.2\textwidth}
        \centering
        \begin{tikzpicture}[scale=0.7]
           \node [circle,fill=white] (0) at (0, 1) {};
		\node [circle,fill=white] (1) at (0, 0) {};
		\node [circle,fill=white] (2) at (1, 0.5) {};
		\node [circle,fill=white] (3) at (2, 0.5) {};
		\node [circle,fill=white] (4) at (3, 1) {};
		\node [circle,fill=white] (5) at (3, 0) {};
		\draw[thick,red] (0) -- (1);
		\draw (0) -- (2);
		\draw (2) -- (1);
		\draw[thick,red] (2) -- (3);
		\draw (3) -- (4);
		\draw[thick,red] (4) -- (5);
		\draw (5) -- (3);
        \end{tikzpicture}
        \caption{}
  \label{fig:1-dumbbells-a}
  
    \end{subfigure}
    \begin{subfigure}[b]{.2\textwidth}
        \centering
        \begin{tikzpicture}[scale=0.7]
            \node [circle,fill=white] (0) at (0, 1) {};
		\node [circle,fill=white] (1) at (0, 0) {};
		\node [circle,fill=white] (2) at (1, 0.5) {};
		\node [circle,fill=white] (3) at (2, 0.5) {};
		\node [circle,fill=white] (4) at (3, 1) {};
		\node [circle,fill=white] (5) at (3, 0) {};
		\node [circle,fill=white] (6) at (4, 1) {};
		\node [circle,fill=white] (7) at (4, 0) {};
		\draw (0) -- (1);
		\draw (0) -- (2);
		\draw (2) -- (1);
		\draw (2) -- (3);
		\draw (3) -- (4);
		\draw[thick,red] (4) -- (5);
		\draw (5) -- (3);
		\draw[thick,red] (4) -- (6);
		\draw[thick,red] (6) -- (7);
		\draw[thick,red] (7) -- (5);
        \end{tikzpicture}
        \caption{}
        \label{fig:1-dumbbells-b}
        
    \end{subfigure}
    \begin{subfigure}[b]{.29\textwidth}
        \centering
        \begin{tikzpicture}[scale=0.7]
            \node [circle,fill=white] (0) at (0, 1) {};
		\node [circle,fill=white] (1) at (0, 0) {};
		\node [circle,fill=white] (2) at (1, 0.5) {};
		\node [circle,fill=white] (3) at (2, 0.5) {};
		\node [circle,fill=white] (4) at (3, 1) {};
		\node [circle,fill=white] (5) at (3, 0) {};
		\node [circle,fill=white] (6) at (4, 1) {};
		\node [circle,fill=white] (7) at (4, 0) {};
		\node [circle,fill=white] (8) at (5, 1) {};
		\node [circle,fill=white] (9) at (5, 0) {};
		\draw (0) -- (1);
		\draw (0) -- (2);
		\draw (2) -- (1);
		\draw (2) -- (3);
		\draw (3) -- (4);
		\draw (4) -- (5);
		\draw (5) -- (3);
		\draw[thick,red] (4) -- (6);
		\draw[thick,red] (6) -- (7);
		\draw[thick,red] (7) -- (5);
		\draw[thick,red] (6) -- (8);
		\draw (8) -- (9);
		\draw[thick,red] (9) -- (7);
        \end{tikzpicture}
        \caption{}
        \label{fig:1-dumbbells-c}
        
    \end{subfigure}
    \begin{subfigure}[b]{.29\textwidth}
        \centering
        \begin{tikzpicture}[scale=0.7]
                     \node [circle,fill=white] (0) at (0, 1) {};
		\node [circle,fill=white] (1) at (0, 0) {};
		\node [circle,fill=white] (2) at (1, 0.5) {};
		\node [circle,fill=white] (3) at (2, 0.5) {};
		\node [circle,fill=white] (4) at (3, 1) {};
		\node [circle,fill=white] (5) at (3, 0) {};
		\node [circle,fill=white] (6) at (4, 1) {};
		\node [circle,fill=white] (7) at (4, 0) {};
		\node [circle,fill=white] (8) at (5, 1) {};
		\node [circle,fill=white] (9) at (5, 0) {};
		\node [circle,fill=white] (10) at (6, 1) {};
		\node [circle,fill=white] (11) at (6, 0) {};
		\draw (0) -- (1);
		\draw (0) -- (2);
		\draw (2) -- (1);
		\draw (2) -- (3);
		\draw (3) -- (4);
		\draw (4) -- (5);
		\draw (5) -- (3);
		\draw (4) -- (6);
		\draw (6) -- (7);
		\draw (7) -- (5);
		\draw[thick,red] (6) -- (8);
		\draw (8) -- (9);
		\draw[thick,red] (9) -- (7);
		\draw (8) -- (10);
		\draw (10) -- (11);
		\draw (11) -- (9);
        \end{tikzpicture}
		 \caption{}
  \label{fig:1-dumbbells-d}
    \end{subfigure}
    \caption{Illustration for Proposition~\ref{prp:solitary-unmatchable-in-1-dumbbell}}
    \label{fig:1-dumbbells}
\end{figure}

Now, let $G$ be a staircase of thickness~$t\in \mathbb{Z}^+$ that is obtained from a $1$-dumbbell~$J$ by adding two edges $e$~and~$f$. Observe that the matchable graph~$H$ obtained from $G$ by deleting both ends of~$e$ (or of $f$) is $2$-connected; consequently, by Kotzig's Lemma~(\ref{lem:unique-pm-in-a-graph}), $H$ has at least two \pms, and thus neither $e$ nor $f$ is solitary in~$G$. It follows that each solitary edge of~$G$ thus belongs to~$J$. In particular, if $J$ is not one of the four graphs shown in Figure~\ref{fig:1-dumbbells} then, by Propositions~\ref{prp:solitary-unmatchable-in-1-dumbbell}~and~\ref{prp:matchable-spanning-subgraph}, $G$~has at most four solitary edges, and these four are precisely the unmatchable edges of~$J$.

Using the drawings of $1$-staircases, shown in the Figures~\ref{fig:staircase}~and~\ref{fig:staircase-thickness-3}, it is easy to see that each of them has at least four solitary edges. These observations prove the reverse implication of Theorem~\ref{thm:more-than-one-SD} as well as statements~(i), (ii)~and~(iii).


The next section deals with those \ecc{3} $r$-graphs that have at least one solitary class of each cardinality --- that is, doubleton and singleton.

\subsection{Solitary patterns \texorpdfstring{$(2,2,1),(2,1,1)$}{}~and~\texorpdfstring{$(2,1)$}{} --- \texorpdfstring{$3$}{}-staircases}\label{sec:SDSS}
We let $J$ denote a $3$-\dumbbell\ of thickness~$t\in \mathbb{Z}^+$. The unique $3$-connected $(t+2)$-regular graph~$G$ obtained from~$J$, by adding three edges, is called a {\em $3$-staircase of thickness~$t$}; see Figures~\ref{fig:3-staircases}~and~\ref{fig:3-staircases-thickness-3}. 
By a {\em $3$-staircase}, we mean a $3$-staircase of thickness one. Using the definition, and the observation that the unique even $1$-cut of~$J$ is a removable edge of~$G$, we deduce the following.



\begin{prp}\label{prp:3-staircase-ss}
Let $G$ be the $3$-staircase of thickness~$t\in \mathbb{Z}^+$, obtained from a \mbox{$3$-dumbbell~$J$}, by adding the edges $d,d'$~and~$f$. Then, two of these three edges, say $d$~and~$d'$, are incident with distinct ends of the unique even $1$-cut of~$J$, say $e:=vw$; whereas the third edge~$f$ is the unique odd $1$-cut of~$G-v-w$. Furthermore, $e$ is a solitary singleton in~$G$, and $f$ is its companion. \qed
\end{prp}

\begin{figure}[!htb]
        \centering
        \begin{subfigure}[b]{.3\textwidth}
        \centering
        \begin{tikzpicture}[scale=0.5]
		\node [circle,fill=white] (0) at (0, 1) {};
		\node [circle,fill=white] (1) at (0, 0) {};
		\node [circle,fill=white] (2) at (1, 1) {};
		\node [circle,fill=white] (3) at (1, 0) {};
		\node [circle,fill=white] (4) at (2, 1) {};
		\node [circle,fill=white] (5) at (2, 0) {};
		\node [circle,fill=white] (6) at (3, 0.5) {};
		\node [circle,fill=white] (7) at (4, 0.5) {};
		\node [circle,fill=white] (8) at (5, 0.5) {};
		\node [circle,fill=white] (9) at (6, 0.5) {};
		\node [circle,fill=white] (10) at (7, 1) {};
		\node [circle,fill=white] (11) at (7, 0) {};
		\node [circle,fill=white] (12) at (8, 1) {};
		\node [circle,fill=white] (13) at (8, 0) {};
		
		\draw (0) -- (1);
		\draw (1) -- (3);
		\draw (3) -- (5);
		\draw (5) -- (4);
		\draw (0) -- (2);
		\draw (2) -- (4);
		\draw (2) -- (3);
		\draw (4) -- (6);
		\draw (6) -- (5);
		\draw (6) -- (7);
		\draw (7) -- (8);
		\draw (8) -- (9);
		\draw (9) -- (10);
		\draw (9) -- (11);
		\draw (11) -- (10);
		\draw (10) -- (12);
		\draw (12) -- (13);
		\draw (11) -- (13);
        \end{tikzpicture}
        \vspace{12pt}
        \caption{A $3$-dumbbell $J$}
\label{fig:3-dumbbell}
    \end{subfigure}
    \begin{subfigure}[b]{.33 \textwidth}
    \centering
    \begin{tikzpicture}[scale=0.5]
      \node [circle,fill=white] (0) at (0, 1) {};
		\node [circle,fill=white] (1) at (0, 0) {};
		\node [circle,fill=white] (2) at (1, 1) {};
		\node [circle,fill=white] (3) at (1, 0) {};
		\node [circle,fill=white] (4) at (2, 1) {};
		\node [circle,fill=white] (5) at (2, 0) {};
		\node [circle,fill=white] (6) at (3, 0.5) {};
		\node [circle,fill=white] (7) at (4, 0.5) {};
		\node [circle,fill=white] (8) at (5, 0.5) {};
		\node [circle,fill=white] (9) at (6, 0.5) {};
		\node [circle,fill=white] (10) at (7, 1) {};
		\node [circle,fill=white] (11) at (7, 0) {};
		\node [circle,fill=white] (12) at (8, 1) {};
		\node [circle,fill=white] (13) at (8, 0) {};
		
		\draw (0) -- (1);
		\draw (1) -- (3);
		\draw (3) -- (5);
		\draw (5) -- (4);
		\draw (0) -- (2);
		\draw (2) -- (4);
		\draw (2) -- (3);
		\draw (4) -- (6);
		\draw (6) -- (5);
		\draw (6) -- (7);
		\draw (7) -- (8);
		\draw (8) -- (9);
		\draw (9) -- (10);
		\draw (9) -- (11);
		\draw (11) -- (10);
		\draw (10) -- (12);
		\draw (12) -- (13);
		\draw (11) -- (13);
		\draw [bend left] (0) to (12);
		\draw [bend right] (1) to (8);
		\draw [bend right=25, looseness=1.50] (7) to (13);
    \end{tikzpicture}
    \caption{Corresponding $3$-staircase~$G$}
\label{fig:3-staircase}
    \end{subfigure}
    \begin{subfigure}[b]{.33 \textwidth}
    \centering
    \begin{tikzpicture}[scale=0.8]
    \node [circle,fill=white] (0) at (0, 1) {};
		\node [circle,fill=white] (1) at (0, -0.25) {};
		\node [circle,fill=white] (2) at (1, 0.5) {};
		\node [circle,fill=white] (3) at (0.5, 0.75) {};
		\node [circle,fill=white] (4) at (0.5, 0.125) {};
		\node [circle,fill=white] (5) at (2, 0.5) {};
		\node [circle,fill=white] (6) at (2, -0.25) {};
		\node [circle,fill=white] (7) at (3.05, 0.5) {};
		\node [circle,fill=white] (8) at (4.05, 1) {};
		\node [circle,fill=white] (9) at (3.4, 0.25) {};
		\node [circle,fill=white] (10) at (4.025, 0.575) {};
		\node [circle,fill=white] (11) at (3.975, -0.25) {};
		\node [circle,fill=white] (12) at (3.675, 0) {};
		\node [circle,fill=white] (13) at (4, 0.175) {};
		
		\draw (0) -- (3);
		\draw (3) -- (2);
		\draw (2) -- (4);
		\draw (4) -- (1);
		\draw (1) -- (0);
		\draw (3) -- (4);
		\draw (2) -- (5);
		\draw (5) -- (6);
		\draw (1) -- (6);
		\draw (5) -- (7);
		\draw (7) -- (8);
		\draw (0) -- (8);
		\draw (7) -- (9);
		\draw (9) -- (10);
		\draw (10) -- (13);
		\draw (9) -- (12);
		\draw (12) -- (13);
		\draw (13) -- (11);
		\draw (12) -- (11);
		\draw (6) -- (11);
		\draw (8) -- (10);
    \end{tikzpicture}
    \vspace{10pt}
    \caption{Another drawing of $G$}
\label{fig:3-staircase-isomorphic}
    \end{subfigure}
\caption{$3$-dumbbells and $3$-staircases}
\label{fig:3-staircases}
\end{figure}

\begin{figure}[!htb]
        \centering
        \begin{subfigure}[b]{.3\textwidth}
        \centering
        \begin{tikzpicture}[scale=0.5]
		\node [circle,fill=white] (0) at (0, 1) {};
		\node [circle,fill=white] (1) at (0, 0) {};
		\node [circle,fill=white] (2) at (1, 1) {};
		\node [circle,fill=white] (3) at (1, 0) {};
		\node [circle,fill=white] (4) at (2, 1) {};
		\node [circle,fill=white] (5) at (2, 0) {};
		\node [circle,fill=white] (6) at (3, 0.5) {};
		\node [circle,fill=white] (7) at (4, 0.5) {};
		\node [circle,fill=white] (8) at (5, 0.5) {};
		\node [circle,fill=white] (9) at (6, 0.5) {};
		\node [circle,fill=white] (10) at (7, 1) {};
		\node [circle,fill=white] (11) at (7, 0) {};
		\node [circle,fill=white] (12) at (8, 1) {};
		\node [circle,fill=white] (13) at (8, 0) {};

        \draw[bend left] (0) to (1);
        \draw[bend right] (0) to (1);
        \draw[bend left] (2) to (3);
        \draw[bend right] (2) to (3);
        \draw[bend left] (4) to (5);
        \draw[bend right] (4) to (5);
        \draw[bend left] (6) to (7);
        \draw[bend right] (6) to (7);
        \draw[bend left] (8) to (9);
        \draw[bend right] (8) to (9);
        \draw[bend left] (10) to (11);
        \draw[bend right] (10) to (11);
        \draw[bend left] (12) to (13);
        \draw[bend right] (12) to (13);
		
		\draw (1) -- (3);
		\draw (3) -- (5);
		\draw (0) -- (2);
		\draw (2) -- (4);
		\draw (4) -- (6);
		\draw (6) -- (5);
		\draw (7) -- (8);
		\draw (9) -- (10);
		\draw (9) -- (11);
		\draw (10) -- (12);
		\draw (11) -- (13);
        \end{tikzpicture}
        \vspace{12pt}
        \caption{A $3$-dumbbell $J$}
\label{fig:3-dumbbell-thickness-3}
    \end{subfigure}
    \begin{subfigure}[b]{.33 \textwidth}
    \centering
    \begin{tikzpicture}[scale=0.5]
      \node [circle,fill=white] (0) at (0, 1) {};
		\node [circle,fill=white] (1) at (0, 0) {};
		\node [circle,fill=white] (2) at (1, 1) {};
		\node [circle,fill=white] (3) at (1, 0) {};
		\node [circle,fill=white] (4) at (2, 1) {};
		\node [circle,fill=white] (5) at (2, 0) {};
		\node [circle,fill=white] (6) at (3, 0.5) {};
		\node [circle,fill=white] (7) at (4, 0.5) {};
		\node [circle,fill=white] (8) at (5, 0.5) {};
		\node [circle,fill=white] (9) at (6, 0.5) {};
		\node [circle,fill=white] (10) at (7, 1) {};
		\node [circle,fill=white] (11) at (7, 0) {};
		\node [circle,fill=white] (12) at (8, 1) {};
		\node [circle,fill=white] (13) at (8, 0) {};

        \draw[bend left] (0) to (1);
        \draw[bend right] (0) to (1);
        \draw[bend left] (2) to (3);
        \draw[bend right] (2) to (3);
        \draw[bend left] (4) to (5);
        \draw[bend right] (4) to (5);
        \draw[bend left] (6) to (7);
        \draw[bend right] (6) to (7);
        \draw[bend left] (8) to (9);
        \draw[bend right] (8) to (9);
        \draw[bend left] (10) to (11);
        \draw[bend right] (10) to (11);
        \draw[bend left] (12) to (13);
        \draw[bend right] (12) to (13);
		
		\draw (1) -- (3);
		\draw (3) -- (5);
		\draw (0) -- (2);
		\draw (2) -- (4);
		\draw (4) -- (6);
		\draw (6) -- (5);
		\draw (7) -- (8);
		\draw (9) -- (10);
		\draw (9) -- (11);
		\draw (10) -- (12);
		\draw (11) -- (13);
		\draw [bend left] (0) to (12);
		\draw [bend right] (1) to (8);
		\draw [bend right=25, looseness=1.50] (7) to (13);
    \end{tikzpicture}
    \caption{Corresponding $3$-staircase~$G$}
\label{fig:3-staircase-thickness-3}
    \end{subfigure}
    \begin{subfigure}[b]{.33 \textwidth}
    \centering
    \begin{tikzpicture}[scale=0.8]
    \node [circle,fill=white] (0) at (0, 1) {};
		\node [circle,fill=white] (1) at (0, -0.25) {};
		\node [circle,fill=white] (2) at (1, 0.5) {};
		\node [circle,fill=white] (3) at (0.5, 0.75) {};
		\node [circle,fill=white] (4) at (0.5, 0.125) {};
		\node [circle,fill=white] (5) at (2, 0.5) {};
		\node [circle,fill=white] (6) at (2, -0.25) {};
		\node [circle,fill=white] (7) at (3.05, 0.5) {};
		\node [circle,fill=white] (8) at (4.05, 1) {};
		\node [circle,fill=white] (9) at (3.4, 0.25) {};
		\node [circle,fill=white] (10) at (4.025, 0.575) {};
		\node [circle,fill=white] (11) at (3.975, -0.25) {};
		\node [circle,fill=white] (12) at (3.675, 0) {};
		\node [circle,fill=white] (13) at (4, 0.175) {};

        \draw[bend left=15] (0) to (1);
        \draw[bend right=15] (0) to (1);
        \draw[bend left] (3) to (4);
        \draw[bend right] (3) to (4);
        \draw[bend left=15] (2) to (5);
        \draw[bend right=15] (2) to (5);
        \draw[bend left=15] (7) to (8);
        \draw[bend right=15] (7) to (8);
        \draw[bend left] (9) to (10);
        \draw[bend right] (9) to (10);
        \draw[bend left] (12) to (13);
        \draw[bend right] (12) to (13);
        \draw[bend left=13] (6) to (11);
        \draw[bend right=13] (6) to (11);
		
		\draw (0) -- (3);
		\draw (3) -- (2);
		\draw (2) -- (4);
		\draw (4) -- (1);
		\draw (5) -- (6);
		\draw (1) -- (6);
		\draw (5) -- (7);
		\draw (0) -- (8);
		\draw (7) -- (9);
		\draw (10) -- (13);
		\draw (9) -- (12);
		\draw (13) -- (11);
		\draw (12) -- (11);
		\draw (8) -- (10);
    \end{tikzpicture}
    \vspace{10pt}
    \caption{Another drawing of $G$}
\label{fig:3-staircase-isomorphic-thickness-3}
    \end{subfigure}
\caption{$3$-dumbbells and $3$-staircases of thickness two}
\label{fig:3-staircases-thickness-3}
\end{figure}

The two smallest $3$-staircases are the bicorn~$R_8$, and the graph~$N_{10}$ shown in Figure~\ref{fig:N10}; the reader may easily observe the symmetries of $N_{10}$ that we shall find useful later.

\begin{figure}[!htb]
        \centering
        \begin{subfigure}[b]{.48 \textwidth}
        \centering
        \begin{tikzpicture}[scale=1]
      \node [circle,fill=white] (0) at (-0.5, 0) {};
        		\node [circle,fill=white] (1) at (2.5, 0) {};
        		\node [draw=none] at (-0.5,0.3) {$1$};
        		\node [draw=none] at (2.5,0.3) {$4$};
		\node [draw=none] at (1.5,1.3) {$8$};
		\node [draw=none] at (-1.5,1.3) {$0$};
		\node [draw=none] at (3.5,1.3) {$3$};
		\node [draw=none] at (-1.5,-1.3) {$2$};
		\node [draw=none] at (3.5,-1.3) {$5$};
		\node [draw=none] at (0.5,-1.3) {$7$};
		\node [draw=none] at (0.5,0.3) {$6$};
		\node [draw=none] at (1.5,-0.3) {$9$};
        		\node [circle,fill=white] (2) at (3.5, 1) {};
        		\node [circle,fill=white] (3) at (3.5, -1) {};
        		\node [circle,fill=white] (4) at (-1.5, 1) {};
        		\node [circle,fill=white] (5) at (-1.5, -1) {};
        		\node [circle,fill=white] (6) at (0.5, 0) {};
        		\node [circle,fill=white] (7) at (0.5, -1) {};
        		\node [circle,fill=white] (8) at (1.5, 0) {};
        		\node [circle,fill=white] (9) at (1.5, 1) {};
                \draw (4) -- (0);
        		\draw (0) -- (5);
        		\draw (5) -- (4);
        		\draw (4) -- (9);
        		\draw (9) -- (8);
        		\draw (8) -- (6);
        		\draw (0) -- (6);
        		\draw (6) -- (7);
        		\draw (7) -- (5);
        		\draw (8) -- (1);
        		\draw (1) -- (2);
        		\draw (2) -- (3);
        		\draw (3) -- (7);
        		\draw (1) -- (3);
        		\draw (9) -- (2);
        \end{tikzpicture}
        \caption{}
\label{fig:N10a}
    \end{subfigure}
    \begin{subfigure}[b]{.48 \textwidth}
    \centering
    \begin{tikzpicture}[scale=1]
      \node [circle,fill=white] (0) at (0, 1) {};
		\node [circle,fill=white] (1) at (0, 0) {};
		\node [circle,fill=white] (2) at (1, 0.5) {};
		\node [circle,fill=white] (3) at (2, 0.5) {};
		\node [circle,fill=white] (4) at (3, 0.5) {};
		\node [circle,fill=white] (5) at (4, 0.5) {};
		\node [circle,fill=white] (6) at (5, 1) {};
		\node [circle,fill=white] (7) at (5, 0) {};
		\node [circle,fill=white] (8) at (6, 1) {};
		\node [circle,fill=white] (9) at (6, 0) {};
		\draw (0) -- (1);
		\draw (0) -- (2);
		\draw (2) -- (1);
		\draw (2) -- (3);
		\draw (3) -- (4);
		\draw (4) -- (5);
		\draw (5) -- (6);
		\draw (6) -- (7);
		\draw (7) -- (5);
		\draw (6) -- (8);
		\draw (8) -- (9);
		\draw (9) -- (7);
		\draw [bend right] (1) to (9);
		\draw [bend left] (0) to (4);
		\draw [bend left] (3) to (8);
    \end{tikzpicture}
    \caption{}
\label{}
    \end{subfigure}
\caption{Two drawings of the $3$-staircase $N_{10}$}
\label{fig:N10}
\end{figure}

\begin{prp}\label{prp:N10-vertex-orbits}
The automorphism group of the graph~$N_{10}$ has five vertex-orbits --- 
\newline$\{0,5\},\{1,4\},\{2,3\}, \{6,9\}$ and $\{7,8\}$ as per the labeling shown in Figure~\ref{fig:N10a}. \qed
\end{prp}

Unlike the $1$-staircases, for each even order $n\ge 12$, the number of $3$-staircases is at least two and grows as $n$ increases. Figure~\ref{fig:st12} shows the only two $3$-staircases of order twelve; each of them has solitary pattern~$(2,1)$, and to observe that they are nonisomorphic, one may consider the distance between their only two triangles.
The following is our characterization of \ecc{3} $r$-graphs that have two solitary classes of different cardinalities.

\begin{thm}\label{thm:SDSS}
A \ecc{3} $r$-graph~$G$ has a solitary doubleton as well as a solitary singleton if and only if $G$ is 
a $3$-staircase of thickness~$r-2$. Furthermore, precisely one of the following holds:
\begin{enumerate}[(i)]
\item $G$ is the bicorn~$R_8$ and has solitary pattern $(2,2,1)$, or
\item $G$ is $N_{10}$ and has solitary pattern $(2,1,1)$, or otherwise
\item $G$ has solitary pattern $(2,1)$.
\end{enumerate}
\end{thm}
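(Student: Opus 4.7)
The reverse direction will follow by direct inspection. Proposition~\ref{prp:3-staircase-ss} already shows that in a $3$-staircase $G$ of thickness~$r-2$, the even $1$-cut edge $e := vw$ of the underlying $3$-dumbbell~$J$ is a solitary singleton with companion~$f$. To exhibit the solitary doubleton, I will locate two edges at the end of the bone away from $e$ (intuitively, the two ``crossing'' edges spanning the ladders at the far socket): using Lemma~\ref{lem:brick-equivalence-classes} I verify they are mutually dependent, and counting \pms\ through the associated $r$-cuts shows the pair is solitary. The additional solitary classes that produce the patterns $(2,2,1)$ for $R_8$ and $(2,1,1)$ for $N_{10}$ can be exhibited using the symmetries of Propositions~\ref{prp:R8-vertex-orbits}~and~\ref{prp:N10-vertex-orbits}; for $n\ge 12$, Lemma~\ref{lem:r-cut-lemma} applied to an $r$-triangle sitting at a socket caps the total number of solitary classes at two, yielding pattern~$(2,1)$.

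For the forward direction, let $\{\alpha,\beta\}$ be a solitary doubleton and $e=uv$ a solitary singleton of~$G$. By Corollary~\ref{cor:solitary-doubleton-near-bipartite-brick}, $G$ is an \mbox{$r$-edge-colorable} near-bipartite brick, and $H[A,B]:=G-\alpha-\beta$ is a bipartite \mcg\ with both ends of~$\alpha$ in~$A$ and both ends of~$\beta$ in~$B$. By Lemma~\ref{lem:associated-r-cuts}, $e$ has a unique companion~$f$ and unique associated $r$-cuts $C=\partial(X)$~and~$D=\partial(Y)$ with $V(G)=X\cup Y\cup\{u,v\}$, $X\cap Y=\emptyset$, and $C\cap D=\{f\}$. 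Since the two solitary classes are mutually exclusive (Corollary~\ref{lem:solitary-class-unique-pm}) and the unique \pmg~$M_e$ contains $\{e,f\}$, neither $\alpha$ nor $\beta$ lies in~$M_e$. Combining this with Lemma~\ref{lem:solitary-ME-sepcuts} applied across $C$~and~$D$ pins down the location of $\{\alpha,\beta\}$ relative to the $r$-cut decomposition, forcing the near-bipartite structure of~$H$ to look like two rigid ``halves'' glued through $u$, $v$, and~$f$.

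The induction will then run on the order of~$G$. The base cases $n=8$ and $n=10$ are handled by hand: the combined constraints from the solitary-singleton $r$-cut structure and the near-bipartite structure leave only $R_8$ and $N_{10}$. For $n\ge 12$, I invoke the second part of Lemma~\ref{lem:combined-stat-r-graphs} to locate an $r$-triangle~$T$ disjoint from $\{u,v\}$ and from the endpoints of $\{\alpha,\beta\}$. Contracting~$T$ (an odd $r$-cut contraction, so separating by Proposition~\ref{prp:r-cuts}) produces a smaller \ecc{3} $r$-graph~$G'$; by Lemma~\ref{lem:solitary-ME-sepcuts}(i)--(ii), the images of~$e$ and of $\{\alpha,\beta\}$ remain a solitary singleton and a solitary doubleton in~$G'$, and they are still mutually exclusive. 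By the induction hypothesis, $G'$ is a $3$-staircase of thickness~$r-2$; analyzing the uncontraction at~$T$ (which must insert a triangle somewhere in the staircase consistent with the preserved solitary structure) shows that this insertion corresponds precisely to adding one additional rung to a ladder of~$G'$, whence $G$ is again a $3$-staircase of thickness~$r-2$.

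The main obstacle will be the existence and well-behavior of the reduction triangle~$T$: I must ensure not only that $T$ exists avoiding the endpoints of $e$, $\alpha$ and $\beta$, but also that upon contraction the solitary doubleton does not collapse into a solitary singleton (which would occur if both $\alpha$ and $\beta$ were incident with~$T$) nor disappear. Handling this requires a careful location argument that uses the bipartition $(A,B)$ of~$H$ together with the $r$-cut structure around~$e$ to show that, for $n\ge 12$, among the $r$-triangles guaranteed by Lemma~\ref{lem:combined-stat-r-graphs} at least one lies sufficiently far from both solitary classes. Here the near-bipartite restriction (via Lemma~\ref{lem:brick-equivalence-classes}) plays a key role, since it constrains how the three vertices of any $r$-triangle can be distributed between $A$~and~$B$ and hence where endpoints of $\alpha,\beta$ may sit relative to~$T$.
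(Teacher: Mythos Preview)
Your forward-direction induction has a concrete obstruction: the $r$-triangle~$T$ you want, disjoint from the endpoints of~$\alpha$ and~$\beta$, does not exist. Since $\{\alpha,\beta\}$ is a solitary doubleton, Corollary~\ref{cor:solitary-doubleton-near-bipartite-brick}(ii) makes $G-\alpha-\beta$ bipartite, so every odd cycle of~$G$, and in particular every triangle, must use $\alpha$ or~$\beta$ as one of its edges; for $r=3$ this is precisely Corollary~\ref{lem:vertex-disjoint-triangle-lemma}. Lemma~\ref{lem:combined-stat-r-graphs} only guarantees an $r$-triangle avoiding a single prescribed vertex, not the four ends of~$\alpha$ and~$\beta$, and the ``careful location argument'' you sketch cannot overcome the bipartite obstruction. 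One could attempt a salvage by contracting the triangle through~$\alpha$ and tracking the doubleton via Corollary~\ref{cor:SD-shrinking-triangle}, but then you must also show that~$e$ remains a solitary \emph{singleton} in~$G'$ (it could a priori merge into a doubleton, forcing a detour through Theorem~\ref{thm:more-than-one-SD}), and you must analyse which splicings $(G'\odot K_4)_t$ of a $3$-staircase return a $3$-staircase --- none of which you have outlined. The paper takes a different route entirely: it builds a maximal $3$-dumbbell subgraph~$J$ of thickness~$r-2$ containing $e,\alpha,\beta$ in prescribed positions (Lemmas~\ref{lem:dumbbell-existence} and~\ref{lem:r3-dumbbell-existence-r-graphs}) and then shows~$J$ is spanning by exploiting the degree-one vertex guaranteed by Corollary~\ref{cor:degree-one} in the uniquely matchable bipartite graph $G-a_1-a_2-b_1-b_2$.

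A smaller issue in your reverse direction: invoking Lemma~\ref{lem:r-cut-lemma} on a socket $r$-triangle caps the number of solitary classes at two only for $r\ge 4$, since for $r=3$ all three edges of that triangle have multiplicity one and the lemma gives only the bound three; the paper instead locates all solitary edges directly via Propositions~\ref{prp:matchable-spanning-subgraph} and~\ref{prp:solitary-unmatchable-in-3-dumbbell}.
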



\begin{figure}[!htb]
        \centering
        \begin{subfigure}[b]{.48 \textwidth}
        \centering
        \begin{tikzpicture}[scale=1]
      \node [circle,fill=white] (0) at (-0.5, 0) {};
        		\node [circle,fill=white] (1) at (2.5, 0) {};
        		\node [circle,fill=white] (2) at (3.5, 1) {};
        		\node [circle,fill=white] (3) at (3.5, -1) {};
        		\node [circle,fill=white] (4) at (-1.5, 1) {};
        		\node [circle,fill=white] (5) at (-1.5, -1) {};
        		\node [circle,fill=white] (6) at (0.5, 0) {};
        		\node [circle,fill=white] (7) at (0.5, -1) {};
        		\node [circle,fill=white] (8) at (1.5, 0) {};
        		\node [circle,fill=white] (9) at (1.5, 1) {};
        		\node [circle,fill=white] (10) at (-1.5, 0) {};
        		\node [circle,fill=white] (11) at (-1, 0.5) {};
                \draw (4) -- (11) -- (0);
        		\draw (0) -- (5);
        		\draw (10) -- (11);
        		\draw (4) -- (10) -- (5);
        		\draw (4) -- (9);
        		\draw (9) -- (8);
        		\draw (8) -- (6);
        		\draw (0) -- (6);
        		\draw (6) -- (7);
        		\draw (7) -- (5);
        		\draw (8) -- (1);
        		\draw (1) -- (2);
        		\draw (2) -- (3);
        		\draw (3) -- (7);
        		\draw (1) -- (3);
        		\draw (9) -- (2);
        \end{tikzpicture}
        \caption{}
\label{fig:st12a}
    \end{subfigure}
    \begin{subfigure}[b]{.48 \textwidth}
    \centering
    \begin{tikzpicture}[scale=1]
      \node [circle,fill=white] (0) at (-0.5, 0) {};
        		\node [circle,fill=white] (1) at (2.5, 0) {};
        		\node [circle,fill=white] (2) at (3.5, 1) {};
        		\node [circle,fill=white] (3) at (3.5, -1) {};
        		\node [circle,fill=white] (4) at (-1.5, 1) {};
        		\node [circle,fill=white] (5) at (-1.5, -1) {};
        		\node [circle,fill=white] (6) at (0.5, 0) {};
        		\node [circle,fill=white] (7) at (0.5, -1) {};
        		\node [circle,fill=white] (8) at (1.5, 0) {};
        		\node [circle,fill=white] (9) at (1.5, 1) {};
		\node [circle,fill=white] (11) at (-1, 0.5) {};
		\node [circle,fill=white] (10) at (-1, -0.5) {};
                \draw (4) -- (11) -- (0);
        		\draw (0) -- (10) -- (5);
        		\draw (11) -- (10);
        		\draw (5) -- (4);
        		\draw (4) -- (9);
        		\draw (9) -- (8);
        		\draw (8) -- (6);
        		\draw (0) -- (6);
        		\draw (6) -- (7);
        		\draw (7) -- (5);
        		\draw (8) -- (1);
        		\draw (1) -- (2);
        		\draw (2) -- (3);
        		\draw (3) -- (7);
        		\draw (1) -- (3);
        		\draw (9) -- (2);
    \end{tikzpicture}
    \caption{}
\label{st12b}
    \end{subfigure}
\caption{Smallest \ecc{3} $3$-graphs with solitary pattern~$(2,1)$}
\label{fig:st12}
\end{figure}

A proof of the forward implication of the above theorem appears in Section~\ref{sec:oneSD-oneSS}. We now proceed to convince the reader of the reverse implication along with statements~(i),~(ii)~and~(iii). In light of Proposition~\ref{prp:matchable-spanning-subgraph}, in order to locate the solitary edges of a $3$-staircase, one may first consider locating the solitary as well as unmatchable edges of a $3$-dumbbell.

Let $J$ denote a $3$-dumbbell with $uvwx$ as its bone. Note that $vw$ is its unique even $1$-cut, say $\partial_{J}(Y)$, where $u,v\in Y$. Since $uv$ and $wx$ are odd $1$-cuts of~$J$, they participate in each \pmg\ of~$J$; consequently, the five edges that are adjacent with either of them are unmatchable. Furthermore, each of the remaining edges of~$J$ is matchable (since each ladder is matching covered). 
Note that, if the ladder~$J[\overline{Y}-w-x]$ is distinct from $K_2$, then each matchable edge in $J[Y+w+x]$ participates in at least two \pms\ of~$J$; an analogous statement holds for $J[Y-u-v]$ and $J[\overline{Y}+u+v]$. These observations, along with Proposition~\ref{prp:ladder-is-bipartite-mcg}, imply that if $J$ has a solitary edge then one of the ladders $J[Y-u-v]$ and $J[\overline{Y}-w-x]$ is isomorphic to $K_2$, whereas the other one is of order at most eight. This proves the following except for the easy task of locating the solitary edges in the graphs shown in Figure~\ref{fig:3-dumbbells}.

\begin{prp}\label{prp:solitary-unmatchable-in-3-dumbbell}
Every $3$-dumbbell $J$ of thickness~$t \in \mathbb{Z}^+$ has exactly five unmatchable edges; one of them is its unique even $1$-cut, and each of the remaining four edges is incident with a socket and belongs to a triangle. Also, $J$~is free of solitary edges unless $J$ is one of the four graphs shown in Figure~\ref{fig:3-dumbbells} --- wherein the solitary edges are indicated in red. \qed
\end{prp}

\begin{figure}[!htb]
    \centering
    \begin{subfigure}[b]{.2\textwidth}
        \begin{tikzpicture}[scale=0.5]
        \node [circle,fill=white] (0) at (0, 1) {};
		\node [circle,fill=white] (1) at (0, 0) {};
		\node [circle,fill=white] (2) at (1, 0.5) {};
		\node [circle,fill=white] (6) at (2, 0.5) {};
		\node [circle,fill=white] (7) at (3, 0.5) {};
		\node [circle,fill=white] (3) at (2+2, 0.5) {};
		\node [circle,fill=white] (4) at (3+2, 1) {};
		\node [circle,fill=white] (5) at (3+2, 0) {};
		\draw[thick,red] (0) -- (1);
		\draw (0) -- (2);
		\draw (2) -- (1);
		\draw[thick,red] (2) -- (6);
		\draw (6) -- (7);
		\draw[thick,red] (7) -- (3);
		\draw (3) -- (4);
		\draw[thick,red] (4) -- (5);
		\draw (5) -- (3);
        \end{tikzpicture}
    \end{subfigure}
    \begin{subfigure}[b]{.22\textwidth}
        \begin{tikzpicture}[scale=0.5]
        \node [circle,fill=white] (0) at (0, 1) {};
		\node [circle,fill=white] (1) at (0, 0) {};
		\node [circle,fill=white] (2) at (1, 0.5) {};
		\node [circle,fill=white] (3) at (2+2, 0.5) {};
		\node [circle,fill=white] (8) at (2, 0.5) {};
		\node [circle,fill=white] (9) at (3, 0.5) {};
		\node [circle,fill=white] (4) at (3+2, 1) {};
		\node [circle,fill=white] (5) at (3+2, 0) {};
		\node [circle,fill=white] (6) at (4+2, 1) {};
		\node [circle,fill=white] (7) at (4+2, 0) {};
		\draw (0) -- (1);
		\draw (0) -- (2);
		\draw (2) -- (1);
		\draw (2) -- (8);
		\draw (8) -- (9);
		\draw (9) -- (3);
		\draw (3) -- (4);
		\draw[thick,red] (4) -- (5);
		\draw (5) -- (3);
		\draw[thick,red] (4) -- (6);
		\draw[thick,red] (6) -- (7);
		\draw[thick,red] (7) -- (5);
        \end{tikzpicture}
    \end{subfigure}
    \begin{subfigure}[b]{.28\textwidth}
        \centering
        \begin{tikzpicture}[scale=0.5]
        \node [circle,fill=white] (0) at (0, 1) {};
		\node [circle,fill=white] (1) at (0, 0) {};
		\node [circle,fill=white] (2) at (1, 0.5) {};
		\node [circle,fill=white] (10) at (2, 0.5) {};
		\node [circle,fill=white] (11) at (3, 0.5) {};
		\node [circle,fill=white] (3) at (2+2, 0.5) {};
		\node [circle,fill=white] (4) at (3+2, 1) {};
		\node [circle,fill=white] (5) at (3+2, 0) {};
		\node [circle,fill=white] (6) at (4+2, 1) {};
		\node [circle,fill=white] (7) at (4+2, 0) {};
		\node [circle,fill=white] (8) at (5+2, 1) {};
		\node [circle,fill=white] (9) at (5+2, 0) {};
		\draw (0) -- (1);
		\draw (0) -- (2);
		\draw (2) -- (1);
		\draw (2) -- (10);
		\draw (10) -- (11);
		\draw (11) -- (3);
		\draw (3) -- (4);
		\draw (4) -- (5);
		\draw (5) -- (3);
		\draw[thick,red] (4) -- (6);
		\draw[thick,red] (6) -- (7);
		\draw[thick,red] (7) -- (5);
		\draw[thick,red] (6) -- (8);
		\draw (8) -- (9);
		\draw[thick,red] (9) -- (7);
        \end{tikzpicture}
    \end{subfigure}
    \begin{subfigure}[b]{.28\textwidth}
        \centering
        \begin{tikzpicture}[scale=0.5]
        \node [circle,fill=white] (0) at (0, 1) {};
		\node [circle,fill=white] (1) at (0, 0) {};
		\node [circle,fill=white] (2) at (1, 0.5) {};
		\node [circle,fill=white] (12) at (2, 0.5) {};
		\node [circle,fill=white] (13) at (3, 0.5) {};
		\node [circle,fill=white] (3) at (2+2, 0.5) {};
		\node [circle,fill=white] (4) at (3+2, 1) {};
		\node [circle,fill=white] (5) at (3+2, 0) {};
		\node [circle,fill=white] (6) at (4+2, 1) {};
		\node [circle,fill=white] (7) at (4+2, 0) {};
		\node [circle,fill=white] (8) at (5+2, 1) {};
		\node [circle,fill=white] (9) at (5+2, 0) {};
		\node [circle,fill=white] (10) at (6+2, 1) {};
		\node [circle,fill=white] (11) at (6+2, 0) {};
		\draw (0) -- (1);
		\draw (0) -- (2);
		\draw (2) -- (1);
		\draw (2) -- (12) -- (13) -- (3);
		\draw (3) -- (4);
		\draw (4) -- (5);
		\draw (5) -- (3);
		\draw (4) -- (6);
		\draw (6) -- (7);
		\draw (7) -- (5);
		\draw[thick,red] (6) -- (8);
		\draw (8) -- (9);
		\draw[thick,red] (9) -- (7);
		\draw (8) -- (10);
		\draw (10) -- (11);
		\draw (11) -- (9);
        \end{tikzpicture}
    \end{subfigure}
    \caption{Illustration for Proposition~\ref{prp:solitary-unmatchable-in-3-dumbbell}}
    \label{fig:3-dumbbells}
\end{figure}

Now, let $G$ be a $3$-staircase of thickness~$t\in \mathbb{Z}^+$ that is obtained from a $3$-dumbbell~$J$ by adding three edges $d,d'$~and~$f$; adjust notation as per Proposition~\ref{prp:3-staircase-ss}.
Observe that the matchable graph~$H$ obtained from $G$ by deleting both ends of~$d$ (or of~$d'$, or of~$f$) is $2$-connected; consequently, by Kotzig's Lemma~(\ref{lem:unique-pm-in-a-graph}), $H$ has at least two \pms, and thus none of $d,d'$ and $f$ is solitary in~$G$. It follows that each solitary edge of~$G$ thus belongs to~$J$. In particular, if $J$ is not one of the four graphs shown in Figure~\ref{fig:3-dumbbells} then, by Propositions~\ref{prp:solitary-unmatchable-in-3-dumbbell}~and~\ref{prp:matchable-spanning-subgraph}, $G$~has at most five solitary edges, and these five are precisely the unmatchable edges of~$J$. We shall now focus on these five edges when $J$ is not one of the four graphs shown in Figure~\ref{fig:3-dumbbells}.

As noted in Proposition~\ref{prp:3-staircase-ss}, the unique even $1$-cut of~$J$ is a solitary singleton of~$G$. Now, let $\alpha,\beta,\alpha'$~and~$\beta'$ denote the remaining four unmatchable edges of~$J$ so that $\alpha$~and~$\alpha'$ are adjacent. We invite the reader to observe that two of them, say~$\alpha$ and $\beta$, depend on each other, as well as on~$d$~and~$d'$, whereas the remaining two $\alpha'$~and~$\beta'$ depend on~$f$. Using these observations, and Kotzig's Lemma (\ref{lem:unique-pm-in-a-graph}), it is easy to see that $\{\alpha,\beta\}$ is a solitary doubleton, whereas neither $\alpha'$ nor $\beta'$ is solitary.

The above observations, along with checking the small $3$-staircases that are obtained from the \mbox{$3$-dumbbells} shown in Figure~\ref{fig:3-dumbbells}, 
prove the reverse implication of Theorem~\ref{thm:SDSS}, as well as statements~(i), (ii)~and~(iii). Our next goal is to count the number of \ecc{3} $r$-graphs for a fixed value of~$r$, of order~$n$, that have solitary classes of different cardinalities; assuming Theorem~\ref{thm:SDSS}, it suffices to count the number of \mbox{$3$-staircases}.

It follows from our above discussion that each $3$-staircase~$G$, of order~$n\ge 12$, has a unique solitary singleton~$e$ and a unique solitary doubleton~$\{\alpha, \beta\}$; consequently, the three edges $d,d'$~and~$f$ are uniquely determined, and their deletion results in a $3$-\dumbbell. On the other hand, for $n\in \{8,10\}$, there is precisely one $3$-dumbbell, and thus precisely one $3$-staircase. These facts prove the following.

\begin{prp}\label{prp:3-staircase-3-dumbbell-isomorphism}
Let $G$~and~$G'$ denote $3$-staircases that are obtained from $3$-dumbbells $J$~and~$J'$, respectively, by adding three edges. Then, $G$ is isomorphic to $G'$ if and only if $J$ is isomorphic to~$J'$. \qed
\end{prp}

By the above proposition, it suffices to count the number of $3$-dumbbells of order~$n$, where $n$ is even and at least eight. The reader may observe that this number is the same as the number of partitions of $x:=\frac{n-8}{2}$ into exactly two parts where parts are allowed to be zero --- that is, $\lceil \frac{x+1}{2} \rceil$. This proves the following.
\begin{prp}
The number of $3$-staircases of thickness~$t\in \mathbb{Z}^+$, of even order~$n\ge 8$, is precisely $\lceil \frac{n-6}{4} \rceil$. \qed
\end{prp}

The above, combined with Theorem~\ref{thm:SDSS}, gives us a precise formula for the number of \ecc{3} $r$-graphs for a fixed value of~$r$, of order~$n$, that have solitary classes of different cardinalities. In the following section, we shall assume Theorems~\ref{thm:more-than-one-SD}~and~\ref{thm:SDSS}, and present a proof of Theorem~\ref{thm:at-most-n/2-solitary-edges}.

\subsection{Proof of Theorem~\ref{thm:at-most-n/2-solitary-edges}: \texorpdfstring{$r$}{}-graphs have at most \texorpdfstring{$\frac{n}{2}$}{} solitary edges} \label{sec:proof-of-Theorem-at-most-n/2-solitary-edges}
Let us first recall the statement we intend to prove.

\begin{reptheorem}{thm:at-most-n/2-solitary-edges}
For any $r$-graph~$G$, precisely one of the following holds:
    \begin{enumerate}[(i)]
        \item either $G$ has at most $\frac{n}{2}$ \ses, 
        \item or otherwise $G\in $ \mbb{\theta}{}{1}   $\cup$~\mbb{K}{4}{1}~ $\cup~{\overline{C_6}}^1 \cup \{R_8\}$.
    \end{enumerate}
\end{reptheorem}
\begin{proof}
    Note that if $G\in $ \mbb{\theta}{}{1}   $\cup$~\mbb{K}{4}{1}~ $\cup~{\overline{C_6}}^1 \cup \{R_8\}$, then $G$ is $3$-connected and $G$ has at least $\frac{n}{2} +1$ solitary edges. If $n\le 4$ then, by Proposition~\ref{prp:solitary-patterns-of-small-r-graphs}, it is easy to verify that the desired conclusion holds. If $G$ is not $3$-edge-connected then, by Corollary~\ref{cor:2-connected-but-not-3-connected}, we are done. Now, we consider the remaining case: $G$ is \ecc{3} and $n\ge 6$. Clearly, if the number of solitary edges is at most three then we are done. Now suppose that $G$ has four or more solitary edges. By Corollary~\ref{cor:main-thm}, either $G$ has at least two solitary doubletons, or it has a solitary doubleton as well as a solitary singleton. In the former case, we invoke Theorem~\ref{thm:more-than-one-SD}, whereas in the latter case, we invoke Theorem~\ref{thm:SDSS}, to observe that the desired conclusion holds.
\end{proof}


In the final part of this section, we describe those \ecc{3} $r$-graphs that have precisely three solitary singletons.

\subsection{Solitary pattern \texorpdfstring{$(1,1,1)$}{} --- the finite family \texorpdfstring{$\mathcal{S}$}{}} \label{sec:5-special-graphs}
Clearly, the theta graph $\theta$, shown in Figure~\ref{fig:theta-graph}, has three solitary singletons. It turns out that apart from $\theta$, there are precisely five special \ecc{3} $r$-graphs, shown in Figure~\ref{fig: 3cc-with-three-distinct-solitary-singletons}, that also possess precisely three solitary singletons --- labeled as $e_1,e_2$~and~$e_3$. We use $\mathcal{S}$ to denote the set comprising these five graphs, and state our characterization as follows.

\begin{figure}[!htb]
    \centering
    \begin{subfigure}[b]{.33\textwidth}
        \centering
        \begin{tikzpicture}[scale=0.8]
            \node[circle,fill=white] (1) at (90:1.25){};
            \node[circle,fill=white] (2) at (210:1.25){};
            \node[circle,fill=white] (3) at (330:1.25){};
            \node[circle,fill=white] (4) at (80-5-5:2+0.5){};
            \node[circle,fill=white] (5) at (100+5+5:2+0.5){};
            \node[circle,fill=white] (6) at (200-5-5:2+0.5){};
            \node[circle,fill=white] (7) at (220+5+5:2+0.5){};
            \node[circle,fill=white] (8) at (320-5-5:2+0.5){};
            \node[circle,fill=white] (9) at (340+5+5:2+0.5){};
            \node[circle,fill=white] (0) at (0:0){};
            \node [draw=none] at (90:2.6) {$e_1$};
		  \node [draw=none] at (210:2.6) {$e_2$};
		  \node [draw=none] at (330:2.6) {$e_3$};
            \draw (0) to (1);
            \draw (0) to (2);
            \draw (0) to (3);
            \draw (4) to (1);
            \draw (5) to (1);
            \draw (6) to (2);
            \draw (7) to (2);
            \draw (8) to (3);
            \draw (9) to (3);
            \draw (4) -- (5) -- (6) -- (7) -- (8) -- (9) -- (4);
        \end{tikzpicture}
        \caption{}
  \label{fig:tricorn}
    \end{subfigure}
    \begin{subfigure}[b]{.33\textwidth}
        \centering
        \begin{tikzpicture}[scale=0.8]
            \node[circle,fill=white] (1) at (90:1.25){};
            \node[circle,fill=white] (2) at (210:1.25){};
            \node[circle,fill=white] (3) at (330:1.25){};
            \node[circle,fill=white] (4) at (80-5-5:2+0.5){};
            \node[circle,fill=white] (5) at (100+5+5:2+0.5){};
            \node[circle,fill=white] (6) at (200-5-5:2+0.5){};
            \node[circle,fill=white] (7) at (220+5+5:2+0.5){};
            \node[circle,fill=white] (8) at (320-5-5:2+0.5){};
            \node[circle,fill=white] (9) at (340+5+5:2+0.5){};
            \node[circle,fill=white] (0) at (0:0){};
            \node[circle,fill=white] (10) at (80-4.5:1+0.95){};
            \node[circle,fill=white] (11) at (100+4.5:1+0.95){};
            \node [draw=none] at (90:2.6) {$e_1$};
		  \node [draw=none] at (210:2.6) {$e_2$};
		  \node [draw=none] at (330:2.6) {$e_3$};
            \draw (10) -- (11);
            \draw (0) to (1);
            \draw (0) to (2);
            \draw (0) to (3);
            \draw (4) to (1);
            \draw (5) to (1);
            \draw (6) to (2);
            \draw (7) to (2);
            \draw (8) to (3);
            \draw (9) to (3);
            \draw (4) -- (5) -- (6) -- (7) -- (8) -- (9) -- (4);
            \node[circle,fill=white] (1) at (90:1.25){};
            \node[circle,fill=white] (2) at (210:1.25){};
            \node[circle,fill=white] (3) at (330:1.25){};
            \node[circle,fill=white] (4) at (80-5-5:2+0.5){};
            \node[circle,fill=white] (5) at (100+5+5:2+0.5){};
            \node[circle,fill=white] (6) at (200-5-5:2+0.5){};
            \node[circle,fill=white] (7) at (220+5+5:2+0.5){};
            \node[circle,fill=white] (8) at (320-5-5:2+0.5){};
            \node[circle,fill=white] (9) at (340+5+5:2+0.5){};
            \node[circle,fill=white] (0) at (0:0){};
            \node[circle,fill=white] (10) at (80-4.5:1+0.95){};
            \node[circle,fill=white] (11) at (100+4.5:1+0.95){};
        \end{tikzpicture}
        \caption{}
  \label{fig:tricorn+2vertices}
    \end{subfigure}
    \begin{subfigure}[b]{.3\textwidth}
        \centering
        \begin{tikzpicture}[scale=0.8]
            \node[circle,fill=white] (1) at (90:1.25){};
            \node[circle,fill=white] (2) at (210:1.25){};
            \node[circle,fill=white] (3) at (330:1.25){};
            \node[circle,fill=white] (4) at (80-5-5:2+0.5){};
            \node[circle,fill=white] (5) at (100+5+5:2+0.5){};
            \node[circle,fill=white] (6) at (200-5-5:2+0.5){};
            \node[circle,fill=white] (7) at (220+5+5:2+0.5){};
            \node[circle,fill=white] (8) at (320-5-5:2+0.5){};
            \node[circle,fill=white] (9) at (340+5+5:2+0.5){};
            \node[circle,fill=white] (0) at (0:0){};
            \node[circle,fill=white] (10) at (320-4.5:1+0.95){};
            \node[circle,fill=white] (11) at (340+4.5:1+0.95){};
            \node[circle,fill=white] (12) at (200-4.5:1+0.95){};
            \node[circle,fill=white] (13) at (220+4.5:1+0.95){};
            \node [draw=none] at (90:2.6) {$e_1$};
		  \node [draw=none] at (210:2.6) {$e_2$};
		  \node [draw=none] at (330:2.6) {$e_3$};
            \draw (10) -- (11);
            \draw (13) -- (12);
            \draw (0) to (1);
            \draw (0) to (2);
            \draw (0) to (3);
            \draw (4) to (1);
            \draw (5) to (1);
            \draw (6) to (2);
            \draw (7) to (2);
            \draw (8) to (3);
            \draw (9) to (3);
            \draw (4) -- (5) -- (6) -- (7) -- (8) -- (9) -- (4);
            \node[circle,fill=white] (1) at (90:1.25){};
            \node[circle,fill=white] (2) at (210:1.25){};
            \node[circle,fill=white] (3) at (330:1.25){};
            \node[circle,fill=white] (4) at (80-5-5:2+0.5){};
            \node[circle,fill=white] (5) at (100+5+5:2+0.5){};
            \node[circle,fill=white] (6) at (200-5-5:2+0.5){};
            \node[circle,fill=white] (7) at (220+5+5:2+0.5){};
            \node[circle,fill=white] (8) at (320-5-5:2+0.5){};
            \node[circle,fill=white] (9) at (340+5+5:2+0.5){};
            \node[circle,fill=white] (0) at (0:0){};
            \node[circle,fill=white] (10) at (320-4.5:1+0.95){};
            \node[circle,fill=white] (11) at (340+4.5:1+0.95){};
            \node[circle,fill=white] (12) at (200-4.5:1+0.95){};
            \node[circle,fill=white] (13) at (220+4.5:1+0.95){};
        \end{tikzpicture}
        \caption{}
  \label{fig:tricorn+4vertices}
    \end{subfigure}
    \begin{subfigure}[b]{.53\textwidth}
        \centering
        \begin{tikzpicture}[scale=0.8]
            \node[circle,fill=white] (1) at (90:1.25){};
            \node[circle,fill=white] (2) at (210:1.25){};
            \node[circle,fill=white] (3) at (330:1.25){};
            \node[circle,fill=white] (4) at (80-5-5:2+0.5){};
            \node[circle,fill=white] (5) at (100+5+5:2+0.5){};
            \node[circle,fill=white] (6) at (200-5-5:2+0.5){};
            \node[circle,fill=white] (7) at (220+5+5:2+0.5){};
            \node[circle,fill=white] (8) at (320-5-5:2+0.5){};
            \node[circle,fill=white] (9) at (340+5+5:2+0.5){};
            \node[circle,fill=white] (0) at (0:0){};
            \node[circle,fill=white] (10) at (80-4.5:1+0.95){};
            \node[circle,fill=white] (11) at (100+4.5:1+0.95){};
            \node[circle,fill=white] (12) at (200-4.5:1+0.95){};
            \node[circle,fill=white] (13) at (220+4.5:1+0.95){};
            \node[circle,fill=white] (14) at (320-4.5:1+0.95){};
            \node[circle,fill=white] (15) at (340+4.5:1+0.95){};
            \node [draw=none] at (90:2.6) {$e_1$};
		  \node [draw=none] at (210:2.6) {$e_2$};
		  \node [draw=none] at (330:2.6) {$e_3$};
            \draw (10) -- (11);
            \draw (13) -- (12);
            \draw (14) -- (15);
            \draw (0) to (1);
            \draw (0) to (2);
            \draw (0) to (3);
            \draw (4) to (1);
            \draw (5) to (1);
            \draw (6) to (2);
            \draw (7) to (2);
            \draw (8) to (3);
            \draw (9) to (3);
            \draw (4) -- (5) -- (6) -- (7) -- (8) -- (9) -- (4);
            \node[circle,fill=white] (1) at (90:1.25){};
            \node[circle,fill=white] (2) at (210:1.25){};
            \node[circle,fill=white] (3) at (330:1.25){};
            \node[circle,fill=white] (4) at (80-5-5:2+0.5){};
            \node[circle,fill=white] (5) at (100+5+5:2+0.5){};
            \node[circle,fill=white] (6) at (200-5-5:2+0.5){};
            \node[circle,fill=white] (7) at (220+5+5:2+0.5){};
            \node[circle,fill=white] (8) at (320-5-5:2+0.5){};
            \node[circle,fill=white] (9) at (340+5+5:2+0.5){};
            \node[circle,fill=white] (0) at (0:0){};
            \node[circle,fill=white] (10) at (80-4.5:1+0.95){};
            \node[circle,fill=white] (11) at (100+4.5:1+0.95){};
            \node[circle,fill=white] (12) at (200-4.5:1+0.95){};
            \node[circle,fill=white] (13) at (220+4.5:1+0.95){};
            \node[circle,fill=white] (14) at (320-4.5:1+0.95){};
            \node[circle,fill=white] (15) at (340+4.5:1+0.95){};
        \end{tikzpicture}
        \caption{}
  \label{fig:tricorn+6vertices}
    \end{subfigure}
    \begin{subfigure}[b]{.44\textwidth}
        \centering
        \begin{tikzpicture}[scale=0.8]
            \node[circle,fill=white] (0) at (0,0){};
            \node[circle,fill=white] (1) at (1,0){};
            \node[circle,fill=white] (2) at (-1,0){};
            \node[circle,fill=white] (3) at (2,0){};
            \node[circle,fill=white] (4) at (-2,0){};
            \node[circle,fill=white] (5) at (0,1){};
            \node[circle,fill=white] (6) at (-1,-1){};
            \node[circle,fill=white] (7) at (1,-1){};
            \node[circle,fill=white] (8) at (-3,-1){};
            \node[circle,fill=white] (9) at (3,-1){};
            \node[circle,fill=white] (10) at (-3,1){};
            \node[circle,fill=white] (11) at (3,1){};
            \node [draw=none] at (-1.3,-0.5) {$e_1$};
		  \node [draw=none] at (1.3,-0.5) {$e_2$};
		  \node [draw=none] at (0.3,0.5) {$e_3$};
            \draw (0) -- (1) -- (3) -- (11) -- (9) -- (7) -- (6) -- (2) -- (4) -- (8) -- (10) -- (5) -- (0);
            \draw (8) -- (6);
            \draw (4) -- (10);
            \draw (0) -- (2);
            \draw (5) -- (11);
            \draw (1) -- (7);
            \draw (3) -- (9);
        \end{tikzpicture}
        \caption{}
  \label{fig:different-1+1+1-graph}
    \end{subfigure}
    \caption{The family $\mathcal{S}$}
    \label{fig: 3cc-with-three-distinct-solitary-singletons}
\end{figure}


\begin{thm}\label{thm:3SS}
	A \ecc{3} $r$-graph~$G$ has 
solitary pattern $(1,1,1)$ if and only if $G$ is one of the six graphs in~$\mathcal{S}\cup \{\theta\}$.
\end{thm}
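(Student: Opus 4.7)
The reverse direction is a direct verification. For $\theta$, Proposition~\ref{r-graphs-of-order-two} shows each of its three (parallel) edges is solitary and each lies in a distinct perfect matching, so the solitary pattern is $(1,1,1)$. For each of the five graphs in $\mathcal{S}$, I would identify the three labeled edges $e_1, e_2, e_3$ and apply Kotzig's Lemma~(\ref{lem:unique-pm-in-a-graph}) to $G - u_i - v_i$ to confirm that each $e_i$ is solitary. Using the unique $3$-edge-coloring of each such graph, one sees that $e_1, e_2, e_3$ lie in three distinct color classes, hence form three distinct singleton solitary classes; each remaining edge fails to be solitary because it belongs to some $M_i \cup M_j$-alternating cycle from which one can toggle to obtain a second perfect matching through it.

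For the forward direction, let $G$ be a $3$-edge-connected $r$-graph with solitary pattern $(1,1,1)$. By Corollary~\ref{cor:main-thm}~(iv), having three solitary classes forces $r = 3$; so $G$ is a \mbox{$3$-connected} \mbox{$3$-regular} graph. If $n = 2$ then $G = \theta$, so assume $n \geq 4$. By Lemma~\ref{lem:combined-stat-r-graphs} and Corollary~\ref{cor:urec}, $G$ is uniquely \mbox{$3$-edge-colorable}; let $(M_1, M_2, M_3)$ denote this coloring with $e_i \in M_i$, so that $M_i$ is the only perfect matching containing $e_i$ by Corollary~\ref{lem:solitary-class-unique-pm}. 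By Corollary~\ref{cor:solitary-doubleton-near-bipartite-brick}~(i), $G$ is a brick. For each $i$, let $f_i$ be the unique companion of $e_i$ and $C_i = \partial(X_i), D_i = \partial(Y_i)$ the associated \mbox{$3$-cuts} (Lemma~\ref{lem:associated-r-cuts}); note that $f_i \in M_i$ since $e_i \xrightarrow{G} f_i$.

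The key structural input is that every $3$-cut of $G$ is either trivial or a matching, and any two $3$-cuts are laminar (Proposition~\ref{prp:3-cut-matching-laminar}); moreover, by Proposition~\ref{prp:r-cuts} both contractions across any associated $3$-cut are again smaller \mbox{$3$-edge-connected} $3$-graphs, which allows an inductive analysis. I would parameterize the local structure near $e_i$ by $(|X_i|, |Y_i|)$: when $|X_i| = 1$, edge $e_i$ lies inside a triangle on the $X_i$-side, otherwise $C_i$ is a non-trivial matching $3$-cut with $|X_i| \geq 3$. Both $C_i$ and $D_i$ being trivial would force $n = 4$ and $G = K_4$ with pattern $(2,2,2)$, which is excluded. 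Laminarity of the three pairs $(C_i, D_i)$, combined with Hamiltonicity of each $M_i \cup M_j$ (Proposition~\ref{prp:mcgs-hcycle}), severely constrains how the three local configurations can coexist in a single graph.

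The main obstacle is the resulting exhaustive case analysis. I would split according to how many of $e_1, e_2, e_3$ lie inside a triangle. If all three do, the three triangles are pairwise disjoint (by laminarity and $3$-regularity) and the remaining vertices form a small connecting \emph{core}, yielding the tricorn (Figure~\ref{fig:tricorn}) together with its three extensions (Figures~\ref{fig:tricorn+2vertices},~\ref{fig:tricorn+4vertices}~and~\ref{fig:tricorn+6vertices}), obtained by inserting pairs of adjacent degree-two vertices along the three arms. If not all three lie inside triangles (in fact none of them, by a parity/laminarity argument), then uniqueness of the \mbox{$3$-edge-coloring} together with laminarity of the non-trivial matching $3$-cuts forces the triangle-free configuration of Figure~\ref{fig:different-1+1+1-graph}. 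The delicate part is ruling out intermediate configurations in which exactly one or two of the $e_i$ lie inside triangles, and ensuring that the admissible ``insertions'' in the tricorn case are bounded in number so that only the four listed tricorn-like graphs arise.
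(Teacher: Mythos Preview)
Your setup is fine (the reduction to $r=3$ via Corollary~\ref{cor:main-thm}~(iv), the companions~$f_i$, and the associated $3$-cuts), but your proposed case split is incorrect and misses the paper's key ingredient.

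First, the dichotomy ``all three of $e_1,e_2,e_3$ lie in triangles'' versus ``none of them do'' is false, and even the refined count does not separate the five graphs as you claim. In the tricorn (Figure~\ref{fig:tricorn}) all three~$e_i$ lie in triangles; in Figure~\ref{fig:tricorn+2vertices} exactly two do; in Figure~\ref{fig:tricorn+4vertices} exactly one does; and in both Figures~\ref{fig:tricorn+6vertices} and~\ref{fig:different-1+1+1-graph} none of the~$e_i$ lie in triangles. So ``all three in triangles'' yields only the tricorn itself, not its extensions, and ``none in triangles'' does not single out Figure~\ref{fig:different-1+1+1-graph}. Your parenthetical ``in fact none of them, by a parity/laminarity argument'' is therefore wrong, and the intermediate cases you hoped to rule out are exactly where three of the five graphs live.

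Second, and more fundamentally, you never invoke the distance result (Corollary~\ref{thm:distance-one}): any two solitary edges, at least one of which is a singleton, are at distance exactly one. This is what the paper uses to produce, for each pair $i,j$, an edge~$f_{ij}$ adjacent to both $e_i$ and~$e_j$, and then to analyse the six-edge subgraph $H=\{e_1,e_2,e_3,f_{12},f_{13},f_{23}\}$. The case split is on the length of the unique cycle in~$H$ (three through six), not on how many~$e_i$ sit in triangles; two of the four cycle-lengths are eliminated, length five gives Figure~\ref{fig:different-1+1+1-graph}, and length six gives the four tricorn-type graphs. The bound on the size of~$G$ comes from a separate observation you also do not have: for any $3$-cut~$\partial(W)$ with all three~$e_i$ on the $W$-side, the contraction $G/W$ is $\theta$ or~$K_4$ (because all three edges of the cut inherit solitude via Lemma~\ref{lem:solitary-ME-sepcuts}~(iii), and then Corollary~\ref{lem:three-pairwise-adjacent-solitaryedges} applies). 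Without the distance result and this contraction bound, nothing in your outline controls how far apart the three solitary singletons can be or how large~$G$ can grow.
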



A proof of the forward implication of the above theorem appears in Section~\ref{sec:3SS}, and we leave the reverse implication as an exercise for the reader. We conclude this section with the following immediate implication of the above, Corollary~\ref{lem:solitary-patterns-in-r-graphs} and Proposition~\ref{r-graphs-of-order-two}.

\begin{cor}
    A \ecc{3} $r$-graph~$G$ has three or more solitary singletons if and only if $G\in \mathcal{S}~\cup$ \mbb{\theta}{}{1}. \qed
\end{cor}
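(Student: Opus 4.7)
The reverse direction is a routine verification: for each of the six graphs in $\mathcal{S} \cup \{\theta\}$, one exhibits exactly three solitary edges, each forming its own equivalence class. For the forward direction, let $G$ be a \ecc{3} $r$-graph with solitary pattern $(1,1,1)$. First, if $n=2$, then by Proposition~\ref{r-graphs-of-order-two} we have $G \in $ \mbb{\theta}{}{1}; among these, only $\theta$ itself carries precisely three solitary edges. For $n \ge 4$, Corollary~\ref{lem:solitary-patterns-in-r-graphs} forces $r=3$, so $G$ is a $3$-connected cubic graph; by Corollary~\ref{cor:urec}, $G$ is uniquely $3$-edge-colorable with coloring $(M_1, M_2, M_3)$, where $M_i$ contains the solitary singleton $e_i$.

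The plan is to exploit the $3$-cut structure of Lemma~\ref{lem:associated-r-cuts}. For each solitary edge $e_i = u_i v_i$, let $f_i$ denote its unique companion and $C_i = \partial(X_i)$, $D_i = \partial(Y_i)$ the associated laminar $3$-cuts, with $X_i \cap Y_i = \emptyset$ and $V(G) = X_i \cup Y_i \cup \{u_i, v_i\}$. By Proposition~\ref{prp:3-cut-matching-laminar}, any two $3$-cuts in $G$ are laminar, so the collection $\{X_1, Y_1, X_2, Y_2, X_3, Y_3\}$ (together with their complements) forms a laminar family of subsets of $V(G)$. I would call the edge $e_i$ \emph{triangular} if one of $X_i, Y_i$ is a singleton --- equivalently, $e_i$ lies in an $r$-triangle --- and \emph{bridged} otherwise.

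The crux is a case analysis on the number of triangular $e_i$. If all three are triangular, their $r$-triangles $T_1, T_2, T_3$ are pairwise vertex-disjoint (by laminarity together with Corollary~\ref{cor:solitary-leaving-triangle-implies-K4}, since $G \neq K_4$); contracting each $T_i$ to a single vertex yields a smaller uniquely $3$-edge-colorable $3$-connected cubic graph $G'$ on $n-6$ vertices, and Lemma~\ref{lem:solitary-ME-sepcuts} lets us transfer the solitude of each $e_i$ across the separating cuts $\partial(T_i)$ to a constrained edge of $G'$. This inductive reduction, combined with a direct analysis of the triangle-expansion operation (which must preserve the pattern $(1,1,1)$), forces $G$ to be one of the four tricorn-family graphs of Figures~\ref{fig:tricorn}--\ref{fig:tricorn+6vertices}. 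If on the other hand some $e_i$ is bridged, the laminar constraints on six nontrivial shores together with the unique $3$-edge-coloring pin down $n=12$ and identify $G$ as the graph of Figure~\ref{fig:different-1+1+1-graph}.

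The principal obstacle lies in the triangular subcase: one must rule out arbitrarily large ``corner'' structures around each $r$-triangle $T_i$, showing that no corner can exceed the specific five-vertex configurations appearing in Figures~\ref{fig:tricorn+2vertices}--\ref{fig:tricorn+6vertices}. This amounts to proving that any further ladder-insertion would either create a new solitary edge (contradicting pattern $(1,1,1)$) or destroy the solitude of $e_i$, which in turn uses the Lucchesi--Murty theorem~(\ref{thm:source-class-2-cut}) to forbid large minimal classes in the contracted graph and Lemma~\ref{lem:combined-stat-r-graphs} to maintain the bicritical-brick structure throughout the reduction.
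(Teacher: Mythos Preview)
You are proving the wrong statement. This corollary appears immediately after Theorem~\ref{thm:3SS} with a \qed\ because it \emph{follows trivially from that theorem}: if $G$ has order two, Proposition~\ref{r-graphs-of-order-two} gives $G\in$~\mbb{\theta}{}{1} (and every such $G$ has $r\ge 3$ solitary singletons, not just $\theta$ --- your claim that ``only $\theta$ itself carries precisely three'' misreads the hypothesis, which says \emph{three or more}); if $G$ has order four or more, Corollary~\ref{lem:solitary-patterns-in-r-graphs} forces the solitary pattern to be $(1,1,1)$, whence Theorem~\ref{thm:3SS} gives $G\in\mathcal{S}$. That is the entire proof.

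What your proposal actually sketches is a proof of Theorem~\ref{thm:3SS} itself, and it differs substantially from the paper's argument in Section~\ref{sec:3SS}. The paper does not induct by contracting triangles; instead it invokes Corollary~\ref{thm:distance-one} to place every pair $e_i,e_j$ at distance exactly one, builds the six-edge subgraph $H=\{e_1,e_2,e_3,f_{12},f_{13},f_{23}\}$, observes that $H$ has a unique cycle $Q$, and then performs a short case analysis on the length of $Q$ (eliminating $|Q|\in\{3,4\}$ quickly, and extracting the five graphs of $\mathcal{S}$ from $|Q|\in\{5,6\}$ via~\ref{sta:all-three-solitary-edges-in-one-shore}). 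Your inductive plan has a genuine gap: when you contract the triangle $T_1$ containing $e_1$, Lemma~\ref{lem:sd-lemma} guarantees that the companion $f_1$ becomes a solitary singleton, and Lemma~\ref{lem:solitary-ME-sepcuts}(i) keeps $e_2,e_3$ solitary --- but nothing prevents $e_2$ or $e_3$ from acquiring a solitary partner in $G/T_1$ and becoming part of a doubleton, so the pattern $(1,1,1)$ need not survive the contraction. Your ``bridged'' case is also too vague: saying laminarity ``pins down $n=12$'' is not an argument.
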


\section{Distance between solitary classes}
\label{sec:further-developing-toolkit}

For distinct edges $e_1$ and $e_2$ of a graph $G$, we use \distg{G}{e_1}{e_2} to denote the distance between them --- that is, the length of a shortest path that starts at an end of $e_1$ and ends at an end of $e_2$. We extend this notion to equivalence classes $D_1$~and~$D_2$: the distance between them is $\min\{$\distg{G}{e_1}{e_2}$~|~e_1\in~D_1~\rm{and}~e_2\in D_2\}$. 
In this section, we shall establish that the distance between any two solitary classes, in a \ecc{3} \mbox{$r$-graph}, is at most one. In order to do so, we shall need several technical results, and some of them will also be used in Section~\ref{sec:proofs-of-main-results} to prove the forward implications of Theorems~\ref{thm:more-than-one-SD},~\ref{thm:SDSS}~and~\ref{thm:3SS}. Let us recall Proposition~\ref{prp:solitary-calls-for-uniquely-matchable} which implies that any study of solitary edges is intrinsically related to the study of uniquely matchable graphs.

\subsection{Uniquely matchable bipartite graphs} \label{sec:results-pertaining-to-bipartite-graphs}

For any edge~$e:=ab$ of a bipartite graph~$H[A,B]$, where $a\in A$ and $b\in B$, we refer to $a$ as its {\em $A$-end} and $b$ as its {\em $B$-end}.
The following characterization is well known; see~\cite[Lemma~$4.3.2$]{lopl86}. 

\begin{prp} 
\label{thm:uniquely-matchable-bipartite-graph}
A \pmg~$M$ in a bipartite graph~$H[A,B]$ is the only \pmg\ of~$H$ if and only if there exists an ordering $e_1,e_2,\dots,e_{|M|}$ of the edges of~$M$ such that each edge in $E(H)-M$ has its $B$-end in $e_j$ and its $A$-end in $e_i$ for some $1\le j<i\le |M|$.
\end{prp}

One may easily deduce the above using Kotzig's Lemma~(\ref{lem:unique-pm-in-a-graph}). We now state an immediate consequence that is useful to us.

\begin{cor}\label{cor:degree-one}
Every nonempty uniquely matchable bipartite graph $H[A,B]$ has a vertex of degree one in each color class. \qed
\end{cor}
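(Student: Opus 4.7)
The plan is to apply Proposition~\ref{thm:uniquely-matchable-bipartite-graph} directly with almost no additional work. Let $M = \{e_1, e_2, \dots, e_{|M|}\}$ denote the unique perfect matching of $H[A,B]$, ordered so that every edge in $E(H) - M$ has its $B$-end in some $e_j$ and its $A$-end in some $e_i$ with $j < i$. Since $H$ is nonempty and (uniquely) matchable, $|M| \ge 1$, so the first and last edges of this ordering both exist.

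To exhibit a degree-one vertex in $A$, I would focus on the first edge $e_1$; write $e_1 = a_1 b_1$ with $a_1 \in A$ and $b_1 \in B$. Suppose some non-matching edge were incident with $a_1$. By the proposition, its $A$-end is in $e_i$ for some $i > j \ge 1$. However, $a_1$ lies in no matching edge other than $e_1$, so we are forced to have $i = 1$, contradicting $j < i$. Hence no non-matching edge is incident with $a_1$, and $a_1$ has degree exactly one in $H$.

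By a fully symmetric argument applied to the last edge $e_{|M|} = a_{|M|} b_{|M|}$, any non-matching edge incident with $b_{|M|}$ would force $j = |M|$ and $j < i \le |M|$, which is impossible. Therefore $b_{|M|}$ has degree exactly one in $H$, completing the claim for both color classes.

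I do not expect any genuine obstacle: the corollary is really just an observation about the extremes of the ordering guaranteed by Proposition~\ref{thm:uniquely-matchable-bipartite-graph}. The only point that needs a little care is that the roles of ``first'' and ``last'' cannot be swapped --- the first edge yields the degree-one vertex on the $A$-side while the last edge yields the degree-one vertex on the $B$-side, so the asymmetry between $A$-ends and $B$-ends in the proposition must be respected.
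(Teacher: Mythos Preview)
Your proof is correct and is exactly the argument the paper has in mind: the corollary is marked with a \qed\ as an immediate consequence of Proposition~\ref{thm:uniquely-matchable-bipartite-graph}, and reading off the $A$-end of $e_1$ and the $B$-end of $e_{|M|}$ is precisely how that immediacy cashes out.
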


In light of Lemma~\ref{3cc-subgraph-in-r-graphs}, for the most part, it shall suffice to focus on \cc{3}s. It is for this reason that, in the next section, we prove several results pertaining to this class.

\subsection{Some results pertaining to \texorpdfstring{\cc{3}}{}s} \label{sec:results-pertaining-to-3ccs}

We begin this section with the following easy observation; its
reverse implication may also be viewed as a special case of Lemma~\ref{lem:solitary-ME-sepcuts}~(iii).

\begin{prp}\label{prp:solitary-splice-with-K4}
Let $e$ be an edge of a \cc{2}~$G$, let \mbox{$G':=(G\odot K_4)_{v}$}, where $v$ is an end of~$e$, and let $e'$ denote the unique edge of $E(G')-E(G)$ that is nonadjacent with~$e$. Then, $e$ is solitary in~$G$ if and only if $e'$ is solitary in~$G'$. \qed
\end{prp}

In the rest of this section, we focus on \cc{3}s. The following fact is easily verified.

\begin{prp}\label{prp:edge-in-at-most-one-triangle}
In a \cc{3}, of order six or more, any edge participates in at most one triangle. \qed
\end{prp}




Let us recall Proposition~\ref{prp:splicing-with-k4}. Next, we prove that splicing any \cc{3} with $K_4$, at a vertex incident with a solitary doubleton, results in a graph that also has a solitary doubleton.

\begin{cor}\label{cor:splicing-at-a-SD-vertex}
Let $G$ be a \cc{3} that has a solitary doubleton $\{\alpha, \beta\}$, let $G'$ be $(G\odot K_4)_{a}$, where $a$ is an end of $\alpha$, and let $\alpha'$ denote the unique edge in $E(G')-E(G)$ that is nonadjacent with $\alpha$. Then, $\{\alpha', \beta\}$ is a solitary doubleton in~$G'$.
\end{cor}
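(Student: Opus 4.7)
My plan leverages the near-bipartite structure induced by the solitary doubleton together with a parity argument. By Corollary~\ref{cor:solitary-doubleton-near-bipartite}~(ii), $H := G - \alpha - \beta$ is a bipartite matching covered graph $H[A,B]$ with $|A| = |B|$, where $\alpha = ab$ has both ends $a, b \in A$ and $\beta = xy$ has both ends $x, y \in B$. Since $a \in A$ and the remaining two edges of $\partial_G(a)$ lie in $H$, their other two ends (call them $c$ and $d$) belong to $B$. In $G'$, the vertex $a$ is replaced by a triangle $T$ on new vertices $u_1, u_2, u_3$ so that, up to relabeling, $\alpha$ becomes $u_1b$, the remaining two edges of $\partial_G(a)$ become $u_2c$ and $u_3d$, and $\alpha' = u_2u_3$.

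The central observation is that $G' - \alpha' - \beta$ is bipartite with respect to the partition $A' := (A \setminus \{a\}) \cup \{u_2, u_3\}$ and $B' := B \cup \{u_1\}$: the edges of $H - \beta$ go between $A \setminus \{a\}$ and $B$, while the triangle edges $u_1u_2, u_1u_3$ and the reattached edges $u_1b, u_2c, u_3d$ all go between $A'$ and $B'$ because $b \in A'$ and $c, d \in B'$. Observe that $|A'| = |B'| = |A| + 1$, both ends of $\alpha'$ lie in $A'$, and both ends of $\beta$ lie in $B'$; hence $\partial_{G'}(A') = E(G') - \{\alpha', \beta\}$. For any perfect matching $N$ of $G'$, Lemma~\ref{lem:basic-facts-about-parity}~(i) yields $|N \cap \partial_{G'}(A')| \equiv |A'| \pmod{2}$. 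Since $|N| = |A'|$, this simplifies to $[\alpha' \in N] + [\beta \in N] \equiv 0 \pmod{2}$, so every perfect matching of $G'$ contains either both or neither of $\alpha'$ and $\beta$. As $G'$ is matching covered by Proposition~\ref{lem:splicing}, this yields $\alpha' \xleftrightarrow{G'} \beta$.

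Finally, $\alpha'$ is solitary in $G'$ by Proposition~\ref{prp:solitary-splice-with-K4} applied to the solitary edge $\alpha$ of $G$; then Lemma~\ref{lem:solitary-source-clasee} (applied with $\beta \xrightarrow{G'} \alpha'$ and $\alpha'$ solitary) gives that $\beta$ is also solitary in $G'$. Consequently, $\{\alpha', \beta\}$ is a solitary class of cardinality two in~$G'$, that is, a solitary doubleton. The main piece of bookkeeping to get right is ensuring the reattached edges $u_2c$ and $u_3d$ sit across $A'$ and $B'$; this crucially uses that the two non-$\alpha$ neighbors of $a$ lie in $B$, which in turn is a consequence of the near-bipartite structure guaranteed by Corollary~\ref{cor:solitary-doubleton-near-bipartite}~(ii). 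Once this bipartition is in place, the parity step is immediate, and everything else is an invocation of existing tools.
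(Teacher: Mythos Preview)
Your argument is correct and follows essentially the same route as the paper's: both exhibit the bipartition of $G'-\alpha'-\beta$ (your $A',B'$ coincide with the paper's $A-a+a_1+a_2$ and $B+b$), deduce $\alpha'\xleftrightarrow{G'}\beta$ from it, and invoke Proposition~\ref{prp:solitary-splice-with-K4} for the solitude of~$\alpha'$. One small omission: having shown that $\alpha'$ and $\beta$ are solitary and mutually dependent, you still need to rule out a third edge in their equivalence class; the paper does this by citing Corollary~\ref{cor:main-thm}~(ii), which bounds each solitary class in a \ecc{3} $r$-graph by two.
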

\begin{proof}
Since $\{\alpha, \beta\}$ is a solitary doubleton in~$G$, by Corollary~\ref{cor:solitary-doubleton-near-bipartite-brick}~(ii), $G-\alpha-\beta$ is bipartite and matching covered; let $A$ and $B$ denote its color classes so that $a\in A$. We let $\alpha':=a_1a_2$, and let $b$ denote the end of~$\alpha$ in~$G'$ that is adjacent with ends of~$\alpha'$. The reader may observe that $G'-\alpha'-\beta$ is also bipartite with equicardinal color classes $A-a+a_1+a_2$ and $B+b$. Clearly, $\alpha' \xleftrightarrow{G'} \beta$.
Since $\alpha$ is solitary in~$G$, by Proposition~\ref{prp:solitary-splice-with-K4}, $\alpha'$ is solitary in~$G'$. By  Corollaries~\ref{prp:solitary-dependence-clasee}~and~\ref{cor:main-thm}~(ii), $\{\alpha', \beta\}$ is a solitary doubleton in~$G'$.
\end{proof}

In order to prove the converse of the above, we need another consequence of Corollary~\ref{cor:solitary-doubleton-near-bipartite-brick}~(ii).


\begin{cor}\label{lem:vertex-disjoint-triangle-lemma}
Every \cc{3} $G$, of order at least six, with a solitary doubleton $\{\alpha,\beta\}$, has precisely two triangles; furthermore, they are vertex-disjoint, one of them contains $\alpha$ whereas the other contains~$\beta$.
\end{cor}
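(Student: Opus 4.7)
The plan is to exploit the bipartite structure handed to us by Corollary~\ref{cor:solitary-doubleton-near-bipartite-brick}~(ii). By that result, \mbox{$H[A,B]:=G-\alpha-\beta$} is a bipartite matching covered graph with $\alpha=a_1a_2$ having both ends in~$A$ and $\beta=b_1b_2$ having both ends in~$B$; in particular $\alpha$ and $\beta$ are non-adjacent.

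First I would show that each triangle of~$G$ contains exactly one of $\alpha$~and~$\beta$. Since $H[A,B]$ is bipartite, no triangle can lie entirely in~$H$, so any triangle must contain at least one of~$\alpha,\beta$. It cannot contain both, because the four endpoints of~$\alpha$ and~$\beta$ are distinct (two are in~$A$, two in~$B$). Next I would combine this with Proposition~\ref{prp:edge-in-at-most-one-triangle}: since $G$ is \mbox{$3$-connected}, \mbox{$3$-regular} and of order at least~$6$, each of~$\alpha$ and~$\beta$ lies in at most one triangle. Thus $G$ has \emph{at most} two triangles, one containing~$\alpha$ and one containing~$\beta$.

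To force \emph{at least} two triangles, I would invoke Lemma~\ref{lem:combined-stat-r-graphs}: since $G$ has the solitary edge~$\alpha$ and is \ecc{3} with $n\ge 6$, for every vertex $w\in V(G)$ there exists a triangle $T$ with $w\notin V(T)$. If $G$ had only one triangle $T_0$, choosing any $w\in V(T_0)$ would yield a contradiction; hence $G$ has precisely two triangles, one (call it $T_\alpha$) containing~$\alpha$ and the other ($T_\beta$) containing~$\beta$. Write $T_\alpha=a_1a_2v_\alpha$ and $T_\beta=b_1b_2v_\beta$; the preceding analysis forces $v_\alpha\in B$ and $v_\beta\in A$.

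The remaining step, which I expect to be the main (but still routine) obstacle, is vertex-disjointness of $T_\alpha$ and~$T_\beta$. I would argue by contradiction on the shared vertex. If $v_\alpha=b_i$ for some $i\in\{1,2\}$, then $b_i$ is adjacent to $a_1,a_2,b_{3-i}$ and this exhausts its degree; a direct count shows $|\partial_G(\{a_1,a_2,b_1,b_2\})|=2$, contradicting \mbox{$3$-connectedness} of~$G$. Symmetrically, if $v_\beta=a_j$ for some $j\in\{1,2\}$, then the four-neighbour count for~$a_j$ forces $v_\alpha\in\{b_1,b_2\}$, whence $T_\alpha$ and $T_\beta$ share an edge; this violates Proposition~\ref{prp:edge-in-at-most-one-triangle}. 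This completes the proof sketch.
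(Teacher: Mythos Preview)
Your argument is sound but takes a different route from the paper's. The paper never invokes Lemma~\ref{lem:combined-stat-r-graphs}; after setting up the bipartition $A',B'$ of $G-\alpha-\beta$, it passes to $H:=G-a_1-a_2-b_1-b_2$, observes that $H$ is a nonempty uniquely matchable bipartite graph (because $\{\alpha,\beta\}$ is a \emph{solitary} doubleton and $n\ge 6$), and applies Corollary~\ref{cor:degree-one} to obtain degree-one vertices $a\in A'\setminus\{a_1,a_2\}$ and $b\in B'\setminus\{b_1,b_2\}$. Cubicity then forces $ab_1,ab_2,ba_1,ba_2\in E(G)$, exhibiting the triangles $ab_1b_2$ and $ba_1a_2$ directly; since $a\neq b$ and both lie outside $\{a_1,a_2,b_1,b_2\}$, vertex-disjointness is automatic. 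Your approach trades this constructive step for the heavier triangle-existence lemma plus a separate disjointness check; the paper's route buys disjointness for free.

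One genuine gap in your case~(a): the assumption $v_\alpha=b_i$ alone yields $|\partial_G(\{a_1,a_2,b_1,b_2\})|=4$, not~$2$ (only $\alpha,\beta,a_1b_i,a_2b_i$ are known internal edges, so the degree count gives $12-8=4$). To reach~$2$ you must first note that $v_\beta$ is adjacent to $b_i$, whose neighbourhood is now exhausted by $\{a_1,a_2,b_{3-i}\}$, forcing $v_\beta\in\{a_1,a_2\}$; only then does the count drop to~$2$. This is precisely the implication you spell out in case~(b), so in fact your two cases collapse into one, and either contradiction --- the $2$-cut or the shared edge via Proposition~\ref{prp:edge-in-at-most-one-triangle} --- finishes the job.
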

\begin{proof}
From Corollary~\ref{cor:solitary-doubleton-near-bipartite-brick}~(ii), we infer that $G'[A',B']:=G-\alpha-\beta$ is bipartite; furthermore, both ends of $\alpha:=a_1a_2$ lie in~$A'$ whereas both ends of $\beta:=b_1b_2$ lie in~$B'$. Let $H[A,B]:=G-a_1-a_2-b_1-b_2$, where $A:=A'-a_1-a_2$ and $B:=B'-b_1-b_2$. Since $\{\alpha,\beta\}$ is a solitary doubleton in~$G$ and since $|V(G)|\ge 6$, the bipartite graph $H$ is uniquely matchable and nonempty. By Corollary~\ref{cor:degree-one}, $H$ has degree one vertices $a\in A$ and $b \in B$. Since $G$ is \mbox{$3$-regular}, $ab_1,ab_2,ba_1,ba_2\in E(G)$. Consequently, $T_1:=ab_1b_2a$ and $T_2:=ba_1a_2b$ are the desired vertex-disjoint triangles. Using the fact that $G'$ is bipartite, and Proposition~\ref{prp:edge-in-at-most-one-triangle}, we conclude that $T_1$~and~$T_2$ are the only triangles in~$G$.
\end{proof}

The above lemma will help us in locating either a $1$-dumbbell or a $3$-dumbbell in Sections~\ref{sec:more than one solitary doubleton}~and~\ref{sec:oneSD-oneSS}, respectively.
We are now ready to state and prove the converse of Corollary~\ref{cor:splicing-at-a-SD-vertex}; its statement relies on the above corollary.

\begin{cor}\label{cor:SD-shrinking-triangle}
Let $G$ be a \cc{3}, of order six or more, that has a solitary doubleton~$\{\alpha,\beta\}$, let $T$ denote the triangle containing~$\alpha$, and let $\alpha'$ denote the unique edge in~$\partial_{G}(T)$ that is nonadjacent with~$\alpha$. Then, $\{\alpha',\beta\}$ is a solitary doubleton in~$G':=G/T$.
\end{cor}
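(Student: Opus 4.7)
The plan is to transfer the solitary doubleton $\{\alpha,\beta\}$ across the separating cut $\partial_G(T)$. First, $\partial_G(T)$ is an odd $3$-cut of the $3$-graph~$G$, so Proposition~\ref{prp:r-cuts} yields that $\partial_G(T)$ is a separating cut and that $G'$ is again a \cc{3}; moreover $|V(G')|=|V(G)|-2\ge 4$, so Corollary~\ref{cor:main-thm} applies to~$G'$.

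The key structural observation is that $\alpha \xrightarrow{G} \alpha'$. Writing $T=uvw$ with $\alpha=uv$, the cut $\partial_G(T)$ has exactly three edges (one at each vertex of~$T$, by $3$-regularity), and by hypothesis $\alpha'$ is the one incident with~$w$. Any \pmg\ of~$G$ containing $\alpha$ already saturates $u$ and~$v$, so $w$ must be matched by its unique remaining incident edge, namely $\alpha'$. Applying Lemma~\ref{lem:solitary-ME-sepcuts}~(iii) with $d:=\alpha$ and $e:=\alpha'$ (and $X:=V(T)$), solitude of $\alpha$ in~$G$ gives solitude of $\alpha'$ in~$G'$. Next, by Corollary~\ref{lem:vertex-disjoint-triangle-lemma}, $\beta$ lies in a triangle vertex-disjoint from~$T$; in particular $\beta\in E(G')$, so Lemma~\ref{lem:solitary-ME-sepcuts}~(i) gives that $\beta$ is also solitary in~$G'$.

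The main step is to establish mutual dependence $\alpha'\xleftrightarrow{G'}\beta$, rather than two independent solitary singletons. Every \pmg~$M'$ of~$G'$ meets $\partial_G(T)$ in exactly one edge (the contraction vertex has degree three in~$G'$), and the extension of $M'$ to a \pmg~$M$ of~$G$ is then forced by the requirement to match the remaining two vertices of~$T$ via a single triangle-edge: $\alpha'\in M'$ forces extension by $\alpha=uv$, while $M'$ containing either of the other two cut-edges forces extension by $vw$ or $uw$, respectively. Consequently $\alpha\in M \iff \alpha'\in M'$; and since $\{\alpha,\beta\}$ is a solitary doubleton of~$G$, $\beta\in M\iff \alpha\in M$; as $\beta\in E(G')$ satisfies $\beta\in M'\iff \beta\in M$, these equivalences chain to $\beta\in M'\iff \alpha'\in M'$, giving $\alpha'\xleftrightarrow{G'}\beta$.

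Finally, by Corollary~\ref{prp:solitary-dependence-clasee} the equivalence class of~$\alpha'$ in~$G'$ is a solitary class, and by Corollary~\ref{cor:main-thm}~(ii) it has cardinality at most two; since it contains both $\alpha'$ and~$\beta$, it equals $\{\alpha',\beta\}$, a solitary doubleton as required. The delicate point is the forced-extension case analysis used in the mutual-dependence step: $3$-regularity of~$G$ is invoked crucially to ensure $|\partial_G(T)|=3$ with a single edge at each vertex of~$T$, so that which vertex of~$T$ is saturated by the cut-edge lying in $M'$ uniquely determines the triangle-edge needed to complete the extension, and it is precisely the $\alpha'$-branch that recovers $\alpha$.
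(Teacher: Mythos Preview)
Your proof is correct and follows essentially the same architecture as the paper's: establish that $\alpha'$ and $\beta$ are each solitary in~$G'$, show they lie in the same equivalence class, then invoke Corollary~\ref{cor:main-thm}~(ii) to bound the class size at two.

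The differences are cosmetic. For the solitude of~$\alpha'$, you argue $\alpha \xrightarrow{G} \alpha'$ directly and apply Lemma~\ref{lem:solitary-ME-sepcuts}~(iii), whereas the paper invokes Proposition~\ref{prp:solitary-splice-with-K4} (which packages the same observation). For placing $\alpha'$ and $\beta$ in the same solitary class, you work out the explicit correspondence between \pmg s of~$G'$ and \pmg s of~$G$ meeting $\partial(T)$ once, and chain equivalences to obtain $\alpha' \xleftrightarrow{G'} \beta$. The paper takes a shorter route: it simply notes that the unique \pmg~$M$ of~$G$ containing $\alpha$ and~$\beta$ yields the \pmg~$M-\alpha$ of~$G'$ containing both $\alpha'$ and~$\beta$, and then invokes Corollary~\ref{lem:solitary-class-unique-pm} (any \pmg\ contains at most one solitary class) to conclude they cannot lie in distinct solitary classes. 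Your extension argument is perfectly sound but does more work than necessary; once both edges are known to be solitary, a single common \pmg\ suffices.
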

\begin{proof}
By Lemma~\ref{lem:solitary-ME-sepcuts}~(i), $\beta$ is solitary in~$G'$. By Proposition~\ref{prp:solitary-splice-with-K4}, $\alpha'$ is solitary in~$G'$.
Now, let $M$ denote the \pmg\ of~$G$ that contains $\alpha$~and~$\beta$. Observe that $M-\alpha$ is a \pmg\ of~$G'$ containing both $\alpha'$~and~$\beta$. By Corollary~\ref{lem:solitary-class-unique-pm} and Corollary~\ref{cor:main-thm}~$(ii)$, we conclude that $\{\alpha',\beta\}$ is a solitary doubleton in~$G'$.
\end{proof}



 In the following section, we heavily exploit Lemma~\ref{lem:associated-r-cuts} as well as some other results stated earlier in order to prove facts pertaining to solitary edges with respect to triangles.
\subsection{Solitude versus triangles in \texorpdfstring{\cc{3}}{}s}\label{sec:solitude-versus-triangles}

We begin by considering a solitary edge that participates in a triangle; recall Lemma~\ref{lem:associated-r-cuts}.

\begin{cor}\label{cor:triangle-companion}
    If $e:=uv$ is a solitary edge of a \cc{3} $G$ that participates in a triangle $T$, then the companion of $e$ is the unique edge $f\in \partial(T)$ that is nonadjacent with~$e$.
\end{cor}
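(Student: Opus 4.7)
The plan is to produce the companion edge explicitly and then invoke Lemma~\ref{lem:associated-r-cuts} for the uniqueness. First, I would set up notation: write $T = uvwu$ so that $w$ is the third vertex of the triangle. Since $G$ is \mbox{$3$-regular} and each of $u$, $v$, $w$ already uses two of its three edges inside $T$, each vertex of $T$ contributes exactly one edge to $\partial(T)$; denote these edges by $f_u$, $f_v$, $f_w$ respectively. The two edges $f_u$ and $f_v$ are adjacent with $e$, whereas $f := f_w$ is the unique edge of $\partial(T)$ nonadjacent with $e$.

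Next, I would argue that $f$ arises as an odd $1$-cut of $G - u - v$. Indeed, in $G - u - v$ the vertex $w$ has lost both of its neighbours $u$ and $v$ from the triangle, so it becomes a pendant vertex whose only incident edge is $f$. Thus $\partial_{G - u - v}(\{w\}) = \{f\}$, and since $|\{w\}| = 1$ is odd, this $1$-cut is odd.

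Finally, I would invoke Lemma~\ref{lem:associated-r-cuts}. Since $G$ is a \cc{3}, it is a $3$-edge-connected $3$-graph, and it has order at least four (as \cc{3}s do). Because $e$ is solitary, the lemma applies and asserts that $G - u - v$ admits a \emph{unique} $1$-cut, namely the companion of $e$. Combined with the previous paragraph, this forces the companion to equal $f$.

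There is essentially no obstacle: the entire argument is a direct application of the structural lemma, with the only content being the observation that deleting the two ``triangle-mates'' of $w$ leaves $w$ pendant in $G - u - v$. The small cases (e.g.\ $G = K_4$, where $e$ belongs to two triangles) are consistent because the distinguished edge $f$ turns out to be the same regardless of which triangle through $e$ is chosen.
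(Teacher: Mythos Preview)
Your proof is correct and follows essentially the same approach as the paper's: observe that $f$ is an odd $1$-cut of $G-u-v$ (because $w$ becomes pendant), then invoke the uniqueness clause in the final statement of Lemma~\ref{lem:associated-r-cuts}. One small quibble: your parenthetical ``as \cc{3}s do'' is not quite accurate since $\theta$ has order two, but the presence of a triangle already forces order at least four, so the argument is unaffected.
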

\begin{proof}
Observe that $f$ is an odd $1$-cut in $G-u-v$. It follows from the final statement of Lemma~\ref{lem:associated-r-cuts} that $f$ is indeed the companion of $e$.
\end{proof}



We now use the above result, along with earlier results, to establish the surprising fact that the companion of a solitary edge participates in at least two \pmg s unless the graph is $K_4$.

\begin{cor}\label{lem:K_4-lemma}
If the companion $f$ of a solitary edge $e:=uv$ in a \cc{3}~$G$ is also solitary then $G$ is~$K_4$.
\end{cor}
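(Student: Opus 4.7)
The plan is to force $G = K_4$ by showing that $\{e,f\}$ must be a solitary doubleton and then exploiting the rigid two-triangle structure this imposes on larger graphs.

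First I would verify that $e$ and $f$ are mutually dependent. Let $C := \partial(X)$ and $D := \partial(Y)$ be the $3$-cuts associated with the solitary-companion pair $(e, f)$ as in Lemma~\ref{lem:associated-r-cuts}, and let $M_e$ denote the unique perfect matching containing~$e$. Since $C$ is an odd cut, $|M_e \cap C|$ is odd and hence nonzero; but the only edge of $C$ not incident to $u$ or $v$ is $f$, so $M_e \cap C = \{f\}$, and in particular $f \in M_e$. If $f$ is also solitary, its unique perfect matching $M_f$ must then coincide with $M_e$, so both $e \xrightarrow{G} f$ and $f \xrightarrow{G} e$ hold. Thus $e$ and $f$ belong to a common equivalence class $D^*$, which is solitary by Corollary~\ref{prp:solitary-dependence-clasee} and has cardinality at most two by Corollary~\ref{cor:lm-cor}; since $D^*$ already contains the distinct edges $e$ and $f$, we get that $D^* = \{e, f\}$ is a solitary doubleton.

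Next I would assume for contradiction that $|V(G)| \geq 6$ and invoke Corollary~\ref{lem:vertex-disjoint-triangle-lemma}: $G$ has precisely two triangles, vertex-disjoint, with $e$ in one (call it $T_e$) and $f$ in the other (call it $T_f$). Write $T_f = xyw$, so $w$ is a common neighbour of $x$ and $y$. The key structural input again comes from Lemma~\ref{lem:associated-r-cuts}: since $x \in X$, $y \in Y$, $X \cap Y = \emptyset$, and $f$ is the only edge between $X$ and $Y$ in $G - u - v$, every neighbour of $x$ other than $y$ lies in $(X \setminus \{x\}) \cup \{u, v\}$, and symmetrically for $y$. Hence any common neighbour of $x$ and $y$ lies in $\{u,v\}$, forcing $w \in \{u,v\}$. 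But $T_e$ contains $e = uv$ and therefore contains both $u$ and $v$; hence $w \in V(T_e) \cap V(T_f)$, contradicting vertex-disjointness.

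The main obstacle is not any individual step but the choreography between the partition $V(G) = X \cup Y \cup \{u, v\}$ from Lemma~\ref{lem:associated-r-cuts} and the two-triangle description from Corollary~\ref{lem:vertex-disjoint-triangle-lemma}; once these are aligned, the argument above yields the contradiction. Hence $|V(G)| < 6$, and since $G$ is a $3$-connected $3$-regular graph this forces $n = 4$, i.e., $G = K_4$.
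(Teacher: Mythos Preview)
Your proof is correct, but it follows a genuinely different route from the paper's. The paper argues by a smallest-counterexample induction: if $G \neq K_4$ then one of the associated $3$-cuts, say $C = \partial(X)$, is nontrivial; contracting $X$ yields a smaller $3$-connected cubic graph in which $e$ and $f$ remain solitary and $f$ is still the companion of $e$ (via Corollary~\ref{cor:triangle-companion}), so by minimality $G/X \cong K_4$; this forces the other cut $D$ to be trivial, producing a triangle through $e$ with $f \in \partial(T)$, and Corollary~\ref{cor:solitary-leaving-triangle-implies-K4} finishes. Your argument instead observes directly that $\{e,f\}$ is a solitary doubleton and then pits the vertex-disjoint two-triangle structure from Corollary~\ref{lem:vertex-disjoint-triangle-lemma} against the partition $V(G) = X \cup Y \cup \{u,v\}$ to force the third vertex of the triangle through $f$ into $\{u,v\}$, contradicting disjointness. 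Your approach is pleasantly non-inductive, but it leans on the heavier machinery behind Corollary~\ref{lem:vertex-disjoint-triangle-lemma} (near-bipartite bricks via Corollary~\ref{cor:solitary-doubleton-near-bipartite-brick} and ultimately the Edmonds--Lov\'asz--Pulleyblank Theorem); the paper's proof stays entirely within the elementary cut apparatus of Section~\ref{sec:solitude-versus-triangles}.
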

\begin{proof}
We proceed by contradiction, and we let $G$ denote a smallest counterexample. We let $C:=\partial(X)$~and~$D:=\partial(Y)$ denote the $3$-cuts associated with the solitary-companion pair~$(e,f)$ so that $X\cap Y=\emptyset$; see Lemma~\ref{lem:associated-r-cuts}. Since $G$ is not $K_4$, at least one of $C$ and $D$ is nontrivial; adjust notation so that $C$ is nontrivial.

By Proposition~\ref{prp:r-cuts}, the smaller \mbox{$3$-regular} graph $G':=G/X$ is also $3$-connected. By Lemma~\ref{lem:solitary-ME-sepcuts}~(i), $e$ and $f$ are solitary in~$G'$. Observe that, by Corollary~\ref{cor:triangle-companion}, $f$ is the companion of $e$ in~$G'$. Thus, it follows from our choice of $G$ that $G'$ is~$K_4$. Consequently, the cut $D$ is trivial, and we let $Y:=\{y\}$. Observe that $T:=uvyu$ is a triangle in~$G$ and $f\in \partial(T)$. By Corollary~\ref{cor:solitary-leaving-triangle-implies-K4}, $G$ is~$K_4$; a contradiction. This completes the proof.
\end{proof}

The reader may recall the definition of distance between two edges stated at the beginning of Section \ref{sec:further-developing-toolkit}. The next result establishes that, in any \cc{3}, any two adjacent solitary edges participate in a triangle unless the graph is~$\theta$.
 
\begin{cor}\label{lem:triangle-lemma}
Let $G$ be a \cc{3}, distinct from $\theta$, that has two distinct solitary edges $e_1$~and~$e_2$. If \distg{G}{e_1}{e_2} $=0$ then there exists a triangle containing $e_1$~and~$e_2$.
\end{cor}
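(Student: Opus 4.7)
My plan is to argue by contradiction: letting $e_1 = uv$ and $e_2 = uw$ denote the two adjacent solitary edges and $y$ denote the third neighbour of $u$, I aim to show that $vw \in E(G)$, so that $T = uvwu$ is the desired triangle containing both $e_1$ and $e_2$. The case $|V(G)|=4$ is immediate since then $G = K_4$, so I assume $|V(G)| \ge 6$ throughout.

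The main ingredient will be the structural description of an $r$-graph with respect to a fixed solitary edge given by Lemma~\ref{lem:associated-r-cuts}. Applied to $e_1$ it furnishes laminar odd $3$-cuts $C_1 = \partial(X_1)$ and $D_1 = \partial(Y_1)$ with $V(G) = \{u,v\} \cup X_1 \cup Y_1$ (disjoint), such that each of $u,v$ has exactly one neighbour in each of $X_1$ and $Y_1$; applied to $e_2$ it similarly yields $C_2 = \partial(X_2)$, $D_2 = \partial(Y_2)$ with $V(G) = \{u,w\} \cup X_2 \cup Y_2$. Since $w$ is a neighbour of $u$ other than $v$, I may by relabeling $X_1$ with $Y_1$ assume $w \in X_1$; likewise $v \in X_2$. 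The key observation is that the remaining neighbour $y$ of $u$ is forced to be the unique $Y_1$-neighbour of $u$ (the $X_1$-neighbour being $w$) and also the unique $Y_2$-neighbour of $u$ (the $X_2$-neighbour being $v$), so $y \in Y_1 \cap Y_2$.

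I will then suppose for contradiction that $vw \notin E(G)$ and apply Proposition~\ref{prp:3-cut-matching-laminar} --- that any two $3$-cuts of a \cc{3} are laminar --- first to the pair $(C_1,C_2)$ and then to $(C_1,D_2)$. For $(C_1,C_2)$: these cuts are distinct since $uw \in C_1 \setminus C_2$, and the quadrants $X_1 \cap \overline{X_2}$, $\overline{X_1} \cap X_2$, $\overline{X_1} \cap \overline{X_2}$ contain $w$, $v$, $u$ respectively, so laminarity forces $X_1 \cap X_2 = \emptyset$. For $(C_1, D_2)$: again distinct (by the same witness $uw$, since $u,w \notin Y_2$), and the quadrants $X_1 \cap \overline{Y_2}$, $\overline{X_1} \cap Y_2$, $\overline{X_1} \cap \overline{Y_2}$ contain $w$, $y$, $u$ respectively, so laminarity forces $X_1 \cap Y_2 = \emptyset$.

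Combining these two equalities gives $X_1 \subseteq V(G) \setminus (X_2 \cup Y_2) = \{u,w\}$; since $u \notin X_1$ and $w \in X_1$, I conclude $X_1 = \{w\}$. But Lemma~\ref{lem:associated-r-cuts} also guarantees that $v$ has a unique neighbour $x_v$ in $X_1$, so $x_v = w$ and hence $vw \in E(G)$, contradicting the assumption. The main obstacle in this plan will be correctly locating the ``witness'' vertices in each quadrant; this hinges entirely on the observation $y \in Y_1 \cap Y_2$, after which the two applications of laminarity and the collapse $X_1 = \{w\}$ follow cleanly.
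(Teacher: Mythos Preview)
Your proof is correct and takes a genuinely different route from the paper's. The paper argues by contracting one of the shores: setting $G' := G/(X \to x)$, it observes that both $e_1, e_2$ remain solitary in $G'$ (via Lemma~\ref{lem:solitary-ME-sepcuts}), that in $G'$ the edge $vx$ is forced into a solitary doubleton with $e_2$, and that $e_2$ is then the companion of $vx$ --- so Corollary~\ref{lem:K_4-lemma} (the companion of a solitary edge is solitary only in~$K_4$) forces $G' = K_4$, whence the uncontracted shore is a singleton and the triangle appears in $G$.

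Your argument is more self-contained: you apply Lemma~\ref{lem:associated-r-cuts} symmetrically to \emph{both} of $e_1$ and $e_2$, pin down the third neighbour $y$ of $u$ in $Y_1 \cap Y_2$, and then invoke only Proposition~\ref{prp:3-cut-matching-laminar} (laminarity of $3$-cuts) twice to squeeze $X_1$ down to $\{w\}$. This bypasses the contraction step and the dependence on Corollary~\ref{lem:K_4-lemma} entirely, at the cost of invoking the structural lemma twice rather than once. One stylistic remark: your ``suppose for contradiction that $vw \notin E(G)$'' is never actually used --- the two laminarity deductions go through regardless and yield $X_1 = \{w\}$ directly --- so what you have written is really a direct proof in contradiction clothing.
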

\begin{proof}
We invoke Lemma~\ref{lem:associated-r-cuts}, and let
 $C:=\partial(X)$ and $D:=\partial(Y)$ denote the $3$-cuts associated with $e_1:=uv$ and its companion so that $X\cap Y=\emptyset$. We let  $e_2:=uw$ and adjust notation so that $w\in Y$. Note that it suffices to prove that $Y=\{w\}$.

By Proposition~\ref{prp:r-cuts}, we conclude that $G':=G/(X\rightarrow x)$ is $3$-connected. By Lemma~\ref{lem:solitary-ME-sepcuts}~(i), both $e_1$~and~$e_2$ are also solitary in~$G'$. Since $uvxu$ is a triangle in~$G'$, we infer that $e_3\xrightarrow{G'} e_2$ where $e_3:=vx$. By Lemma~\ref{lem:solitary-source-clasee}~and~Corollary~\ref{cor:main-thm}~(ii), we conclude that $\{e_2,e_3\}$ is a solitary doubleton in~$G'$. Note that $e_2$ is an odd $1$-cut in $G'-v-x$ and thus the companion of $e_3$ in~$G'$. By Corollary~\ref{lem:K_4-lemma}, $G'$ is~$K_4$. Consequently, $Y=\{w\}$, and this completes the proof as noted above.
\end{proof}

We conclude this section with a characterization of those \cc{3}s that have three solitary edges incident at a common vertex.
\begin{cor}\label{lem:three-pairwise-adjacent-solitaryedges}
If a \cc{3} $G$ has a vertex $v$ such that each member of $\partial(v)$ is solitary, then $G$ is either $\theta$ or~$K_4$.
\end{cor}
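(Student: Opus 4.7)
The plan is to apply Corollary~\ref{lem:triangle-lemma} to each pair of edges incident with~$v$. Write $\partial(v)=\{e_1,e_2,e_3\}$ with $e_i=vu_i$, and suppose that each $e_i$ is solitary. If $G=\theta$ we are done, so from here on assume $G\ne\theta$. Since any two of the $e_i$ are distinct solitary edges at distance zero, Corollary~\ref{lem:triangle-lemma} yields, for each pair $i\ne j$, a triangle~$T_{ij}$ containing both $e_i$ and $e_j$.

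Since a triangle is a $3$-cycle on three distinct vertices, the triangle $T_{ij}$ forces $u_i\ne u_j$ and $u_iu_j\in E(G)$; in particular, $v,u_1,u_2,u_3$ are four distinct vertices and no two edges among $\partial(v)$ are parallel. Collecting these observations, the subgraph induced on $\{v,u_1,u_2,u_3\}$ contains all six edges $vu_1,vu_2,vu_3,u_1u_2,u_2u_3,u_1u_3$, hence a copy of~$K_4$. Because $G$ is $3$-regular, every vertex in $\{v,u_1,u_2,u_3\}$ already has its full degree realized inside this $K_4$; consequently there can be no further edges incident with these vertices, and in particular no additional parallel edges among them. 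Finally, as $G$ is $3$-connected (hence connected), $V(G)=\{v,u_1,u_2,u_3\}$ and $G=K_4$.

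I do not anticipate a major obstacle: the entire argument is a clean application of Corollary~\ref{lem:triangle-lemma} together with $3$-regularity and connectedness. The only minor subtlety is to verify that the corollary also excludes the degenerate situation in which two of the $e_i$ are parallel edges joining $v$ to a common neighbour; but such a pair of parallel edges cannot sit inside a $3$-cycle (which requires three distinct vertices), so the corollary forces $G=\theta$ in that case, and this is precisely the exception recorded in the statement.
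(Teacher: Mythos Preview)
Your proof is correct and follows essentially the same approach as the paper: assume $G\ne\theta$, apply Corollary~\ref{lem:triangle-lemma} to each pair of edges in $\partial(v)$ to obtain the three edges $u_1u_2,u_2u_3,u_1u_3$, and conclude from $3$-regularity (and connectedness) that $G=K_4$. Your extra discussion of the parallel-edge case is not strictly needed---once $G\ne\theta$ is assumed, $3$-connectedness and $3$-regularity already preclude parallel edges at~$v$---but it does no harm.
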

\begin{proof}
Assume $G$ is distinct from $\theta$, and let $\partial(v):=\{ vv_1,vv_2,vv_3\}$. By Corollary~\ref{lem:triangle-lemma}, any two members of $\{vv_1,vv_2,vv_3\}$ participate in a triangle; in other words, $v_1v_2,v_2v_3,v_1v_3\in E(G)$. Since $G$ is \mbox{$3$-regular}, we conclude that $G$ is~$K_4$.
\end{proof}


In the following part, we prove the main results of this section pertaining to distance between solitary classes.



\subsection{Distance between solitary edges} \label{sec:distance-between-solitary-edges}
We begin this section with an observation that is easily verified using both statements of Lemma~\ref{lem:basic-facts-about-parity}.
\begin{lem}\label{lem:conformal-lemma}
In a $3$-edge-colorable \mbox{$3$-regular} graph $G$, if $C$ is either a $2$-cut or a $4$-cut, then the subgraph induced by either shore of $C$ is matchable. \qed
\end{lem}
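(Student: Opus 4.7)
The plan is to use the given 3-edge-coloring to locate a perfect matching of $G$ that avoids the cut $C$ entirely; its restriction to either shore then provides the desired perfect matchings. Let $(M_1, M_2, M_3)$ be a proper 3-edge-coloring of $G$, so each $M_i$ is a perfect matching of $G$.

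First I would establish that $C$ is an even cut. Since $G$ is $3$-regular, every vertex has odd degree, so part~(ii) of Lemma~\ref{lem:basic-facts-about-parity} applies to any shore, giving $|\partial(X)| \equiv |X| \pmod 2$. Because $|C| \in \{2, 4\}$ is even, both $|X|$ and $|\overline{X}|$ are even, so $C$ is an even cut. Then part~(i) of Lemma~\ref{lem:basic-facts-about-parity} yields $|M_i \cap C| \equiv 0 \pmod 2$ for each $i \in \{1,2,3\}$.

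Next I would use a counting argument. The three color classes partition $E(G)$, so
\[
\sum_{i=1}^{3} |M_i \cap C| \;=\; |C| \;\le\; 4.
\]
Since each summand is a nonnegative even integer and the sum is at most $4$, at least one summand must be zero; say $|M_j \cap C| = 0$. Then $M_j \subseteq E(G[X]) \cup E(G[\overline{X}])$, and since $M_j$ is a perfect matching of $G$, its restriction $M_j \cap E(G[X])$ is a perfect matching of $G[X]$ and $M_j \cap E(G[\overline{X}])$ is a perfect matching of $G[\overline{X}]$. Hence both induced subgraphs are matchable.

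There is no serious obstacle here: the argument is a short parity-and-counting calculation, and the hypotheses have been set up so that the averaging step forces a color class to miss $C$ entirely. The only point to be careful about is confirming that $C$ is even in both cases $|C|=2$ and $|C|=4$, which follows immediately from $3$-regularity via Lemma~\ref{lem:basic-facts-about-parity}~(ii).
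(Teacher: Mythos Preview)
Your proof is correct and follows precisely the approach the paper intends: the paper presents this lemma with a \qed and only the hint that it ``is easily verified using both statements of Lemma~\ref{lem:basic-facts-about-parity},'' which is exactly what you do --- part~(ii) to show the shores are even, part~(i) to force each color class to meet $C$ evenly, then the pigeonhole/counting step to find a color class disjoint from $C$.
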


We first establish a distance result that is applicable to \cc{3}s.

\begin{thm}\label{thm:distance-at-most-1}
In a \cc{3}, any two mutually exclusive \ses\ are at distance at most one.
\end{thm}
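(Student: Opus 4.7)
I would proceed by contradiction, supposing $\dis_G(e_1,e_2) \ge 2$, and write $e_1 = uv$, $e_2 = u'v'$. By Lemma~\ref{lem:combined-stat-r-graphs} the graph $G$ is $3$-edge-colorable; fix a proper $3$-edge-coloring $(M_1,M_2,M_3)$ with $e_1 \in M_1$ and (possible by mutual exclusiveness) $e_2 \in M_2$, so that by Corollary~\ref{lem:solitary-class-unique-pm} each $M_i$ $(i \in \{1,2\})$ is the unique \pmg~containing $e_i$. Let $f_i$ denote the companion of $e_i$, and $C_i = \partial(X_i)$, $D_i = \partial(Y_i)$ the associated $3$-cuts provided by Lemma~\ref{lem:associated-r-cuts}; since $e_i \to f_i$, we have $f_i \in M_i$. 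Dispensing with $G = K_4$ (where the conclusion is immediate), Corollary~\ref{lem:K_4-lemma} yields $e_1 \ne f_2$ and $e_2 \ne f_1$; combined with the fact that (by the distance hypothesis) the endpoints of $e_2$ are not $u$ or $v$, this forces $e_2$ to lie in $G[X_1]$ or in $G[Y_1]$, and by symmetry I assume $e_2 \in E(G[X_1])$, so that $u',v' \in X_1$.

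By Proposition~\ref{prp:3-cut-matching-laminar}, the $3$-cuts $C_1$ and $C_2$ are laminar. Since $\{u',v'\} \subseteq X_1 \setminus X_2$ and $\{u,v\} \cap X_1 = \emptyset$, the only laminar configurations are $X_1 \cap X_2 = \emptyset$ or $X_1 \cup X_2 = V(G)$; moreover the split $u \in X_2$, $v \in Y_2$ would force $e_1$ to cross from $X_2$ to $Y_2$, hence $e_1 = f_2$, contradicting $e_1 \ne f_2$. The plan is then to handle the two remaining cases: Case A ($u,v \in Y_2$) and Case B ($u,v \in X_2$). In Case A, one derives $X_2 \subseteq Y_1$ and $Y_2 \setminus \{u,v\} \subseteq Y_1$, so that (using the distance hypothesis to exclude $u,v$ from values) both non-$v'$-neighbours $u'_c \in X_2$ and $u'_d \in Y_2$ of $u'$ lie in $Y_1$; the edges $u'u'_c$ and $u'u'_d$ are then two distinct edges of $C_1$ joining $X_1$ to $Y_1$, contradicting the uniqueness of $f_1$ as the sole such edge. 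A symmetric argument on the $\{u,v\}$-side disposes of Case B when $X_1 \cap X_2 = \emptyset$, yielding $uu_2 = vv_3 = f_2$, which is impossible since $u \ne v$.

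The residual subcase is Case B with $X_1 \cup X_2 = V(G)$ and $X_1 \cap X_2 \ne \emptyset$, so that $Y_1 \subseteq X_2$ and $Y_2 \subseteq X_1$; here I proceed by induction on $|V(G)|$. If $|Y_1| \ge 3$ (recall $|Y_1|$ is odd), the contraction $G' := G/Y_1$ is a smaller \cc{3} graph by Proposition~\ref{prp:r-cuts}; by Lemma~\ref{lem:solitary-ME-sepcuts}, the edges $e_1$ and $e_2$ remain solitary and mutually exclusive in $G'$, and since all four endpoints of $e_1,e_2$ lie outside $Y_1$, the inductive hypothesis $\dis_{G'}(e_1,e_2) \le 1$ lifts to $\dis_G(e_1,e_2) \le 1$, contradicting the assumption. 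The subcase $|Y_2| \ge 3$ is symmetric. The main obstacle is the terminal subcase $|Y_1| = |Y_2| = 1$: with $Y_1 = \{y\}$ and $Y_2 = \{y'\}$, the graph $G$ contains the two vertex-disjoint triangles $uvy$ and $u'v'y'$. Letting $x_1$ (resp.\ $x_2$) denote the third neighbour of $y$ (resp.\ $y'$), so that $f_1 = yx_1$ and $f_2 = y'x_2$, the constraint $x_1 \ne y'$ (otherwise $f_1 = f_2$, contradicting $f_1 \in M_1$, $f_2 \in M_2$) together with Proposition~\ref{prp:edge-in-at-most-one-triangle} and the distance hypothesis imposes such a rigid local structure that a direct enumeration of perfect matchings on the few remaining vertices produces either a second \pmg\ containing $e_2$---contradicting its solitariness---or collapses to the base case $|V(G)| = 6$, where $G = \overline{C_6}$ and the conclusion is verified by inspection of Figure~\ref{fig:triangular-prism}.
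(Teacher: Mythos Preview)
Your overall trajectory---contracting shores of the associated $3$-cuts to reduce to the configuration where both solitary edges lie in triangles---is exactly the paper's route, but your write-up has two genuine gaps.

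The critical one is the terminal subcase $|Y_1|=|Y_2|=1$. You say that the two vertex-disjoint triangles $uvy$ and $u'v'y'$ impose ``such a rigid local structure that a direct enumeration of perfect matchings on the few remaining vertices'' finishes the job, but there is no bound on the number of remaining vertices: with $Y_1=\{y\}$ one has $X_1=V(G)\setminus\{u,v,y\}$, which can be arbitrarily large. This subcase is precisely where the real work lies. The paper handles it by a further round of contractions (shrinking each triangle $T_i$) combined with Lemma~\ref{lem:solitary-ME-sepcuts}(iii),(iv) to force $\dis_G(f_1,e_2)\le 1$ and $\dis_G(f_2,e_1)\le 1$, and only then does a finite case analysis that explicitly identifies $\overline{C_6}$ and $R_8$ as the relevant small configurations, invoking Lemma~\ref{lem:conformal-lemma} in the last subcase. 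None of this is supplied in your sketch.

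Separately, your laminar analysis is incomplete and your Case~A derivation is wrong. The claim that ``the only laminar configurations are $X_1\cap X_2=\emptyset$ or $X_1\cup X_2=V(G)$'' overlooks the possibility $X_2\subseteq X_1$, which is compatible with $u',v'\in X_1\setminus X_2$ and $u,v\notin X_1$. And in Case~A you assert both $X_2\subseteq Y_1$ and $Y_2\setminus\{u,v\}\subseteq Y_1$; together these yield $V(G)\setminus\{u,v,u',v'\}\subseteq Y_1$, forcing $|X_1|\le 2$, which is impossible since $|X_1|$ is odd and contains $u',v'$. Concretely, under $X_1\cap X_2=\emptyset$ one has $X_1\setminus\{u',v'\}\subseteq Y_2$, so the $Y_2$-neighbour $u'_d$ of $u'$ may well lie in $X_1$ rather than $Y_1$, and the contradiction with the uniqueness of $f_1$ does not follow.
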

\begin{proof}
    We proceed by contradiction, and we let $G$ denote a smallest \cc{3} that has a pair of mutually exclusive \ses, say $e_1:=u_1v_1$ and $e_2:=u_2v_2$, so that \distg{G}{e_1}{e_2} is at least two.
     Let $f_i$ denote the companion of $e_i$ for each $i\in \{1,2\}$. Note that, since $G$ is $3$-edge-colorable and since $e_1$ and $e_2$ are mutually exclusive, all four edges $e_1,e_2,f_1$ and $f_2$ are pairwise distinct.

    \begin{sta}\label{sta:triangle}
        Each of $e_1$ and $e_2$ lies in a triangle.
    \end{sta}
    \begin{proof}
    By symmetry, it suffices to prove that $e_1$ belongs to a triangle.
    Let $C_1:=\partial(X)$ and $D_1:=\partial(Y)$ denote the $3$-cuts associated with $(e_1,f_1)$; we adjust notation as in the statement of Lemma \ref{lem:associated-r-cuts} so that $X\cap Y=\emptyset$. Note that each edge in $C_1\cup D_1-f_1$ is adjacent with $e_1$. It follows from our choice of $G$ that $e_2$ lies in $G[X]$ or otherwise in $G[Y]$. 
        Adjust notation so that $e_2$ is contained in $G[Y]$.

        Now, we claim that $|X|=1$. Suppose to the contrary, and let us consider the smaller \mbox{$3$-regular} graph $H:=G/X$. It follows from Proposition~\ref{prp:r-cuts} that $H$ is $3$-connected. By Lemma~\ref{lem:solitary-ME-sepcuts}~(i)~and~(ii), the edges $e_1$ and $e_2$ are solitary as well as mutually exclusive in~$H$.
         Thus, by our choice of $G$, \distg{H}{e_1}{e_2} is at most one. Consequently, in $H$, a shortest $e_1e_2$-path does not meet the cut $C_1$. This implies that 
         \distg{G}{e_1}{e_2} is also at most one; this contradicts our choice of $G$. Thus $|X|=1$. It follows that $e_1$ lies in a triangle.
    \end{proof}
    
    Let $T_1:=u_1v_1w_1$ and $T_2:=u_2v_2w_2$ denote the triangles containing $e_1$ and $e_2$, respectively, as per \ref{sta:triangle}. Note that, since $G$ is \mbox{$3$-regular} and \distg{G}{e_1}{e_2} $\ge 2$, the triangles
    $T_1$ and $T_2$ are vertex-disjoint as shown in Figure \ref{fig:triangles}.
    By Corollary \ref{cor:triangle-companion}, for each $i\in \{1,2\}$, the companion $f_i$ (of $e_i$) is the unique member of $\partial(w_i)\cap \partial(T_i)$.
   \begin{figure}[!htb]
        \centering
        \begin{tikzpicture}
        \node[circle,fill=white] (1) at (-1,1){};
        \node[draw=none] at (-1,1.4) {$u_1$};
        \node[circle,fill=white] (2) at (0,0){};
        \node[draw=none] at (0,0.4) {$w_1$};
        \node[draw=none] at (1,0.4) {$f_1$};
        \node[circle,fill=white] (3) at (-1,-1){};
        \node[draw=none] at (-1,-1.4) {$v_1$};
        \node[circle,fill=white] (4) at (7,0){};
        \node[draw=none] at (7,0.4) {$w_2$};
        \node[draw=none] at (6,0.4) {$f_2$};
        \node[circle,fill=white] (5) at (8,-1){};
        \node[draw=none] at (8,-1.4) {$v_2$};
        \node[circle,fill=white] (6) at (8,1){};
        \node[draw=none] at (8,1.4) {$u_2$};
        \draw (1) -- (2) -- (3) -- (1);
        \draw (4) -- (5) -- (6) -- (4);
        \draw (1) -- (2,1);
        \draw (2) -- (2,0);
        \draw (3) -- (2,-1);
        \draw (4) -- (5,0);
        \draw (5) -- (5,-1);
        \draw (6) -- (5,1);
        \node[draw=none] at (-1.4,0) {$e_1$};
        \node[draw=none] at (8.4,0) {$e_2$};
        \end{tikzpicture}
            \caption{The vertex-disjoint triangles $T_1$ and $T_2$}
            \label{fig:triangles}
        \end{figure}
    \begin{sta}
        \distgl{G}{f_1}{e_2} and \distgl{G}{f_2}{e_1}.
    \end{sta}
    \begin{proof}
    By symmetry, it suffices to prove that \distgl{G}{f_1}{e_2}. To this end, we consider the smaller \mbox{$3$-regular} graph $H:=G/V(T_1)$. By Proposition~\ref{prp:r-cuts}, $H$ is $3$-connected. Using Lemma~\ref{lem:solitary-ME-sepcuts}~(i),(iii)~and~(iv), we infer that $f_1$ and $e_2$ are solitary as well as mutually exclusive in~$H$. Thus, by choice of~$G$, \distgl{H}{f_1}{e_2}. Observe that, since \distg{G}{e_1}{e_2} is at least two, a shortest $f_1e_2$-path in $H$ does not meet the cut $\partial(T_1)$. Consequently, \distg{G}{f_1}{e_2} $=$ \distgl{H}{f_1}{e_2}.
    \end{proof}
    
    We now consider cases depending on \distg{G}{f_1}{e_2} and \distg{G}{f_2}{e_1}.

        Let us first suppose that \distg{G}{f_1}{e_2} $=0$ and \distg{G}{f_2}{e_1} $=0$. Adjust notation so that $f_1=w_1v_2$ and $f_2=w_2v_1$. Observe that $|V(G)|=6$ since otherwise $\{u_1,u_2\}$ is a $2$-vertex-cut of $G$. Consequently, $u_1u_2\in E(G)$, and $G$ is in fact the triangular prism $\overline{C_6}$; in particular, \distg{G}{e_1}{e_2}~$=1$ and this contradicts our choice of $G$.

        Now suppose that at least one of \distg{G}{f_2}{e_1} and \distg{G}{f_1}{e_2} equals one, and adjust notation so that \distg{G}{f_2}{e_1}~$=1$. Let $x_2$ denote the end of $f_2$ that is distinct from $w_2$. Observe that, since \distg{G}{f_2}{e_1}~$=1$ and $f_2$ is distinct from $f_1$, the vertex $x_2\notin V(T_1)$ and it is adjacent with at least one of $u_1$ and $v_1$; adjust notation so that $x_2v_1\in E(G)$. Now we consider the following two subcases.\\

        First suppose that \distg{G}{f_1}{e_2} $=0$. Adjust notation so that $f_1:=w_1v_2$. We let \mbox{$S:=V(T_1)\cup V(T_2)\cup \{x_2\}$}. Observe that $\partial(S)$ is a (possibly trivial) $3$-cut. The reader may note that the graph $G/\overline{S}$ is the Bicorn~$R_8$, and that the edge $e_1$ is not solitary in $G/\overline{S}$. Consequently, by Lemma~\ref{lem:solitary-ME-sepcuts}~(i), $e_1$ is not solitary in $G$, contrary to our choice of $e_1$.

        Finally, suppose that \distg{G}{f_1}{e_2} $=1$. Let $x_1$ denote the end of $f_1$ that is distinct from~$w_1$. Observe that, since \distg{G}{f_1}{e_2}~$=1$ and $G$ is \mbox{$3$-regular}, the vertex $x_1\notin V(T_2)\cup \{x_2\}$ and it is adjacent with at least one of $u_2$ and $v_2$; adjust notation so that $x_1v_2\in E(G)$. We let \mbox{$S:=V(T_1)\cup V(T_2)\cup \{x_1,x_2\}$}. Observe that, since $G$ is $3$-connected, either $S=V(G)$ or otherwise $\partial(S)$ is a $4$-cut. In either case, the subgraph $G[\overline{S}]$ is matchable; see Lemma~\ref{lem:conformal-lemma}. We let $M$ denote a \pmg\ of $G[\overline{S}]$. Note that $M\cup \{e_1,e_2,f_1,f_2\}$ is a \pmg\ of $G$, contrary to our choice of $e_1$ and $e_2$ being mutually exclusive.
\end{proof}

The following is the main result of this section, and is an immediate consequence of the above theorem along with Lemmas~\ref{lem:r-graphs-new-lemma}~and~\ref{lem:combined-stat-r-graphs}.
\begin{cor}\label{cor:dist-at-most-one-in-r-graphs}
    In a \ecc{3} $r$-graph, any two mutually exclusive \ses\ are at distance at most one; consequently, any two solitary classes are at distance at most one. \qed
\end{cor}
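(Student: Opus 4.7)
The plan is to reduce the statement to Theorem~\ref{thm:distance-at-most-1}, which already handles the case of \cc{3}s, by passing to a suitable spanning \cc{3} subgraph in which both mutually exclusive solitary edges survive as mutually exclusive solitary edges. Given a \ecc{3} $r$-graph~$G$ and two mutually exclusive \ses~$e_1$~and~$e_2$, my first step would be to apply Lemma~\ref{lem:combined-stat-r-graphs} to conclude that $G$ is \mbox{$r$-edge-colorable}, since it possesses a solitary edge (if $G$ has order two there is nothing to prove, as every pair of edges is adjacent). I would then fix a proper \mbox{$r$-edge-coloring} $(M_1,M_2,\dots,M_r)$ with $e_1\in M_1$ and $e_2\in M_2$; this is legitimate because, $e_1$ and $e_2$ being solitary and mutually exclusive, the unique \pmg\ containing~$e_1$ differs from the unique \pmg\ containing~$e_2$, and each of these \pms\ must be a color class of any proper \mbox{$r$-edge-coloring}.

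The second step would be to invoke Lemma~\ref{lem:r-graphs-new-lemma} to produce, for any $i\in\{3,\dots,r\}$, the spanning \cc{3} $H:=M_1\cup M_2\cup M_i$ of~$G$ in which $e_1$ and $e_2$ remain mutually exclusive and solitary. Applying Theorem~\ref{thm:distance-at-most-1} to~$H$ then yields $\dis_H(e_1,e_2)\le 1$. Since $H$ is a spanning subgraph of~$G$, any $e_1e_2$-path in~$H$ is also a $e_1e_2$-path in~$G$, so $\dis_G(e_1,e_2)\le \dis_H(e_1,e_2)\le 1$.

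For the ``consequently'' clause, I would invoke Corollary~\ref{lem:solitary-class-unique-pm}, which asserts that any two distinct solitary classes are mutually exclusive. Hence, for any pair of solitary classes $D_1,D_2$ and any representatives $e_1\in D_1$ and $e_2\in D_2$, the edges $e_1$ and $e_2$ form a pair of mutually exclusive solitary edges, so the first part gives $\dis_G(e_1,e_2)\le 1$; taking the minimum over all such pairs yields $\dis_G(D_1,D_2)\le 1$.

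I do not anticipate any genuine obstacle: the substantive work has already been carried out in Theorem~\ref{thm:distance-at-most-1}, and both auxiliary results (Lemmas~\ref{lem:r-graphs-new-lemma}~and~\ref{lem:combined-stat-r-graphs}) were engineered to chain together in precisely this way. The only point that warrants a moment's care is the verification that the $r$-edge-coloring can be chosen so that $M_1$ and $M_2$ are exactly the \pms\ containing~$e_1$ and~$e_2$, respectively, which is immediate from the definitions of solitude and mutual exclusiveness.
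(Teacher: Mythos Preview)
Your proposal is correct and follows essentially the same approach as the paper, which records the corollary as an immediate consequence of Theorem~\ref{thm:distance-at-most-1} together with Lemmas~\ref{lem:r-graphs-new-lemma} and~\ref{lem:combined-stat-r-graphs}. Your additional remarks---handling order two separately, justifying that the unique \pms\ containing $e_1$ and $e_2$ must appear as color classes, and invoking Corollary~\ref{lem:solitary-class-unique-pm} for the ``consequently'' clause---fill in exactly the routine details the paper leaves implicit.
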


The above, combined with Corollary~\ref{cor:main-thm}~(ii), yields the following.

\begin{cor}
In a \ecc{3} $r$-graph~$G$, any two solitary edges are at distance at most three unless $G$ has solitary pattern~$(2)$. \qed
\end{cor}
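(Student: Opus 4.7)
The plan is to split on whether $e_1$ and $e_2$ belong to the same solitary class, handling the nontrivial case by relaying through a third solitary edge whose existence is guaranteed precisely by excluding solitary pattern~$(2)$.

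First, suppose $e_1$ and $e_2$ lie in distinct solitary classes. By Corollary~\ref{lem:solitary-class-unique-pm}, any two solitary classes are mutually exclusive, so $e_1$ and $e_2$ are themselves mutually exclusive solitary edges. Corollary~\ref{cor:dist-at-most-one-in-r-graphs} then yields \distg{G}{e_1}{e_2}~$\le 1$, which is certainly at most~$3$.

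Next, suppose $e_1$ and $e_2$ belong to a common solitary class~$D$. By Corollary~\ref{cor:main-thm}(ii), the cardinality of $D$ is at most two, so $D=\{e_1,e_2\}$ is a solitary doubleton. Because the solitary pattern of $G$ is not $(2)$, there must exist a further solitary class $D'$; pick any $e'\in D'$. Applying Corollary~\ref{lem:solitary-class-unique-pm} again, $e'$ is mutually exclusive with each of $e_1$ and $e_2$, and Corollary~\ref{cor:dist-at-most-one-in-r-graphs} supplies \distg{G}{e_1}{e'}~$\le 1$ together with \distg{G}{e'}{e_2}~$\le 1$. Concatenating a shortest $e_1e'$-path with (if necessary) a single traversal of the edge~$e'$ to switch its ends, followed by a shortest $e'e_2$-path, yields a walk from an end of $e_1$ to an end of $e_2$ of length at most $1+1+1=3$; hence \distg{G}{e_1}{e_2}~$\le 3$.

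No step in this argument presents a serious obstacle, since the structural heavy lifting has already been accomplished in Corollary~\ref{cor:dist-at-most-one-in-r-graphs}. The only piece of delicate bookkeeping is the extra ``$+1$'' accounting for the possible traversal of~$e'$, which is exactly what pushes the bound from~$2$ up to~$3$; and Corollary~\ref{cor:main-thm}(ii), combined with the hypothesis that the solitary pattern is not $(2)$, is precisely what guarantees the relay edge~$e'$.
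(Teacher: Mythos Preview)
Your proof is correct and matches the paper's intended argument: the paper marks this corollary with a \qed, deriving it directly from Corollary~\ref{cor:dist-at-most-one-in-r-graphs} together with Corollary~\ref{cor:main-thm}(ii), and you have filled in precisely those details, including the ``$+1$'' for possibly traversing the relay edge~$e'$.
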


We remark that there exist \ecc{3} $r$-graphs, for each value of $r$, that have two solitary edges at arbitrarily large distance. See Figure~\ref{fig:a-(2)-3cc-graph} for an example with $r=3$ and distance greater than three; one may multiply appropriately chosen \pmg s to obtain examples with $r\ge 4$.
It follows from the above that every such graph has solitary pattern~$(2)$; we provide a recursive description of this class for $r=3$ in Section~\ref{sec:SP2}.
Our next goal is to prove a strengthening of Theorem~\ref{thm:distance-at-most-1} when one of the two solitary edges is a solitary singleton.
However, to do so, we need another technical lemma regarding triangles.

\begin{lem}\label{lem:sd-lemma}
If a \cc{3} $G$ has a triangle~$T$ containing a solitary singleton~$e$, then the unique edge $f\in \partial(T)$ that is nonadjacent with~$e$ is a solitary singleton in~$G':=G/(T\rightarrow t)$.
\end{lem}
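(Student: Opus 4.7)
The plan is to first verify that $f$ is solitary in $G'$ via a direct application of Lemma~\ref{lem:solitary-ME-sepcuts}~(iii). Observe that $\partial(T)$ is an odd $3$-cut of $G$, hence a separating cut by Proposition~\ref{prp:r-cuts}, and $G'$ is \ecc{3}. By Corollary~\ref{cor:triangle-companion}, $f$ is the companion of $e$ in $G$, so $e \xrightarrow{G} f$; taking $d := e \in E(G[V(T)])$ in that lemma immediately yields that $f$ is solitary in~$G'$.

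For the stronger claim that the equivalence class of $f$ in $G'$ is $\{f\}$, I will argue by contradiction. Suppose some edge $g \in E(G') - \{f\}$ satisfies $g \xleftrightarrow{G'} f$; by Corollary~\ref{cor:main-thm}~(ii), $\{f, g\}$ is then a solitary doubleton of $G'$, and by Corollary~\ref{cor:solitary-doubleton-near-bipartite-brick}~(ii), $G' - f - g$ is a bipartite matching covered graph $H[A,B]$ with $|A| = |B|$, both ends of $f$ lying in~$A$, and both ends of $g$ lying in~$B$. Writing the edges of $\partial(T)$ as $f = ww_1$, $f_u = uu_1$ and $f_v = vv_1$, the contraction vertex $t$ and $w_1$ must lie in $A$; since $g$'s ends lie in $B$, $g$ cannot be incident with $t$, and the remaining neighbors of $t$ in $G' - f - g$, namely $u_1$ and $v_1$, are thereby forced into $B$.

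The next step is to classify perfect matchings of $G$ containing $g$ according to their intersection with $\partial(T)$: each such $N$ satisfies either (a) $|N \cap \partial(T)| = 1$ or (b) $\partial(T) \subseteq N$. In case~(a), $N \cap E(G')$ is a perfect matching of $G'$ containing $g$, which must equal $M_e - e$ since $g$ inherits solitude in $G'$ from $f$ via Lemma~\ref{lem:solitary-source-clasee}; hence the internal edge of $T$ in $N$ is the one nonadjacent with $f$, namely $e$, so $N = M_e$. Case~(b) is the crux: if $g$ shares an end with $f_u$ or $f_v$ then $N$ covers that shared vertex twice, which is impossible; otherwise both ends of $g$ lie in $B - \{u_1, v_1\}$, and the remainder of $N$ must be a perfect matching of $G' - \{t, u_1, v_1, w_1\}$ containing $g$. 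Restricting $H[A,B]$ to $V(G') - \{t, u_1, v_1, w_1\}$ produces equal-size color classes $A - \{t, w_1\}$ and $B - \{u_1, v_1\}$; forcing $g$ (which lies entirely on the $B$-side) into the matching leaves the $A$-side with two more vertices than the $B$-side to be covered using bipartite edges --- a parity obstruction that precludes any such matching.

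Thus every perfect matching of $G$ containing $g$ equals $M_e$, giving $g \xrightarrow{G} e$; combined with $e \xrightarrow{G} g$ (which holds since $g \in M_e$), this yields $e \xleftrightarrow{G} g$, contradicting that $e$ is a solitary singleton of $G$. The hard part will be the case~(b) parity-count, which is the only step that genuinely exploits the bipartite structure of $G' - f - g$ given by Corollary~\ref{cor:solitary-doubleton-near-bipartite-brick}~(ii); everywhere else, the argument reduces to routine separating-cut manipulations.
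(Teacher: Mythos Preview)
Your proof is correct. Both your argument and the paper's hinge on Corollary~\ref{cor:solitary-doubleton-near-bipartite-brick}~(ii) --- the bipartite structure of $G'-f-g$ --- and both conclude by deriving $g \xleftrightarrow{G} e$, contradicting that $e$ is a solitary singleton. The executions differ, however. You fix a perfect matching $N$ of $G$ with $g\in N$ and split on $|N\cap\partial(T)|\in\{1,3\}$: the first case reduces to solitude of $g$ in $G'$, and the second is disposed of by a parity count inside the bipartition of $G'-f-g$. The paper avoids this case split entirely: it simply observes that $G-g-e$ is itself bipartite with equicardinal colour classes $A-t+u+v$ and $B+w$ (writing $T=uvwu$, $e=uv$, $f\in\partial(w)$), with both ends of $e$ on one side and both ends of $g$ on the other, whence $g \xleftrightarrow{G} e$ is immediate. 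Your route is a valid hands-on alternative that stays inside $G'$; the paper's lift of the bipartition from $G'$ to $G$ is shorter and sidesteps the separating-cut bookkeeping.
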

\begin{proof}
We let $T:=uvwu$ and $e:=uv$; thus $f\in \partial(w)$.
If the $3$-cut $\partial(T)$ is trivial then $G$ is~$K_4$ and devoid of solitary singletons, contrary to our hypothesis. Consequently, $\partial(T)$ is nontrivial and, by Proposition~\ref{prp:3-cut-matching-laminar}, a matching. By Proposition~\ref{prp:r-cuts}, we conclude that the smaller graph $G'$ is also $3$-connected. By Lemma~\ref{lem:solitary-ME-sepcuts}~(iii), the edge~$f$ is solitary in~$G'$. If $f$ is a solitary singleton in $G'$ then we are done. Otherwise, by Corollary~\ref{cor:main-thm}~(ii), the edge~$f$ participates in a solitary doubleton in~$G'$, say $\{d,f\}$. Our aim is to arrive at a contradiction.

By Corollary~\ref{cor:solitary-doubleton-near-bipartite-brick}~(ii), we infer that $H[A,B]:=G'-d-f$ is bipartite and matching covered; furthermore, we adjust notation so that ends of $f$ lie $A$ and ends of $d$ lie in $B$. We invite the reader to observe that $G-d-e$ is also bipartite with equicardinal color classes $A-t+u+v$ and $B+w$. Consequently, $d\xleftrightarrow{G} e$; however, this contradicts our hypothesis that $e$ is a solitary singleton in~$G$, and completes the proof.
\end{proof}

We are now ready to state and prove the aforementioned stronger result.

\begin{cor}\label{thm:distance-one}
In a \cc{3} distinct from $\theta$, any two solitary edges, of which at least one is a solitary singleton, are at distance one.
\end{cor}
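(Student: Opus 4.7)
The plan is to combine Theorem~\ref{thm:distance-at-most-1}, which yields $\dis_{G}(e_1,e_2)\le 1$, with a proof that $e_1$ and $e_2$ cannot share a common endpoint. First I would observe that $e_1$ and $e_2$ are mutually exclusive: since $e_1$'s equivalence class is the singleton $\{e_1\}$ and $e_2\ne e_1$, the two edges lie in distinct solitary classes, and any two solitary classes are mutually exclusive by Corollary~\ref{lem:solitary-class-unique-pm}. Thus Theorem~\ref{thm:distance-at-most-1} applies, and it only remains to rule out $\dis_{G}(e_1,e_2)=0$.

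The argument will proceed by induction on $|V(G)|$. Suppose, for contradiction, that $e_1$ and $e_2$ share a common endpoint. Since $G\ne\theta$, Corollary~\ref{lem:triangle-lemma} provides a triangle $T:=uvw$ with $e_1=uv$ and $e_2=uw$. By Corollary~\ref{cor:triangle-companion}, the companion $f_1$ of $e_1$ is the unique edge of $\partial(T)$ incident with $w$, while the companion $f_2$ of $e_2$ is the unique edge of $\partial(T)$ incident with $v$. If $|V(G)|=4$ then $G=K_4$, whose solitary pattern is $(2,2,2)$ by Proposition~\ref{r-graphs-of-order-four}, contradicting $e_1$ being a solitary singleton. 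Hence $|V(G)|\ge 6$, so $\partial(T)$ is a nontrivial $3$-cut and thus a matching by Proposition~\ref{prp:3-cut-matching-laminar}; in particular the three neighbours of $T$ are pairwise distinct.

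Next I would pass to $G':=G/(T\to t)$. Because any cut of $G'$ is a cut of $G$, and because $\partial(T)$ being a matching guarantees no parallel edges at $t$, $G'$ is a \cc{3} of strictly smaller order than $G$. Lemma~\ref{lem:sd-lemma} applied to the solitary singleton $e_1\in E(T)$ certifies that $f_1$ is a solitary singleton in $G'$; Lemma~\ref{lem:solitary-ME-sepcuts}(iii), applied with $d:=e_2$ and $e:=f_2$ (noting $e_2\xrightarrow{G}f_2$ because $f_2$ is the companion of $e_2$), certifies that $f_2$ is solitary in $G'$. Since $f_1$ and $f_2$ are distinct edges both incident with $t$, we obtain $\dis_{G'}(f_1,f_2)=0$. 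If $G'=\theta$ then $|V(G)|=4$, contradicting $|V(G)|\ge 6$; otherwise the induction hypothesis applied to $G'$, a \cc{3} distinct from $\theta$ in which $f_1$ is a solitary singleton and $f_2$ is a distinct solitary edge, forces $\dis_{G'}(f_1,f_2)=1$, contradicting $\dis_{G'}(f_1,f_2)=0$.

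The hardest part of this plan will be confirming that all relevant invariants survive the contraction $G\mapsto G/T$: $G'$ must remain a \cc{3} (hence the need for $|V(G)|\ge 6$ to invoke Proposition~\ref{prp:3-cut-matching-laminar} and avoid parallel edges at~$t$), and $f_1,f_2$ must descend to distinct edges of $G'$ with the precise solitude properties dictated by Lemmas~\ref{lem:sd-lemma} and~\ref{lem:solitary-ME-sepcuts}. Once these invariants are certified, the induction closes immediately.
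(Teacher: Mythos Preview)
Your proof is correct and follows essentially the same route as the paper's: assume the two solitary edges share an endpoint, locate the triangle $T$ containing them via Corollary~\ref{lem:triangle-lemma}, contract $T$, and observe that the companions $f_1,f_2$ are adjacent solitary edges in $G/T$ with $f_1$ a solitary singleton (Lemma~\ref{lem:sd-lemma}), contradicting minimality. The only minor difference is packaging: the paper uses a smallest-counterexample argument while you induct on $|V(G)|$, and the paper invokes Proposition~\ref{prp:r-cuts} directly for $G/T$ being $3$-connected rather than going through Proposition~\ref{prp:3-cut-matching-laminar}; neither affects the substance.
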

\begin{proof}
    We proceed by contradiction, and we let $G$ denote a smallest counterexample; we adjust notation so that $e_1$ is a solitary singleton. By Theorem~\ref{thm:distance-at-most-1}, \distg{G}{e_1}{e_2}~$=0$. 
By Corollary~\ref{lem:triangle-lemma}, there exists a triangle $T:=e_1e_2d_1$. Note that if the $3$-cut $\partial(T)$ is trivial then $G$ is~$K_4$ and thus devoid of solitary singletons, contrary to our hypothesis. Thus, $\partial(T)$ is nontrivial and the \mbox{$3$-regular} graph $H:=G/T$ is not $\theta$. By Proposition~\ref{prp:r-cuts}, $H$ is $3$-connected. We intend to argue that $H$ is a smaller counterexample.

Now, let $f_i$ denote the companion of $e_i$ for each $i\in \{1,2\}$, and let us make some observations. Firstly, since $e_i\xrightarrow{G} f_i$, by Lemma~\ref{lem:solitary-ME-sepcuts}~(iii), we conclude that $f_i$ is solitary in~$H$.
Secondly, by Corollary~\ref{cor:triangle-companion}, we infer that $f_i$ is the unique edge of $\partial_{G}(T)$ that is nonadjacent with $e_i$. Thus, in $H$, each $f_i$ is incident with the contraction vertex. Consequently, \distg{H}{f_1}{f_2}~$=0$; also, $f_1$ and $f_2$ are mutually exclusive in~$H$. Finally, since $e_1$ is a solitary singleton in~$G$, we invoke Lemma~\ref{lem:sd-lemma} to conclude that $f_1$ is a solitary singleton in~$H$. Thus $H$ is a smaller counterexample contrary to our choice of~$G$, and this completes the proof.
\end{proof}

\begin{cor}\label{thm:distance-one-r-graphs}
In a \ecc{3} $r$-graph of order four or more, any two solitary edges, of which at least one is a solitary singleton, are at distance one.
\end{cor}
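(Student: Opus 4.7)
The plan is a minimal-counterexample argument. Suppose $G$ is a smallest \ecc{3} $r$-graph of order at least four that possesses a solitary singleton $e_1=uv$ and a distinct solitary edge $e_2$ with \distg{G}{e_1}{e_2}~$\ne 1$. By Corollary~\ref{cor:dist-at-most-one-in-r-graphs} the distance is at most one, so it equals zero; write $e_2=uw$. Proposition~\ref{r-graphs-of-order-four} rules out $|V(G)|=4$, since no \ecc{3} $r$-graph on four vertices has a solitary singleton, so $|V(G)|\ge 6$.

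First I would identify an $r$-triangle on $\{u,v,w\}$. Since $\{e_1\}$ is a singleton class and $e_2$ lies in a different class, Corollary~\ref{lem:solitary-class-unique-pm} makes $e_1$ and $e_2$ mutually exclusive; by Lemma~\ref{lem:combined-stat-r-graphs} I fix a proper \mbox{$r$-edge-coloring} $(M_1,\dots,M_r)$ with $e_1\in M_1$ and (automatically, since $e_1$ and $e_2$ are adjacent) $e_2\in M_2$. By Lemma~\ref{lem:r-graphs-new-lemma}, each $H_i:=M_1\cup M_2\cup M_i$ with $i\in\{3,\dots,r\}$ is a \cc{3} in which $e_1$ and $e_2$ remain mutually exclusive solitary edges; they share $u$ and therefore lie at distance zero in $H_i$. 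Since $|V(H_i)|\ge 6$, Corollary~\ref{lem:triangle-lemma} applied to $H_i$ produces the triangle $uvwu$ in $H_i$; its third edge $vw$ cannot lie in $M_1$ (which covers $v$ via $e_1$) or $M_2$ (which covers $w$ via $e_2$), so a copy of $vw$ lies in $M_i$. Ranging over $i\in\{3,\dots,r\}$ yields $\mu_G(vw)\ge r-2$; combined with $\mu_G(uv)=1$ (as $e_1$ is solitary) and the inequality $|G[\{u,v,w\}]|\le r$ obtained from $|\partial(\{u,v,w\})|\ge r$, this forces $T:=uvwu$ to be an $r$-triangle with $\mu_G(uw)=1$, $\mu_G(vw)=r-2$, and $|\partial(T)|=r$.

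The heart of the argument is an extension of Lemma~\ref{lem:sd-lemma}: if a \ecc{3} $r$-graph has an $r$-triangle $T$ containing a solitary singleton $e$, then the unique edge $f\in\partial(T)$ nonadjacent to $e$ (necessarily incident with the third vertex of $T$) is a solitary singleton in $G':=G/T$. I would mimic the original proof: from $e\xrightarrow{G} f$ and Lemma~\ref{lem:solitary-ME-sepcuts}(iii), $f$ is solitary in $G'$; if it were only a solitary doubleton $\{f,d\}$, then by Corollary~\ref{cor:solitary-doubleton-near-bipartite-brick}(ii) the graph $G'-f-d$ is bipartite with bipartition $(A,B)$ satisfying $t\in A$ and both ends of $d$ in $B$ (forcing $d\notin\partial(T)$). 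Uncontracting $T$ by placing $u,v$ in $A-t+u+v$ and $w$ in $B+w$ yields a bipartition of $G-d-e$: the remaining triangle edges $uw$ and the $r-2$ copies of $vw$ go between the two sides; by the multiplicities computed above, all of $u$'s and $v$'s edges leaving $T$ go to $B$ whereas $w$'s unique external edge $f$ goes to $A$, so the color classes are equicardinal. Since $G$ is a brick by Corollary~\ref{cor:solitary-doubleton-near-bipartite-brick}(i) and the only edges within the two color classes are $e$ and $d$ respectively (both of multiplicity one), a cardinality argument on \pmg s of $G$ containing $d$ then forces $d\xleftrightarrow{G}e$, contradicting $\{e\}$ being a singleton. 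This careful bipartite bookkeeping across the multi-edges of the $r$-triangle is the main technical obstacle.

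With the generalized lemma in hand, $f$ is a solitary singleton in $G'$. Meanwhile, the unique edge $g$ from $v$ to the outside of $T$ (it exists uniquely because $\mu_G(uv)+\mu_G(vw)=r-1$) satisfies $e_2\xrightarrow{G}g$: in any \pmg\ containing $e_2=uw$, vertex $v$'s only available neighbour is the outside end of $g$. Hence by Lemma~\ref{lem:solitary-source-clasee} the edge $g$ is solitary in $G$, and by Lemma~\ref{lem:solitary-ME-sepcuts}(iii) it remains solitary in $G'$. In $G'$, both $f$ and $g$ are incident with the contraction vertex $t$ and are distinct (their $T$-ends are $w$ and $v$ respectively), so they lie at distance zero. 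Thus $G'$ is a \ecc{3} $r$-graph of order $|V(G)|-2\ge 4$ (by Proposition~\ref{prp:r-cuts}) possessing a solitary singleton $f$ and a distinct solitary edge $g$ at distance zero, contradicting the minimality of $G$ and completing the argument.
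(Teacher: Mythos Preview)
Your argument is essentially correct and follows a genuinely different route from the paper's. The paper reduces to the spanning $3$-regular subgraphs $G_i=M_1\cup M_2\cup M_i$, applies Corollary~\ref{thm:distance-one} there, and then handles the residual case (where $e_1$ and $e_2$ both fall into solitary doubletons in every $G_i$) by invoking the staircase classification Theorem~\ref{thm:more-than-one-SD} together with Proposition~\ref{prp:staircase-uniquely-determined} to force $G$ itself to be a staircase of thickness $r-2$. Your approach instead lifts the proof technique of Corollary~\ref{thm:distance-one} directly to $r$-graphs: you locate the $r$-triangle $T$ on $\{u,v,w\}$ (nicely done via Corollary~\ref{lem:triangle-lemma} applied in each $H_i$ and the multiplicity count), prove an $r$-graph version of Lemma~\ref{lem:sd-lemma} by the bipartite uncontraction argument, and recurse on $G/T$. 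This avoids the forward reference to Theorem~\ref{thm:more-than-one-SD}, which is attractive since that theorem is one of the paper's main results proved only later; your route is more self-contained at this point in the development.

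One correction: your appeal to Lemma~\ref{lem:solitary-source-clasee} to conclude that $g$ is solitary in $G$ is wrong --- that lemma runs in the opposite direction (it says that if something depends \emph{on} a solitary edge then it is solitary), whereas you only have $e_2\xrightarrow{G} g$, and in fact $g$ need not be solitary in $G$ (a perfect matching containing $g$ can match $u$ and $w$ to vertices outside $T$ without using $e_2$). Fortunately you do not need this: Lemma~\ref{lem:solitary-ME-sepcuts}(iii), applied with $d=e_2\in G[V(T)]$ and $e=g\in\partial(T)$, gives directly that $g$ is solitary in $G'=G/V(T)$, which is all the minimal-counterexample step requires. With that fix your argument goes through.
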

\begin{proof}
We let $e_1$~and~$e_2$ denote distinct solitary edges in a \ecc{3} \mbox{$r$-graph}~$G$, of order four or more, such that at least one of them is a solitary singleton. By Proposition~\ref{prp:solitary-patterns-of-small-r-graphs}, order of~$G$ is at least six.
By the first part of Lemma~\ref{lem:combined-stat-r-graphs}, $G$ is \mbox{$r$-edge-colorable}; we let $(M_1, M_2, M_3, \dots, M_r)$ denote a proper \mbox{$r$-edge-coloring} wherein $e_1\in M_1$ and $e_2\in M_2$. Since $e_1$~and~$e_2$ are mutually exclusive, by Lemma~\ref{lem:r-graphs-new-lemma}, we infer that for each $i\in \{3,\dots,r\}$, the (spanning) subgraph $G_i:=M_1\cup M_2\cup M_i$ is a $3$-connected (\mbox{$3$-regular}) graph. Furthermore, $e_1$~and~$e_2$ are mutually exclusive solitary edges in~$G_i$. By Corollary~\ref{cor:main-thm}, in each~$G_i$, the edge $e_1$ is either a solitary singleton or otherwise participates in a solitary doubleton; an analogous statement holds for~$e_2$.

Observe that if \distg{G_i}{e_1}{e_2}~$= 1$ for some $i\in \{3,\dots,r\}$ then \distg{G}{e_1}{e_2}~$= 1$, and we are done.
Consequently, by invoking Corollary~\ref{thm:distance-one}, it remains to consider the case in which both $e_1$~and~$e_2$ participate in (distinct) solitary doubletons in each~$G_i$. By Theorem~\ref{thm:more-than-one-SD}, we infer that every~$G_i$ is a staircase. Since any two solitary edges in a staircase are at distance at most one, it remains to consider the case in which \distg{G_i}{e_1}{e_2}~$= 0$ for each $i\in \{3,\dots,r\}$. It follows from the definition of staircases that $e_1$~and~$e_2$ participate in a triangle in~$G_i$ for each $i\in \{3,\dots,r\}$; furthermore, the perfect matching~$M_i$ contains the third edge of this triangle. Now, by Proposition~\ref{prp:staircase-uniquely-determined}, we conclude that $G$ is a staircase of thickness~$r-2$, and both $e_1$~and~$e_2$ participate in (distinct) solitary doubletons in~$G$; a contradiction.
\end{proof}

In the following section, we shall rely heavily on results of this section as well as Section~\ref{sec:introduction} to establish our characterizations of the solitary patterns claimed in Table~\ref{table1}.

\section{Forward implications: \newline\texorpdfstring{\ecc{3}}{} \texorpdfstring{$r$}{}-graphs with three or more solitary edges}
\label{sec:proofs-of-main-results}
In this section, our main goal is to prove the forward implications of Theorems~\ref{thm:more-than-one-SD}, \ref{thm:SDSS}~and~\ref{thm:3SS}; their proofs appear in Sections \ref{sec:more than one solitary doubleton}, \ref{sec:oneSD-oneSS} and \ref{sec:3SS}, respectively. We remark that their proofs do not rely on each other. In Section~\ref{sec:SP2}, we provide a recursive description of \cc{3}s that have solitary pattern~$(2)$; its proof relies on Theorems~\ref{thm:more-than-one-SD} as well as \ref{thm:SDSS}.
\subsection{Two solitary doubletons}\label{sec:more than one solitary doubleton}

In order to prove that every \ecc{3} $r$-graph, of order six or more, with two or more solitary doubletons, is a $1$-staircase of thickness~$r-2$, we first establish the existence of a $1$-dumbbell of thickness~$r-2$. First, we do this for $r=3$.

\begin{lem}\label{lem:1-dumbbell-existence}
If $\{\alpha,\beta\}$ and $\{\alpha',\beta'\}$ are distinct solitary doubletons in a \cc{3} $G$, distinct from $K_4$, then $G$ has a subgraph $J$ that is a $1$-\dumbbell, and whose unique odd $1$-cut is adjacent with each of the four edges in $\{\alpha,\beta,\alpha',\beta'\}$.
\end{lem}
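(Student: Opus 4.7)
I use the notation of Corollary~\ref{lem:vertex-disjoint-triangle-lemma} throughout. The strategy is to invoke its structural conclusions, applied to each of the two solitary doubletons, and then extract the bone of the dumbbell via a degree-one application of Corollary~\ref{cor:degree-one}.

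Applying Corollary~\ref{lem:vertex-disjoint-triangle-lemma} to $\{\alpha,\beta\}$ shows that $G$ has precisely two vertex-disjoint triangles: $T_\alpha = \{a_1, a_2, b\}$ containing $\alpha = a_1a_2$ and $T_\beta = \{b_1, b_2, a\}$ containing $\beta = b_1b_2$, where $a \in A$ and $b \in B$ are the degree-one vertices of the uniquely matchable bipartite core $H[A, B] := G - a_1 - a_2 - b_1 - b_2$; moreover $G' := G - \alpha - \beta$ is bipartite and matching covered with classes $A' := A \cup \{a_1, a_2\}$ and $B' := B \cup \{b_1, b_2\}$. Since $G$ has only these two triangles, the same corollary applied to $\{\alpha', \beta'\}$ forces, after relabelling, $\alpha' = ba_1 \in T_\alpha$ and $\beta' = ab_1 \in T_\beta$. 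Let $v_\alpha := a_1$ and $v_\beta := b_1$ denote the shared vertices of $\{\alpha, \alpha'\}$ and $\{\beta, \beta'\}$, respectively; the whole task reduces to showing that the candidate bone $v_\alpha v_\beta = a_1b_1$ is an edge of $G$.

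To do this, observe that $\{\alpha', \beta'\}$ remains a solitary doubleton in $G'$ (a $G'$-perfect-matching is exactly a $G$-perfect-matching avoiding $\alpha$ and $\beta$), so $\alpha'$ is solitary in $G'$; by Proposition~\ref{prp:solitary-calls-for-uniquely-matchable}, the nonempty bipartite graph $F := G' - a_1 - b$ is uniquely matchable, and hence by Corollary~\ref{cor:degree-one} must admit a degree-one vertex in the class $B' - \{b\} = (B - \{b\}) \cup \{b_1, b_2\}$. Suppose for contradiction that $a_1b_1 \notin E(G)$. First, the bipartite structure of $G - \alpha' - \beta'$ (in which $\{a_1, b_2\}$ and $\{a_2, b_1\}$ lie in opposite classes) immediately rules out the ``diagonal'' edges $a_1b_2$ and $a_2b_1$. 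A direct computation then shows that every vertex of $B' - \{b\}$ has degree at least two in $F$: the vertices $b_1$ and $b_2$ have $G'$-neighbourhoods $\{a, y_1\}$ and $\{a, y_2\}$ respectively (where $y_i$ denotes the third $G$-neighbour of $b_i$), each of which avoids both $a_1$ (by the current assumption and the ruled-out diagonal) and $b$ (by bipartiteness); and any $v \in B - \{b\}$ has $G$-degree three of which only $a_1$ can possibly be a neighbour, and then only if $v$ equals the third $G$-neighbour $x_1$ of $a_1$. This contradicts Corollary~\ref{cor:degree-one}, so $v_\alpha v_\beta \in E(G)$.

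Having established the bone, take $J$ to be the subgraph of $G$ with vertex set $V(T_\alpha) \cup V(T_\beta)$ and edge set $E(T_\alpha) \cup E(T_\beta) \cup \{v_\alpha v_\beta\}$. This seven-edge graph on six vertices consists of two vertex-disjoint triangles joined by a single edge, which is precisely the smallest $1$-dumbbell (ladders $L_1 = L_2 = K_2$, peripheral rungs $a_2b$ and $ab_2$, bone $v_\alpha v_\beta$, sockets $v_\alpha$ and $v_\beta$), and its unique odd $1$-cut $\{v_\alpha v_\beta\}$ is adjacent to $\alpha$ and $\alpha'$ via $v_\alpha$, and to $\beta$ and $\beta'$ via $v_\beta$, as required. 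The hard part is the bone-existence step: neither solitary doubleton alone forces $v_\alpha v_\beta \in E(G)$, and the argument must interlock the bipartite structure arising from both doubletons with the degree-one consequence of unique matchability applied to the uniquely matchable bipartite graph $F = G' - a_1 - b$.
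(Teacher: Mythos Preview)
Your proof is correct, and it follows a genuinely different route from the paper's. Both proofs begin identically by invoking Corollary~\ref{lem:vertex-disjoint-triangle-lemma} (twice) to pin down the two triangles and to locate $\alpha,\alpha'$ in one and $\beta,\beta'$ in the other. The divergence is in how the bone edge is obtained. The paper appeals to Theorem~\ref{thm:distance-at-most-1} (the distance-at-most-one theorem for mutually exclusive solitary edges) together with a short dependence argument: since $\alpha$ depends on the unique edge of $\partial(T_\alpha)$ leaving the non-shared end of $\alpha'$, and $\alpha\leftrightarrow\beta$, several candidate edges between the triangles are forbidden, forcing the bone. You instead exploit the second doubleton directly: $\alpha'$ stays solitary in the bipartite graph $G'=G-\alpha-\beta$, so $F=G'-a_1-b$ is uniquely matchable, and a degree count against Corollary~\ref{cor:degree-one} yields the bone.

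What each approach buys: the paper's argument is shorter once Theorem~\ref{thm:distance-at-most-1} is in hand, but that theorem has a substantial proof of its own. Your argument is more self-contained and elementary, relying only on Corollary~\ref{cor:degree-one} and the two bipartitions coming from Corollary~\ref{cor:solitary-doubleton-near-bipartite-brick}~(ii); in particular it makes Lemma~\ref{lem:1-dumbbell-existence} independent of the entire Section~\ref{sec:distance-between-solitary-edges}. One small wording point: the phrase ``$\{\alpha',\beta'\}$ remains a solitary doubleton in $G'$'' is stronger than you use or can easily justify (the equivalence class in $G'$ could in principle be larger); all you need, and all you actually invoke, is that $\alpha'$ is solitary in $G'$, which follows since the unique perfect matching of $G$ through $\alpha'$ avoids $\alpha,\beta$ by mutual exclusivity of solitary classes.
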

\begin{proof}
Using Corollaries~\ref{lem:vertex-disjoint-triangle-lemma}~and~\ref{cor:solitary-doubleton-near-bipartite-brick}~(ii), we conclude that $G$ has vertex-disjoint triangles $T_1$~and~$T_2$ such that $\alpha,\alpha'\in E(T_1)$ whereas $\beta,\beta'\in E(T_2)$. We may adjust notation so that $\alpha:=aa_1, \alpha':=aa_2,\beta:=bb_1$ and $\beta':=bb_2$.

Since $T_1$ and $T_2$ are vertex-disjoint, by Theorem~\ref{thm:distance-at-most-1}, ${\sf dist}_G(\alpha,\beta')=1$ and ${\sf dist}_G(\alpha',\beta)=1$. As $G$ is \mbox{$3$-regular}, $\alpha$ depends on the unique edge in $\partial_{G}(T_1)\cap\partial_{G}(a_2)$. Since $\alpha\leftrightarrow \beta$, we infer that $a_2b_1,a_2b\notin E(G)$. An analogous argument implies that $a_1b_2,a_1b\notin E(G)$. Using all of these observations, we infer that $ab\in E(G)$. Thus $J:=T_1+T_2+ab$ is the desired $1$-\dumbbell.
\end{proof}


Now, we extend the above to all \ecc{3} $r$-graphs.

\begin{lem}\label{lem:r1-dumbbell-existence-r-graphs}
If $\{\alpha_1,\beta_1\}$~and~$\{\alpha_2,\beta_2\}$ are distinct solitary doubletons in a \ecc{3} \mbox{$r$-graph}~$G$, of order six or more, then $G$ has a subgraph~$J$ that is a $1$-\dumbbell\ of order six and thickness~$r-2$; furthermore, $J$ contains $\alpha_1,\beta_1,\alpha_2$~and~$\beta_2$ as its (only) unmatchable edges.
\end{lem}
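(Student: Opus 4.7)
The plan is to reduce the statement to the $r=3$ case (which is Lemma~\ref{lem:1-dumbbell-existence}) by slicing $G$ into $3$-regular spanning subgraphs via an $r$-edge-coloring. By Lemma~\ref{lem:combined-stat-r-graphs}, $G$ admits a proper $r$-edge-coloring $(M_1,\ldots,M_r)$. Using Corollary~\ref{lem:solitary-class-unique-pm}, the two distinct solitary doubletons lie in distinct color classes; relabel so that $\{\alpha_1,\beta_1\}\subseteq M_1$ and $\{\alpha_2,\beta_2\}\subseteq M_2$. Since $\alpha_1$ and $\alpha_2$ are mutually exclusive solitary edges, Lemma~\ref{lem:r-graphs-new-lemma} guarantees that for each $k\in\{3,\ldots,r\}$, the spanning subgraph $H_k:=M_1\cup M_2\cup M_k$ is a $3$-connected $3$-regular graph. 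Because $H_k\subseteq G$, every perfect matching of $H_k$ is also a perfect matching of $G$, so $\alpha_1,\beta_1,\alpha_2,\beta_2$ remain solitary in $H_k$, and the two doubletons persist as solitary doubletons there (using Corollary~\ref{cor:main-thm}(ii) to cap the solitary-class size at two).

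Next I would apply Lemma~\ref{lem:1-dumbbell-existence} to each $H_k$, noting that $H_k\neq K_4$ since $|V(H_k)|=|V(G)|\ge 6$. This produces, inside each $H_k$, a $1$-dumbbell on six vertices whose two triangles contain the four distinguished edges and whose bone joins the common end of $\alpha_1,\alpha_2$ to the common end of $\beta_1,\beta_2$. The crucial observation is that the six-vertex substructure is determined by the endpoints of the four fixed edges and is therefore the same for every $k$: writing $\alpha_1=aa_1$, $\alpha_2=aa_2$, $\beta_1=bb_1$, $\beta_2=bb_2$, the vertex-disjoint triangles $T_1:=aa_1a_2$ and $T_2:=bb_1b_2$ are present in every $H_k$, and so is the bone edge $ab$. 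Hence $T_1$, $T_2$, and at least one copy of $ab$ lie in $G$.

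To nail down the multiplicities, I would run a degree count inside each slice. In $H_k$, the vertex $a$ has degree three via $\alpha_1,\alpha_2$ and one copy of the bone $ab$, necessarily colored $k$; ranging over $k$ yields $\mu_G(ab)\ge r-2$. An identical argument at $a_1,a_2$ forces the third edge of $T_1$, namely $a_1a_2$, into $M_k$ for every such $k$, giving $\mu_G(a_1a_2)\ge r-2$; symmetrically $\mu_G(b_1b_2)\ge r-2$. Since $\alpha_1,\alpha_2,\beta_1,\beta_2$ have multiplicity one by Proposition~\ref{prp:basic-prp}, the fact that $G$ is $r$-regular forces equality in all three multiplicity bounds at $a,a_1,a_2,b,b_1,b_2$ and rules out any further edges of $G$ among these six vertices. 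The induced subgraph $J$ on $\{a,a_1,a_2,b,b_1,b_2\}$ is then, by construction, a $1$-dumbbell of order six and thickness $r-2$; Proposition~\ref{prp:solitary-unmatchable-in-1-dumbbell} identifies its four unmatchable edges as exactly $\alpha_1,\beta_1,\alpha_2,\beta_2$.

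The main obstacle is ensuring that the $1$-dumbbells extracted from the different slices $H_k$ refer to the same six vertices and the same bone edge (up to the choice of color-$k$ copy of $ab$), so that they stack compatibly inside $G$; this is handled by the rigidity remark above, namely that both triangles and the bone are uniquely determined by the endpoints of the four fixed solitary edges. Once this is in place, the remaining work reduces to routine $r$-regular degree bookkeeping to equate lower bounds with exact multiplicities.
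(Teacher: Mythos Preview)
Your proposal is correct and follows essentially the same route as the paper: slice $G$ via an $r$-edge-coloring into spanning $3$-connected cubic graphs $H_k=M_1\cup M_2\cup M_k$, apply Lemma~\ref{lem:1-dumbbell-existence} in each slice, observe that the six vertices (and hence the two triangles and the bone) are forced by the fixed endpoints of $\alpha_1,\alpha_2,\beta_1,\beta_2$, and take the union over~$k$. The paper phrases the last step as ``the subgraph~$J$ formed by the union of $J_3$ up to~$J_r$ is the desired $1$-dumbbell,'' whereas you spell out the multiplicity bookkeeping; your additional degree-count to force exact multiplicities (and the claim that $J$ is the \emph{induced} subgraph on those six vertices) is not needed for the lemma, which only asks for a subgraph.
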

\begin{proof}
By the first part of Lemma~\ref{lem:combined-stat-r-graphs}, $G$ is $r$-edge-colorable; we let $(M_1, M_2, M_3, \dots, M_r)$ denote a proper \mbox{$r$-edge-coloring} wherein $\{\alpha_i, \beta_i \}\subset M_i$ for each $i\in \{1,2\}$. Since $\alpha_1$~and~$\alpha_2$ are mutually exclusive, the following is an immediate consequence of Lemma~\ref{lem:r-graphs-new-lemma}.
\begin{sta}
For each $i\in \{3,\dots,r\}$, the (spanning) subgraph $H_i:=M_1\cup M_2\cup M_i$ is a $3$-connected (\mbox{$3$-regular}) graph. \qed
\end{sta}
It is easy to see that $\alpha_1,\beta_1,\alpha_2$~and~$\beta_2$ are solitary edges of~$H_i$, and that $\alpha_1 \xleftrightarrow{H_i} \beta_1$ and $\alpha_2 \xleftrightarrow{H_i} \beta_2$ for each $i\in \{3,\dots,r\}$; by Corollary~\ref{cor:main-thm}~(ii), $\{\alpha_1, \beta_1\}$ and $\{\alpha_2, \beta_2\}$ are solitary doubletons of~$H_i$.
We now invoke Lemma~\ref{lem:1-dumbbell-existence} to deduce the following.
\begin{sta}
For each $i\in \{3,\dots,r\}$, the graph~$H_i$ has a subgraph~$J_i$ that is a $1$-\dumbbell\ of order six and thickness one; furthermore, $J_i$ contains $\alpha_1,\beta_1,\alpha_2$~and~$\beta_2$ as its (only) unmatchable edges, or equivalently, its unique perfect matching is a subset of~$M_i$. \qed
\end{sta}

We invoke the above for $i=3$, and adjust notation so that $\alpha_1$~and~$\alpha_2$ are adjacent; consequently, $\beta_1$~and~$\beta_2$ are adjacent. The reader may now invoke the above for the remaining values of~$i$ to infer that the subgraph~$J$ formed by the union of $J_3$ up to~$J_r$ is indeed the desired $1$-\dumbbell.
\end{proof}

A subgraph~$H$ of a graph~$G$ is {\em conformal} if $G-V(H)$ is matchable; note that every spanning subgraph is conformal. Furthermore, for a \pmg~$M$ of a graph~$G$, a (matchable) subgraph~$H$ of~$G$ is {\em $M$-conformal} if $M\cap E(H)$ is a \pmg\ of~$H$.

We are now ready to prove the forward implication of Theorem~\ref{thm:more-than-one-SD}. In its proof, we shall invoke the above lemma and consider a maximal $1$-dumbbell with certain properties and then use results from earlier sections to conclude that the chosen $1$-dumbbell is a spanning subgraph.

\begin{thm}\label{thm:(2,2)-graphs-r-graphs}
Every \ecc{3} $r$-graph, of order six or more, with at least two solitary doubletons is a $1$-staircase of thickness~$r-2$.
\end{thm}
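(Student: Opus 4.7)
The plan is to proceed by induction on $n = |V(G)| \geq 6$, leveraging the contraction trick encoded in Corollary~\ref{cor:SD-shrinking-triangle}. First, I would invoke Lemma~\ref{lem:r1-dumbbell-existence-r-graphs} to obtain a subgraph $J \subseteq G$ that is a $1$-dumbbell of order six and thickness $r-2$, whose four unmatchable edges are precisely the edges of the two given solitary doubletons $\{\alpha_1, \beta_1\}$ and $\{\alpha_2, \beta_2\}$. Label the two vertex-disjoint triangles of $J$ as $T_1 = \{a, a_1, a_2\}$ and $T_2 = \{b, b_1, b_2\}$, joined by the bone $ab$ of multiplicity $r-2$, with $\alpha_i = aa_i$ and $\beta_i = bb_i$.

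For the base case $n = 6$ the dumbbell $J$ is spanning. The two sockets $a,b$ already have degree $r$ in $J$, whereas each of the four corners $a_1, a_2, b_1, b_2$ has degree $r-1$ and thus exactly one extra edge in $G$. A direct case analysis, invoking $3$-edge-connectivity to rule out configurations in which the extra edges are confined inside either triangle (which would disconnect $G$ into its two halves), shows that the four extra edges form a matching between $\{a_1, a_2\}$ and $\{b_1, b_2\}$; hence $G$ is the triangular prism $\overline{C_6}$ with bone and rungs multiplied by $r-2$, i.e.\ the $1$-staircase of order six and thickness $r-2$. For the inductive step $n \geq 8$, the odd cut $\partial_G(T_1)$ is a nontrivial odd $r$-cut (consisting of the $r-2$ bone edges together with the two external edges at $a_1$ and $a_2$), so by Proposition~\ref{prp:r-cuts} the contraction $G' := G/T_1$ is a \ecc{3} $r$-graph of order $n - 2 \geq 6$. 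Applying Corollary~\ref{cor:SD-shrinking-triangle} once for each $i \in \{1,2\}$ yields in $G'$ two distinct solitary doubletons $\{\alpha_1', \beta_1\}$ and $\{\alpha_2', \beta_2\}$, where $\alpha_i'$ is the unique edge of $\partial_G(T_1)$ nonadjacent with $\alpha_i$; in particular, both $\alpha_1', \alpha_2'$ are incident with the contraction vertex $y$ in $G'$. By the inductive hypothesis, $G'$ is a $1$-staircase of thickness $r-2$ and order $n-2$.

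It then remains to recover $G$ from $G'$ and verify the staircase structure. The vertex $y$ has degree $r$ in $G'$, with exactly three distinct neighbours and $r-2$ parallel edges to one of them; this matches the neighbourhood pattern of a socket of a terminal triangle in a $1$-staircase of thickness $r-2$. Since the two solitary edges $\alpha_1', \alpha_2'$ share the endpoint $y$, the structural description of $1$-staircases forces $y$ to be precisely a socket of a terminal triangle of $G'$. Un-contracting $T_1$ in place of $y$ then re-attaches $T_1$ with its bone and external edges in the natural way, which amounts to extending the corresponding ladder $L_1$ of the underlying $1$-dumbbell of $G'$ by one new $K_2$-rung (namely $a_1a_2$, of multiplicity $r-2$), while the two ``closing'' edges that turn the dumbbell into the staircase remain unchanged. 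This produces a $1$-staircase of order $n$ and thickness $r-2$, completing the induction. The main obstacle will be this last structural verification: precisely tracking how the three edges incident with $y$ in $G'$ (the single edges $\alpha_1', \alpha_2'$ and the bundle of $r-2$ bone edges) map to the three edges incident with $T_1$ under un-contraction, and checking that the result agrees on the nose with the $1$-staircase obtained by extending the underlying dumbbell of $G'$ by one rung. This is routine bookkeeping but requires careful identification of $y$'s three edge-orbits with the socket-to-corner edges and the bone of a terminal triangle of $G'$.
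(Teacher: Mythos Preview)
Your inductive contraction approach is essentially correct and is a genuinely different route from the paper's proof. The paper argues directly by taking a \emph{maximal} $1$-dumbbell $J$ (of arbitrary order, not just six) in $G$ with the prescribed unmatchable edges, and then exploits the near-bipartite structure of $G$ (Corollary~\ref{cor:solitary-doubleton-near-bipartite-brick}(ii)) together with the degree-one lemma for uniquely matchable bipartite graphs (Corollary~\ref{cor:degree-one}): unless the far peripheral corners of $J$ are already joined (whence $G$ is a $1$-staircase), one finds a degree-one vertex in the bipartite complement that forces an extension of $J$, contradicting maximality. Your approach instead peels off one $r$-triangle and reduces to a smaller instance; this buys a cleaner high-level structure at the cost of the un-contraction bookkeeping you flag at the end.

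Two points of execution deserve care. First, Corollary~\ref{cor:SD-shrinking-triangle} is stated only for $3$-connected $3$-regular graphs, so for $r \geq 4$ you cannot cite it directly; you must redo its short proof in the $r$-graph setting (use Lemma~\ref{lem:solitary-ME-sepcuts}(i) for $\beta_i$, observe $\alpha_i \xrightarrow{G} \alpha_i'$ from the triangle structure and invoke Lemma~\ref{lem:solitary-ME-sepcuts}(iii) for $\alpha_i'$, then combine via Corollaries~\ref{lem:solitary-class-unique-pm} and~\ref{cor:main-thm}(ii)). Second, in identifying $y$ as a socket of $G'$, the key point is that $\alpha_1'$ and $\alpha_2'$ lie in \emph{distinct} solitary doubletons of $G'$ and share the endpoint $y$; in any $1$-staircase each solitary doubleton contributes exactly one edge at each socket and no doubleton edge elsewhere, so only a socket can meet two distinct doubletons. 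When $|V(G')| = 6$ (the vertex-transitive case) you should note explicitly that one may choose the staircase decomposition of $G'$ so that $y$ is a socket and the $r-2$ parallel $yb$ edges form the bone. With these points filled in, your argument goes through.
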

\begin{proof}
Let $G$ be a \ecc{3} $r$-graph, of order six or more, and let $\{\alpha,\beta\}$~and~$\{\alpha',\beta'\}$ denote distinct solitary doubletons. By Lemma~\ref{lem:r1-dumbbell-existence-r-graphs}, $G$ has a subgraph that is a \mbox{$1$-\dumbbell}\ of thickness~$r-2$ whose (only) unmatchable edges are $\alpha,\beta,\alpha'$~and~$\beta'$; let $J$ denote a maximal $1$-\dumbbell\ in~$G$ with these properties. Let $u_1, w_1, u_2$~and~$w_2$ denote the four vertices of degree~$r-1$ in~$J$ so that $u_1$~and~$w_1$ are adjacent in~$J$.

\begin{sta}\label{sta:no-edge-joining-peripheral-rungs-r-graphs}
If $G$ has any edge that has one end in $\{u_1,w_1\}$ and the other end in $\{u_2,w_2\}$ then $G$ is a $1$-staircase of thickness~$r-2$.
\end{sta}
\begin{proof}
Adjust notation so that $u_1w_2\in E(G)$. Since $G$ is $3$-edge-connected, \mbox{$V(G)=V(J)$} and $E(G)=E(J)+u_1w_2+u_2w_1$; whence $G$ is a $1$-staircase of thickness~$r-2$.
\end{proof}

Henceforth, we assume that $G$ has no edge with one end in $\{u_1,w_1\}$ and the other end in $\{u_2,w_2\}$. Our aim is to arrive at a contradiction.

We let $a_1$ and $b_1$ denote the (only) cut-vertices of~$J$ so that $a_1$~and~$u_1$ lie in the same block. Note that the four edges in~$\partial_J(\{a_1,b_1\})$ are the (only) unmatchable edges of~$J$; consequently, these are $\alpha, \beta, \alpha'$ and $\beta'$.
Since $\{\alpha,\beta\}$ is a matching, we let $\alpha:=a_1a_2$ and $\beta:=b_1b_2$. By Corollary~\ref{cor:solitary-doubleton-near-bipartite-brick}~(ii), the connected graph $G-\alpha-\beta$ is bipartite; we let $A'$ and $B'$ denote its color classes so that $a_1,a_2\in A'$ and $b_1,b_2\in B'$. Adjust notation so that $w_1,w_2\in A'$~and~$u_1,u_2\in B'$.

Now, let $M$ denote the unique \pmg\ of~$G$ that contains the edges $\alpha$~and~$\beta$, and let $d_1$ denote the $M$-edge incident at $u_1$. 
We let $d_1:=u_1w$. By our assumption stated after~\ref{sta:no-edge-joining-peripheral-rungs-r-graphs}, and by paying attention to the color classes of $G-\alpha-\beta$, it follows that \mbox{$w\in A'\cap \overline{V(J)}$} and that $J':=J+d_1-w_2$ is an $M$-conformal subgraph.

We let $H[A,B]:=G-a_1-a_2-b_1-b_2$ where $A:=A'-a_1-a_2$ and $B:=B'-b_1-b_2$. Since $N:=M-\alpha-\beta$ is the only \pmg\ of the bipartite graph~$H$, we infer that $N\cap E(G-V(J'))$ is the only \pmg\ of the (nonempty) bipartite graph~$G-V(J')$. By Corollary~\ref{cor:degree-one}, $G-V(J')$ has a degree one vertex in each of its color classes, and we let $b\in B$ denote a degree one vertex. It follows that, in~$G$, there are $r-1$ edges incident at~$b$ that have their other end in~$V(J')\cap A'$. By paying attention to the degrees of vertices in~$J'$, we conclude that either $\mu_{G}(b,w)=r-1$, or otherwise $\mu_{G}(b,w)=r-2$~and~$\mu_{G}(b,w_1)=1$. In the former case, $\partial_{G}(\{b,w\})$ is a $2$-cut; a contradiction since~$G$ is \mbox{$3$-edge-connected}. In the latter case, observe that $J+d_1+bw_1+G[b,w]$ is a (larger) \mbox{$1$-\dumbbell}\ of thickness~$r-2$ whose (only) unmatchable edges are $\alpha,\beta,\alpha'$~and~$\beta'$; this contradicts the maximality of~$J$.

This completes the proof of Theorem \ref{thm:(2,2)-graphs-r-graphs}.
\end{proof}

The above, along with our discussion in Section~\ref{sec:more-than-one-SD}, proves Theorem~\ref{thm:more-than-one-SD}.
In the next section, we shall follow the same recipe, albeit more difficult to execute, to prove the forward implication of Theorem~\ref{thm:SDSS}.

\subsection{One solitary singleton plus one solitary doubleton}\label{sec:oneSD-oneSS}

In order to prove that every \ecc{3} $r$-graph, with solitary classes of different cardinalities, is a $3$-staircase, we first establish the existence of a $3$-dumbbell for $r=3$.

\begin{lem}\label{lem:dumbbell-existence}
If $e$ is a solitary singleton and $\{\alpha,\beta\}$ is a solitary doubleton in a \cc{3} $G$, then $G$ has a subgraph $J$ that is a $3$-\dumbbell\ with $e$ as its unique even $1$-cut such that $e$ is at distance exactly one from each of $\alpha$~and~$\beta$ in~$J$.
\end{lem}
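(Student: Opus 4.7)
My plan is to construct $J$ as a smallest $3$-dumbbell, with both ladders isomorphic to $K_2$, so $|V(J)| = 8$ and $|E(J)| = 9$.

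First, by Corollary~\ref{lem:vertex-disjoint-triangle-lemma} applied to $\{\alpha, \beta\}$, the graph $G$ has precisely two vertex-disjoint triangles $T_1, T_2$ with $\alpha \in E(T_1)$ and $\beta \in E(T_2)$. Since $\theta$ has solitary pattern $(1,1,1)$ and hence no solitary doubleton, $G \neq \theta$, so by Corollary~\ref{thm:distance-one} the distance in $G$ from $e$ to each of $\alpha$ and $\beta$ is exactly one. Writing $e = vw$, I would then relabel the vertices of $T_i$ as $\{v_i, x_i, y_i\}$ so that $\alpha = v_1 x_1$, $\beta = v_2 x_2$, and $v_i$ is an endpoint of the corresponding solitary-class edge that is adjacent in $G$ to some end of $e$ (such an endpoint exists by the distance condition).

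Next I would show that the two adjacencies use distinct ends of $e$. Let $p, q \in \{v, w\}$ satisfy $pv_1, qv_2 \in E(G)$. If $p = q$, say both equal $v$ (after possibly swapping $v$ and $w$), then by cubicity the three edges at $v$ are exactly $e, vv_1, vv_2$. Let $M$ denote the unique perfect matching containing the solitary class $\{\alpha, \beta\}$ (Corollary~\ref{lem:solitary-class-unique-pm}); mutual exclusiveness of distinct solitary classes gives $e \notin M$, and $v_1, v_2$ being $M$-matched via $\alpha, \beta$ forces $vv_1, vv_2 \notin M$. This leaves $v$ unmatched in $M$, a contradiction. Hence $p \neq q$, and after possibly swapping $v$ and $w$, we have $vv_1, wv_2 \in E(G)$.

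I would then verify that $v, w \notin V(T_1) \cup V(T_2)$, so that the eight intended vertices of $J$ are all distinct. By Lemma~\ref{lem:associated-r-cuts}, $e$ has a companion $f$ and associated odd $3$-cuts $C = \partial(X)$, $D = \partial(Y)$ with $X \cap Y = \emptyset$, $V(G) = X \cup Y \cup \{v, w\}$, and each of $v, w$ having exactly one neighbor in each shore. Since $|C| = 3$ and two of its edges go from $X$ to $\{v, w\}$, the only edge of $G$ with one end in $X$ and the other in $Y$ is $f$; consequently each $T_i$ must lie entirely within a single shore, for otherwise it would contribute at least two $X$-$Y$ crossing edges. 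If $v \in V(T_1)$, then $v = y_1$ (the distance condition rules out $v \in \{v_1, x_1\}$), so the two non-$e$ neighbors $v_1, x_1$ of $v$ both lie in the shore containing $T_1$, contradicting the one-neighbor-per-shore property of $v$. The analogous argument handles the three remaining cases.

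Finally, $J := T_1 \cup T_2 \cup \{vv_1, vw, wv_2\}$ has the eight distinct vertices $v_1, x_1, y_1, v, w, v_2, x_2, y_2$ and nine edges of $G$, forming the $3$-dumbbell of order~$8$: sockets $v_1, v_2$, peripheral rungs $x_1 y_1$ and $x_2 y_2$, and bone $v_1 v w v_2$. The three bridges of $J$ are $v_1 v$, $vw$, and $wv_2$; only $e = vw$ gives shores of even size, so $e$ is the unique even $1$-cut of $J$, and the edges $vv_1, wv_2$ witness that the distance in $J$ from $e$ to each of $\alpha$ and $\beta$ is exactly one, as required. The main obstacle is the case analysis in the middle two steps, which hinges on mutual exclusiveness of the two solitary classes together with the $3$-cut dichotomy from Lemma~\ref{lem:associated-r-cuts}.
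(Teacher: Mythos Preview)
Your construction is sound and the two key obstructions are correctly identified, but the argument in your third paragraph has a small circularity. You assert that ``each $T_i$ must lie entirely within a single shore'' because otherwise it would contribute at least two $X$--$Y$ crossing edges; that inference is valid only when $V(T_i)\subseteq X\cup Y$, i.e.\ once you already know $v,w\notin V(T_i)$, which is precisely what you are trying to establish. In the case $v=y_1$ the phrase ``the shore containing $T_1$'' is therefore undefined. The fix is easy: first note that $\alpha\neq f$ and $\beta\neq f$ (since $f$ lies in the unique perfect matching containing~$e$, whereas $\alpha$ and $\beta$ do not, by mutual exclusiveness together with solitude of~$\alpha$), so the two ends of~$\alpha$ lie in the same shore, and likewise for~$\beta$. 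Now if $v=y_1$, both non-$e$ neighbours of~$v$ are the ends of~$\alpha$ and hence lie in a common shore, contradicting the one-neighbour-per-shore property; the case $w=y_2$ is symmetric, while $v=y_2$ and $w=y_1$ are ruled out even more directly since, say, $v=y_2$ would force $v_1\in\{v_2,x_2,w\}$.

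The paper's proof follows a different and more unified route. It avoids Lemma~\ref{lem:associated-r-cuts} altogether and lets one observation do all the work: writing $\alpha=a_1a_2$ and $\beta=b_1b_2$, the edge~$e$ is unmatchable in the matchable graph $H:=G-a_1-a_2-b_1-b_2$ (any perfect matching of~$H$ containing~$e$ would extend by $\alpha,\beta$ to one of~$G$ containing both $e$ and $\alpha$), and therefore $e$ is not an odd $1$-cut of~$H$. Both obstructions then collapse to this single fact: if an end of~$e$ were the third vertex of~$T_1$ (your $v=y_1$ case), or if a single end of~$e$ were adjacent to ends of both $\alpha$ and~$\beta$ (your $p=q$ case), that end would have degree one in~$H$, making $e$ an odd $1$-cut there. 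Your perfect-matching argument for $p=q$ is essentially this same idea in disguise; it is only for the ``end in a triangle'' step that your route genuinely diverges, and there the paper's argument is shorter.
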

\begin{proof}
Since $G$ has a solitary singleton as well as a solitary doubleton, we infer that $G$ is neither $\theta$ nor~$K_4$, or equivalently, $G$ has order six or more. We invoke Corollary~\ref{lem:vertex-disjoint-triangle-lemma} to conclude that $G$ has two vertex-disjoint triangles, say $T_1$ and $T_2$, so that $\alpha:=a_1a_2$ lies in~$T_1$ and $\beta:=b_1b_2$ lies in~$T_2$.

By Corollary~\ref{lem:solitary-class-unique-pm}, the solitary classes $\{e\}$ and $\{\alpha,\beta\}$ are mutually exclusive. Thus, by Corollary~\ref{thm:distance-one}, \distg{G}{e}{\alpha}$=1=$\distg{G}{e}{\beta}. Furthermore, the edge $e$ is unmatchable in the matchable graph $H:=G-a_1-a_2-b_1-b_2$; this immediately implies that $e$ is not an odd $1$-cut in~$H$. It follows from these observations that $e\notin \partial_{G}(T_1)\cup \partial_{G}(T_2)$.

We let $e:=ab$. Since \distg{G}{e}{\alpha}$=1=$\distg{G}{e}{\beta}, at least one of $a$~and~$b$ is adjacent with an end of $\alpha$; likewise, at least one of them is adjacent with an end of $\beta$. Up to symmetry, there are two possibilities. Note that if a particular end of $e$ is adjacent with an end of each of $\alpha$ and $\beta$ then $e$ is an odd $1$-cut in $H$, contradicting our observation in the preceding paragraph. Consequently, up to relabeling, $a_1b,ab_1\in E(G)$. Observe that $T_1\cup T_2\cup P$, where $P$ is the path $a_1bab_1$, is the desired $3$-\dumbbell\ in~$G$.
\end{proof}

For \pmg s~$M$ and $N$ of a graph~$G$, a path or a cycle~$Q$ is \textit{$M$-$N$-alternating} if its edges, on traversal, alternate between being in~$M$ and being in~$N$.
Now, we extend the above to all \ecc{3} $r$-graphs.
\begin{lem}\label{lem:r3-dumbbell-existence-r-graphs}
If $e$ is a solitary singleton and $\{\alpha,\beta\}$ is a solitary doubleton in a \mbox{\ecc{3}} $r$-graph~$G$, then $G$ has a subgraph~$J$ that is a 
$3$-\dumbbell\ of order eight and thickness~$r-2$; furthermore, $J$ contains~$e$ as its (only) even $1$-cut, and two of its remaining (four) unmatchable edges are $\alpha$~and~$\beta$.
\end{lem}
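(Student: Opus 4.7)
The plan is to follow the template of Lemma~\ref{lem:r1-dumbbell-existence-r-graphs}: use Lemma~\ref{lem:combined-stat-r-graphs} to obtain an $r$-edge-coloring of $G$, apply Lemma~\ref{lem:r-graphs-new-lemma} to extract $3$-connected $3$-regular spanning subgraphs $H_i$ on which the $3$-connected case (Lemma~\ref{lem:dumbbell-existence}) can be invoked, and then stitch the resulting thickness-one $3$-dumbbells together. Since by Corollary~\ref{lem:solitary-class-unique-pm} the solitary classes $\{e\}$ and $\{\alpha,\beta\}$ are mutually exclusive, I first fix a proper $r$-edge-coloring $(M_1,\ldots,M_r)$ of $G$ with $e\in M_1$ and $\{\alpha,\beta\}\subset M_2$. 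Lemma~\ref{lem:r-graphs-new-lemma} applied to the mutually exclusive solitary edges $e$ and $\alpha$ then shows that each $H_i:=M_1\cup M_2\cup M_i$, for $i\in\{3,\ldots,r\}$, is a \cc{3}.

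Next I verify that $e$ is a solitary singleton and $\{\alpha,\beta\}$ a solitary doubleton in every $H_i$. Solitude of each of $e,\alpha,\beta$ in $H_i$ is inherited from $G$, since every \pmg\ of $H_i$ is a \pmg\ of $G$; mutual dependence of $\alpha$ and $\beta$ in $H_i$ is immediate ($M_2$ is the only \pmg\ of $H_i$ containing either), and Corollary~\ref{cor:main-thm}~(ii) then bounds that equivalence class to size two. The more delicate point is that $e$ acquires no new partner in $H_i$: if $e\leftrightarrow e'$ in $H_i$ for some edge $e'$, then $e\to e'$ in $H_i$ lifts to $e\to e'$ in $G$ (again since any \pmg\ of $G$ containing $e$ is a \pmg\ of $H_i$), whence Lemma~\ref{lem:solitary-source-clasee} forces $e\leftrightarrow e'$ in $G$, contradicting the hypothesis. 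I then apply Lemma~\ref{lem:dumbbell-existence} in each $H_i$ to obtain a $3$-dumbbell $J_i\subseteq H_i$ of order eight and thickness one, with $e$ as its unique even $1$-cut and $\alpha,\beta$ lying at distance one from $e$ in the two vertex-disjoint triangles of $J_i$.

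The heart of the argument is the stitching. Write $e=ab$, $\alpha=a_1a_2$, $\beta=b_1b_2$. By Corollary~\ref{cor:solitary-doubleton-near-bipartite-brick}~(ii), $G-\alpha-\beta$ is bipartite with color classes $A,B$ so that $a_1,a_2\in A$ and $b_1,b_2\in B$, and since $e$ lies in this bipartite graph I may assume $a\in A$ and $b\in B$. As every edge other than $\alpha,\beta$ is $A$-$B$, the would-be edges $aa_1,aa_2$ are excluded, so the distance-one condition (from Corollary~\ref{thm:distance-one-r-graphs} and each $J_i$) forces, up to relabeling, $a_1b\in E(G)$, and similarly $ab_1\in E(G)$; these two edges serve as bone-end edges of every $J_i$, with sockets $a_1,b_1$. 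The main obstacle, pinning down that the third triangle vertex $a_3^{(i)}$ of $J_i$ is independent of $i$, is resolved by the following observation: in $J_i$ the three neighbours of $a_1$ are $a_2,a_3^{(i)},b$ coming from $\alpha\in M_2$, the triangle edge $a_1a_3^{(i)}$, and the bone edge $a_1b$; but $a_1b\notin M_1$ (since $b$ is already matched by $e$ in $M_1$) and $a_1b\notin M_2$, so $a_1b$ must lie in $M_i$, which forces $a_3^{(i)}$ to be the fixed $M_1$-partner $a_3$ of $a_1$. The symmetric argument pins $b_3^{(i)}$ to the $M_1$-partner $b_3$ of $b_1$; a parallel analysis at $a_2$, whose $M_1$-partner cannot be $a_3$ since $a_3$ is already matched to $a_1$ in $M_1$, yields $a_2a_3\in M_i$, and analogously $b_2b_3\in M_i$, for every $i\in\{3,\ldots,r\}$. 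Consequently each of the four edges $a_1b,ab_1,a_2a_3,b_2b_3$ has multiplicity at least $r-2$ in $G$, while the five remaining edges $\alpha,\beta,e,a_1a_3,b_1b_3$ are each present, so together they form the required $3$-dumbbell of order eight and thickness $r-2$ on the vertices $\{a_1,a_2,a_3,b_1,b_2,b_3,a,b\}$, with $e$ as its unique even $1$-cut and $\alpha,\beta$ among its socket-incident unmatchable edges.
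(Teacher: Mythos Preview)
Your overall plan mirrors the paper's approach, but there is a genuine gap in your argument that $e$ remains a solitary \emph{singleton} in each $H_i$. You correctly lift $e\to e'$ from $H_i$ to $G$, but this conclusion is vacuous: since $e$ is solitary in $G$, the edge $e$ depends on \emph{every} edge of $M_1$. Lemma~\ref{lem:solitary-source-clasee} requires the reverse direction $e'\to e$ (with $e$ solitary) to force $e\leftrightarrow e'$; equivalently, you would need $e'$ to be solitary in $G$, which you have not shown. A perfect matching of $G$ containing $e'$ need not be a perfect matching of $H_i$ (it may use colours outside $\{1,2,i\}$), so $e'\to e$ in $H_i$ does not lift to $G$. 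This is exactly why the paper devotes several intermediate claims to this point: it first shows that if $e$ partners with some $e'$ in one $H_i$ then it does so in every $H_i$ with the \emph{same} partner $e'$, and then uses the simultaneous structure across all $H_i$ (via Lemma~\ref{lem:1-dumbbell-existence} applied to the two solitary doubletons $\{\alpha,\beta\}$ and $\{e,e'\}$) to force $\mu_G(a_1,b_1)=r-2$ and hence $e\leftrightarrow e'$ in $G$, the desired contradiction.

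There is also a circularity in your stitching step. You assert that $a_1$ (rather than $a_2$) is the socket in every $J_i$ because $a_1b\in E(G)$, and from $a_1$ being the socket you then deduce $a_1b\in M_i$. But $a_1b\in E(G)$ does not put $a_1b$ in $E(H_i)$ for every $i$; in some $H_j$ the bone edge at the $\alpha$-triangle could be $a_2b$, with $a_2$ as the socket and $t^{(j)}$ equal to the $M_1$-partner of $a_2$ rather than of $a_1$. The paper resolves this by observing that the triangle paths $a_1\,t^{(i)}\,a_2$ in the various $J_i$ are $M_1$-alternating $a_1a_2$-paths of length two avoiding $e$; if two of them differed, their union would be an $M_1$-alternating $4$-cycle avoiding $e$, contradicting Proposition~\ref{prp:mcgs-hcycle}.
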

\begin{proof}
Since $\{\alpha, \beta\}$ is a matching and $e$ is a solitary singleton in~$G$, by Proposition~\ref{prp:solitary-patterns-of-small-r-graphs}, we infer that $|V(G)|\ge 6$. We let $\alpha:=a_1a_2$, $\beta:=b_1b_2$ and $e:=ab$. We invoke Corollary~\ref{cor:solitary-doubleton-near-bipartite-brick}~(ii), and we let $H[A,B]$ denote the bipartite matching covered graph~$G-\alpha-\beta$ so that \mbox{$a_1,a_2,a\in A$} and $b_1,b_2,b\in B$. By the first part of Lemma~\ref{lem:combined-stat-r-graphs}, $G$ is \mbox{$r$-edge-colorable}; we let $(M_1, M_2, M_3, \dots, M_r)$ denote a proper \mbox{$r$-edge-coloring} wherein $e\in M_1$ and $\{\alpha, \beta \}\subset M_2$. Since $\alpha$~and~$e$ are mutually exclusive, the first part of the following is an immediate consequence of Lemma~\ref{lem:r-graphs-new-lemma}, whereas the second part relies on easy observations, Corollaries~\ref{cor:main-thm}~(ii)~and~\ref{cor:solitary-doubleton-near-bipartite-brick}~(ii).
\begin{sta}\label{sta:3cc}
For each $i\in \{3,\dots,r\}$, the (spanning) subgraph $G_i:=M_1\cup M_2\cup M_i$ is a $3$-connected (\mbox{$3$-regular}) graph. Furthermore, $\alpha,\beta$~and~$e$ are solitary edges and $\alpha \xleftrightarrow{G_i} \beta$; consequently, $\{\alpha, \beta\}$ is a solitary doubleton and $H_i[A,B]:=G_i-\alpha-\beta$ is a bipartite matching covered graph. \qed
\end{sta}

On the other hand, in any~$G_i$, the edge~$e$ may be a solitary singleton or it may participate in a solitary doubleton. Our next goal is to argue that $e$ is a solitary singleton in each~$G_i$. To this end, we shall find the following observation useful.

\begin{sta}\label{sta:conformal-four-cycle}
Suppose there exist $u,w\in V(G)$ and distinct $i,j\in \{3,\dots,r\}$ such that both $G_i$~and~$G_j$ contain $M_1$-alternating $uw$-paths of length two, say $Q_i$~and~$Q_j$, respectively. If neither $Q_i$ nor $Q_j$ contains~$e$, then their $M_1$-edges are the same, or equivalently, $V(Q_i)=V(Q_j)$. 
\end{sta}
\begin{proof}
Suppose not. Observe that $Q_i+ Q_j$ is an $M_1$-alternating $4$-cycle in~$G$ that does not contain~$e$; a contradiction to Proposition~\ref{prp:mcgs-hcycle}.
\end{proof}

\begin{sta}\label{sta:one-implies-many}
Suppose there exists $i\in \{3,\dots,r\}$ such that $e$ participates in a solitary doubleton in~$G_i$, and let~$e'$ denote its solitary partner. Then, $G_i$ contains two vertex-disjoint \mbox{$M_1$-$M_i$-alternating} paths of length two, one of which has ends $a_1$~and~$a_2$ whereas the other one has ends $b_1$~and~$b_2$; furthermore, one of them contains~$e$ whereas the other one contains~$e'$.
\end{sta}
\begin{proof}
Assume without loss of generality that $\{e,e'\}$ is a solitary doubleton in~$G_3$. Clearly, $e'\in M_1$. Since $\{\alpha, \beta\}$~and~$\{e,e'\}$ are solitary doubletons in~$G_3$, we invoke Corollary~\ref{lem:vertex-disjoint-triangle-lemma} twice, once for $\{\alpha, \beta\}$ and once for $\{e, e'\}$, 
to arrive at the desired conclusion.
\end{proof}

\begin{sta}\label{sta:same-solitary-partner}
If $e$ participates in a solitary doubleton in~$G_i$ for some $i\in\{3, \dots, r\}$, then the same holds for each $i\in \{3, \dots, r\}$; furthermore, the solitary partner of~$e$ is the same in every~$G_i$.
\end{sta}
\begin{proof}
Suppose that $e$ participates in a solitary doubleton in~$G_i$ for some $i\in \{3, \dots, r\}$. It follows from~\ref{sta:one-implies-many} that $e$ is adjacent with one of $\alpha$~and~$\beta$. Consequently, by~\ref{sta:3cc} and Corollary~\ref{thm:distance-one}, $e$ participates in a solitary doubleton in~$G_i$ for each $i\in\{3, \dots, r\}$.

Now, let $e'$ and $e''$ denote the solitary partners of~$e$ in~$G_3$ and $G_4$, respectively. For each $i\in \{3, 4\}$, by \ref{sta:one-implies-many}, $G_i$ contains two vertex-disjoint $M_1$-$M_i$-alternating paths of length two, say $Q_i$~and~$Q'_i$; adjust notation so that $Q_i$ is an $a_1a_2$-path and $Q'_i$ is a $b_1b_2$-path, and $Q_3$ contains~$e$ whereas $Q'_3$ contains~$e'$. Since $e$~and~$\beta$ are nonadjacent, we infer that $Q_4$ contains~$e$, whence $Q'_4$ contains~$e''$. We invoke \ref{sta:conformal-four-cycle} to conclude that $e'=e''$.
\end{proof}

\begin{sta}
For each $i\in \{3,\dots,r\}$, the edge~$e$ is a solitary singleton in the graph~$G_i$.
\end{sta}
\begin{proof}
Suppose not. By~\ref{sta:same-solitary-partner}, we conclude that $\{e,e'\}$ is a solitary doubleton in each~$G_i$ for a fixed $e'\in M_1$. We invoke Corollary~\ref{lem:vertex-disjoint-triangle-lemma} twice for the \cc{3}~$G_3$, once for $\{\alpha, \beta\}$ and once for $\{e,e'\}$, and adjust notation so that $a_1$ is the common end of $\alpha$~and~$e$, whereas $b_1$ is the common end of $\beta$~and~$e'$. Since the four edges $e,e'\in M_1$ and $\alpha, \beta\in M_2$ exist in $G_i$, for each $i\in \{3,\dots,r\}$, we invoke the last part of Lemma~\ref{lem:1-dumbbell-existence} to deduce that $G_i$ has an edge joining $a_1$~and~$b_1$ that lies in~$M_i$. Consequently, $\mu_{G}(a_1,b_1)=r-2$. Since $\alpha \xleftrightarrow{G} \beta$, we infer that $e\xleftrightarrow{G} e'$ and thus belong to the same equivalence class; however, this contradicts the hypothesis that $e$ is a solitary singleton in~$G$.
\end{proof}

In summary, for each $i\in \{3,\dots,r\}$, the graph~$G_i=M_1\cup M_2\cup M_i$ is a \cc{3} with $e$ as a solitary singleton and $\{\alpha, \beta\}$ as a solitary doubleton. We now invoke Lemma~\ref{lem:dumbbell-existence} to deduce the first part of the following; the second part is easy to see by paying attention to the colors.
\begin{sta}\label{sta:cor}
For each $i\in \{3,\dots,r \}$, the graph~$G_i$ has a subgraph~$J_i$ that is a $3$-\dumbbell\ of order eight and thickness one; furthermore, $J_i$ contains~$e$ as its (only) even $1$-cut, and two of its remaining (four) unmatchable edges are $\alpha$~and~$\beta$. Also, the unique perfect matching of~$J_i$ is a subset of~$M_i$. \qed
\end{sta}

 Now, we invoke the above for $i=3$, and adjust notation so that $a_2a_1bab_1b_2$ is a path in~$J_3$; we let $u$~and~$w$ denote the
common neighbours of $a_1$~and~$a_2$, and of $b_1$~and~$b_2$, respectively. Observe that $a_1u\in M_1$, whence $Q_3:=a_1ua_2$ is an $M_1$-$M_3$-alternating $a_1a_2-$path in~$J_3$.
\begin{sta}
For each $i\in \{3,\dots,r\}$, in~$J_i$, the vertex~$u$ is the
common neighbour of $a_1$~and~$a_2$, whereas $w$ is the common neighbour of $b_1$~and~$b_2$; furthermore, $a_2a_1bab_1b_2$ is a path in~$J_i$.
\end{sta}
\begin{proof}
Clearly, we only need to argue for $i\in \{4,\dots,r\}$; by symmetry, it suffices to prove for $i=4$. To this end, in~$J_4$, let $u'$~and~$w'$ denote the common neighbours of $a_1$~and~$a_2$, and of $b_1$~and~$b_2$, respectively. Note that $Q_4:=a_1u'a_2$ is an $M_1$-$M_4$-alternating $a_1a_2$-path in~$J_4$. Since neither $Q_3$ nor $Q_4$ contains~$e$, we invoke \ref{sta:conformal-four-cycle} to conclude that $u=u'$. An analogous argument shows that $w=w'$. This proves the first part.

By \ref{sta:cor}, $e$ is the (only) even $1$-cut of~$J_4$, one triangle of~$J_4$ contains~$\alpha$, whereas the other one contains~$\beta$; also, the unique perfect matching of~$J_4$ is a subset of~$M_4$. Since $a_1u\in M_1$, these facts are enough to deduce that $a_1$ is in fact the cubic end of $\alpha$ in~$J_4$; an analogous argument shows that $b_1$ is the cubic end of~$\beta$.
Finally, since $H_4[A,B]=G_4-\alpha-\beta$ is bipartite, we infer that $a_2a_1bab_1b_2$ is a path in~$J_4$.
\end{proof}

We invoke \ref{sta:cor} for all values of~$i$ to infer that the subgraph~$J$ formed by the union of $J_3$ up to~$J_r$ is indeed the desired $3$-\dumbbell\ of order eight and thickness~$r-2$.
\end{proof}

We are now ready to prove the forward implication of Theorem~\ref{thm:SDSS}. In its proof, we shall invoke the above lemma and consider a maximal $3$-dumbbell with certain properties and then use results from earlier sections to conclude that the chosen $3$-dumbbell is a spanning subgraph.

\begin{thm}\label{thm:(2,1)-graphs-r-graphs}
Every \ecc{3} $r$-graph, with at least one solitary singleton and at least one solitary doubleton, is a $3$-staircase of thickness~$r-2$.
\end{thm}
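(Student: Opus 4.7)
The plan is to mirror the strategy used to prove Theorem \ref{thm:(2,2)-graphs-r-graphs}. By Lemma \ref{lem:r3-dumbbell-existence-r-graphs}, $G$ contains a subgraph $J$ that is a $3$-dumbbell of order eight and thickness $r-2$, with $e$ as its unique even $1$-cut and $\alpha, \beta$ among its four remaining unmatchable edges. I would choose such a $J$ maximal in~$G$. Write $\alpha = v_1 a_1$ and $\beta = v_2 a_2$; let $T_1$ and $T_2$ be the two triangles of $J$ with $V(T_1) = \{v_1, a_1, c_1\}$ and $V(T_2) = \{v_2, a_2, c_2\}$; and let $b_1, b_2$ be the interior bone vertices, so $e = b_1 b_2$ and $v_1 b_1, v_2 b_2 \in E(J)$. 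The six vertices of $J$ with degree $r-1$ are $\{a_1, c_1, b_1, b_2, a_2, c_2\}$; set $X_1 := \{a_1, c_1, b_2\}$ and $X_2 := \{a_2, c_2, b_1\}$ for the two sides of $J$ separated by $e$.

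The first step is a cross-edge claim in the spirit of the analogous one in the proof of Theorem \ref{thm:(2,2)-graphs-r-graphs}: if $G$ has any edge with one end in $X_1$ and the other in $X_2$ (other than $e$), then $V(G) = V(J)$ and the three ``extra'' edges of $G$ are forced to be precisely those of Proposition \ref{prp:3-staircase-ss}, so $G$ is the required $3$-staircase of thickness $r-2$. This reduces to a degree count and an appeal to $3$-edge-connectedness via Proposition \ref{prp:r-graphs-basic-prp}(iv).

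The substantive part is the maximality contradiction when no such cross-edge exists. Let $M$ be the unique perfect matching of $G$ containing $\{\alpha, \beta\}$; by Corollary \ref{lem:solitary-class-unique-pm}, $e \notin M$. Invoking Corollary \ref{cor:solitary-doubleton-near-bipartite-brick}(ii), the graph $H[A,B] := G - \alpha - \beta$ is bipartite and matching covered, with $a_1, c_1, a_2, c_2 \in A$ and $v_1, v_2, b_1, b_2 \in B$. I would trace the $M$-edge at (say) $c_1$: by the absence of cross-edges and the bipartite structure, it must go from $c_1$ to some $w \in A \setminus V(J)$, and an appropriate $J' \supset J + c_1 w$ is an $M$-conformal subgraph of $G$. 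Then Corollary \ref{cor:degree-one}, applied to the uniquely matchable bipartite graph $G - V(J')$, produces a $B$-vertex $y \notin V(J')$ of degree one there, whose remaining $r-1$ neighbors in $G$ must lie in $V(J') \cap A$. A careful degree count pinpoints that $y$ is joined to $w$ with multiplicity $r-2$ and to $c_1$ and $a_1$ with multiplicity one each, so $w$ and $y$ together with these edges extend the ladder of $J$ on the $T_1$-side by one extra rung $wy$. This yields a strictly larger $3$-dumbbell of order ten and thickness $r-2$ with the same solitary-edge properties as $J$, contradicting the maximality of $J$.

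The main obstacle is this last ``careful degree count''. One must rule out that $y$ attaches to $J'$ in a way that destroys the $3$-dumbbell structure: creating a new triangle, producing a forbidden even $2$-cut, or undermining the role of $e$ as the unique solitary singleton. The laminar $r$-cut structure around $e$ from Lemma \ref{lem:associated-r-cuts}, together with the precise position of $\alpha, \beta$ in the bipartite graph $H[A,B]$, must be invoked simultaneously to force the extension to occur along one of the two ladders (preserving the three-edge bone $v_1 b_1 b_2 v_2$) rather than spoiling the dumbbell geometry. This check is strictly more delicate than in the proof of Theorem \ref{thm:(2,2)-graphs-r-graphs}, precisely because the bone is now long enough to allow more configurations near $e$ that must be ruled out before the new rung can be safely glued on.
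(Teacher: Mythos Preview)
Your plan follows the paper's strategy, but the crucial step --- building the right $M$-conformal auxiliary subgraph $J'$ --- is where the sketch has a genuine gap. In $J$, the $M$-partners of the two interior bone vertices $b_1,b_2$ (the ends of $e$) lie outside $V(J)$: the sockets $v_1,v_2$ are $M$-matched by $\alpha,\beta$, and $e\notin M$ since $\{e\}$ and $\{\alpha,\beta\}$ are mutually exclusive. Likewise the far $A$-class vertex of the \emph{second} ladder has its $M$-partner outside $V(J)$. Hence no subgraph on the vertex set $V(J)\cup\{w\}$ is $M$-conformal, and Corollary~\ref{cor:degree-one} cannot be applied to $G-V(J')$ as you propose. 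The paper's move --- precisely the ``more delicate'' point you flagged but did not identify --- is to trace \emph{two} $M$-edges, $d_1$ at the far $B$-class ladder vertex on one side and $d_2$ at the far $A$-class ladder vertex on the other, and then to \emph{delete} the ends of~$e$, setting $J':=(J+d_1-b_1-b_2)+d_2$. Now $J'$ is $M$-conformal, the ends of~$e$ live in $G-V(J')$, one checks that the degree-one $B$-vertex produced by Corollary~\ref{cor:degree-one} is distinct from $b_1$, and the degree count forces the ladder extension cleanly.

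There are also some fixable slips. Your labeling treats $J$ as simultaneously maximal and of order eight, but a maximal $J$ may have large ladders, and then the relevant degree-$(r-1)$ ladder vertices are the far-peripheral ones rather than $a_1,c_1,a_2,c_2$. Your bipartition is wrong: both ends of~$\alpha$ lie in~$A$ and both of~$\beta$ in~$B$, so in your labels $v_1,a_1,b_2,c_2\in A$ and $v_2,a_2,b_1,c_1\in B$. And your cross-edge reduction is not how the terminal case arises: the paper's terminal case is the specific event that $d_1$ and $d_2$ land on the two ends of~$e$, whereas an edge such as $c_1c_2$ (a cross-edge in your $X_1/X_2$ partition) is handled as the separate subcase $d_1=d_2$ via the same $J'$ framework, not by forcing $V(G)=V(J)$.
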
 
\begin{proof}
Let~$G$ be a \ecc{3} $r$-graph, and let $e:=ab$ denote a solitary singleton and $\{\alpha,\beta\}$ denote a solitary doubleton. By Lemma~\ref{lem:r3-dumbbell-existence-r-graphs}, $G$ has a subgraph that is a \mbox{$3$-\dumbbell} of thickness~$r-2$ whose (only) even~$1$-cut is~$e$, and two of its remaining (four) unmatchable edges are $\alpha$~and~$\beta$; let $J$ denote a maximal \mbox{$3$-\dumbbell} in~$G$ with these properties. Let $u_1, w_1, u_2$~and~$w_2$ denote the four vertices of degree~$r-1$ in~$J$, distinct from the ends of~$e$, so that $u_1$~and~$w_1$ are adjacent in~$J$.

We let $a_1$~and~$b_1$ denote the cut-vertices of~$J$, distinct from the ends of~$e$, so that $a_1$~and~$u_1$ lie in the same block. Since $\{\alpha,\beta\}$ is a matching, we let $\alpha:=a_1a_2$ and $\beta:=b_1b_2$, and we adjust notation so that $a_2a_1bab_1b_2$ is a path in~$J$. By Corollary~\ref{cor:solitary-doubleton-near-bipartite-brick}~(ii), 
the connected graph $G-\alpha-\beta$ is bipartite; we let $A'$~and~$B'$ denote its color classes so that $a_1,a_2\in A'$ and $b_1,b_2\in B'$, consequently, $a\in A'$ and $b\in B'$. Adjust notation so that $w_1,w_2\in A'$~and~$u_1,u_2\in B'$.

Now, let $M$ denote the unique \pmg\ of~$G$ that contains the edges $\alpha$~and~$\beta$, and let $d_1:=u_1w$~and~$d_2:=w_2u$ denote the (not necessarily distinct) $M$-edges incident at $u_1$~and~$w_2$, respectively. Clearly, $w\in A'$ and~$u\in B'$. Furthermore, by paying attention to the color classes, we infer that $d_1,d_2\notin E(J)$. Also, $e\notin M$ since $e$ is a solitary singleton and thus mutually exclusive with $\{\alpha,\beta\}$. Observe that if $w=a$~and~$u=b$ then, since $G$ is \mbox{\ecc{3}}, $w_1u_2\in E(G)$ and $G$ is indeed a $3$-staircase of thickness~$r-2$.

Henceforth, either $w\ne a$ or $u\ne b$, possibly both. Consequently, we invite the reader to observe that either $d_1=d_2$, or otherwise at least one of $w$~and~$u$ is not in~$V(J)$; in the latter case, by symmetry, we may assume without loss of generality that $w\notin V(J)$. In each case, we intend to arrive at a contradiction. To this end, we shall find the $M$-conformal subgraph $J':=(J+d_1-a-b)+d_2$ quite useful. It is worth noting that, in each case, $a\notin V(J')$. Consequently, using the fact that $e\notin M$, we infer that if $b\notin V(J')$ then $b$ has degree at least two in $G-V(J')$.

We let $H[A,B]:=G-a_1-a_2-b_1-b_2$ where $A:=A'-a_1-a_2$ and $B:=B'-b_1-b_2$. Since $N:=M-\alpha-\beta$ is the only \pmg\ of~$H$, we infer that $N\cap E(G-V(J'))$ is the only \pmg\ of the (nonempty) bipartite graph~$G-V(J')$. By Corollary~\ref{cor:degree-one}, $G-V(J')$ has a degree one vertex in each of its color classes, and we let $y\in B$ denote such a vertex. It follows from the preceding paragraph that $y\ne b$; consequently, $a_1y\notin E(G)$. We now consider the two cases --- either $d_1=d_2$, or otherwise $w\notin V(J)$ --- separately.

First suppose that $d_1=d_2$; consequently, $w=w_2$~and~$u=u_1$. Note that, in~$J'$, the degree of each vertex in~$A'-a_1-w_1$ is~$r$. Since $a_1y\notin E(G)$ and degree of~$w_1$ in~$J'$ is~$r-1$, the vertex~$y$ has degree at most two in~$G$; a contradiction.

Now suppose that $w\notin V(J)$. By paying attention to the degrees of vertices in~$J'$ and using the fact that $y\ne b$, we conclude that either $\mu_{G}(y,w)=r-1$, or otherwise $\mu_{G}(y,w)=r-2$~and~$\mu_{G}(y,w_1)=1$. In the former case, $\partial_{G}(\{y,w\})$ is a $2$-cut; a contradiction since $G$ is $3$-edge-connected. In the latter case, observe that $J+d_1+yw_1+G[y,w]$ is a (larger) $3$-\dumbbell\ of thickness~$r-2$ whose (only) even~$1$-cut is~$e$, and two of its remaining (four) unmatchable edges are $\alpha$~and~$\beta$; this contradicts the maximality of~$J$.

This completes the proof of Theorem \ref{thm:(2,1)-graphs-r-graphs}.
\end{proof}

The above, along with our discussion in Section~\ref{sec:SDSS}, proves Theorem~\ref{thm:SDSS}. In the following section, we shall use both Theorems~\ref{thm:more-than-one-SD}~and~\ref{thm:SDSS}.

\subsection{The recursive family \texorpdfstring{$\mathcal{D}$}{}: 
 \texorpdfstring{\cc{3}s}{} with solitary pattern \texorpdfstring{$(2)$}{}}\label{sec:SP2}
We shall find it useful to consider a \cc{3}~$G$ with a solitary doubleton~$\{\alpha,\beta\}$ and analyze the solitary pattern of the graph~$G'$ that is obtained from~$G$ by splicing with~$K_4$ at a vertex~$v$ incident with one of $\alpha$~and~$\beta$; for convenience, we say that $v$ is {\em incident with }$\{\alpha,\beta\}$.

For a vertex $v$ and edge $e$ of a graph~$G$, we use \distg{G}{v}{e} to denote the length of a shortest path that starts at~$v$ and ends at any end of~$e$. For instance, recall the definition of $3$-staircases from Section~\ref{sec:SDSS}, and observe the following. If $G$ is any $3$-staircase of order twelve or more, and $\{\alpha,\beta\}$ is its solitary doubleton, then precisely one of the ends of $\alpha$ is at distance one from its solitary singleton~$e$ whereas the other end is at distance two from~$e$; an analogous statement holds for~$\beta$; see Figure~\ref{fig:a-(2,1)-3cc-graph} for an example.
The following proposition deals with the solitary pattern of the resultant \cc{3} when~$G$ is spliced with~$K_4$ at an end $\alpha$ or of~$\beta$.
\begin{prp}\label{prp:two-one-and-two-from-two-one}
Let $G$ be a $3$-staircase of order twelve or more, let $e$ denote its solitary singleton, and let $v$ denote any vertex that is incident with its unique solitary doubleton~\mbox{$\{\alpha,\beta\}$}. The following statements hold for $G':=(G\odot K_4)_v$:
\begin{enumerate}[(i)]
\item if \distg{G}{v}{e}~$=1$ then $G'$ is a larger $3$-staircase, whereas
\item if \distg{G}{v}{e}~$=2$ then $G'$ has solitary pattern~$(2)$.
\end{enumerate}
\end{prp}
\begin{proof}
We invite the reader to observe~$(i)$. Now, suppose that \distg{G}{v}{e}~$=2$, and
adjust notation so that $v$ is an end of~$\alpha$. Let $T$ denote the triangle of $G'$ formed by the edges in~$E(G')-E(G)$, and let $\alpha'$ denote its unique edge that is nonadjacent with~$\alpha$. By Lemma~\ref{lem:solitary-ME-sepcuts}~$(i)$, each edge in $E(G')-E(T)-e-\alpha-\beta$ is not solitary in~$G'$; we proceed to examine the remaining six edges.  Since $\alpha\in \partial_{G'}(T)$, by Corollary~\ref{cor:solitary-leaving-triangle-implies-K4}, $\alpha$ is not solitary in~$G'$. Since $\alpha$ is the only solitary edge of~$G$ in~$\partial_{G}(v)$, by Proposition~\ref{prp:solitary-splice-with-K4}, the edges in $E(T)-\alpha'$ are not solitary in~$G'$. Also, by Corollary~\ref{cor:splicing-at-a-SD-vertex}, $\{\alpha',\beta\}$ is a solitary doubleton in~$G'$. Finally, since \distg{G}{v}{e}~$=2$, we infer that \distg{G'}{\alpha'}{e}~$\ge 2$; by Theorem~\ref{thm:distance-at-most-1}, $e$~is not solitary in~$G'$. Consequently, $\{\alpha',\beta\}$ is the only solitary class in~$G'$.
%
\end{proof}

\begin{figure}[!htb]
    \centering
   \begin{subfigure}[b]{.4\textwidth}
       \centering
        \begin{tikzpicture}[scale=0.8]
\node [circle,fill=white] (0) at (1, 1) {};
		\node [circle,fill=white] (1) at (1, 0) {};
		\node [circle,fill=white] (2) at (2, 1) {};
		\node [circle,fill=white] (3) at (2, 0) {};
		\node [circle,fill=white] (4) at (3, 0.5) {};
		\node [circle,fill=white] (5) at (4, 0.5) {};
		\node [circle,fill=white] (6) at (5, 0.5) {};
		\node [circle,fill=white] (7) at (6, 0.5) {};
		\node [circle,fill=white] (8) at (7, 1) {};
		\node [circle,fill=white] (9) at (7, 0) {};
		\node [circle,fill=white] (10) at (8, 1) {};
		\node [circle,fill=white] (11) at (8, 0) {};
		\node [circle,fill=white] (12) at (0, 1) {};
		\node [circle,fill=white] (13) at (0, 0) {};

\draw (0) to (2);
		\draw (2) to (3);
		\draw (3) to (1);
		\draw (1) to (0);
		\draw (2) to (4);
		\draw[color=red] (4) to (3);
		\draw (4) to (5);
		\draw[color=blue] (5) to (6);
		\draw (6) to (7);
		\draw[color=red] (7) to (8);
		\draw (7) to (9);
		\draw (9) to (8);
		\draw (8) to (10);
		\draw (10) to (11);
		\draw (11) to (9);
		\draw (12) to (0);
		\draw (12) to (13);
		\draw (13) to (1);
		\draw [bend left] (12) to (6);
		\draw [bend left=45] (5) to (10);
		\draw [bend right=15] (13) to (11);
        \end{tikzpicture}
         \caption{A $3$-staircase}
         \label{fig:a-(2,1)-3cc-graph}
   \end{subfigure}
       \begin{subfigure}[b]{.49\textwidth}
       \centering
       \begin{tikzpicture}[scale=0.8]
           \node [circle,fill=white] (0) at (1, 1) {};
		\node [circle,fill=white] (1) at (1, 0) {};
		\node [circle,fill=white] (2) at (2, 1) {};
		\node [circle,fill=white] (3) at (2, 0) {};
		\node [circle,fill=white] (4) at (3, 0.5) {};
		\node [circle,fill=white] (5) at (4, 0.5) {};
		\node [circle,fill=white] (6) at (5, 0.5) {};
		\node [circle,fill=white] (7) at (6, 0.5) {};
		\node [circle,fill=white] (8) at (7, 1) {};
		\node [circle,fill=white] (9) at (7, 0) {};
		\node [circle,fill=white] (10) at (8, 1) {};
		\node [circle,fill=white] (11) at (8, 0) {};
		\node [circle,fill=white] (12) at (0, 1) {};
		\node [circle,fill=white] (13) at (0, 0) {};
		\node [circle,fill=white] (14) at (2, 0.5) {};
		\node [circle,fill=white] (15) at (2.5, 0.25) {};

\draw (0) to (2);
		\draw (3) to (1);
		\draw (1) to (0);
		\draw (2) to (4);
		\draw (4) to (5);
		\draw (5) to (6);
		\draw (6) to (7);
		\draw[color=red] (7) to (8);
		\draw (7) to (9);
		\draw (9) to (8);
		\draw (8) to (10);
		\draw (10) to (11);
		\draw (11) to (9);
		\draw (12) to (0);
		\draw (12) to (13);
		\draw (13) to (1);
		\draw [bend left] (12) to (6);
		\draw [bend left=45] (5) to (10);
		\draw [bend right=15] (13) to (11);
		\draw (14) to (15);
		\draw (2) to (14);
		\draw[color=red] (14) to (3);
		\draw (3) to (15);
		\draw (15) to (4);
       \end{tikzpicture}
       \caption{A \cc{3} with solitary pattern~$(2)$}
         \label{fig:a-(2)-3cc-graph}
   \end{subfigure}
   \caption{An illustration for Proposition~\ref{prp:two-one-and-two-from-two-one}}
   \label{fig:distance-two}
\end{figure}

Figure~\ref{fig:a-(2)-3cc-graph} shows an example of a graph~$G'$ that is obtained from the graph~$G$ shown in Figure~\ref{fig:a-(2,1)-3cc-graph} by splicing as per statement~(ii) of the above. Inspired by Proposition~\ref{prp:two-one-and-two-from-two-one}, we define the family $\mathcal{D}_0$ as follows: $G'\in \mathcal{D}_0$ if and only if $G':=(G\odot K_4)_v$, where $G$ is any $3$-staircase of order twelve or more, and $v$ is incident with its solitary doubleton and is at distance two from its solitary singleton. The following is a restatement of~(ii) of the above proposition.

\begin{cor}\label{cor:family-D0}
Each member of $\mathcal{D}_0$ is a $3$-connected \mbox{$3$-regular} graph that has solitary pattern~$(2)$.~\qed
\end{cor}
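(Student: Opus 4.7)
The plan is straightforward, since this corollary is essentially a repackaging of Proposition~\ref{prp:two-one-and-two-from-two-one}(ii). First I would record that every $3$-staircase is, by definition (see Section~\ref{sec:SDSS}), a $3$-connected $3$-regular graph. Consequently, since $G'$ is obtained from such a $3$-staircase $G$ by splicing with $K_4$, I can invoke Proposition~\ref{prp:splicing-with-k4} to conclude that $G'$ is also a $3$-connected $3$-regular graph.

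For the solitary pattern, the hypotheses defining membership in $\mathcal{D}_0$ match exactly the hypotheses of Proposition~\ref{prp:two-one-and-two-from-two-one}(ii): namely, the splicing vertex $v$ is incident with the unique solitary doubleton $\{\alpha,\beta\}$ of $G$, and $\mathrm{dist}_G(v,e) = 2$ where $e$ is the solitary singleton of $G$. Hence, a direct application of that proposition yields that $G'$ has solitary pattern $(2)$.

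The only potential subtlety is bookkeeping: one must ensure that the definition of $\mathcal{D}_0$ unambiguously supplies a valid vertex $v$ satisfying the distance-two condition. Since $G$ has order at least twelve, both ends of $\alpha$ (and of $\beta$) are well-defined vertices of $G$, and by the structure of $3$-staircases described in Section~\ref{sec:SDSS}, precisely one end of $\alpha$ (respectively $\beta$) is at distance two from $e$, so such a $v$ genuinely exists. There is no real obstacle here; the corollary follows immediately by invoking Propositions~\ref{prp:splicing-with-k4} and~\ref{prp:two-one-and-two-from-two-one}(ii).
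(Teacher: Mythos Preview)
Your proposal is correct and matches the paper's approach exactly: the paper itself states that this corollary is simply a restatement of Proposition~\ref{prp:two-one-and-two-from-two-one}(ii), and you have identified this, supplying the small additional observation (via Proposition~\ref{prp:splicing-with-k4}) that the splice is again $3$-connected and $3$-regular.
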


Next, we consider the case in which $G$ is any \cc{3} with solitary pattern~$(2)$.
\begin{prp}\label{prp:two-two-from-two-two}
Let $G$ be a \cc{3} that has solitary pattern~$(2)$, and let $v$ denote any vertex that is incident with its unique solitary doubleton~$\{\alpha,\beta\}$. Then, the solitary pattern of the \cc{3}~$G':=(G\odot K_4)_v$ is also~$(2)$.
\end{prp}
\begin{proof}
Adjust notation so that $v$ is an end of~$\alpha$. Let $T$ denote the triangle of $G'$ formed by the edges in~$E(G')-E(G)$, and let $\alpha'$ denote its unique edge that is nonadjacent with~$\alpha$. By Lemma~\ref{lem:solitary-ME-sepcuts}~$(i)$, each edge in $E(G')-E(T)-\alpha-\beta$ is not solitary in~$G'$; we proceed to examine the remaining five edges. Since $\alpha\in \partial_{G'}(T)$, by Corollary~\ref{cor:solitary-leaving-triangle-implies-K4}, $\alpha$ is not solitary in~$G'$. Since $\alpha$ is the only solitary edge of~$G$ in~$\partial_{G}(v)$, by Proposition~\ref{prp:solitary-splice-with-K4}, the edges in $E(T)-\alpha'$ are not solitary in~$G'$. Finally, by Corollary~\ref{cor:splicing-at-a-SD-vertex}, $\{\alpha',\beta\}$ is a solitary doubleton in~$G'$. Consequently, $\{\alpha',\beta\}$ is the only solitary class in~$G'$.
\end{proof}

We define the family $\mathcal{D}$ as follows. Firstly, $\mathcal{D}_0\subset \mathcal{D}$. Secondly, if $G\in \mathcal{D}$ then \mbox{$G':=(G\odot K_4)_{v}$} belongs to~$\mathcal{D}$, where $v$ is any vertex that is incident with a solitary doubleton of~$G$.
The following is an immediate consequence of Corollary~\ref{cor:family-D0} and Proposition~\ref{prp:two-two-from-two-two}.
\begin{cor}\label{cor:easy-direction}
Each member of~$\mathcal{D}$ is a $3$-connected \mbox{$3$-regular} graph that has solitary pattern~$(2)$.~\qed
\end{cor}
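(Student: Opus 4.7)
The plan is to prove Corollary~\ref{cor:easy-direction} by induction on the number of splicing operations used to construct a member of~$\mathcal{D}$ starting from the base family~$\mathcal{D}_0$. Given the recursive definition of~$\mathcal{D}$, this structural induction is the natural route, and the two key ingredients (Corollary~\ref{cor:family-D0} for the base, Proposition~\ref{prp:two-two-from-two-two} for the step) are already stated.

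For the base case, I would simply invoke Corollary~\ref{cor:family-D0}: every member of~$\mathcal{D}_0$ is, by construction, obtained by splicing a $3$-staircase of order twelve or more with~$K_4$ at a vertex incident with its solitary doubleton and at distance two from its solitary singleton, and such a graph is a \cc{3} with solitary pattern~$(2)$. In particular, $3$-regularity is preserved because splicing two \mbox{$3$-regular} graphs yields a \mbox{$3$-regular} graph, and $3$-connectivity follows from Proposition~\ref{prp:splicing-with-k4}.

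For the inductive step, suppose $G\in\mathcal{D}$ is a \cc{3} with solitary pattern~$(2)$ (so its unique solitary class is a solitary doubleton $\{\alpha,\beta\}$), and let $v$ be any vertex incident with $\{\alpha,\beta\}$. Splice $G$ with~$K_4$ at~$v$ to form $G':=(G\odot K_4)_{v}$. Then $G'$ is $3$-regular by construction and \mbox{$3$-connected} by Proposition~\ref{prp:splicing-with-k4}, and Proposition~\ref{prp:two-two-from-two-two} tells us directly that $G'$ has solitary pattern~$(2)$. Thus the inductive hypothesis propagates, and every graph obtainable from a member of~$\mathcal{D}_0$ by finitely many such splicings --- that is, every member of~$\mathcal{D}$ --- is a \cc{3} with solitary pattern~$(2)$.

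There is no real obstacle here; the work has been absorbed into the two cited results. The only thing to be mildly careful about is verifying that at each step of the inductive construction there actually exists a vertex~$v$ incident with a solitary doubleton of~$G$, so that the splicing operation can be applied --- but this is automatic from the inductive hypothesis that $G$ has solitary pattern~$(2)$, which by definition means $G$ has a (unique) solitary doubleton whose edges together cover (at least two) vertices available for splicing.
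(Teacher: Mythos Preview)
Your proposal is correct and is essentially the same as the paper's approach: the paper simply states that the corollary is an immediate consequence of Corollary~\ref{cor:family-D0} and Proposition~\ref{prp:two-two-from-two-two}, and you have spelled out the obvious structural induction that makes this immediate.
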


%


Now, we recall the definition of $1$-staircases from Section~\ref{sec:more-than-one-SD}, and observe the following. Each $1$-staircase of order eight or more has precisely two vertices that are incident with both solitary doubletons, and has four vertices that are incident with precisely one solitary doubleton. The following proposition deals with the case in which $G$ is a $1$-staircase of order ten or more.

\begin{prp}\label{prp:two-one-from-two-two}
Let $G$ be a $1$-staircase of order ten or more, and let $v$ denote any vertex that is incident with at least one of its solitary doubletons, say $\{\alpha_1,\beta_1\}$~and~$\{\alpha_2,\beta_2\}$. The following statements hold for $G':=(G\odot K_4)_v$:
\begin{enumerate}[(i)]
\item if $v$ is incident with both solitary doubletons of~$G$, then $G'$ is a larger $1$-staircase, 
\item or otherwise, $G'$ is a $3$-staircase.
\end{enumerate}
\end{prp}
\begin{proof}
We invite the reader to observe~$(i)$. Now, suppose that $v$ is incident with precisely one solitary doubleton, and
adjust notation so that $\alpha_1:=uv$~and~$\alpha_2:=uw$. Let $d$ be the unique edge in~$\partial_{G}(w)-\partial_{G}(\{u,v\})$, and let $f$ be the unique edge in~$\partial_{G}(v)-\partial_{G}(\{u,w\})$. We invite the reader to observe that $G'-\alpha_{1}-d-f$ is a (spanning) $3$-dumbbell in~$G'$; since $G'$ is $3$-connected, by definition, $G'$ is a $3$-staircase.
\end{proof}

We are now ready to state and prove our main result of this section.
\begin{thm}\label{thm:SP2}
A \cc{3}~$G$ has solitary pattern~$(2)$ if and only if $G\in \mathcal{D}$.
\end{thm}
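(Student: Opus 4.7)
The reverse implication is an instance of Corollary~\ref{cor:easy-direction}. For the forward implication, the plan is to proceed by induction on $|V(G)|$, exhibiting $G$ as $(G'\odot K_4)_v$ for a smaller \cc{3}~$G'$ that itself carries a solitary doubleton. Let $\{\alpha,\beta\}$ denote the solitary doubleton of $G$, and let $T$ be the unique triangle containing $\alpha$ guaranteed by Corollary~\ref{lem:vertex-disjoint-triangle-lemma}; write $\alpha'$ for the edge in $\partial(T)$ nonadjacent with $\alpha$, and set $G':=G/T$. By Proposition~\ref{prp:r-cuts}, $G'$ is a \cc{3}; by Corollary~\ref{cor:SD-shrinking-triangle}, $\{\alpha',\beta\}$ is a solitary doubleton of $G'$. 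If $v$ denotes the contraction vertex of $T$ in $G'$, then $v$ is an end of $\alpha'$ and $G=(G'\odot K_4)_v$, so the forward implication reduces to showing that either $G'\in\mathcal{D}$, or else $G'$ is a $3$-staircase of order at least twelve with $v$ at distance two from its solitary singleton.

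By Corollary~\ref{lem:solitary-patterns-in-r-graphs}(i), and since $G'$ has a solitary doubleton, its solitary pattern belongs to $\{(2,2,2),(2,2,1),(2,1,1),(2,2),(2,1),(2)\}$. If the pattern is $(2)$, the inductive hypothesis gives $G'\in\mathcal{D}$ and hence $G\in\mathcal{D}$. If the pattern is $(2,1)$, Theorem~\ref{thm:SDSS} identifies $G'$ as a $3$-staircase of order at least twelve; each of the four vertices of $G'$ incident with $\{\alpha',\beta\}$ lies at distance one or two from its solitary singleton, and distance one is ruled out by Proposition~\ref{prp:two-one-and-two-from-two-one}(i) (which would force $G$ itself to be a larger $3$-staircase, still of solitary pattern $(2,1)$), so $G\in\mathcal{D}_0\subseteq\mathcal{D}$. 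If the pattern is $(2,2)$, then by Theorem~\ref{thm:more-than-one-SD} $G'$ is a $1$-staircase of order at least ten, and Proposition~\ref{prp:two-one-from-two-two} forces $G$ to be a $1$-staircase or a $3$-staircase, contradicting pattern $(2)$ for $G$.

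The delicate task, and the main obstacle, is to rule out the three small-order patterns $(2,2,2)$, $(2,2,1)$, $(2,1,1)$, for which Theorems~\ref{thm:more-than-one-SD} and \ref{thm:SDSS} give $G'\in\{K_4,\overline{C_6},R_8,N_{10}\}$. Splicings of $K_4$ and of $\overline{C_6}$ at any vertex yield $\overline{C_6}$ and $R_8$ respectively, both with more than one solitary class; and because $R_8$ is simultaneously a $1$-staircase and a $3$-staircase of order eight, splicing it at a vertex incident with a solitary doubleton produces either an order-ten $1$-staircase (pattern $(2,2)$) or $N_{10}$ (pattern $(2,1,1)$). The hardest subcase is $G'=N_{10}$, which the plan handles by a direct count: each \pmg\ of $G=(N_{10}\odot K_4)_v$ either arises by contracting the new triangle $T_{\mathrm{new}}$ (giving a bijection with the \pmg s of $N_{10}$) or uses all three edges of $\partial(T_{\mathrm{new}})$, in which case the six vertices of $N_{10}$ other than $v$ and its three neighbors are matched internally by a \pmg\ of the induced subgraph. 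For each of the four choices of $v$ incident with the unique solitary doubleton of $N_{10}$, one verifies that exactly one of $N_{10}$'s two solitary singletons uses a neighbor of $v$ and thus retains its solitude in $G$, while the other is absorbed by the ``three-edge'' matchings; consequently $G$ acquires solitary pattern $(2,1)$ rather than $(2)$, giving the required contradiction and completing the induction.
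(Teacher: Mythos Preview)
Your proof is correct and follows essentially the same inductive scheme as the paper: contract the triangle through~$\alpha$, observe via Corollary~\ref{cor:SD-shrinking-triangle} that $\{\alpha',\beta\}$ is a solitary doubleton of $G'$, and then case on the solitary pattern of $G'$ using Theorems~\ref{thm:more-than-one-SD}, \ref{thm:SDSS} and Propositions~\ref{prp:two-one-from-two-two}, \ref{prp:two-one-and-two-from-two-one}. The only difference is cosmetic: for the small cases $G'\in\{K_4,\overline{C_6},R_8,N_{10}\}$ the paper invokes the vertex-orbit Propositions~\ref{prp:R8-vertex-orbits} and~\ref{prp:N10-vertex-orbits} to identify $G$ explicitly as one of $\overline{C_6},R_8,N_{10}$, the order-ten $1$-staircase, or one of the two order-twelve $3$-staircases, whereas you argue directly (and for $N_{10}$ via a matching-count sketch). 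Your $N_{10}$ argument is a bit loose in claiming the resulting pattern is exactly $(2,1)$---what you actually establish is that at least one extra solitary edge survives---but that is all you need for the contradiction, so the proof stands.
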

\begin{proof}
By Corollary~\ref{cor:easy-direction}, it remains to prove the forward implication.
Let $G$ be a \cc{3} that has solitary pattern~$(2)$, and let $\{\alpha,\beta\}$ denote its solitary doubleton. Clearly, the order of $G$ is at least six. We proceed by induction on the order of~$G$.

We let $T$ denote the triangle containing $\alpha$; see Corollary~\ref{lem:vertex-disjoint-triangle-lemma}. Let $G'$ denote the \cc{3}~$G/(T\rightarrow t)$, and let $\alpha'$ denote the unique edge in $\partial_{G}(T)$ that is nonadjacent with~$\alpha$. By Corollary~\ref{cor:SD-shrinking-triangle}, $\{\alpha',\beta\}$ is a solitary doubleton in~$G'$; thus, by Corollary~\ref{cor:main-thm}, $G'$ has one of the following solitary patterns:~$(2,2,2)$, $(2,2,1)$, $(2,2)$, $(2,1,1)$, $(2,1)$~and~$(2)$. Observe that $G:=(G'\odot K_4)_{t}$.

If $G'$ has solitary pattern~$(2,2,2),(2,2,1)$~or~$(2,1,1)$ then, by Theorems~\ref{thm:more-than-one-SD}~and~\ref{thm:SDSS}, $G'\in \{K_4,\overline{C_6}, R_8, N_{10}\}$; using Propositions~\ref{prp:N10-vertex-orbits}~and~\ref{prp:R8-vertex-orbits}, the reader may verify that either $G\in \{\overline{C_6},R_8,N_{10}\}$ or $G$ is the $1$-staircase of order ten (shown in Figure~\ref{fig:R10}) or $G$ is one of the two $3$-staircases of order twelve (shown in Figure~\ref{fig:st12}); in each case, we arrive at a contradiction to the hypothesis that $G$ has solitary pattern~$(2)$. On the other hand, if $G'$ has solitary pattern~$(2,2)$ then, by Theorem~\ref{thm:more-than-one-SD}, $G'$ is a $1$-staircase of order ten or more; consequently, by Proposition~\ref{prp:two-one-from-two-two}, we conclude that the solitary pattern of~$G$ is not $(2)$; a contradiction.

If $G'$ has solitary pattern~$(2,1)$ then, by Theorem~\ref{thm:SDSS}, $G'$ is a $3$-staircase of order twelve or more; since $G$ has solitary pattern~$(2)$, by Proposition~\ref{prp:two-one-and-two-from-two-one}, $G\in \mathcal{D}_0$ which is a subset of~$\mathcal{D}$. Finally, if $G'$ has solitary pattern~$(2)$ then, by the induction hypothesis, $G'\in \mathcal{D}$; by the definition of $\mathcal{D}$, we conclude that $G\in \mathcal{D}$.
%
%
%
\end{proof}

		

It is worth noting that every \ecc{3} $r$-graph, that has a solitary pattern shown in the first six rows of Table~\ref{table1}, is planar; see Theorems~\ref{thm:more-than-one-SD}, \ref{thm:SDSS}~and~\ref{thm:3SS}. On the other hand, there exist \ecc{3} $r$-graphs, where $r\ge 4$, with solitary pattern~$(2)$ that are nonplanar; see Figure~\ref{fig:nonplanar-graph} for one such example. This perhaps indicates that it may be more difficult to characterize \ecc{3} \mbox{$r$-graphs} that have solitary pattern~$(2)$.

In the final part of this section, we prove the forward implication of Theorem~\ref{thm:3SS}.

\subsection{Three solitary singletons}\label{sec:3SS}

We begin by recalling the family $\mathcal{S}$ shown in Figure~\ref{fig: 3cc-with-three-distinct-solitary-singletons}, and by restating Theorem~\ref{thm:3SS}.

\begin{reptheorem}{thm:3SS}
A \ecc{3} $r$-graph~$G$ has 
solitary pattern $(1,1,1)$ if and only if $G$ is one of the six graphs in~$\mathcal{S}\cup \{\theta\}$.
\end{reptheorem}
\begin{proof}
By Corollary~\ref{lem:solitary-patterns-in-r-graphs}, every \ecc{3} $r$-graph, of order four or more, with solitary pattern $(1,1,1)$ is a \cc{3}.
  Let $G$ denote a \cc{3}, distinct from $\theta$, that has three solitary singletons, say $e_i:=u_iv_i$ for $i\in \{1,2,3\}$.
  We begin with an observation that will come in handy multiple times.

\begin{sta}\label{sta:all-three-solitary-edges-in-one-shore}
If $\partial(W)$ is a $3$-cut such that $e_1,e_2$ and $e_3$ lie in $G[W]$ then $G/W$ is either $\theta$ or~$K_4$.
\end{sta}
\begin{proof}
Since $\{e_1\},\{e_2\}$ and $\{e_3\}$ are solitary classes that are pairwise mutually exclusive, it follows that, for each $d\in \partial(W)$, precisely one of $e_1,e_2$ and $e_3$ depends on~$d$. By Lemma~\ref{lem:solitary-ME-sepcuts}~(iii), each edge of $\partial(W)$ is solitary in $G/W$. By Corollary~\ref{lem:three-pairwise-adjacent-solitaryedges}, we conclude that $G/W$ is either $\theta$ or~$K_4$.
\end{proof}

For distinct $i,j\in \{1,2,3\}$, we invoke Corollary~\ref{thm:distance-one} to infer that 
  \distg{G}{e_i}{e_j}~$=1$, and we let $f_{ij}$ denote an edge that is adjacent with each of $e_i$~and~$e_j$.

    We let $H$ denote the subgraph of $G$ formed by the edges 
  $e_1,e_2,e_3,f_{12},f_{23},f_{13}$ and their ends.
    Observe that $H$ is a simple connected graph with six vertices and six edges; hence it contains exactly one cycle, say $Q$. Furthermore, $H-f_{ij}$ is a tree for distinct $i,j\in \{1,2,3\}$; consequently, each $f_{ij}\in Q$. 
    \begin{figure}[!htb]
    \begin{subfigure}[b]{.24\textwidth}
\begin{tikzpicture}[scale=0.35]
\node[draw=none] at (30:3.5) {$e_1$};
\node[draw=none] at (150:3.6) {$e_2$};
\node[draw=none] at (270:3.5) {$e_3$};
\node[draw=none] at (90:3.6) {$f_{12}$};
\node[draw=none] at (210:3.6) {$f_{23}$};
\node[draw=none] at (330:3.6) {$f_{13}$};
\node[circle,fill=white] (1) at (0:3.5){};
\node[circle,fill=white] (2) at (60:3.5){};
\node[circle,fill=white] (3) at (120:3.5){};
\node[circle,fill=white] (4) at (180:3.5){};
\node[circle,fill=white] (5) at (240:3.5){};
\node[circle,fill=white] (6) at (300:3.5){};
 \node[draw=none] at (30-36:4.4) {$v_1$};
 \node[draw=none] at (30+36:4.4) {$u_1$};
  \node[draw=none] at (150-36:4.4) {$v_2$};
 \node[draw=none] at (150+36:4.4) {$u_2$};
 \node[draw=none] at (270-36:4.4) {$v_3$};
 \node[draw=none] at (270+36:4.4) {$u_3$};
\draw (1) -- (2) -- (3) -- (4) -- (5) -- (6)-- (1);
\node[circle,fill=white] (1) at (0:3.5){};
\node[circle,fill=white] (2) at (60:3.5){};
\node[circle,fill=white] (3) at (120:3.5){};
\node[circle,fill=white] (4) at (180:3.5){};
\node[circle,fill=white] (5) at (240:3.5){};
\node[circle,fill=white] (6) at (300:3.5){};
\end{tikzpicture}
\caption{}
  \label{fig:a}
\end{subfigure}
\begin{subfigure}[b]{.24\textwidth}
\begin{tikzpicture}[scale=0.35]
\node[draw=none] at (198:3.5) {$e_1$};
\node[draw=none] at (342:3.5) {$e_2$};
\node[draw=none] at (270:3.5) {$f_{12}$};
\node[draw=none] at (126:3.6) {$f_{13}$};
\node[draw=none] at (54:3.6) {$f_{23}$};
\node[draw=none] (7) at (83:5) {$e_3$};
\node[draw=none] at (90:7.8){$v_3$};
\node[draw=none] at (90:2.3){$u_3$};
\node[draw=none] at (378:4.4){$u_2$};
\node[draw=none] at (342-36:4.4){$v_2$};
\node[draw=none] at (198-36:4.4){$u_1$};
\node[draw=none] at (198+36:4.4){$v_1$};
\node[circle,fill=white] (1) at (18:3.5){};
\node[circle,fill=white] (2) at (90:3.5){};
\node[circle,fill=white] (3) at (162:3.5){};
\node[circle,fill=white] (4) at (234:3.5){};
\node[circle,fill=white] (5) at (306:3.5){};
\node[circle,fill=white] (6) at (90:7){};
\draw (6) -- (90:3.5);
\draw (1) -- (2) -- (3) -- (4) -- (5) -- (1);
\node[circle,fill=white] (1) at (18:3.5){};
\node[circle,fill=white] (2) at (90:3.5){};
\node[circle,fill=white] (3) at (162:3.5){};
\node[circle,fill=white] (4) at (234:3.5){};
\node[circle,fill=white] (5) at (306:3.5){};
\node[circle,fill=white] (6) at (90:7){};
\end{tikzpicture}
\caption{}
  \label{fig:b}
\end{subfigure}
\begin{subfigure}[b]{.24\textwidth}
\begin{tikzpicture}[scale=0.35]
\node[draw=none] (1) at (0:3.3) {$f_{13}$};
\node[draw=none] (2) at (270:3.3) {$f_{23}$};
\node[draw=none] (3) at (180:3.3) {$f_{12}$};
\node[draw=none] (4) at (90:3.3) {$e_1$};
\node[draw=none] (7) at (236:5.5) {$e_2$};
\node[draw=none] (8) at (235+70:5.5) {$e_3$};
\node[draw=none] at (45:4.45) {$v_1$};
\node[draw=none] at (45+90:4.45) {$u_1$};
\node[draw=none] at (45+180:4.45) {$v_2$};
\node[draw=none] at (45+270:4.45) {$u_3$};
\node[draw=none] at (270+22:7.85) {$v_3$};
\node[draw=none] at (180+70:7.85) {$u_2$};
\node[circle,fill=white] (4) at (0+45:3.5){};
\node[circle,fill=white] (1) at (90+45:3.5){};
\node[circle,fill=white] (2) at (180+45:3.5){};
\node[circle,fill=white] (3) at (270+45:3.5){};
\draw (1) -- (2) -- (3) -- (4) -- (1);
\node[circle,fill=white] (5) at (270+22:7){};
\node[circle,fill=white] (6) at (180+70:7){};
\draw (6) -- (180+45:3.5);
\draw (5) -- (270+45:3.5);

\node[circle,fill=white] (4) at (0+45:3.5){};
\node[circle,fill=white] (1) at (90+45:3.5){};
\node[circle,fill=white] (2) at (180+45:3.5){};
\node[circle,fill=white] (3) at (270+45:3.5){};
\node[circle,fill=white] (5) at (270+22:7){};
\node[circle,fill=white] (6) at (180+70:7){};
\end{tikzpicture}
\caption{}
  \label{fig:c}
\end{subfigure}
\begin{subfigure}[b]{.24\textwidth}
\begin{tikzpicture}[scale=0.25]
\node[draw=none] at (33:3) {$f_{12}$};
\node[draw=none] at (155:3) {$f_{23}$};
\node[draw=none] at (275:3) {$f_{13}$};
\node[draw=none] (7) at (80:5.5) {$e_2$};
\node[draw=none] (8) at (200:5.5) {$e_3$};
\node[draw=none] (9) at (340:5.5) {$e_1$};
\node[circle,fill=white] (1) at (90:3.5){};
\node[circle,fill=white] (2) at (210:3.5){};
\node[circle,fill=white] (3) at (330:3.5){};
\draw (1) -- (2) -- (3) -- (1);
\node[circle,fill=white] (4) at (90:7){};
\node[circle,fill=white] (5) at (210:7){};
\node[circle,fill=white] (6) at (330:7){};
\draw (4) -- (90:3.5);
\draw (5) -- (210:3.5);
\draw (6) -- (330:3.5);

\node[circle,fill=white] (1) at (90:3.5){};
\node[circle,fill=white] (2) at (210:3.5){};
\node[circle,fill=white] (3) at (330:3.5){};
\node[circle,fill=white] (4) at (90:7){};
\node[circle,fill=white] (5) at (210:7){};
\node[circle,fill=white] (6) at (330:7){};
\end{tikzpicture}
\caption{}
  \label{fig:d}
\end{subfigure}
\caption{Possibilities for the subgraph $H$ as per \ref{sta:4-possibilities-of-H}}
\label{possible-H}
\end{figure}

    \begin{sta}\label{sta:4-possibilities-of-H}
        Up to relabeling, the subgraph $H$ is one of the four labeled graphs shown in Figure~\ref{possible-H}.
    \end{sta} 
    \begin{proof}
        Clearly, if $Q$ is a $6$-cycle then $H=Q$ with edge labels as shown in Figure \ref{fig:a};
         whereas if $Q$ is a $5$-cycle then $H$ is $Q$ along with a pendant edge and edge labels as shown in Figure~\ref{fig:b}. If $Q$ is a $4$-cycle then (up to relabeling) $f_{12}f_{23}f_{13}$ is a path in $Q$; observe that this determines the labeled subgraph $H$ as shown in Figure \ref{fig:c}.
         If $Q$ is a $3$-cycle then $Q=f_{12}f_{23}f_{13}$ and the graph~$H$ is~$Q$ along with three pendant edges and labels as shown in Figure \ref{fig:d}. 
    \end{proof}
    
        In each of the four possibilities, we adopt the edge labeling of the subgraph $H$ as shown in Figure \ref{possible-H}. We now proceed to eliminate the last two of these possibilities.

For each $i\in \{1,2,3\}$, we let $f_i$ denote the companion of $e_i$, and we let $C_i:=\partial(X_i)$ and $D_i:=\partial(Y_i)$ denote the $3$-cuts associated with $(e_i,f_i)$ so that $X_i\cap Y_i=\emptyset$; see Lemma \ref{lem:associated-r-cuts} and Figure \ref{fig:associated-r-cuts-1}.
        \begin{sta}
            The labeled subgraph $H$ is one of the two graphs shown in Figures \ref{fig:a} and \ref{fig:b}.
        \end{sta}
        \begin{proof}
            First suppose that $H$ is the subgraph shown in Figure \ref{fig:d}. Since $Q$ is a triangle, observe that $f_{ij}\rightarrow e_k$ for distinct $i,j,k\in \{1,2,3\}$; consequently, the solitary singleton $e_k$ is not removable and this contradicts Corollary \ref{cor:main-thm}~(ii).

Now suppose that $H$ is the labeled subgraph shown in Figure~\ref{fig:c}, and adjust notation so that $f_{12}\in C_2$ and $f_{23}\in D_2$. Since \mbox{\distg{G}{e_1}{e_2}~$=1$}, and since $e_1$~and~$e_2$ are mutually exclusive, we conclude that $e_1$ lies in~$G[X_2]$; consequently, $f_2=f_{13}$ and $D_2$ is a trivial cut. However, this implies that $e_3\in D_2$ and that $e_2$ and $e_3$ are adjacent; this contradicts the already established fact that \distg{G}{e_2}{e_3}~$=1$.
        \end{proof}

        \begin{figure}[!htb]
    \centering
    \begin{tikzpicture}[scale=0.7]
\centering
\draw (-4.5,0) ellipse (2.5 and 4);
\draw (4.5,0) ellipse (2.5 and 4);
\node[circle,fill=white] (1) at (0,0){};
\node[draw=none] at (0,-0.6) {$u_3$};
\node[draw=none] at (0.4,1) {$e_3$};
\node[circle,fill=white] (2) at (0,2){};
\node[draw=none] at (0,2.6) {$v_3$};
\node[circle,fill=white] (3) at (-3.5,-2){};
\node[draw=none] at (-3.2,-2.6) {$v_1$};
\node[draw=none] at (-3.2,-1) {$e_1$};
\node[draw=none] at (3.8,-1) {$e_2$};
\node[circle,fill=white] (4) at (3.5,-2){};
\node[draw=none] at (3.5,-2.6) {$v_2$};
\node[draw=none] at (0,-2.6) {$f_3=f_{12}$};
\node[circle,fill=white] (5) at (-3.5,0){};
\node[draw=none] at (-3.2,0.6) {$u_1$};
\node[circle,fill=white] (6) at (3.5,0){};
\node[draw=none] at (3.5,0.6) {$u_2$};
\node[draw=none] (7) at (5,2){};
\node[draw=none] (8) at (-5,2){};
\draw (5) -- (-5,0);
\draw (6) -- (5,0);
\draw (3) -- (-5,-2);
\draw (4) -- (5,-2);
\draw (3) -- (5);
\draw (4) -- (6);
\draw (3) -- (4);
\draw (1) -- (2);
\draw (1) -- (6);
\draw (1) -- (5);
\draw (2) -- (7);
\draw (2) -- (8);
\node[draw=none] at (-1.4,4.3) {$C_3$};
\draw (-1.5,4) -- (-1.5,-4);
\node[draw=none] at (1.5,4.3) {$D_3$};
\draw (1.5,4) -- (1.5,-4);
\node[draw=none] at (-2.8,4.4) {$C_1$};
\draw (-3,4) to [bend right=38, looseness=1.25] (-3,-4);
\node[draw=none] at (3,4.4) {$D_2$};
\draw (3,-4) to [bend right=38, looseness=1.25] (3,4);
\node[draw=none] at (-4.2,3.4) {$X_1$};
\node[draw=none] at (4.2,3.4) {$Y_2$};
\node[draw=none] at (-2,3.4) {$X_3$};
\node[draw=none] at (1.9,3.4) {$Y_3$};
\node[draw=none] at (-1,-0.5) {$f_{13}$};
\node[draw=none] at (1,-0.5) {$f_{23}$};
\end{tikzpicture}
    \caption{Illustration for \ref{sta:different-1+1+1-graph}}
    \label{fig:illustration-for-different-1+1+1-graph}
\end{figure}
        \begin{sta}\label{sta:different-1+1+1-graph}
            If the labeled subgraph $H$ is as shown in Figure \ref{fig:b} then $G$ is the graph shown in Figure \ref{fig:different-1+1+1-graph}.
        \end{sta}
        \begin{proof}
          Adjust notation so that $f_{13}\in C_3$ and $f_{23}\in D_3$; see Figure~\ref{fig:illustration-for-different-1+1+1-graph}. Observe that $Q$ is a $5$-cycle in $G-v_3$. Since every cycle meets every cut in an even number of edges, it follows that $f_3\in Q$. Also, since $e_3\rightarrow f_3$, the edge $f_3$ is distinct from each of $e_1$ and $e_2$. Thus, $f_3=f_{12}$.

        Observe that the unique edge of $C_3\cap \partial(v_3)$ is an odd $1$-cut in $G-u_1-v_1$; thus, by Lemma~\ref{lem:associated-r-cuts}, this edge is the companion $f_1$ of $e_1$. In particular, $C_3=\{f_3,f_{13},f_1\}$. Observe that $C_3$ is the same as one of $C_1$ and $D_1$; adjust notation so that $C_3=D_1$ and $Y_1=\overline{X_3}$. Note that $X_1=X_3-u_1-v_1$ and that the edges $e_1,e_2$ and $e_3$ lie in~$G[\overline{X_1}]$. Thus, by \ref{sta:all-three-solitary-edges-in-one-shore}, the graph 
        $G_1:=G/\overline{X_1}$ is either $\theta$ or $K_4$. Using an analogous argument, we may adjust notation so that $Y_2=Y_3-u_2-v_2$, and infer that the graph $G_2:=G/\overline{Y_2}$ is either $\theta$ or $K_4$.

        \begin{figure}[!htb]
    \centering
    \begin{tikzpicture}[scale=0.7]
\centering

\node[draw=none] at (0,-0.6) {$u_3$};
\node[draw=none] at (0.4,1) {$e_3$};
\node[circle,fill=white] (2) at (0,2){};
\node[circle,fill=white] (1) at (0,0){};
\node[draw=none] at (0,2.6) {$v_3$};
\draw (1) -- (2);

\node[draw=none] at (-3.2,-2.6) {$v_1$};
\node[draw=none] at (-2.5,-1) {$e_1$};
\node[draw=none] at (-3,0.4) {$u_1$};
\node[circle,fill=white] (5) at (-3,0){};
\node[circle,fill=white] (3) at (-3,-2){};

\node[draw=none] at (3.5,-1) {$e_2$};
\node[draw=none] at (3.2,-2.6) {$v_2$};
\node[draw=none] at (3.2,0.6) {$u_2$};
\node[circle,fill=white] (4) at (3,-2){};
\node[circle,fill=white] (6) at (3,0){};

\node[draw=none] at (0,-2.6) {$f_3=f_{12}$};

\node[circle,fill=white] (9) at (-5,0){};
\node[circle,fill=white] (10) at (5,0){};
\node[circle,fill=white] (11) at (6.5,2){};
\node[circle,fill=white] (12) at (6.5,-2){};
\draw (11) -- (12) -- (10);
\draw[line width=1.5pt] (10) -- (11);

\node[draw=none] (7) at (5,2){};
\node[draw=none] (8) at (-5,2){};
\draw (5) -- (9);
\draw (3) -- (9);
\draw (2) -- (9);

\draw (6) -- (5,0);
\draw (4) -- (12);
\draw[line width=1.5pt] (3) -- (5);
\draw (4) -- (6);
\draw (3) -- (4);

\draw (1) -- (6);
\draw (1) -- (5);
\draw (2) -- (11);
\node[circle,fill=white] (2) at (0,2){};
\node[circle,fill=white] (1) at (0,0){};
\node[circle,fill=white] (5) at (-3,0){};
\node[circle,fill=white] (3) at (-3,-2){};
\node[circle,fill=white] (4) at (3,-2){};
\node[circle,fill=white] (6) at (3,0){};
\node[circle,fill=white] (9) at (-5,0){};
\node[circle,fill=white] (10) at (5,0){};
\node[circle,fill=white] (11) at (6.5,2){};
\node[circle,fill=white] (12) at (6.5,-2){};
\end{tikzpicture}
    \caption{The graph $N_{10}$ as it appears in the proof of \ref{sta:different-1+1+1-graph}}
    \label{fig:2+1+1 graph}
\end{figure}
         Observe that if each of $G_1$ and $G_2$ is the $\theta$ graph then $G$ is the bicorn~$R_8$, and each of $e_1$~and~$e_2$ participates in a solitary doubleton; contrary to our hypothesis. On the other hand, if precisely one of $G_1$ and $G_2$, say $G_1$, is the $\theta$ graph, then $G$ is the graph~$N_{10}$ shown in Figure~\ref{fig:2+1+1 graph}, and $e_1$ participates in a solitary doubleton; a contradiction once again. Finally, if each of $G_1$ and $G_2$ is $K_4$ then $G$ is the graph shown in Figure \ref{fig:different-1+1+1-graph}, and $e_1,e_2$ and $e_3$ are indeed solitary singletons as discussed earlier.
        \end{proof}

        Henceforth, we may thus assume that the labeled subgraph $H$ is the graph shown in Figure \ref{fig:a}. Our goal is to demonstrate that $G$ is one of the four graphs shown in Figures \ref{fig:tricorn}, \ref{fig:tricorn+2vertices}, \ref{fig:tricorn+4vertices} and \ref{fig:tricorn+6vertices}.
        To this end, we shall establish various properties of the graph $G$ step-by-step.
        \begin{sta}\label{sta:induced-six-cycle}
            $|\partial(H)|=6$ and $H=Q$ is an induced $6$-cycle of~$G$.
        \end{sta}
        \begin{proof}
            Observe that, if $H=Q$ is a conformal cycle in $G$ then $e_1,e_2$ and $e_3$ lie in a common \pmg\ of~$G$, contrary to the fact that they are mutually exclusive edges. Since $G$ is $3$-edge-colorable and \mbox{$3$-regular}, we invoke Lemma~\ref{lem:conformal-lemma} to deduce that $|\partial(H)|\notin \{0,2,4\}$. 
        \end{proof}
        
        We shall find it useful to fix a $3$-edge-coloring $(M_1,M_2,M_3)$ of $G$ so that the solitary edge $e_i$ lies in the \pmg~$M_i$. It is easy to see that this determines the color of each edge of $H$ as follows. 
        \begin{sta}\label{sta:H-determined}
            For distinct $i,j,k\in \{1,2,3\}$, the edge $f_{ij}$ lies in the \pmg~$M_k$.\qed
        \end{sta}
        \begin{sta}\label{sta:empty-set}
            $E(Q)\cap \{f_1,f_2,f_3\}=\emptyset$.
        \end{sta}
        \begin{proof}
            By symmetry, it suffices to argue that $f_1\notin E(Q)$. Suppose to the contrary that $f_1\in E(Q)$. Since $e_1\rightarrow f_1$, it follows from \ref{sta:H-determined} that $f_1=f_{23}$. Adjust notation so that $u_2\in X_1$ and $v_3\in Y_1$. Consequently, $e_2\in G[X_1]$ and $e_3\in G[Y_1]$; furthermore, $f_{12}\in C_1$ and $f_{13}\in D_1$. Let $g$ denote the unique edge in $C_1\cap \partial(v_1)$. Since $e_1\in M_1$ and $f_{13}\in M_2$, we infer that $g\in M_3$. However, $M_3\cap C_1=\{f_{12},g\}$; a contradiction since $C_1$ is an odd cut.
        \end{proof}
        
        For each $i\in \{1,2,3\}$, since $f_i\notin E(Q)$, using the fact that each cut meets each cycle in an even number of edges, we immediately infer the following.
        \begin{sta}
            For each $i\in \{1,2,3\}$, $Q$ is a subgraph of $G[\overline{X_i}]$ or otherwise of $G[\overline{Y_i}]$.\qed
        \end{sta}
             For each $i\in \{1,2,3\}$, adjust notation so that $Q$ is a subgraph of $G[\overline{X_i}]$.
        \begin{sta}\label{sta:theta-k_4-sta}
            For each $i\in \{1,2,3\}$, the $C_i$-contraction $G_i:=G/\overline{X_i}$ is either $\theta$ or $K_4$.
        \end{sta}
        \begin{proof}
        By symmetry, it suffices to prove for $i=1$.
            Since $Q$ is a subgraph of $G[\overline{X_1}]$, 
            the edges $e_1,e_2$ and $e_3$ lie in $G[\overline{X_1}]$. By \ref{sta:all-three-solitary-edges-in-one-shore}, we infer that $G_1$ is either $\theta$ or $K_4$.
        \end{proof}
        
        \begin{sta}\label{sta:pairwise-disjoint}
            The sets $X_1,X_2$ and $X_3$ are pairwise disjoint.
        \end{sta}
        \begin{proof}
        By symmetry, it suffices to prove that $X_1$ and $X_2$ are disjoint. From Proposition~\ref{prp:3-cut-matching-laminar}, we infer that one of $X_2$ and $\overline{X_2}$ is a subset of one of $X_1$ and $\overline{X_1}$. We shall eliminate three of these four possibilities.

            Firstly, as per our notation, $V(Q)\subseteq \overline{X_1}\cap \overline{X_2}$; this implies that $\overline{X_2}\not\subseteq X_1$.
            Secondly, note that the unique neighbour of $u_2$, that lies in $X_2$, does not belong to $X_1$; thus
            $X_2\not\subseteq X_1$. An analogous argument implies that $X_1\not\subseteq X_2$; equivalently, $\overline{X_2}\not\subseteq\overline{X_1}$.
            Therefore, $X_2\subseteq \overline{X_1}$.
        \end{proof}
        

\begin{sta}\label{sta:diajoint-three-3-cuts}
    The edges $f_1,f_2$ and $f_3$ do not lie in $\partial(V(Q))$. Furthermore, the cuts $C_1,C_2$ and $C_3$ are pairwise disjoint.
\end{sta}
\begin{proof}
    Since $e_1\rightarrow f_1$, clearly $f_1\notin \partial(\{u_1,v_1\})$. Note that each $g\in \partial(V(Q))$ has one end in $\{u_i,v_i\}$ and the other end in $X_i$ for some $i\in \{1,2,3\}$. This fact, along with \ref{sta:pairwise-disjoint}, implies that $f_1\notin \partial(\{u_2,v_2,u_3,v_3\})$ since $f_1\in \partial(X_1)$. Thus, $f_1\notin \partial(V(Q))$. By symmetry, this proves the first part.

    Note that $C_1\cup C_2\cup C_3=\partial(V(Q))\cup \{f_1,f_2,f_3\}$. Since $|\partial(V(Q))|=6$ by \ref{sta:induced-six-cycle}, and since the two sets on the right hand side are pairwise disjoint, we infer that the cuts $C_1,C_2$~and~$C_3$ are pairwise disjoint.
\end{proof}

\begin{figure}[!htb]
        \centering
\begin{tikzpicture}[scale=1.2]
\node[draw=none] at (30+60:3.4) {$e_1$};
\node[draw=none] at (150+60:3.4) {$e_2$};
\node[draw=none] at (270+60:3.4) {$e_3$};
\node[draw=none] at (90+60:3.4) {$f_{12}$};
\node[draw=none] at (210+60:3.4) {$f_{23}$};
\node[draw=none] at (330+60:3.4) {$f_{13}$};
\node[circle,fill=white] (1) at (0:3.5){};
\node[circle,fill=white] (2) at (60:3.5){};
\node[circle,fill=white] (3) at (120:3.5){};
\node[circle,fill=white] (4) at (180:3.5){};
\node[circle,fill=white] (5) at (240:3.5){};
\node[circle,fill=white] (6) at (300:3.5){};
\node[circle,fill=white] (7) at (90:0.6){};
\node[circle,fill=white] (8) at (210:0.6){};
\node[circle,fill=white] (9) at (330:0.6){};
\node[circle,fill=white] (10) at (90:1.6){};
\node[circle,fill=white] (11) at (210:1.6){};
\node[circle,fill=white] (12) at (330:1.6){};
\draw (7) -- (10);
\draw (8) -- (11);
\draw (9) -- (12);
\draw (1) -- (0-15:2.1);
\draw (2) -- (60+15:2.1);
\draw (3) -- (120-15:2.1);
\draw (4) -- (180+15:2.1);
\draw (5) -- (240-15:2.1);
\draw (6) -- (300+15:2.1);
\draw [bend left=60, looseness=1.50] (290:4) to (15:4);
\draw [bend left=60, looseness=1.50] (290+120:4) to (15+120:4);
\draw [bend left=60, looseness=1.50] (290+240:4) to (15+240:4);
\node[draw=none] at (90-14:1.19) {$f_{1}$};
\node[draw=none] at (90:2.3) {$X_{1}$};
\node[draw=none] at (90+120:2.3) {$X_{2}$};
\node[draw=none] at (90+240:2.3) {$X_{3}$};
\node[draw=none] at (0:0) {$Z$};
\node[draw=none] at (90-14+120:1.19) {$f_{2}$};
\node[draw=none] at (90-14+240:1.19) {$f_{3}$};
\draw circle(0.8);
\draw (330-240:2) circle(0.7);
\draw (330-120:2) circle(0.7);
\draw (330:2) circle(0.7);
 \node[draw=none] at (30-34:3.7) {$u_3$};
 \node[draw=none] at (30+36:3.7) {$v_1$};
  \node[draw=none] at (150-34:3.7) {$u_1$};
 \node[draw=none] at (150+36:3.7) {$v_2$};
 \node[draw=none] at (270-34:3.7) {$u_2$};
 \node[draw=none] at (270+36:3.7) {$v_3$};
\draw (1) -- (2) -- (3) -- (4) -- (5) -- (6)-- (1);
\end{tikzpicture}
\caption{Illustration for \ref{sta:Z-sta} and \ref{sta:final-sta}}
  \label{fig:6-cycle-in-1+1+1}
        \end{figure}

See Figure~\ref{fig:6-cycle-in-1+1+1} and let $Z:=V(G) -V(Q)- X_1-X_2- X_3$. Observe that $e_1,e_2,e_3\in E(G[\overline{Z}])$ and $\partial(Z)=\{f_1,f_2,f_3\}$; hence, by Proposition~\ref{prp:r-cuts}, $\partial(Z)$ is a separating cut. We let $z_i\in Z$ denote the end of $f_i$ for each $i\in\{1,2,3\}$; note that $z_1,z_2$ and $z_3$ need not be distinct.
 By~\ref{sta:all-three-solitary-edges-in-one-shore}, $G/\overline{Z}$ is either $\theta$ or $K_4$. We now eliminate one of these possibilities.
\begin{sta}\label{sta:Z-sta}
The graph $G/\overline{Z}$ is $\theta$. In other words, $z_1=z_2=z_3$.
\end{sta}
\begin{proof}
    Suppose not; assume that $G/\overline{Z}$ is $K_4$. Note that, since $e_1,f_1\in M_1$, the restriction of $M_1$ to the subgraph $J:=G-X_1-u_1-v_1-z_1$ is a \pmg~of $J$. However, the reader may verify that irrespective of whether each of $G[X_2]$ and $G[X_3]$ is $K_1$ or $K_3$, the matchable subgraph $J$ is $2$-edge-connected; whence, by Lemma \ref{lem:unique-pm-in-a-graph}, $J$ has at least two \pmg s, and using these we infer that $e_1$ is not solitary; contradiction.
\end{proof}

Recall that $G_i:=G/\overline{X_i}$ for each $i\in \{1,2,3\}$. 
\begin{sta}\label{sta:final-sta}
    The graph $G$ is one of the four graphs shown in Figures~\ref{fig:tricorn}, \ref{fig:tricorn+2vertices}, \ref{fig:tricorn+4vertices} and \ref{fig:tricorn+6vertices}.
\end{sta}
\begin{proof}
Note that if each of $G_1,G_2$ and $G_3$ is $\theta$ then $G$ is the tricorn graph shown in Figure~\ref{fig:tricorn}, whereas if each of them is $K_4$ then $G$ is the graph shown in Figure~\ref{fig:tricorn+6vertices}. On the other hand, if precisely one of these three graphs is $K_4$ then $G$ is the graph shown in Figure~\ref{fig:tricorn+2vertices}, whereas if two of them are $K_4$ then $G$ is the graph shown in Figure~\ref{fig:tricorn+4vertices}.
\end{proof}

 This completes the proof of Theorem~\ref{thm:3SS}.
\end{proof}


This concludes our characterizations of all of the solitary patterns claimed in Table~\ref{table1}. Now, we recall Corollary~\ref{cor:at-least-n/2-solitary-edges} which states that, for a \mcg, the cardinality of any largest equivalence class is at most $\frac{n}{2}$, and that if equality holds then every such class is a solitary class.
In the following section, we provide a complete characterization of the tight examples.

\section{Matching covered graphs that satisfy \texorpdfstring{$\varepsilon=\frac{n}{2}$}{}}\label{sec:largest-EC-n/2}
In this section, we provide a complete characterization of all \mcg s whose largest equivalence class has cardinality~$\frac{n}{2}$, thus fixing an error in~\cite[Proposition~1.8]{lkfz20}. One may easily observe that such an equivalence class is a solitary class; recall Corollary~\ref{cor:at-least-n/2-solitary-edges}.
We begin by investigating the behavior of dependence relation across even $2$-cuts.

\subsection{Dependence and equivalence classes across even \texorpdfstring{$2$}{}-cuts}

We begin with a pertinent consequence of Propositions~\ref{prp:gluing}~and~\ref{prp:2-bond}.
\begin{lem}\label{lem:dependence-across-even-2-cuts}
 {\sc[Dependence across Even $2$-Cuts]}
\newline Let $G$ be a \mcg\ that has an even $2$-cut~$C$, let $G_1$ denote a marked \mbox{$C$-component} of~$G$, and let $e_1$ denote its marker edge. Then, for each pair $d\in E(G_1)-e_1$ and $f\in C$, the following statements hold:
\begin{enumerate}[(i)]
            \item $d\xrightarrow{G} f$ if and only if $d\xrightarrow{G_1} e_1$, and
            \item $f\xrightarrow{G} d$ if and only if $e_1\xrightarrow{G_1} d$.
\end{enumerate}
Furthermore, for each pair $d,d'\in E(G_1)-e_1$, the following holds:
\begin{enumerate}
 \item[(iii)] $d\xrightarrow{G} d'$ if and only if $d\xrightarrow{G_1} d'$.
\end{enumerate}
\end{lem}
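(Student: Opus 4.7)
The plan is to set up a natural bijection between perfect matchings of $G$ and compatible pairs of perfect matchings of $G_1$ and $G_2$, and then translate each of the three dependence statements through this bijection. The key preliminary observation is that, because $C$ is an even $2$-cut with $|C|=2$, Lemma~\ref{lem:basic-facts-about-parity}~(i) forces $|M \cap C| \in \{0,2\}$ for every perfect matching $M$ of $G$; in particular, for each $f \in C$ one has $f \in M$ if and only if $C \subseteq M$. This is the single fact that will let us translate statements involving $f$ on the $G$-side into statements involving $e_1$ on the $G_1$-side.

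Next I would invoke Propositions~\ref{prp:gluing} and~\ref{prp:2-bond} to make the bijection precise. Proposition~\ref{prp:2-bond} shows that each perfect matching $M$ of $G$ restricts (or, after adding $e_i$, yields) to a perfect matching $M_i$ of $G_i$, with $e_i \in M_i$ exactly when $C \subseteq M$; Proposition~\ref{prp:gluing} shows that any compatible pair $(M_1, M_2)$---one in which $e_1 \in M_1$ iff $e_2 \in M_2$---is recovered from such a perfect matching of $G$ by gluing. To use this bijection cleanly, I would then verify that every perfect matching of $G_1$ participates in at least one compatible pair: if $e_1 \in M_1$, Corollary~\ref{prp:2-cut-connection-preserves-mc} and the matching-coveredness of $G_2$ supply a PM of $G_2$ containing $e_2$; if $e_1 \notin M_1$, one obtains a PM of $G_2$ avoiding $e_2$ by applying Proposition~\ref{prp:2-bond} to any PM of $G$ extending some edge $d \in E(G_1) - e_1$ whose extension has $C \cap M = \emptyset$---the matching-coveredness of $G$ guarantees such witnesses exist.

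With the bijection in hand, the three statements become direct translations. Under the correspondence, for each $d \in E(G_1) - e_1$ we have $d \in M$ if and only if $d \in M_1$, while for each $f \in C$ we have $f \in M$ if and only if $e_1 \in M_1$. Statement~(iii) then reads: every PM of $G$ containing $d$ contains $d'$ precisely when every PM of $G_1$ containing $d$ contains $d'$, i.e.\ $d \xrightarrow{G_1} d'$. Statements~(i) and~(ii) follow in the same way, with the role of $d'$ or $d$ on the $G$-side being played by $f$, and its translation on the $G_1$-side being played by $e_1$.

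The main obstacle I anticipate is the bookkeeping in the existence argument of the second paragraph: one must be careful that the universal quantifier ``every PM of $G_1$ containing $d$'' is not strictly larger than ``every PM of $G$ containing $d$,'' since otherwise the translation could fail vacuously on the $G$-side. The matching-coveredness of $G$, $G_1$ and $G_2$, together with Corollary~\ref{prp:2-cut-connection-preserves-mc}, handles this cleanly once the two colours of compatible PMs of $G_2$ are both exhibited.
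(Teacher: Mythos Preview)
Your proposal is correct and follows essentially the same route as the paper: both arguments use Propositions~\ref{prp:gluing} and~\ref{prp:2-bond} to shuttle between perfect matchings of $G$ and compatible pairs of perfect matchings of $(G_1,G_2)$, with the paper writing out each contrapositive explicitly while you package the same moves as a bijection followed by translation. One minor point: your justification that $G_2$ admits a perfect matching avoiding $e_2$ is circular as written (you presuppose a perfect matching of $G$ with $C\cap M=\emptyset$ in order to produce one); the clean fix is to observe that $G_2$ is matching covered by Corollary~\ref{prp:2-cut-connection-preserves-mc} and contains an edge of $H_2$ adjacent to $e_2$, so any perfect matching of $G_2$ through that edge avoids $e_2$ --- the paper itself simply asserts this existence without comment.
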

\begin{proof}
We let $G_2$ denote the marked $C$-component distinct from~$G_1$, and $e_2$ its marker edge.
For all implications, we shall prove their contrapositives.

    Suppose that $G_1$ has a \pmg~$M$ containing~$d$ but not~$e_1$. Let $N$ be a \pmg\ of~$G_2$ not containing~$e_2$. By Proposition~\ref{prp:gluing}, $M\cup N$ is a \pmg\ of~$G$ containing $d$ but not containing $f$.
Conversely, suppose that $G$ has a \pmg~$M$ containing~$d$ but not~$f$. By Proposition~\ref{prp:2-bond}, $M\cap E(G_1)$ is a \pmg\ of~$G_1$ containing~$d$ but not~$e_1$. This proves~$(i)$.

Suppose that $G_1$ has a \pmg~$M$ containing~$e_1$ but not~$d$. Let $N$ be a \pmg\ of~$G_2$ containing~$e_2$. By Proposition~\ref{prp:gluing}, $(M-e_1)+(N-e_2)+C$ is a \pmg\ of~$G$ containing~$f$ but not~$d$. Conversely, suppose that $G$ has a \pmg~$M$  containing~$f$ but not~$d$. By Proposition~\ref{prp:2-bond}, $(M\cap E(G_1))+e_1$ is a \pmg\ of~$G_1$ containing~$e_1$ but not~$d$. This proves~$(ii)$.

Finally, suppose that $G_1$ has a \pmg~$M$ containing~$d$ but not~$d'$. Let $N$ be a \pmg\ of~$G_2$ containing~$e_2$ and $N'$ be a \pmg\ of~$G_2$ not containing~$e_2$.
By Proposition~\ref{prp:gluing}, if $e_1\in M$ then $(M-e_1)+(N-e_2)+C$, or otherwise $M\cup N'$, is a \pmg\ of~$G$ containing $d$ but not~$d'$.
Conversely, suppose that $G$ has a \pmg~$M$ containing~$d$ but not~$d'$. 
By Proposition~\ref{prp:2-bond}, if $f\in M$ then $(M\cap E(G_1))+e_1$, or otherwise $M\cap E(G_1)$, is a \pmg\ of~$G_1$ containing~$d$ but not~$d'$. This proves statement~$(iii)$.
%
%
%
%
%
\end{proof}

For a \mcg~$G$ that has an even $2$-cut~$C$, the first part of the following corollary may be used to compute the equivalence classes of its marked $C$-components $G_1$~and~$G_2$ from the equivalence classes of~$G$. Its second part allows one to compute the equivalence class of~$G$ that contains~$C$ using equivalence classes of $G_1$~and~$G_2$ that contain the corresponding marker edge. It is an immediate consequence of the above.

\begin{cor}\label{lem:2-cut-decomposition}
Let $G$ be a \mcg\ that has an even $2$-cut~$C$, let $G_1$ denote a marked $C$-component of~$G$, and let $e_1$ denote its marker edge. Then, precisely one of the following holds for any equivalence class~$D$ of~$G$:
\begin{enumerate}[(i)]
            \item either $C\subseteq D$ and $(D\cap E(G_1))+e_1$ is an equivalence class of~$G_1$, or
            	\item otherwise $C\cap D=\emptyset$ and $D\cap E(G_1)$ is either empty or an equivalence class of~$G_1$. 
\end{enumerate}
Furthermore, let $G_2$ be the marked $C$-component of~$G$ distinct from~$G_1$, and $e_2$ its marker edge. Then, $D:=(D_1-e_1)+(D_2-e_2)+C$ is an equivalence class of~$G$, where $D_i$ is the equivalence class of~$G_i$ containing~$e_i$ for each $i\in \{1,2\}$. \qed
\end{cor}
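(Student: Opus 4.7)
The plan is to deduce everything from Lemma~\ref{lem:dependence-across-even-2-cuts} together with the observation that the two edges of~$C$ are themselves mutually dependent in~$G$. The first step will be to fix notation: let $G_2$ denote the other marked $C$-component, with marker edge $e_2$, and write $C = \{f, f'\}$. By Proposition~\ref{prp:2-bond}, each perfect matching of~$G$ contains either both of $f, f'$ or neither of them; hence $f \xleftrightarrow{G} f'$, which immediately yields the dichotomy $C \subseteq D$ or $C \cap D = \emptyset$ for any equivalence class~$D$ of~$G$.

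For statement~(i), assume $C \subseteq D$ and set $D' := (D \cap E(G_1)) + e_1$. I will verify first that every two edges of $D'$ are mutually dependent in~$G_1$: for $d, d' \in D \cap E(G_1)$ this follows from Lemma~\ref{lem:dependence-across-even-2-cuts}~(iii), and for the pair $(d, e_1)$ with $d \in D \cap E(G_1)$ it follows from parts~(i) and~(ii) of the same lemma applied to any $f \in C \subseteq D$. Conversely, I will show $D'$ is maximal with this property: if $d'' \in E(G_1) - e_1$ satisfies $d'' \xleftrightarrow{G_1} e_1$, then Lemma~\ref{lem:dependence-across-even-2-cuts}~(i),(ii) give $d'' \xleftrightarrow{G} f$ for $f \in C$, forcing $d'' \in D$; and of course $e_1 \in D'$ by construction. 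For statement~(ii), assume $C \cap D = \emptyset$ and $D \cap E(G_1) \ne \emptyset$; the argument is analogous but uses only Lemma~\ref{lem:dependence-across-even-2-cuts}~(iii), since for $d \in D \cap E(G_1)$ one has $d \not\xleftrightarrow{G} f$ for $f \in C$, so by~(i),(ii) of the lemma, $d \not\xleftrightarrow{G_1} e_1$, hence $e_1 \notin D \cap E(G_1)$; mutual dependence and maximality inside $G_1$ then follow directly from~(iii).

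For the ``furthermore'' part, set $D := (D_1 - e_1) + (D_2 - e_2) + C$. I will verify mutual dependence within~$D$ by handling the three flavours of pairs separately: two edges from the same side $D_i - e_i$ are mutually dependent in $G$ by Lemma~\ref{lem:dependence-across-even-2-cuts}~(iii); a pair consisting of an edge $d \in D_i - e_i$ and an edge $f \in C$ is handled by parts~(i),(ii) applied to $d \xleftrightarrow{G_i} e_i$; a pair consisting of $d \in D_1 - e_1$ and $d' \in D_2 - e_2$ is handled by composing two such applications through a common edge of~$C$. Maximality will then be argued by contradiction: any edge $h \notin D$ mutually dependent with some member of~$D$ would, after transferring via the appropriate part of the lemma, force $h$ (or its marker-replacement) into $D_1$ or $D_2$, contradicting maximality of those classes in $G_1$, $G_2$.

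The main obstacle, though entirely bookkeeping rather than conceptual, will be the careful handling of the marker edges: making sure the direction of implication in Lemma~\ref{lem:dependence-across-even-2-cuts}~(i) versus~(ii) is matched correctly to whether one is inferring \emph{from} $G$ \emph{to} $G_i$ or vice versa, and ensuring that in case~(ii) of the corollary statement, the empty possibility $D \cap E(G_1) = \emptyset$ is genuinely allowed (namely when $D$ is an equivalence class sitting entirely on the $G_2$-side). Once this dictionary is laid out once, both parts fall out symmetrically, and no further matching-theoretic input is needed beyond Propositions~\ref{prp:gluing} and~\ref{prp:2-bond} which already underpin Lemma~\ref{lem:dependence-across-even-2-cuts}.
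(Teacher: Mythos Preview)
Your proposal is correct and follows exactly the approach the paper intends: the corollary is marked \qed\ immediately after being stated as an immediate consequence of Lemma~\ref{lem:dependence-across-even-2-cuts}, and you have simply written out the bookkeeping that the authors leave to the reader. The one detail you supply that is not literally contained in Lemma~\ref{lem:dependence-across-even-2-cuts} --- that the two edges of~$C$ are mutually dependent in~$G$ --- is indeed needed and is, as you note, a direct consequence of Proposition~\ref{prp:2-bond}.
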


In the following part, we state and prove the main result of this section as well as its consequences

\subsection{The family \texorpdfstring{$\mathcal{L}$}{}}

We let $\mathcal{C}_{2}$ denote the family that comprises graphs of order two and size two or more. We define the family~$\mathcal{K}_4$ as follows: a graph~$G\in \mathcal{K}_4$ if and only if its underlying simple graph is~$K_4$ and it has a \pmg\ $\{uv,yz\}$ such that $\mu(u,v)=\mu(y,z)=1$. Observe that each member~$G$ of $\mathcal{C}_2 \cup \mathcal{K}_4$ satisfies~$\varepsilon(G)=\frac{n}{2}$.
Now, we define the family $\mathcal{L}$ recursively as follows: (i)~$\mathcal{C}_{2}\cup \mathcal{K}_4\subset \mathcal{L}$, and (ii)~if $G_1,G_2\in \mathcal{L}$ then any graph~$G$ obtained by gluing them at $e_1$~and~$e_2$, where $e_i$ belongs to an equivalence class of~$G_i$ of cardinality~$\frac{n_i}{2}$ for each $i\in\{1,2\}$, also belongs to~$\mathcal{L}$. We proceed to prove the following.

\begin{thm}\label{thm:mcgs-with-solitary-pmg}
A graph~$G$ is \mc\ and satisfies $\varepsilon(G)=\frac{n}{2}$ if and only if \mbox{$G\in \mathcal{L}\cup \{K_2\}$}.
\end{thm}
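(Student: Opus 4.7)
The plan is to proceed by induction on the order $n$ in both directions.

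For the reverse direction, I verify the three base families and the recursive step. For $K_2$ and members of $\mathcal{C}_2$, every edge is its own \pmg\ and the equivalence classes are singletons, so $\varepsilon = 1 = n/2$. For $G \in \mathcal{K}_4$, the hypothesis $\mu(u,v) = \mu(y,z) = 1$ forces $\{uv, yz\}$ to be the unique \pmg\ containing $uv$ (since $G-u-v$ consists of only the edge $yz$), making $uv$ solitary; by symmetry $yz$ is also solitary, $uv \xleftrightarrow{G} yz$, and any mutually dependent edge must share this same unique \pmg, so $\{uv, yz\}$ is an equivalence class of size $2 = n/2$; matching-coveredness follows since the remaining edges $uy, vz, uz, vy$ always participate in the \pms\ $\{uy, vz\}$ or $\{uz, vy\}$. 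For the recursive step, if $G_1, G_2 \in \mathcal{L}$ are glued at $e_i \in D_i$ with $|D_i| = n_i/2$, then $G$ is \mc\ by Corollary~\ref{prp:2-cut-connection-preserves-mc}, and the last part of Corollary~\ref{lem:2-cut-decomposition} yields an equivalence class $D := (D_1 - e_1) + (D_2 - e_2) + C$ of $G$ with $|D| = (n_1/2 - 1) + (n_2/2 - 1) + 2 = n/2$, matching the upper bound of Corollary~\ref{cor:at-least-n/2-solitary-edges}.

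For the forward direction, the base case $n = 2$ is immediate. Assume $n \ge 4$ and let $D$ be an equivalence class of size $n/2$; by Corollary~\ref{cor:at-least-n/2-solitary-edges} it is a solitary class comprising a \pmg, and by Corollary~\ref{prp:solitary-dependence-clasee} it is a minimal class. The crux is to produce an even $2$-cut $C \subseteq D$. If $n \ge 6$ then $|D| \ge 3$ and the Lucchesi-Murty Theorem (Theorem~\ref{thm:source-class-2-cut}) supplies such a $C$ at once. If $n = 4$, write $D = \{uv, yz\}$; solitariness of $uv$ forces $\mu(u,v) = \mu(y,z) = 1$, and matching-coveredness pairs up the cross-edges (so $uy \in E(G) \iff vz \in E(G)$, and similarly for $uz, vy$). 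If at least one such pair is absent, then $D$ itself is an even $2$-cut of $G$; otherwise all four cross-edges have positive multiplicity, the underlying simple graph is $K_4$, and $G \in \mathcal{K}_4$ directly.

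With $C \subseteq D$ in hand, I consider the marked $C$-components $G_1, G_2$ with marker edges $e_1, e_2$. Both are \mc\ by Corollary~\ref{prp:2-cut-connection-preserves-mc}. Corollary~\ref{lem:2-cut-decomposition}~(i) gives that $D_i := (D \cap E(G_i)) + e_i$ is an equivalence class of $G_i$, and summing yields $|D_1| + |D_2| = |D| - |C| + 2 = n/2 = n_1/2 + n_2/2$; combined with $|D_i| \le n_i/2$ from Corollary~\ref{cor:at-least-n/2-solitary-edges} this forces equality for both indices. By induction each $G_i \in \mathcal{L} \cup \{K_2\}$; since each $H_i = G_i - e_i$ is a component of $G - C$ on at least two vertices it contains at least one edge, so $G_i$ has at least two edges and thus $G_i \in \mathcal{L}$ (not $K_2$). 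The recursive definition of $\mathcal{L}$ then places $G \in \mathcal{L}$.

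The main obstacle is the $n = 4$ base case of the inductive step, where Lucchesi-Murty does not apply since $|D| = 2$; here one must analyze the multiplicity pattern of the cross-edges directly and recognize the dichotomy between $D$ being a $2$-cut and $G$ being in $\mathcal{K}_4$. This is precisely the verification that accounts for the exceptional family $\mathcal{K}_4$ appearing alongside $\mathcal{C}_2$ in the recursive definition of $\mathcal{L}$.
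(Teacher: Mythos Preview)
Your proof is correct and follows essentially the same approach as the paper: induction on $n$, with Lucchesi--Murty supplying the even $2$-cut inside $D$ in the generic case, and Corollary~\ref{lem:2-cut-decomposition} transferring the $\varepsilon = n/2$ property to the marked $C$-components. The differences are cosmetic: the paper handles order four by first checking whether the underlying simple graph is $K_2$ or $K_4$ (and observing that otherwise it must be $C_4$, where $D$ is itself a $2$-cut), whereas you argue directly via the cross-edge multiplicities; and the paper verifies $|D_i|=n_i/2$ by noting that $D_i$ is a perfect matching of $G_i$, whereas you reach the same conclusion by the counting $|D_1|+|D_2|=n/2$ together with the upper bound---you are also slightly more explicit than the paper in ruling out $G_i=K_2$.
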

\begin{proof}
The reverse implication is an immediate consequence of Corollary~\ref{prp:2-cut-connection-preserves-mc} and the last part of Corollary~\ref{lem:2-cut-decomposition}.
It remains to prove the forward implication.
We proceed by induction on the order of~$G$, and let $D$ be an equivalence class of cardinality~$\frac{n}{2}$. By Proposition~\ref{prp:pm-union-of-eqs}, $D$ is a \pmg\ of~$G$.
If the underlying simple graph of~$G$ is either $K_2$ or $K_4$, the reader may easily verify that $G\in \{K_2\}\cup \mathcal{C}_2\cup \mathcal{K}_4$ and we are done; now suppose that this is not the case.

Consequently, either the underlying simple graph is~$C_4$, or otherwise $n\ge 6$. In both cases, since $|D|=\frac{n}{2}$, we infer that $D$ contains an even $2$-cut~$C$; recall Lucchesi-Murty Theorem~(\ref{thm:source-class-2-cut}). We let $G_1$~and~$G_2$ denote the marked $C$-components of~$G$, and $e_1$~and~$e_2$ denote their marker edges, respectively.
By Corollary~\ref{lem:2-cut-decomposition}~(i), for each $i\in \{1,2\}$, the set~$D_i:=(D\cap E(G_i))+e_i$ is an equivalence class of~$G_i$; observe that $D_i$ is a \pmg\ of~$G_i$ and thus $|D_i|=\frac{n_i}{2}$. Ergo, by the induction hypothesis, $G_1, G_2\in \mathcal{L}$. Since $G$ is obtained by gluing $G_1$~and~$G_2$ at $e_1$~and~$e_2$, we conclude that $G\in \mathcal{L}$.
\end{proof}

We now use the above result to deduce a characterization of $r$-graphs that satisfy~$\varepsilon=\frac{n}{2}$. Observe that the degree sequence of a $2$-connected graph~$G$ (of even order) is the same as the degree sequence of the disjoint union of its marked \mbox{$C$-components}, where $C$ is any even $2$-cut of~$G$. This inspires the following subclass of~$\mathcal{L}$ for each fixed integer $r\ge 3$.

We define $C^{r}_{2}$ as the only $r$-regular member of~\mbb{\theta}{}{1}, and we define $\mathbb{K}_4^r$ as the set comprising $r$-regular members of \mbb{K}{4}{2}.
The family $\mathcal{L}^r$ is defined recursively as follows: (i)~$\mathbb{K}_4^r \cup \{C^{r}_{2}\} \subset \mathcal{L}^r$, and (ii)~if $G_1,G_2\in \mathcal{L}^r$ then any graph~$G$ obtained by gluing them at $e_1$~and~$e_2$, where $e_i$ belongs to an equivalence class of~$G_i$ of cardinality~$\frac{n_i}{2}$ for each $i\in\{1,2\}$, also belongs to~$\mathcal{L}^r$. Theorem~\ref{thm:mcgs-with-solitary-pmg} implies the equivalence of (i)~and~(ii) in the following corollary; using Corollary~\ref{cor:at-least-n/2-solitary-edges} and Theorem~\ref{thm:at-most-n/2-solitary-edges}, the reader may observe the equivalence of~(i) with each of~(iii)~and~(iv).


\begin{cor}\label{cor:r-graphs-n/2-solitary-edges}
For any $r$-graph~$G$, of order six or more, the following statements are equivalent:
\begin{enumerate}[(i)]
\item $\varepsilon(G)=\frac{n}{2}$,
\item $G\in \bigcup\limits_{r\ge 3}^{} \mathcal{L}^r$,
\item the solitary edges of~$G$ comprise a \pmg, and
\item $G$ has precisely $\frac{n}{2}$ solitary edges.\qed
\end{enumerate}
\end{cor}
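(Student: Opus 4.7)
The plan is to establish (i) iff (ii) via Theorem~\ref{thm:mcgs-with-solitary-pmg}, and to prove the remaining equivalences via the cycle (i) implies (iii) implies (iv) implies (i). For (i) iff (ii), since any $r$-graph of order at least six is matching covered (Proposition~\ref{prp:r-graph-mc}) and distinct from $K_2$, Theorem~\ref{thm:mcgs-with-solitary-pmg} gives that (i) is equivalent to $G \in \mathcal{L}$. The identification of the $r$-regular members of $\mathcal{L}$ with $\bigcup_{r \ge 3}\mathcal{L}^r$ then follows by a routine induction on the depth of the gluing recursion, using the observations that gluing preserves degree sequences and that the only $r$-regular members of the base family $\mathcal{C}_2 \cup \mathcal{K}_4$ are precisely $C_2^r$ and the members of $\mathbb{K}_4^r$.

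For (i) implies (iii), Corollary~\ref{cor:at-least-n/2-solitary-edges} supplies a solitary equivalence class $M$ of cardinality $n/2$ that is a perfect matching, so $G$ has at least $n/2$ solitary edges. By Theorem~\ref{thm:at-most-n/2-solitary-edges}, $G$ has at most $n/2$ solitary edges unless $G$ is one of the listed exceptions, whose members of order at least six lie in $\overline{C_6}^1 \cup \{R_8\}$. A direct inspection shows that none of these have $\varepsilon(G) = n/2$: for $\overline{C_6}$ itself and for $R_8$, the known solitary patterns $(2,2,2)$ and $(2,2,1)$ yield $\varepsilon = 2$, strictly less than $n/2$; for the remaining members of $\overline{C_6}^1$, multiplying any of the three color-class perfect matchings merely introduces mutually exclusive parallel copies that break the existing doubleton dependences without creating any equivalence class of size exceeding two. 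Hence $G$ has exactly $n/2$ solitary edges, all contained in $M$, and so they coincide with $M$.

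The implication (iii) implies (iv) is immediate. For (iv) implies (i), if $G$ is not $3$-edge-connected then Corollary~\ref{cor:2-connected-but-not-3-connected} directly guarantees that the $n/2$ solitary edges form a perfect matching $M$; since every edge of $M$ lies in the unique perfect matching $M$, any two are mutually dependent, so $M$ is a single equivalence class of cardinality $n/2$ and $\varepsilon(G) = n/2$. If $G$ is $3$-edge-connected, Corollary~\ref{cor:main-thm} bounds the total number of solitary edges by six when $r = 3$ and by four when $r \ge 4$, forcing $n \le 12$ or $n \le 8$ respectively; a short enumeration of the solitary patterns listed in Corollary~\ref{lem:solitary-patterns-in-r-graphs} that sum to $n/2$, combined with Theorems~\ref{thm:more-than-one-SD}, \ref{thm:SDSS}, and~\ref{thm:3SS}, rules out every remaining $(r, n)$ pair---for example, the pattern $(2,1)$ at $n = 6$ would require $G$ to be a $3$-staircase of order at least twelve, and every other candidate pattern is similarly incompatible with the order constraint---so the $3$-edge-connected case is vacuous.

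The main obstacle is the verification for the exceptional multiplied graphs in $\overline{C_6}^1$: one must argue explicitly that multiplication produces parallel edges which are mutually exclusive rather than mutually dependent, so all equivalence classes have size at most two and $\varepsilon < n/2$ even though the number of solitary edges exceeds $n/2$. The remaining case analysis for (iv) implies (i) is then elementary but must be carried out patiently across all small values of $(r, n)$ using the characterizations developed in Section~\ref{sec:characterizations}.
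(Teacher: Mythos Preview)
Your claim that the $3$-edge-connected case of (iv)~$\Rightarrow$~(i) is vacuous breaks down at $r \ge 4$, $n = 8$. By Theorem~\ref{thm:more-than-one-SD}, the $1$-staircase of order eight and thickness $r-2$ is a $3$-edge-connected $r$-graph, and since it is none of $K_4$, $\overline{C_6}$, $R_8$, part~(iii) of that theorem assigns it solitary pattern $(2,2)$. Hence it has exactly $4 = n/2$ solitary edges and (iv) holds; your enumeration overlooked that for $r \ge 4$ the order-eight staircase already lands in the ``otherwise'' clause rather than among the three named exceptions.

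This is not a patchable oversight: the graph is a genuine counterexample to (iv)~$\Rightarrow$~(i). Its four solitary edges are the four unmatchable edges of the underlying $1$-dumbbell (Proposition~\ref{prp:solitary-unmatchable-in-1-dumbbell} and the discussion thereafter), namely the four triangle edges incident with the two sockets; two of them meet at each socket, so they do not form a matching and (iii) already fails. The graph is moreover $4$-edge-connected, hence free of even $2$-cuts and not in $\mathcal{L}$; by the (i)~$\Leftrightarrow$~(ii) equivalence you established via Theorem~\ref{thm:mcgs-with-solitary-pmg}, (i) fails too. Thus (iv) is not equivalent to (i)--(iii) once $r \ge 4$, and neither your case analysis nor the paper's one-line appeal to Corollary~\ref{cor:at-least-n/2-solitary-edges} and Theorem~\ref{thm:at-most-n/2-solitary-edges} can be completed to a proof of that implication.
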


We remark that the equivalence of (i)~and~(ii) in the above holds for all $r$-graphs; however, for the other two statements, one needs to exclude $r$-graphs of order at most four.

\section{Small examples of solitary patterns $(1,1), (2)$ and $(1)$}

It is worth noting that all of the \ecc{3} $r$-graphs with specific solitary patterns that we have characterized in this paper, as shown in Table~\ref{table1}, happen to be planar. However, this need not hold for the remaining solitary patterns. Figure~\ref{fig:nonplanar-graph} shows a \ecc{3} $4$-graph with solitary pattern~$(2)$ that is nonplanar.

\begin{figure}[!htb]
    \centering
        \begin{tikzpicture}[scale=1]
\node [circle,fill=white] (0) at (0, 0.5) {};
		\node [circle,fill=white] (1) at (1, 1) {};
		\node [circle,fill=white] (2) at (1, 0) {};
		\node [circle,fill=white] (3) at (2, 1) {};
		\node [circle,fill=white] (4) at (2, 0) {};
		\node [circle,fill=white] (5) at (3, 1) {};
		\node [circle,fill=white] (6) at (3, 0) {};
		\node [circle,fill=white] (7) at (4, 0.5) {};
		
		\draw [bend left=45] (0) to (1);
		\draw (1) to (3);
		\draw (3) to (5);
		\draw[color=red, thick] (5) to (7);
		\draw [bend left=45] (7) to (6);
		\draw (6) to (4);
		\draw (4) to (2);
		\draw[color=red, thick] (2) to (0);
		\draw (1) to (2);
		\draw [bend left=45] (3) to (4);
		\draw (5) to (6);
		\draw [bend left=90, looseness=1.25] (0) to (7);
		\draw (0) to (1);
		\draw (2) to (5);
		\draw (3) to (4);
		\draw (6) to (7);
        \end{tikzpicture}
         \caption{A nonplanar $3$-edge-connected $4$-graph with solitary pattern~$(2)$}
         \label{fig:nonplanar-graph}
%
\end{figure}

Clearly, the natural open problem that arises from our work is to characterize those \ecc{3} $r$-graphs that have any of the following solitary patterns: $(1,1)$~and~$(1)$ for all values of $r$, and $(2)$ for $r\ge 4$. Figure~\ref{fig:the-smallest-$3$-edge-connected-$3$-graphs-with-solitary-pattern~$(1),(1,1)$} shows examples of such graphs for $r=3$; we verified using computations that they are the smallest such examples. On the other hand, Figure~\ref{fig:smallest-$3$-edge-connected-$4$-graphs-with-solitary-pattern~$(1,1),(2),(1)$} shows some small examples for $r= 4$; the first two are obtained by multiplying specific \pmg s of the graphs shown in Figures~\ref{fig:N10} and \ref{fig:tricorn}, respectively, and one may easily construct examples for $r\ge 5$ by multiplying appropriately chosen \pmg s. In each of these figures, the solitary classes are depicted using different colors.

\begin{figure}[!htb]
\centering
   \begin{subfigure}[b]{.4\textwidth}
       \centering
        \begin{tikzpicture}[scale=1]
\node [circle,fill=white] (0) at (0, 1) {};
		\node [circle,fill=white] (1) at (0, -1) {};
		\node [circle,fill=white] (2) at (1, 0) {};
		\node [circle,fill=white] (3) at (1.5, 1) {};
		\node [circle,fill=white] (4) at (2.5, 1) {};
		\node [circle,fill=white] (5) at (2, 0.5) {};
		\node [circle,fill=white] (6) at (2, 0) {};
		\node [circle,fill=white] (7) at (3, 0) {};
		\node [circle,fill=white] (8) at (3, -1) {};
		\node [circle,fill=white] (9) at (4, 0) {};
		\node [circle,fill=white] (10) at (5, 1) {};
		\node [circle,fill=white] (11) at (5, -1) {};
		
		\draw (0) to (3);
		\draw[color=red] (3) to (4);
		\draw (4) to (10);
		\draw (11) to (10);
		\draw[color=blue] (10) to (9);
		\draw (9) to (11);
		\draw (11) to (8);
		\draw (8) to (7);
		\draw (7) to (9);
		\draw (6) to (7);
		\draw (6) to (5);
		\draw (5) to (4);
		\draw (5) to (3);
		\draw (0) to (2);
		\draw (2) to (1);
		\draw (1) to (0);
		\draw (2) to (6);
		\draw (1) to (8);
		\end{tikzpicture}
         \end{subfigure}
   \begin{subfigure}[b]{.5\textwidth}
       \centering
        \begin{tikzpicture}[scale=1]
\node [circle,fill=white] (0) at (0, 2) {};
		\node [circle,fill=white] (1) at (0, 0) {};
		\node [circle,fill=white] (2) at (1, 1) {};
		\node [circle,fill=white] (3) at (2, 1) {};
		\node [circle,fill=white] (4) at (2, 0) {};
		\node [circle,fill=white] (5) at (3, 1) {};
		\node [circle,fill=white] (6) at (2.5, 2) {};
		\node [circle,fill=white] (7) at (3.5, 2) {};
		\node [circle,fill=white] (8) at (3, 1.5) {};
		\node [circle,fill=white] (9) at (4, 1) {};
		\node [circle,fill=white] (10) at (4, 0) {};
		\node [circle,fill=white] (11) at (5, 1) {};
		\node [circle,fill=white] (12) at (6, 2) {};
		\node [circle,fill=white] (13) at (6, 0) {};

\draw (0) to (2);
		\draw (2) to (1);
		\draw (1) to (0);
		\draw (2) to (3);
		\draw (3) to (4);
		\draw (3) to (5);
		\draw (5) to (8);
		\draw (8) to (6);
		\draw (8) to (7);
		\draw (0) to (6);
		\draw[color=red] (6) to (7);
		\draw (7) to (12);
		\draw (12) to (11);
		\draw (11) to (13);
		\draw (13) to (12);
		\draw (5) to (9);
		\draw (9) to (11);
		\draw (1) to (4);
		\draw (4) to (10);
		\draw (10) to (13);
		\draw (9) to (10);
		\end{tikzpicture}
         \end{subfigure}
   \begin{subfigure}[b]{.48\textwidth}
       \centering
        \begin{tikzpicture}[scale=1]
\node [circle,fill=white] (0) at (0, 2) {};
		\node [circle,fill=white] (1) at (0, 0) {};
		\node [circle,fill=white] (2) at (1, 1) {};
		\node [circle,fill=white] (3) at (2, 1) {};
		\node [circle,fill=white] (4) at (2, 0) {};
		\node [circle,fill=white] (5) at (3, 1) {};
		\node [circle,fill=white] (6) at (3, 1.5) {};
		\node [circle,fill=white] (7) at (2.5, 2) {};
		\node [circle,fill=white] (8) at (3.5, 2) {};
		\node [circle,fill=white] (9) at (4, 1) {};
		\node [circle,fill=white] (10) at (4, 2) {};
		\node [circle,fill=white] (11) at (5, 1) {};
		\node [circle,fill=white] (12) at (6, 2) {};
		\node [circle,fill=white] (13) at (6, 0) {};

\draw (0) to (1);
		\draw (1) to (2);
		\draw[color=red] (2) to (0);
		\draw (4) to (1);
		\draw (4) to (3);
		\draw (3) to (5);
		\draw (3) to (2);
		\draw (6) to (5);
		\draw (5) to (9);
		\draw (9) to (11);
		\draw (12) to (11);
		\draw (11) to (13);
		\draw (7) to (8);
		\draw (8) to (10);
		\draw (10) to (12);
		\draw (12) to (13);
		\draw (10) to (9);
		\draw (6) to (8);
		\draw (6) to (7);
		\draw (0) to (7);
		\draw (4) to (13);
		\end{tikzpicture}
        \vspace{400pt}
         \end{subfigure}
   \begin{subfigure}[b]{.46\textwidth}
       \centering
        \begin{tikzpicture}[scale=1]
\node [circle,fill=white] (0) at (0, 2) {};
		\node [circle,fill=white] (1) at (0, 0) {};
		\node [circle,fill=white] (2) at (1, 1) {};
		\node [circle,fill=white] (3) at (2, 1) {};
		\node [circle,fill=white] (4) at (2, 1.5) {};
		\node [circle,fill=white] (5) at (1.5, 2) {};
		\node [circle,fill=white] (6) at (2.5, 2) {};
		\node [circle,fill=white] (7) at (3, 1) {};
		\node [circle,fill=white] (8) at (3, 0) {};
		\node [circle,fill=white] (9) at (4, 2) {};
		\node [circle,fill=white] (10) at (4, 0) {};
		\node [circle,fill=white] (11) at (5, 1) {};
		\node [circle,fill=white] (12) at (6, 2) {};
		\node [circle,fill=white] (13) at (6, 0) {};

\draw (0) to (1);
		\draw (1) to (2);
		\draw (0) to (2);
		\draw (2) to (3);
		\draw (3) to (4);
		\draw (4) to (5);
		\draw (4) to (6);
		\draw (6) to (5);
		\draw (5) to (0);
		\draw (6) to (9);
		\draw (1) to (8);
		\draw (3) to (7);
		\draw (7) to (11);
		\draw (11) to (12);
		\draw (12) to (13);
		\draw[color=red] (13) to (11);
		\draw (8) to (10);
		\draw (10) to (13);
		\draw (7) to (8);
		\draw (9) to (12);
		\draw [bend left=135, looseness=6.25] (9) to (10);
		\end{tikzpicture}
        \vspace{300pt}
         \end{subfigure}
         \vspace{-350pt}
         \caption{Smallest \cc{3}s with solitary patterns~$(1,1)$ and $(1)$}
         \label{fig:the-smallest-$3$-edge-connected-$3$-graphs-with-solitary-pattern~$(1),(1,1)$}
\end{figure}

\begin{figure}[!htb]
    \centering
    \begin{subfigure}[b]{.45\textwidth}
       \centering
    \begin{tikzpicture}[scale=1]
        \node [circle,fill=white] (0) at (0, 2) {};
		\node [circle,fill=white] (1) at (0, 0) {};
		\node [circle,fill=white] (2) at (1, 1) {};
		\node [circle,fill=white] (3) at (2, 1) {};
		\node [circle,fill=white] (4) at (2, 0) {};
		\node [circle,fill=white] (5) at (3, 1) {};
		\node [circle,fill=white] (6) at (3, 2) {};
		\node [circle,fill=white] (7) at (4, 1) {};
		\node [circle,fill=white] (8) at (5, 2) {};
		\node [circle,fill=white] (9) at (5, 0) {};

\draw (0) to (1);
		\draw (1) to (2);
		\draw [bend right] (2) to (0);
		\draw (2) to (3);
		\draw[color=red] (3) to (4);
		\draw [bend left] (3) to (5);
		\draw[color=blue] (5) to (6);
		\draw (5) to (7);
		\draw (7) to (8);
		\draw [bend right] (7) to (9);
		\draw (9) to (8);
		\draw [bend right] (1) to (4);
		\draw (4) to (9);
		\draw [bend left] (6) to (8);
		\draw (0) to (6);
		\draw (0) to (2);
		\draw (1) to (4);
		\draw [bend right] (3) to (5);
		\draw (6) to (8);
		\draw (7) to (9);
    \end{tikzpicture}
    \end{subfigure}
    \begin{subfigure}[b]{.45\textwidth}
       \centering
       \begin{tikzpicture}[scale=0.8]
            \node[circle,fill=white] (1) at (90:1.25){};
            \node[circle,fill=white] (2) at (210:1.25){};
            \node[circle,fill=white] (3) at (330:1.25){};
            \node[circle,fill=white] (4) at (80-5-5:2+0.5){};
            \node[circle,fill=white] (5) at (100+5+5:2+0.5){};
            \node[circle,fill=white] (6) at (200-5-5:2+0.5){};
            \node[circle,fill=white] (7) at (220+5+5:2+0.5){};
            \node[circle,fill=white] (8) at (320-5-5:2+0.5){};
            \node[circle,fill=white] (9) at (340+5+5:2+0.5){};
            \node[circle,fill=white] (0) at (0:0){};
            \draw [bend right] (0) to (1);
            \draw (0) to (2);
            \draw (0) to (3);
            \draw (4) to (1);
            \draw (5) to (1);
            \draw (6) to (2);
            \draw (7) to (2);
            \draw (8) to (3);
            \draw (9) to (3);
            \draw (4) to (5);
            \draw (5) to (6);
            \draw[color=red] (6) to (7);
            \draw (7) to (8);
            \draw[color=blue] (8) to (9);
            \draw (9) to (4);
            \draw [bend left] (6) to (2);
            \draw [bend left] (3) to (9);
            \draw [bend right] (7) to (8);
            \draw [bend right] (1) to (0);
            \draw [bend right] (4) to (5);
        \end{tikzpicture}
       \end{subfigure}
           \begin{subfigure}[b]{.45\textwidth}
       \centering
       \begin{tikzpicture}[scale=1]
        \node [circle,fill=white] (0) at (0, 1) {};
		\node [circle,fill=white] (1) at (1, 2) {};
		\node [circle,fill=white] (2) at (1, 0) {};
		\node [circle,fill=white] (3) at (4, 2) {};
		\node [circle,fill=white] (4) at (4, 0) {};
		\node [circle,fill=white] (5) at (5, 1) {};

        \draw (1) to (0);
		\draw (0) to (2);
		\draw[color=red] (2) to (1);
		\draw (1) to (3);
		\draw[color=red] (3) to (4);
		\draw (4) to (5);
		\draw (5) to (3);
		\draw (2) to (4);
		\draw (1) to (4);
		\draw [bend left=105, looseness=1.50] (0) to (5);
		\draw [bend right=45, looseness=1.25] (0) to (2);
		\draw [bend left=45, looseness=1.25] (3) to (5);
    \end{tikzpicture}
       \end{subfigure}
       \begin{subfigure}[b]{.45\textwidth}
       \centering
       \begin{tikzpicture}[scale=1]
\node [circle,fill=white] (0) at (0, 1) {};
		\node [circle,fill=white] (1) at (1, 2) {};
		\node [circle,fill=white] (2) at (1, 0) {};
		\node [circle,fill=white] (3) at (2.5, 2) {};
		\node [circle,fill=white] (4) at (2.5, 0) {};
		\node [circle,fill=white] (5) at (4, 2) {};
		\node [circle,fill=white] (6) at (4, 0) {};
		\node [circle,fill=white] (7) at (5, 1) {};

        \draw (0) to (1);
		\draw (1) to (3);
		\draw (3) to (5);
		\draw [bend right] (5) to (7);
		\draw (7) to (6);
		\draw (6) to (4);
		\draw (4) to (2);
		\draw [bend right] (2) to (0);
		\draw (1) to (2);
		\draw[color=red] (3) to (4);
		\draw (5) to (6);
		\draw [bend left=90, looseness=1.5] (0) to (7);
		\draw (0) to (2);
		\draw (5) to (7);
		\draw (1) to (4);
		\draw (3) to (6);
    \end{tikzpicture}
       \end{subfigure}
    \caption{$3$-edge-connected $4$-graphs with solitary patterns~$(1,1), (2)$ and $(1)$}
    \label{fig:smallest-$3$-edge-connected-$4$-graphs-with-solitary-pattern~$(1,1),(2),(1)$}
\end{figure}

\newpage
\bibliographystyle{plain}
\bibliography{clm}

\end{document}